\def\draftdate{April 14, 2022}
\newcommand{\tact}{\zeta}
\newcommand{\bs}{\backslash}
\newcommand{\subdot}{_{\bullet}}
\def\sbreak{\texorpdfstring{\protect\ssbreak}{\ }} 
\let\ssbreak\

\let\iso\cong
\let\sma\wedge
\newcommand{\dR}{\mathbf{R}}
\newcommand{\dL}{\mathbf{L}}
\newcommand{\piS}[1][{*}]{\pi_{#1}^{\bS}}
\newcommand{\smal}{\wedge^{\dL}}
\newcommand{\tH}{\hat H\mathstrut}
\newcommand{\oleq}{\mathop{\leq}}
\newcommand{\ogeq}{\mathop{\geq}}
\newcommand{\half}{\frac12}
\renewcommand{\to}{\mathchoice{\longrightarrow}{\rightarrow}{\rightarrow}{\rightarrow}}
\newcommand{\from}{\mathchoice{\longleftarrow}{\leftarrow}{\leftarrow}{\leftarrow}}
\newcommand{\overto}[1]{\xrightarrow{\,#1\,}}
\newcommand{\overfrom}[1]{\xleftarrow{\,#1\,}}
\newcommand{\tEG}[1][{G}]{\widetilde{E#1}}
\newcommand{\tET}{\tEG[\bT]}
\newcommand{\tEOG}[1]{\widetilde{E_{#1}G}\mathstrut}
\newcommand{\tEOT}[1]{\widetilde{E_{#1}\bT}\mathstrut}
\newcommand{\tWG}{\widetilde{WG}\mathstrut^{\Delta}}
\newcommand{\CC}{HM}
\newcommand{\CCdg}{\CC_{*}}
\newcommand{\uplabel}[1]{\mvuplabel{#1}{.75pt}}
\newcommand{\mvuplabel}[2]{\hbox to 0pt{\hskip #2
\vrule height1.5em width.5pt depth0pt
\raise1.8em\hbox to 0pt{\hskip-.5ex $\scriptstyle #1$\hss}\hss}}
\newcommand{\dnlabel}[1]{\hbox to 0pt{\hskip .75pt
\vrule height0em width.5pt depth1.5em
\raise-2.2em\hbox to 0pt{\hskip-.5ex $\scriptstyle #1$\hss}\hss}}
\newcommand{\subseg}[2]{%
\overbrace{\hbox to #2{\hss%
\vbox to 8pt{\vss\hrule height2pt width2pt depth1pt}%
\vbox to 8pt{\vss\hrule height1pt width#2 depth0pt}%
\vbox to 8pt{\vss\hrule height2pt width2pt depth1pt}%
\vbox to 0pt{\vss\hrule height0pt width0pt depth8pt}%
\hss}}^{#1}}
\newcommand{\subsegnolabel}[1]{%
\hbox to #1{\hss%
\vbox to 8pt{\vss\hrule height2pt width2pt depth1pt}%
\vbox to 8pt{\vss\hrule height1pt width#1 depth0pt}%
\vbox to 8pt{\vss\hrule height2pt width2pt depth1pt}%
\vbox to 0pt{\vss\hrule height0pt width0pt depth8pt}%
\hss}}
\newcommand{\subsegnoend}[2]{%
\overbrace{\hbox to #2{\hss%
\vbox to 8pt{\vss\hrule height2pt width2pt depth1pt}%
\vbox to 8pt{\vss\hrule height1pt width#2 depth0pt}%
\vbox to 8pt{\vss\hrule height1pt width2pt depth0pt}%
\vbox to 0pt{\vss\hrule height0pt width0pt depth8pt}%
\hss}}^{#1}}
\newcommand{\subsegnoel}[1]{%
\hbox to #1{\hss%
\vbox to 8pt{\vss\hrule height2pt width2pt depth1pt}%
\vbox to 8pt{\vss\hrule height1pt width#1 depth0pt}%
\vbox to 8pt{\vss\hrule height1pt width2pt depth0pt}%
\vbox to 0pt{\vss\hrule height0pt width0pt depth8pt}%
\hss}}
\newcommand{\subsegnobeg}[2]{%
\overbrace{\hbox to #2{\hss%
\vbox to 8pt{\vss\hrule height1pt width2pt depth0pt}%
\vbox to 8pt{\vss\hrule height1pt width#2 depth0pt}%
\vbox to 8pt{\vss\hrule height2pt width2pt depth1pt}%
\vbox to 0pt{\vss\hrule height0pt width0pt depth8pt}%
\hss}}^{#1}}
\newcommand{\subsegnobl}[1]{%
\hbox to #1{\hss%
\vbox to 8pt{\vss\hrule height1pt width2pt depth0pt}%
\vbox to 8pt{\vss\hrule height1pt width#1 depth0pt}%
\vbox to 8pt{\vss\hrule height2pt width2pt depth1pt}%
\vbox to 0pt{\vss\hrule height0pt width0pt depth8pt}%
\hss}}
\DeclareMathAlphabet{\catsymbfont}{U}{rsfs}{m}{n}
\newcommand{\aC}{{\catsymbfont{C}}}
\newcommand{\Cat}{\mathop{\aC\!\mathrm{at}}\nolimits}
\newcommand{\aD}{{\catsymbfont{D}}}
\newcommand{\aF}{{\catsymbfont{F}}}
\newcommand{\aI}{{\catsymbfont{I}}}
\newcommand{\aS}{{\catsymbfont{S}}}
\newcommand{\Sp}{\aS}
\newcommand{\SpG}[1][{G}]{\Sp^{#1}}
\newcommand{\SpGbS}{\SpG\bs\bS}
\newcommand{\SpbS}{\Sp\bs\bS}
\newcommand{\Stab}{\Ho(\Sp)}
\newcommand{\StabG}[1][{G}]{\Ho(\Sp^{#1})}
\newcommand{\StabGB}[1][{G}]{\Ho^{B}(\Sp^{#1})}
\newcommand{\aM}{{\catsymbfont{M}}}
\newcommand{\Mod}{\mathop {\aM\mathrm{od}}\nolimits}
\newcommand{\aX}{{\catsymbfont{X}}}
\newcommand{\aY}{{\catsymbfont{Y}}}
\newcommand{\bC}{{\mathbb{C}}}
\newcommand{\bN}{{\mathbb{N}}}
\newcommand{\bO}{{\mathbb{O}}}
\newcommand{\bR}{{\mathbb{R}}}
\newcommand{\bS}{{\mathbb{S}}}
\newcommand{\bT}{{\mathbb{T}}}
\newcommand{\bW}{{\mathbb{W}}}
\newcommand{\bZ}{{\mathbb{Z}}}
\newcommand{\Wk}{\bW k}
\newcommand{\oA}{\mathcal{A}}
\newcommand{\Ass}{\mathop{\oA\mathrm{ss}}\nolimits}
\newcommand{\oAss}{\overline{\Ass}}
\newcommand{\oC}{\mathcal{C}}
\newcommand{\Com}{\mathop{\oC\mathrm{om}}\nolimits}
\newcommand{\oL}{\mathcal{L}}
\newcommand{\oO}{\mathcal{O}}
\newcommand{\oCO}{\oO^{\Xi}_{1}}
\newcommand{\ooO}{\overline{\oO}}
\newcommand{\ooCo}{\overline{\oC}_{1}}
\newcommand{\Ncyc}{N^{cy}}
\newcommand{\Spcat}{\Cat^{\Sp}}
\newcommand{\SpcatHk}{\Cat^{\Sp}_{Hk}}
\newcommand{\dgcatk}{\Cat^{\mathrm{dg}}_{k}}
\def\quickop#1{\expandafter\DeclareMathOperator\csname
#1\endcsname{#1}}
\DeclareMathOperator*\lcolim{colim}
\DeclareMathOperator*\lhocolim{hocolim}
\numberwithin{equation}{section}
\newtheorem{thm}[equation]{Theorem}
\newtheorem{main}{Theorem}
\newtheorem{Acor}{Corollary}
\newtheorem{cor}[equation]{Corollary}
\newtheorem{lem}[equation]{Lemma}
\newtheorem{prop}[equation]{Proposition}
\theoremstyle{definition}
\newtheorem{defn}[equation]{Definition}
\newtheorem{hyp}[equation]{Hypothesis}
\newtheorem{cons}[equation]{Construction}
\newtheorem{notn}[equation]{Notation}
\theoremstyle{remark}
\newtheorem{rem}[equation]{Remark}
\newtheorem{example}[equation]{Example}
\newcommand{\term}[1]{\textit{#1}}
\begin{document}

\title[Strong K\"unneth Theorem for $TP_{*}$]
{The strong K\"unneth theorem for topological periodic cyclic homology}

\author{Andrew J. Blumberg}
\address{Department of Mathematics, Columbia University, 
New York, NY \ 10027}
\email{blumberg@math.columbia.edu}
\thanks{The first author was supported in part by NSF grants
DMS-1151577, DMS-1812064, DMS-2104420}
\author{Michael A. Mandell}
\address{Department of Mathematics, Indiana University,
Bloomington, IN \ 47405}
\email{mmandell@indiana.edu}
\thanks{The second author was supported in part by NSF grants
DMS-1505579, DMS-1811820, DMS-2104348}

\date{\draftdate} 
\subjclass[2010]{Primary 19D55.}
\keywords{K\"unneth theorem, topological Hochschild homology, periodic
cyclic homology, Tate cohomology.}

\begin{abstract}
Topological periodic cyclic homology (i.e., $\mathbb{T}$-Tate fixed
points of 
$THH$) has the structure of a strong symmetric monoidal functor of
smooth and proper dg categories over a perfect field of finite
characteristic. 
\end{abstract}

\maketitle

\addtocontents{toc}{\protect\setcounter{tocdepth}{1}}

\tableofcontents

\let\ssbreak\\

%%%%%%%%%%%%%%%%%%%%%%%%%%%%%%%%%%%%%%%
%\newpage

%\chaptermark{Strong K\"unneth Theorem for $TP_{*}$}

\section*{Introduction} 

Trace methods have produced powerful tools for computing algebraic
$K$-theory.  In these methods, one obtains information about the
$K$-theory spectrum by
mapping it to more computable theories like topological Hochschild
homology ($THH$) and topological cyclic homology ($TC$).  As the name
suggests, $THH$ is a ``topological'' analogue of Hochschild homology,
where tensor product is replaced with smash product and we work over
the sphere spectrum instead of the integers,
resulting in a theory which is the same rationally but richer
at finite primes.  Despite the name, $TC$ is not the
topological analogue of cyclic homology, but is more closely related to
negative cyclic homology.  In contrast to the algebraic setting, this
construction does not provide evident topological analogues of positive or
periodic cyclic homology.  Since $TC$ has been so successful for
calculations in 
algebraic $K$-theory, topologists have focused comparatively
little attention on such analogues until recently.

In~\cite{Hesselholt-Periodic}, Hesselholt studied a topological analogue
for periodic cyclic homology, motivated by the Deninger
program~\cite{Deninger-Main} and the recent progress on it by
Connes-Consani~\cite{ConnesConsani}.  Deninger proposed an approach to
the classical Riemann hypothesis in analogy with Deligne's proof of
Weil's Riemann hypothesis.  A basic ingredient in this approach is a
suitable infinite-dimensional cohomology theory and some kind of
endomorphism playing the role of the Frobenius automorphism; there is
then a conjectural expression for the zeta function in terms of
regularized determinants.  Using periodic cyclic homology,
Connes-Consani~\cite[1.1]{ConnesConsani} produced a cohomology theory
and expression for the product of Serre's archimedean local factors of
the Hasse-Weil zeta function of a smooth projective variety over a
number field in terms of a regularized determinant for an endomorphism
coming from the action of $\lambda$-operations on this cohomology
theory.  Hesselholt~\cite{Hesselholt-Periodic} defined a 
theory $TP$ as the Tate $\bT$-fixed points of
$THH$.  We refer to this theory as \term{topological
periodic cyclic homology} as it is the topological analogue of
periodic cyclic homology, though we note that it is not always itself
periodic.  Using  
$TP$ in place of the Connes-Consani theory, Hesselholt establishes a
non-archimedean version of their results, producing a
regularized determinant expression for the Hasse-Weil zeta function
of a smooth projective variety over a finite field.

The purpose of this paper is to prove an important structural property
of $TP$.  It is
well-known to experts that $TP$ has the natural structure of a lax
symmetric monoidal functor from dg categories (or spectral categories)
over a commutative ring $R$ (or commutative $\bS$-algebra $R$) to the
derived category of $TP(R)$-modules.  In other words, $TP$ satisfies a 
lax K\"unneth formula.  
We establish a strong K\"unneth formula
for $TP$ when restricted to smooth and proper dg categories over a
perfect field of characteristic $p>0$.

\begin{main}[Strong K\"unneth Formula]\label{main:Kunneth}
\label{MAIN:KUNNETH}
Let $k$ be a perfect field of characteristic $p>0$ and let $\aX$ and $\aY$
be $k$-linear dg categories.  The natural lax symmetric monoidal transformation 
\[
TP(\aX) \smal_{TP(k)} TP(\aY) \to TP(\aX \otimes_{k} \aY)
\]
is a weak equivalence when $\aX$ and $\aY$ are smooth and proper over $k$.
\end{main}

Here $\smal$ denotes the derived smash product:  
This theorem is a derived category statement rather than a homotopy groups
statement.  We get a homotopy groups statement from the K\"unneth
spectral sequence~\cite[IV.4.7]{EKMM}, which has $E^{2}$-term
\[
E^{2}_{*,*}=\Tor^{TP_{*}(k)}_{*,*}(TP_{*}(\aX),TP_{*}(\aY))
\]
and converges strongly to $\pi_{*}(TP(\aX)\smal_{TP(k)}TP(\aY))$.  
When $k$ is a perfect field of characteristic $p$, $\pi_{*}TP(k)\iso
\Wk[v,v^{-1}]$, where $\Wk$ denotes the $p$-typical Witt vectors,
and $v$ is an element in degree $2$.  In particular,
$\Tor^{TP_{*}(k)}_{s,*}$ vanishes for $s>1$ and the spectral sequence
degenerates to a short exact sequence.  We state this as the following
corollary. 

\begin{Acor}
Let $k$ be a perfect field of characteristic $p>0$ and let $\aX$ and $\aY$
be smooth and proper $k$-linear dg categories.  Then there is a
natural short exact sequence
\[
0\rightarrow TP_{*}(\aX)\mathbin{\mathop{\otimes}\limits_{\hspace{0pt}TP_{*}(k)}\hspace{-3.75pt}}TP_{*}(\aY)\rightarrow TP_{*}(\aX\otimes_{k}\aY)\rightarrow
\Tor^{TP_{*}(k)}_{*-1,*}(TP_{*}(\aX),TP_{*}(\aY))\rightarrow 0.
\]
This exact sequence splits, but not naturally.
\end{Acor}

We also get a strong K\"unneth theorem on homotopy groups after inverting
$p$; indeed, for the Hesselholt work on $TP$, the statements of the
main results only involve $TP_{*}[1/p]$.  Since inverting $p$ commutes with
the smash product and $TP_{*}(k)[1/p]$ is a ``graded field'' in the sense
that all graded modules over it are free (in particular, $\Tor_{s,*}=0$
for $s>0$), we have the following immediate corollary.

\begin{Acor}
Let $k$ be a perfect field of characteristic $p>0$ and let $\aX$ and $\aY$
be $k$-linear dg categories.  The natural map on homotopy groups with
$p$ inverted
\[
TP_{*}(\aX)[1/p] \otimes_{TP_{*}(k)[1/p]} TP_{*}(\aY)[1/p] \to TP_{*}(\aX \otimes_{k} \aY)[1/p]
\]
is an isomorphism when $\aX$ and $\aY$ are smooth and proper over $k$.
\end{Acor}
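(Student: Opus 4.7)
The plan is to deduce this corollary from the Main Theorem (the derived-category strong K\"unneth formula) together with the K\"unneth spectral sequence of the preceding corollary, exploiting the graded-field property of $TP_{*}(k)[1/p]$ that is noted in the text.

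First I would apply the Main Theorem to obtain the weak equivalence
\[
TP(X) \smal_{TP(k)} TP(Y) \to TP(X \otimes_{k} Y)
\]
of $TP(k)$-modules.  Smashing with the $p$-inverted Moore spectrum is an exact functor that commutes with the derived smash product over any $\bS$-algebra, so inverting $p$ on both sides yields the weak equivalence
\[
TP(X)[1/p] \smal_{TP(k)[1/p]} TP(Y)[1/p] \to TP(X \otimes_{k} Y)[1/p].
\]
I would then pass to homotopy groups using the K\"unneth spectral sequence of the preceding corollary, applied to the $p$-inverted module spectra over the ring spectrum $TP(k)[1/p]$; its $E_{2}$-page is
\[
\Tor^{TP_{*}(k)[1/p]}_{s,t}(TP_{*}(X)[1/p], TP_{*}(Y)[1/p]) \Longrightarrow TP_{s+t}(X \otimes_{k} Y)[1/p].
\]
Because $TP_{*}(k)[1/p]$ is a graded field, every graded module over it is free, so $\Tor_{s}=0$ for $s>0$ and the spectral sequence collapses to the $s=0$ line.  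With only one nonzero column, the filtration on the abutment is trivial and the edge homomorphism of the surviving line is precisely the natural map in the statement.

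There is essentially no separate difficulty beyond unwinding the Main Theorem: the only technical points to verify are that inverting $p$ commutes with the derived smash product (which holds because $\bS[1/p]$ is flat over $\bS$) and that the edge map of the K\"unneth spectral sequence agrees with the natural transformation in the statement (which follows from the naturality of that spectral sequence with respect to the ring map $TP(k)\to TP(k)[1/p]$).  All of the substantive work is therefore concentrated in the Main Theorem itself; the present corollary is a formal consequence.
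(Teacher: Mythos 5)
Your proposal is correct and follows essentially the same route as the paper: start from the Main Theorem's weak equivalence, use that inverting $p$ commutes with the derived smash product, and invoke the graded-field property of $TP_{*}(k)[1/p]$ to collapse the K\"unneth spectral sequence to its zero line. The paper's own text is just a compressed version of exactly this argument.
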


Along the way to proving Theorem~\ref{main:Kunneth}, we prove the
following finiteness result for $TP$. This result is surprising in light of
the relationship between $TP$ and the \'etale cohomology of the de
Rham-Witt sheaves~\cite[6.8]{Hesselholt-Periodic} and the extreme
non-finiteness of this cohomology for supersingular $K3$
surfaces~\cite[\S II.7.2]{Illusie-DRW}.  (We thank Lars Hesselholt for
calling our attention to this example.)

\begin{main}\label{main:finite}\label{MAIN:FINITE}
Let $k$ be a perfect field of characteristic $p>0$ and let $\aX$ be a
smooth and proper $k$-linear dg category.  Then $TP(\aX)$ is a small
$TP(k)$-module; in particular $TP_{*}(\aX)$ is finitely generated over
$TP_{*}(k)$. 
\end{main}

We also prove analogues of Theorems~\ref{main:Kunneth}
and~\ref{main:finite} for $C_{p}$-Tate $THH$.  For $THH$, as we review
in Section~\ref{sec:THH}, the analogue of Theorem~\ref{main:Kunneth}
is well-known to hold in much wider generality (without the smooth and
proper hypotheses, or the field hypothesis), but
Theorem~\ref{main:finite} appears to be new even in this case.  Since
the other cases take this case as input, we include the statement here
in the generality we prove it in Section~\ref{sec:spthhfg}.  We
emphasize that the statement is non-equivariant.

\begin{main}\label{main:THHfg}\label{MAIN:THHFG}
Let $R$ be a commutative ring orthogonal spectrum and $\aX$ a
smooth and proper $R$-spectral category.  Then $THH(\aX)$ is a small
$THH(R)$-module.
\end{main}

Theorem~\ref{main:THHfg} is easy to deduce from the strong K\"unneth
theorem for $THH$ and the interpretation of smooth and proper in terms
of duality~\cite[5.4.2]{Toen-Lectures},
\cite[3.7]{BGT}, but we provide a simplicial argument
in Section~\ref{sec:spthhfg}. 

An interesting application for Theorem~\ref{main:Kunneth} comes from
the theory of noncommutative motives.  Periodic cyclic homology plays
an important role in Kontsevich's ideas on noncommutative motives over
a field of characteristic $0$, as developed by
Tabuada~\cite{Tabuada-ULECT}, serving as a noncommutative replacement
for de Rham cohomology.  The fact that $TP_*[1/p]$ satisfies a
K\"unneth formula allows $TP$ to take the place of periodic cyclic
homology in
the setting of noncommutative motives over a perfect field of
characteristic $p$.  Notably, Tabuada~\cite{Tabuada-Num} (see also~\cite{Tabuada-Weil}) has recently
used our Theorem~\ref{main:Kunneth} to show that the numerical
Grothendieck group of a smooth and proper variety (the generalization
of the group of algebraic cycles up to numerical equivalence) is
finitely generated.  Tabuada also applies Theorem~\ref{main:Kunneth}
to establish a conjecture of Kontsevich~\cite{Kontsevich-talk} that
the category of numerical noncommutative motives is abelian
semi-simple, generalizing work of
Janssen~\cite{Janssen-MotivesNumerical} for numerical motives.

Finally, this paper also achieves some technical results that may be of
interest to homotopy theorists.  Some highlights include:
\begin{itemize}
\item Construction of an explicit $A_{\infty}$ coalgebra structure,
parametrized by the little $1$-cubes operad, for cellular approximations
of the diagonal for the cell structure on the geometric realization of
a simplicial set and the analogue for a simplicial
space. (See Section~\ref{sec:NegFilt}.)
\item Construction of models for the co-family universal space
$\tEG$ with action by arbitrary $A_{\infty}$ or
$E_{\infty}$ operads (see Section~\ref{sec:lax}) and a
filtration equivalent to the standard filtration (as the cofiber of
$EG\to S^{0}$)
that respects the operad structure (see Section~\ref{sec:PosFilt}). 
\item Construction of a point-set lax symmetric monoidal model of the
Tate construction (see Section~\ref{sec:lax}) and a point-set lax
monoidal filtered version of the Tate construction (see
Section~\ref{sec:Moore}).
\item Construction of the balanced smash product for right and left
modules over $A_{\infty}$ ring orthogonal spectra, and homotopical
comparison for some common $A_{\infty}$ operads as the operad varies. (See
Section~\ref{sec:compare}.)
\item Study of the $\bT$-analogue of the Hesselholt-Madsen
construction of the Tate spectral sequence, which (unlike the case of
a finite group) differs at $E^{2}$ from the Greenlees Tate
spectral sequence, before becoming isomorphic at $E^{3}$.  (See
Sections~\ref{sec:TSS} and~\ref{sec:CCS}--\ref{sec:HMvsG}.)
\end{itemize}

The reader interested in homotopy theory might wonder about
generalizing Theorems~\ref{main:Kunneth} and~\ref{main:finite} for
commutative ring spectra more general than $Hk$ and to other closed
subgroups of $\bT$ beyond $\{1\}$, $C_{p}$, and $\bT$.  Besides connectivity
of $Hk$, the proof of Theorem~\ref{main:Kunneth} depends on
Theorem~\ref{main:finite} and the observation in
Proposition~\ref{prop:kfinite} that the canonical map $THH(Hk)\to Hk$
makes $Hk$ a small $THH(Hk)$-module in the Borel category of
equivariant $THH(Hk)$-modules.  Theorem~\ref{main:finite} depends on
the fact that the commutative ring $THH_{*}(Hk)$ is Noetherian and the
commutative rings $TP_{*}(k)$ and $\pi_{*}(THH(k))^{tC_{p}}$ are a
graded PID and a graded field, respectively; for the theorem, it is
enough that they have finite global dimension.  For other
subgroups $C_{p^{n}}<\bT$ with $n>1$, $\pi_{*}(THH(k))^{tC_{p}}$ has
infinite global dimension.

\subsection*{Acknowledgments}
This project was suggested by Lars Hesselholt based on conversations
with Gon\c{c}alo Tabuada; we thank both for their interest and
contributions.  The authors learned the terminology of ``Borel
equivalences'' and ``Borel category'' in conversations with Mark
Behrens.  The second author thanks Ayelet Lindenstrauss and Teena
Gerhardt for conversations in November 2008 related to
Section~\ref{sec:spthhfg} and specifically Remark~\ref{rem:TA}. 

\section{Orthogonal \texorpdfstring{$G$}{G}-spectra and the Tate fixed points}\label{sec:spec}

This section sets out some conventions for the rest of the paper and
reviews certain aspects of the homotopy theory of equivariant
orthogonal spectra and the construction of the Tate fixed point
spectra.  Throughout the paper, $G$ denotes a compact Lie 
group, but starting in Section~3, we specialize to the case when $G$
is a finite group or $G=\bT$, 
the circle group of unit complex numbers.

Let $\Sp$ denote the category of non-equivariant orthogonal spectra
and $\SpG$ the category of orthogonal $G$-spectra, which
we understand to be indexed on all finite-dimensional orthogonal
$G$-representations (at least up to isomorphism).  We understand
\term{weak equivalence} (when used as an unmodified specific term) to
refer to the weak equivalences in the stable model structure
of~\cite[III.4.2]{MM}, or equivalently, the positive stable model
structure~\cite[III.5.3]{MM}; the weak equivalences are the maps that
induce isomorphisms on homotopy groups $\pi_{*}^{H}$ for all $H<G$,
defined by
\[
\pi_{q}^{H}X=\lcolim_{V<U}\lcolim_{n\geq \max\{0,-q\}}
\pi_{q+n} ((\Omega^{V}(X(\bR^{n}\oplus V)))^{H}).
\]
In this notation, $H<G$ means $H$ is a closed subgroup of $G$ (not
necessarily proper), $U$ denotes a fixed infinite dimensional $G$-inner
product space containing a representative of each finite dimensional
$G$-representation, and $V<U$ means that $V$ is a finite dimensional
$G$-stable vector subspace of $U$.  The homotopy category obtained by
inverting these weak equivalences is called the \term{stable category} (or
$G$-stable category or $G$-equivariant stable category) and denoted
here as $\StabG$.  We call the objects of $\StabG$ \term{spectra} (or
$G$-spectra or $G$-equivariant spectra).

The Tate construction is most naturally viewed as a functor not from
the stable category but from a localization called the 
``Borel stable category'', which is formed by inverting the ``Borel
equivalences''.

\begin{defn}\label{defn:Borel}
A map of orthogonal $G$-spectra is a \term{Borel equivalence} if it
induces an isomorphism on $\pi_{*}=\pi_{*}^{\{e\}}$ (where $\{e\}$ denotes the
trivial subgroup of $G$).  The \term{Borel colocal model structure} on orthogonal
$G$-spectra is the $\aF$-model structure of~\cite[IV.6.5ff]{MM} for
$\aF=\{\{e\}\}$, the \term{Borel local model structure} is the
Bousfield $\aF$-module structure of~\cite[IV.6.3ff]{MM} for
$\aF=\{\{e\}\}$, and the \term{Borel category} $\StabGB$ is the
homotopy category obtained by inverting the Borel equivalences.
\end{defn}

For orthogonal $G$-spectra $X$ and $Y$, the relationship between maps
in the stable category and maps in the Borel category is given by the formula~\cite[IV.6.11]{MM}
\begin{multline*}
\StabGB(X,Y)\iso \StabG(X\sma EG_{+},Y\sma EG_{+})\\
\iso
\StabG(X\sma EG_{+},Y)\iso
\StabG(X,Y^{EG}).
\end{multline*}
As always, $EG$ denotes the universal space, i.e., a free $G$-CW
space whose underlying non-equivariant space is contractible.  The
notation $Y^{EG}$ denotes the derived $G$-spectrum of unbased
non-equivariant maps from $EG$ to $Y$.  The last two isomorphisms indicate
that the (co)localization $\StabG\to \StabGB$ has both a left and a right
adjoint: The left is modeled by the functor $(-)\sma EG_{+}$ and the
right is modeled by applying fibrant replacement in the stable or
positive stable model
structure followed by the point-set mapping $G$-spectrum functor $(-)^{EG}$.

The previous formula indicates that the Borel category may be viewed
as the full subcategory of the $G$-stable category consisting of the
free $G$-spectra.  The Borel category can also be constructed as the
homotopy category of $G$-objects in orthogonal spectra, giving rise to
the Lewis-May slogan ``free $G$-spectra live in the trivial
universe''.  To be precise, let $\Sp[G]$ denote the category where an
object is an orthogonal spectrum $X$ together with an associative and
unital structure map $G_{+}\sma X\to X$ and a morphism is a map of
orthogonal spectra preserving the action maps.  The category $\Sp[G]$
then has a model structure with weak equivalences and fibrations the
underlying weak equivalences and fibrations of orthogonal spectra; the
cofibrations are retracts of cell complexes built using free $G$-cells on
spheres.
Let $\kappa \colon \Sp[G]\to \SpG$ be the functor that fills in the
non-trivial representations by the formula
\[
\kappa X(V):=\aI(\bR^{\dim V},V)\sma_{O(\dim V)}X(\bR^{\dim V})
\]
where $\aI(\bR^{\dim V},V)$ denotes the $G$-space of
non-equivariant linear isometries from $\bR^{\dim V}$ to $V$.
As observed in~\cite[V.1.5]{MM}, $\kappa$ is an equivalence of
categories with inverse the functor that forgets the non-trivial
representation indexes.  Since the formula for the homotopy groups of
an orthogonal $G$-spectrum in the case $H=\{e\}$ simplifies to
\[
\pi_{q}X=\lcolim_{n\geq \max\{0,-q\}}
\pi_{q+n} X(\bR^{n})
\]
(as the map $\lcolim_{n}\pi_{q+n} X(\bR^{n})\to \lcolim_{n}
\pi_{q+n} \Omega^{V}X(\bR^{n}\oplus V)$ is an isomorphism for all
$V<U$),
$\kappa$ sends weak equivalences to Borel equivalences and
$\kappa^{-1}$ sends Borel equivalences to weak equivalences.  Thus,
$\kappa$ and $\kappa^{-1}$ also induce equivalences on homotopy
categories $\Ho(\Sp[G])\simeq \StabGB$ (with $\kappa$ a Quillen left
adjoint in both model structures in Definition~\ref{defn:Borel}). 

If $A$ is an associative ring orthogonal $G$-spectrum, we also have
Borel local and colocal model structures on the category $\Mod_{A}$ of
equivariant $A$-modules.  We call the homotopy category the
\term{Borel derived category}, denoted $\Ho^{B}(\Mod_{A})$.  Just as
in the base case of $A=\bS$, we have
\begin{multline*}
\Ho^{B}(\Mod_{A})(X,Y)\iso \Ho(\Mod_{A})(X\sma EG_{+},Y\sma EG_{+})\\
\iso \Ho(\Mod_{A})(X\sma EG_{+},Y)\iso \Ho(\Mod_{A})(X,Y^{EG}).
\end{multline*}
As usual, we can interpret $\Ho^{B}(\Mod_{A})(X,Y)$ as the $\pi_{0}$ of a derived mapping spectrum 
$\dR^{B}F^{G}_{A}(X,Y)$. We can construct $\dR^{B}F_{A}(X,Y)$ as the
derived $G$-fixed points of the derived mapping $G$-spectrum 
\[
\dR F_{A}(X\sma EG_{+},Y\sma EG_{+})\simeq 
\dR F_{A}(X\sma EG_{+},Y)\simeq 
\dR F_{A}(X,Y^{EG_{}}),
\]
or equivalently, as the homotopy $G$-fixed points of the derived
mapping $G$-spectrum $\dR F_{A}(X,Y)$.

We now define the Tate fixed point functor,
following~\cite{GreenleesMay-Tate}.  In the definition,
$\tEG$ denotes a based $G$-space of the homotopy type of the cofiber
of the map $EG_{+} \to S^{0}$; we will become more picky about models
in later sections.

\begin{defn}\label{defn:TateFP}
Define the \term{Tate fixed points} to be the functor
\[
(-)^{tG}\colon \StabGB\to \Stab
\]
from the Borel category to the stable
category constructed as the composite of derived functors
\[
X^{tG}=(X^{EG}\sma \tEG)^{G}
\]
where $(-)^{EG}$ is the right derived inclusion functor $\StabGB\to
\StabG$.  We write $\pi^{tG}_{*}X$ for $\pi_{*}(X^{tG})$.
\end{defn}

We can construct a point-set model for the Tate fixed point functor
using a fibrant replacement functor $R_{G}$ in the (positive) stable model
structure in orthogonal $G$-spectra; we will be more specific about
$R_{G}$ in later sections.

\begin{cons}\label{cons:PSTate}
For a chosen fibrant replacement functor $R_{G}$ in the stable or
positive stable model
structure on orthogonal $G$-spectra and a chosen model for $S^{0}\to
\tEG$, let $T_{G}\colon \SpG\to \Sp$ be the composite point-set functor
\[
T_{G}(X)=(R_{G}((R_{G}X)^{EG}\sma \tEG))^{G}.
\]
\end{cons}

As constructed, $T_{G}$ sends Borel equivalences of 
orthogonal $G$-spectra to weak equivalences of orthogonal spectra 
and its derived functor is canonically naturally isomorphic to $(-)^{tG}$.

\begin{rem}\label{rem:choices}
Construction~\ref{cons:PSTate} gives us a lot of latitude in the
choices, which we take advantage of in later sections.  We pause to
note that any choices give naturally weakly equivalent functors by an
essentially unique natural transformation (where ``essentially
unique'' means a contractible space of choices): The choice of fibrant
replacement functor is unique up to essentially unique natural weak
equivalence, the space of equivariant self-maps of the $G$-space $EG$ is
contractible, and the space of equivariant based self-maps of the
based $G$-space $\tEG$ is based homotopy equivalent to $S^{0}$.
\end{rem}

\section{A lax K\"unneth theorem for Tate fixed points}\label{sec:lax}

In this section, we prove a lax version of the K\"unneth theorem for
the Tate fixed points (for an arbitrary compact Lie group $G$).  These
results are regarded as well-known, but the exposition of this section
sets up some of the constructions and notation we need in later
sections.  We deduce our stable
category results from stronger point-set results that we expect to be
useful in future work.

Construction~\ref{cons:PSTate}, our point-set model $T_{G}$ for the Tate fixed
point functor, left the choice of the fibrant replacement functor
$R_{G}$ and the model for $\tEG$ unspecified.  Our strategy in this
section is to make choices so that $T_{G}$ becomes a lax symmetric
monoidal functor.  The first step is to choose $R_{G}$ to be
lax symmetric monoidal. 

\begin{lem}\label{lem:smRG}
Let $G$ be a compact Lie group.  The positive stable model structure
on orthogonal $G$-spectra admits a topologically enriched lax symmetric monoidal
fibrant replacement functor.
\end{lem}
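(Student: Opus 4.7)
The plan is to build $R_G$ via the standard ``$\Omega^{\infty-V}$'' colimit formula, whose lax symmetric monoidal structure is visible from the Day-convolution description of the smash product on $\SpG$.  For an orthogonal $G$-spectrum $X$ and a $G$-inner product space $V$ with $V\neq 0$ (the positivity condition), set
\[
(R_G X)(V) := \lcolim_{W<U}\; \Omega^{W} X(V\oplus W),
\]
where $W$ runs over the filtered poset of finite-dimensional $G$-stable subspaces of the chosen complete $G$-universe $U$, and the structure maps come from those of $X$; extend to $V=0$ by the trivial (based one-point) space, so that $R_G X$ is a well-defined positively fibrant orthogonal $G$-spectrum.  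The unit $X \to R_G X$ is the inclusion of the $W=0$ term.

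First I would verify that $R_G X$ is a positive $\Omega$-$G$-spectrum and that $X \to R_G X$ is a positive stable equivalence: both are formal consequences of cofinality of the indexing system, together with the description of $\pi_{*}^{H}$ recalled in Section~\ref{sec:spec} and the standard fact that conjugation by a linear isometry acts trivially on homotopy groups.  For the lax symmetric monoidal structure, recall that $\sma$ on $\SpG$ is the Day convolution along orthogonal direct sum.  The evaluation-and-reshuffle pairing
\[
\Omega^{W_1} X(V_1\oplus W_1) \sma \Omega^{W_2} Y(V_2\oplus W_2) \to \Omega^{W_1\oplus W_2}(X\sma Y)(V_1\oplus V_2\oplus W_1\oplus W_2)
\]
is natural in the pair $(W_1,W_2)$; using cofinality of such pairs in $\{W<U\}$ (choose compatible embeddings $W_1\oplus W_2\hookrightarrow U$) and then applying the universal property of the Day convolution, these pairings assemble into the required natural transformation $R_G X \sma R_G Y \to R_G(X\sma Y)$.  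The unit map is induced from $S^{0}\to \Omega^{W}S^{W}$; symmetry, associativity, and unit coherences descend from the coherences of $\sma$ and the symmetry of $\oplus$.

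The main obstacle I expect is the coherence bookkeeping: verifying that the shuffle isomorphisms in the orthogonal sums and the colimit-comparison maps fit together so that the lax monoidal unit, associativity, and symmetry diagrams commute strictly, and that the construction is consistent with the positivity convention used to define fibrant objects in the positive stable model structure.  Topological enrichment of $R_G$ is automatic, since each of $\Omega^{W}$, $\sma$, and filtered colimits is a topologically enriched operation.  These verifications parallel the monoidality arguments in~\cite{MM} and require no new ideas beyond careful matching of indexing categories.
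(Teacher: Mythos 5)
Your outline correctly identifies the general shape (a Day-convolution argument over a filtered colimit of loop functors), but the pairing you propose cannot be assembled into a map out of the colimit, and this is a genuine obstruction rather than a bookkeeping nuisance. To map
$\lcolim_{W_1,W_2} \Omega^{W_1}X(V_1\oplus W_1)\wedge \Omega^{W_2}Y(V_2\oplus W_2)$
into $\lcolim_W \Omega^W (X\wedge Y)(V_1\oplus V_2\oplus W)$ you must send each $(W_1,W_2)$ term to some $W<U$ term compatibly with the transition maps of both colimit systems. The internal vector-space sum $W_1+W_2<U$ is the only strictly associative, strictly commutative candidate, but it is not an external direct sum: pushing each factor to the $W_1+W_2$ stage and smashing lands in $\Omega^{(W_1+W_2)\oplus(W_1+W_2)}(X\wedge Y)(V_1\oplus V_2\oplus(W_1+W_2)^{\oplus 2})$, the $(W_1+W_2)\oplus(W_1+W_2)$ term — not the $W_1+W_2$ term — and there is no natural delooping relating them. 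Your alternative, ``choose compatible embeddings $W_1\oplus W_2\hookrightarrow U$,'' cannot be executed: the functor $(W_1,W_2)\mapsto W_1\oplus W_2$ does not factor through the poset of subspaces of $U$, and any collection of injections $W_1\oplus W_2\hookrightarrow U$ fails to be simultaneously compatible with the inclusions $W_i\subset W_i'$ in both variables and equivariant under the swap $(W_1,W_2)\leftrightarrow(W_2,W_1)$ needed for the symmetry axiom. This is exactly why there is no naive $\Omega^{\infty-V}$-style lax symmetric monoidal fibrant replacement, equivariantly or non-equivariantly.

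The paper's construction (following Kro) fixes this by tensoring the directed variable with the level variable: $(R_GX)(W)=\lhocolim_{V<U}\Omega^{V\otimes W}X((\bR\oplus V)\otimes W)$. The point is that $(V+V')\otimes(W\oplus W')=((V+V')\otimes W)\oplus((V+V')\otimes W')$, so pushing both factors of $R_GX(W)\wedge R_GY(W')$ to the common stage $V+V'$ and then smashing the loop spaces lands exactly in $\Omega^{(V+V')\otimes(W\oplus W')}((X\wedge Y)((\bR\oplus(V+V'))\otimes(W\oplus W')))$, i.e.\ the $V+V'$ term of $R_G(X\wedge Y)(W\oplus W')$. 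The strict associativity and commutativity of internal sum on $\{V<U\}$ then give the coherence diagrams on the nose. That tensoring is precisely the ingredient missing from your proposal, and without it the pairings you write down do not survive to the colimit.
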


As the details of the construction are unimportant, we give the proof
in Section~\ref{sec:smRG}.

We next choose a model for $\tEG$.
McClure~\cite[\S2]{McClure-EinftyTate} showed that any model of $\tEG$
has the structure of an equivariant algebra over some non-equivariant
$E_{\infty}$ operad (an $N_{\infty}$ operad in the terminology
of~\cite{BlumbergHill-NormsTransfers}).  Turning this around, given a
non-equivariant $E_{\infty}$ or $A_{\infty}$ operad $\oO$, we would
like a tractable model of $\tEG$ which is an $\oO$-algebra in based
$G$-spaces (using $\sma$ as the symmetric monoidal product).  We need
the flexibility to consider varying $\oO$: in
this section, using Boardman's linear isometries operad $\oL$ lets us
take advantage of the symmetric monoidal structures considered
in~\cite{BlumbergHill-GSymmetric}, and in future sections we need to
use the Boardman-Vogt little $1$-cubes operad $\oC_{1}$ (and certain
variants).

In the following, we take $\oO$ to be an $E_{\infty}$ or $A_{\infty}$
operad (in unbased non-equivariant spaces), and for convenience of
exposition, we require $\oO$ to satisfy $\oO(0)=*$.  (If not, 
the discussion below changes in that $I$ and $S^{0}$ get replaced
with $\oO(0)\times I$ and $\oO(0)_{+}$, respectively.) For a based
space $X$, we denote by $\bO X$ the free $\bO$-algebra on $X$ in based
spaces, $\bO X=\bigvee\oO(n)_{+}\sma_{\Sigma_{n}}X^{(n)}$, where $(n)$
denotes the $n$th smash power.

\begin{cons}\label{cons:tEOG}
Let $\tEOG{\oO}$ be the $\oO$-algebra formed as the pushout in
$\oO$-algebras 
\[
S^{0}\from \bO(G\times \partial I)_{+}\to \bO(G\times I)_{+}
\]
where the rightward map is induced by the inclusion and the leftward
map is induced by the map $G\times \partial I\to S^{0}$ sending $G\times
\{1\}$ to the basepoint and $G\times \{0\}$ to the non-basepoint.
\end{cons}

It is clear that $\tEOG{\oO}$ is non-equivariantly contractible
because the basepoint lies in the unit component.  Neglecting the
$\oO$-algebra structure, the underlying $G$-space has a filtration by
$G$-equivariant cofibrations induced by homogeneous degree in the
elements of $I$,
\begin{equation}\label{eq:firstfiltration}
S^{0}=E_{0}\subset E_{1}\subset \cdots \subset \tEOG{\oO},
\end{equation}
with quotients 
\[
E_{n}/E_{n-1}\iso \oO(n)_{+}\sma_{\Sigma_{n}}(G^{n}_{+}\sma I^{n}/\partial I^{n}).
\]
Indeed, $E_{n}$ is the pushout in unbased spaces of the evident map
\[
\oO(n)\times_{\Sigma_{n}}(G^{n}\times  \partial (I^{n}))\to E_{n-1}
\]
and the inclusion
\[
\oO(n)\times_{\Sigma_{n}}(G^{n}\times  \partial I^{n})
\to \oO(n)\times_{\Sigma_{n}}(G^{n}\times I^{n}).
\]
Because $\oO(n)$ is $\Sigma_{n}$-free, it follows that for any
nontrivial $H<G$, the fixed point subspace $E^{H}$ is exactly the
subspace $S^{0}$.  This shows that $\tEOG{\oO}$ is a model of $\tEG$.  

The result of this construction is that when we use a lax symmetric
monoidal fibrant replacement functor to define $T_{G}$, we now have a
natural map
\begin{equation}\label{eq:operadassoc}
\oO(n)_{+}\sma T_{G}X_{1}\sma \cdots \sma T_{G}X_{n}
\to
T_{G}(X_{1}\sma\cdots \sma X_{n}),
\end{equation}
compatible with the operadic multiplication.  
This map is also natural in the operad $\oO$.

Using $\tEOG{\oL}$ for Boardman's linear
isometries operad $\oL$ allows us to construct a point-set lax
symmetric monoidal model of $(-) \sma \tEG$ as follows.  We
use the model of spectra given by the symmetric monoidal category of
$S$-modules inside the weak symmetric monoidal category of
$\oL(1)$-spectra in orthogonal spectra constructed
by Blumberg-Hill
in~\cite{BlumbergHill-GSymmetric} following the ideas of EKMM.
Specifically, let $\SpG{}[\oL(1)]$ denote the category with objects
orthogonal $G$-spectra $X$ equipped with an associative and unital
action map $\oL(1)_+ \sma X \to X$, and morphisms the maps $X \to Y$
compatible with the action.  Recall that this is a \emph{weak}
symmetric monoidal category, that is, a category equipped with a
product which satisfies all the axioms of a symmetric monoidal
category except that the unit map need not be an isomorphism.  As in
EKMM, there is a full symmetric monoidal subcategory of the unital
objects in $\SpG{}[\oL(1)]$, which we write as $\SpG_{\BH}$
(denoted in~\cite{BlumbergHill-GSymmetric} as
$G\aS_{\bR^{\infty}}$), the analogue of EKMM
$S$-modules.  Furthermore, \cite[4.8]{BlumbergHill-GSymmetric}
constructs a strong
symmetric monoidal functor $J \colon \SpG{}[\oL(1)] \to
\SpG_{\BH}$ and a natural weak equivalence $J\to \Id$.

Both $\SpG{}[\oL(1)]$ and $\SpG_{\BH}$ admit model structures
with weak equivalences determined by the underlying equivalences in
$\SpG$; although~\cite{BlumbergHill-GSymmetric} works with the stable
equivalences, the arguments clearly hold for the Borel equivalences as
well.  In $\SpG{}[\oL(1)]$, the fibrations are also determined by the
forgetful functor to $\SpG$.  
\iffalse Needed?
These model structures lift to model
structures on (commutative) monoid objects as well, where the weak
equivalences are detected by the forgetful functor.  
\fi
Note that the lax
symmetric monoidal fibrant replacement functor $R_{G}$ on $\SpG$ induces
a lax symmetric monoidal fibrant replacement functor on
$\SpG{}[\oL(1)]$: as the fibrations in $\SpG{}[\oL]$ are the
maps whose underlying maps in $\SpG$ are fibrations, $R_{G} X$ is clearly
fibrant, and $R_{G}$ inherits a lax
symmetric monoidal structure on $\SpG{}[\oL(1)]$.

The point of introducing this setup is that for the model $\tEOG{\oL}$,
$\Sigma^{\infty}\tEOG{\oL}$ is 
a commutative monoid in the weak symmetric monoidal structure on
$\oL(1)$-spectra and therefore 
\[
J((-)\sma \tEOG{\oL})\iso J((-)\sma \Sigma^{\infty}\tEOG{\oL})
\]
is a strong symmetric monoidal functor from orthogonal $G$-spectra to
the category of $J(\Sigma^{\infty}\tEOG{\oL})$-modules in $\SpG_{\BH}$.
The following theorem is an immediate consequence of the construction.

\begin{thm}\label{thm:pssymmon}
Taking $R_{G}$ to be a lax symmetric monoidal fibrant replacement functor
and the model $\tEOG{\oL}$ for $\tEG$, the functor $T_{G}$ of
Construction~\ref{cons:PSTate} lifts to a lax symmetric monoidal functor
$\SpG\to \SpG{}[\oL]$ and $J\circ T_{G}$ has the compatible structure
of a lax symmetric monoidal functor $\SpG\to \SpG_{\BH}$.
\end{thm}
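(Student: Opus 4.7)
The plan is to exhibit $T_G(X) = (R_G((R_GX)^{EG} \sma \tEOG{\oL}))^G$ as a composite of lax symmetric monoidal functors, tracking how the target refines along the way. Three of the four constituent functors are lax symmetric monoidal on essentially formal grounds. First, $R_G$ is lax symmetric monoidal on $\SpG$ by Lemma~\ref{lem:smRG}; since fibrations in $\SpG{}[\oL(1)]$ are detected by the forgetful functor to $\SpG$, the same functor $R_G$ is also a lax symmetric monoidal fibrant replacement on $\SpG{}[\oL(1)]$. Second, $(-)^{EG} = F(EG_+,-)$ is lax symmetric monoidal via the diagonal coalgebra structure on $EG_+$, which gives natural assembly maps $F(EG_+,X)\sma F(EG_+,Y) \to F(EG_+ \sma EG_+, X\sma Y) \to F(EG_+, X\sma Y)$, with cocommutativity and coassociativity of the diagonal supplying the coherence. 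Third, the categorical $G$-fixed-point functor $(-)^G$ is lax symmetric monoidal via the evident map $X^G \sma Y^G \to (X\sma Y)^G$, and this is compatible with the $\oL(1)$-action.

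The essential monoidal input is that $\Sigma^\infty \tEOG{\oL}$ is a commutative monoid in the weak symmetric monoidal category $\SpG{}[\oL(1)]$. By Construction~\ref{cons:tEOG}, $\tEOG{\oL}$ is an $\oL$-algebra in based $G$-spaces; it is a feature of the linear isometries operad, used by EKMM non-equivariantly and extended by~\cite{BlumbergHill-GSymmetric} to the equivariant setting, that an $\oL$-algebra in based spaces stabilizes to a commutative monoid in $\oL(1)$-spectra via the coequalizer presentation of the smash product on $\SpG{}[\oL(1)]$, which collapses the many operadic multiplications $\oO(n)_{+}\sma X^{(n)} \to X$ to a single commutative multiplication. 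Smashing with this commutative monoid therefore defines a strong symmetric monoidal functor from $\SpG{}[\oL(1)]$ to modules over $\Sigma^\infty \tEOG{\oL}$, hence in particular a lax symmetric monoidal endofunctor of $\SpG{}[\oL(1)]$.

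Composing the four lax symmetric monoidal pieces then assembles $T_G$ into a lax symmetric monoidal functor lifting its underlying target to $\SpG{}[\oL(1)]$. Applying the strong symmetric monoidal functor $J\colon \SpG{}[\oL(1)] \to \SpG_{\BH}$ of~\cite{BlumbergHill-GSymmetric} transports the structure to $J\circ T_G$, giving the asserted compatible lax symmetric monoidal functor valued in $\SpG_{\BH}$; the natural weak equivalence $J\to \Id$ ensures that the two monoidal structures are compatible. The main obstacle is the middle paragraph: carefully verifying that the $\oL$-algebra structure on $\tEOG{\oL}$ genuinely descends to a commutative monoid structure in the weak symmetric monoidal category $\SpG{}[\oL(1)]$. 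Everything else is essentially formal once Lemma~\ref{lem:smRG} and the coalgebra structure on $EG_+$ are in hand; the coequalizer argument for linear isometries, imported from~\cite{BlumbergHill-GSymmetric}, carries the real content.
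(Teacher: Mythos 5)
Your proposal matches the paper's approach: the paper's proof is simply ``immediate consequence of the construction,'' and the construction it refers to is precisely what you have spelled out — decomposing $T_G$ into the lax symmetric monoidal fibrant replacement (Lemma~\ref{lem:smRG}), the cofree/co-induction functor via the diagonal on $EG_+$, smashing with $\tEOG{\oL}$ (using the commutative monoid structure of $\Sigma^\infty\tEOG{\oL}$ in $\SpG{}[\oL(1)]$), and the $G$-fixed points, then applying $J$. Your identification of the commutative monoid structure as the essential point, with the reference to the coequalizer formulation of the $\sma_{\oL}$-smash product from \cite{BlumbergHill-GSymmetric}, is exactly the input the paper's exposition highlights in the paragraph preceding the theorem.
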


\begin{notn}\label{notn:TwO}
For any $A_{\infty}$ or $E_{\infty}$ operad $\oO$ in unbased spaces
with $\oO(0)=*$, write $T^{\oO}_{G}$ for the functor $T_{G}$ of
Construction~\ref{cons:PSTate} using a lax symmetric monoidal fibrant
replacement functor and the model $\tEOG{\oO}$ for $\tEG$.
Write $JT^{\oL}_{G}\colon \SpG\to \SpG_{\BH}$ for the composite
functor $J\circ T^{\oL}_{G}$ in the previous theorem.
\end{notn}

Note that the functor $J$ preserves all weak equivalences and so the
functor $JT^{\oL}_{G}$ induces a lax symmetric monoidal functor from the Borel
stable category to the stable category; in these terms, the
previous theorem specializes to the following homotopy category
statement.

\begin{cor}\label{cor:pssymmon}
The Tate fixed point functor $(-)^{tG}\colon \StabGB\to \Stab$ has a 
lax symmetric monoidal structure.
\end{cor}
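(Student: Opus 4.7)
The plan is to deduce this corollary from the point-set statement of Theorem~\ref{thm:pssymmon} by checking that the lax symmetric monoidal functor $JT^{\oL}_{G}\colon \SpG\to \SpG_{\BH}$ descends to derived functors in a way that is identified with the Tate fixed point construction.

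First I would verify that $JT^{\oL}_{G}$ passes to homotopy categories. By the comments after Construction~\ref{cons:PSTate}, $T_{G}$ takes Borel equivalences of orthogonal $G$-spectra to weak equivalences of orthogonal spectra; the choice of $R_{G}$ and $\tEOG{\oL}$ (which is just a particular point-set model of $\tEG$) is irrelevant to this property. Then since $J$ preserves all weak equivalences in $\SpG{}[\oL(1)]$ by~\cite{BlumbergHill-GSymmetric}, the composite $JT^{\oL}_{G}$ takes Borel equivalences in $\SpG$ to weak equivalences in $\SpG_{\BH}$, inducing a functor $\StabGB\to \Ho(\SpG_{\BH})\iso \Stab$. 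By Remark~\ref{rem:choices} and the construction, this functor is canonically naturally isomorphic to $(-)^{tG}$.

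Next I would check that the point-set lax symmetric monoidal structure constructed in Theorem~\ref{thm:pssymmon} descends to the derived category. Here the key is that $\SpG_{\BH}$ is a genuinely symmetric monoidal category whose smash product computes the derived smash product on $\Stab$ (the EKMM-style feature of the Blumberg--Hill construction). Thus the structure maps of $JT^{\oL}_{G}$, namely the unit $\bS\to JT^{\oL}_{G}(\bS)$ and the natural transformations $JT^{\oL}_{G}(X)\sma JT^{\oL}_{G}(Y)\to JT^{\oL}_{G}(X\sma Y)$, are already maps between objects whose point-set smash products compute the derived ones; applying the localization $\SpG_{\BH}\to \Stab$ gives the structure maps of $(-)^{tG}$, and the coherence diagrams for a lax symmetric monoidal functor become the corresponding coherence diagrams in $\Stab$.

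There is essentially no obstacle here: once Theorem~\ref{thm:pssymmon} is in hand, the corollary is purely formal, amounting to the observation that a lax symmetric monoidal functor between symmetric monoidal model categories (with the target being a symmetric monoidal model in the point-set sense that computes the derived smash product) induces a lax symmetric monoidal functor on homotopy categories. The only mildly delicate point is to ensure the identification of the derived functor with $(-)^{tG}$ respects the structure maps up to canonical natural isomorphism, which follows from the uniqueness-up-to-contractible-choice discussion of Remark~\ref{rem:choices}.
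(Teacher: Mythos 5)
Your proposal is correct and follows essentially the same route as the paper: deduce the corollary from Theorem~\ref{thm:pssymmon} by noting that $T^{\oL}_{G}$ sends Borel equivalences to weak equivalences and $J$ preserves all weak equivalences, so the point-set lax symmetric monoidal structure on $JT^{\oL}_{G}$ descends to the homotopy categories. The paper states this in a single sentence immediately before the corollary; your extra remarks on the identification with $(-)^{tG}$ via Remark~\ref{rem:choices} only make explicit what the paper leaves implicit.
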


More generally, if $A$ is a commutative ring orthogonal $G$-spectrum
(or even an $\oL$-algebra in orthogonal $G$-spectra), the previous
theorem specializes to show that Tate fixed
point functor restricts to a lax symmetric monoidal functor
from the Borel derived category of $A$-modules to the derived category
of $JT^{\oL}_{G}A$-modules.

\section{The Tate spectral sequences}\label{sec:TSS}

In this section, we review the construction and properties of certain
``Tate'' spectral sequences computing $\pi^{tG}_{*}$.  We discuss two
different spectral sequences, one based on the Greenlees Tate
filtration of~\cite[\S1]{Greenlees-Tate} and another introduced by
Hesselholt-Madsen in~\cite[\S4.3]{HMAnnals}.  In the case of finite
group $G$, these spectral sequences agree from $E^{2}$ on and in the
case when $G=\bT$, they agree from $E^{3}$ on.  The former
spectral sequence is aesthetically superior, but we can prove sharper
multiplicativity properties for the latter, and we need these sharper
properties for the work in Section~\ref{sec:main}.  We begin with the
case where $G$ is a finite group as that is conceptually simpler.

For $G$ a finite group and $P_{*}$ a projective $\bZ[G]$-resolution of
$\bZ$ in which each $P_{n}$ is finitely generated, Tate cohomology is
constructed by putting together $P_{*}$ and the dual (contragradient)
$\bZ[G]$-complex $Q^{*}=\Hom(P_{*},\bZ)$.  If $X$ is a $\bZ[G]$-module, the Tate
cohomology $\tH^{-*}_{G}(X)$ is the homology of the $G$-fixed point chain
complex of the chain complex
\begin{equation}\label{eq:GT}
\cdots \from Q^{n}\otimes X\from \cdots \from Q^{0}\otimes X\from
P_{0}\otimes X\from \cdots \from P_{n}\otimes X\from \cdots
\end{equation}
using the augmentation $P_{0}\to \bZ$ and its dual $\bZ\to Q^{0}$.
(Here $Q^{0}\otimes X$ sits in degree $0$.)  Alternatively, let
$\tilde P_{*}$ denote the augmented resolution: $\tilde P_{0}=\bZ$,
$\tilde P_{n}=P_{n+1}$ for $n>0$.  The Tate cohomology can also be
calculated as the homology of the $G$-fixed point chain complex of 
\begin{equation}\label{eq:HMT}
\Tot^{\oplus}(\Hom(P_{*},X)\otimes \tilde P_{*})\iso Q^{*}\otimes \tilde P_{*}\otimes X
\end{equation}
(total complex formed with $\oplus$).
The construction~\eqref{eq:GT} has a simpler form, particularly when $G$
is a cyclic group and we take $P_{*}$ to be the minimal resolution;
for the construction~\eqref{eq:HMT}, it is easy to construct the
cup product in Tate cohomology, using the (unique up to chain homotopy)
chain maps $P_{*}\to P_{*}\otimes P_{*}$ and $\tilde P_{*}\otimes
\tilde P_{*} \to \tilde P_*$ consistent with the maps $\bZ\to \bZ\otimes \bZ$ and
$\bZ\otimes \bZ\to \bZ$, respectively.

The Greenlees Tate filtration and Hesselholt-Madsen Tate filtration do
the analogous constructions in topology.  For any $G$-CW model of $EG$, the cellular
chain complex of $EG$ is a $\bZ[G]$-resolution $P_{*}$ of $\bZ$.  
For an orthogonal $G$-spectrum $X$, we consider the tower 
of orthogonal $G$-spectra 
\begin{multline*}
\cdots \to F({EG/EG_{n}},X)\to F({EG/EG_{n-1}},X)\to \cdots\\
\cdots  \to
F({EG/EG_{0}},X)\to F({EG/EG_{-1}},X)=X^{EG},
\end{multline*}
where $F(-,X)$ denotes the orthogonal $G$-spectrum of
(non-equivariant) maps and where we understand $EG_{-1}$ as the empty
set.  When $X$ is fibrant, this is a tower of 
fibrations whose inverse limit $F({EG/EG},X)$ is trivial.
Taking the model of $\tEG$ given as the homotopy cofiber
of the map $EG_{+}\to S^{0}$, 
\[
\tEG=(EG\times I)\cup_{(EG\times \partial I)}S^{0}
\]
we get a filtration with $\tEG_{n}$
($n\geq 0$) the homotopy cofiber of $(EG_{n-1})_{+}\to S^{0}$ and
$\tEG_{0}=S^{0}$; the cellular chain complex of this filtration is
$\tilde P_{*}$ in the notation above.  

We get the Greenlees Tate filtration~\cite[\S1]{Greenlees-Tate}
(cf.~\cite[4.3.6]{HMAnnals}) by continuing the tower above using the
filtration on $\tEG$:
\[
X^{EG}=X^{EG}\sma \tEG_{0}\to X^{EG}\sma \tEG_{1}\to \cdots \to X^{EG}\sma \tEG_{n}\to \cdots .
\]
In the spectral sequence associated to this $\bZ$-indexed sequence,
the $E^{1}$-term is canonically naturally isomorphic to
\[
\cdots \from Q^{n}\otimes \pi_{*}X\from \cdots \from Q^{0}\otimes \pi_{*}X\!\from
P_{0}\otimes \pi_{*}X\from \cdots \from P_{n}\otimes \pi_{*}X\from \cdots,
\]
precisely the complex of~\eqref{eq:GT}.  Because each homotopy cofiber
is $G$-free and has $G$-free homotopy groups, the homotopy groups of
the $G$-fixed point spectra are the $G$-fixed points of the homotopy
groups.  The spectral sequence associated to the induced
$\bZ$-indexed sequence on $G$-fixed points therefore has $E^{1}$-term
the complex 
\[
%\cdots \from (Q^{n}\otimes \pi_{*}X)^{G}\from 
\cdots \from (Q^{0}\otimes \pi_{*}X)^{G}\from
(P_{0}\otimes \pi_{*}X)^{G}\from \cdots 
%\from (P_{n}\otimes \pi_{*}X)^{G}\from \cdots
\]
of $G$-fixed points, and so has $E^{2}$-term given by
$E^{2}_{i,j}=\tH^{-i}_{G}(\pi_{j}X)$.   Because\break $\holim_{n}
F(EG/EG_{n-1},X)^{G}\simeq *$, this spectral sequence is conditionally
convergent (in the sense of~\cite[5.10]{Boardman-SpectralSequences})
to the colimit $\pi^{tG}_{*}X$. 

\begin{defn}\label{defn:GTfilt}
Let $G$ be a finite group and let $X$ be an orthogonal $G$-spectrum.
The \term{Greenlees Tate filtration} on $X^{tG}$ is the
$\bZ$-indexed sequence
\[
\cdots\to X^{tG}_{-1}\to X^{tG}_{0}\to X^{tG}_{1}\to \cdots,
\]
where $X^{tG}_{n}$ is the composite of derived functors
\[
X^{tG}_{n}=\begin{cases}
F({EG/EG_{-n-1}},X)^{G}&\text{if }n< 0\\
(X^{EG}\sma \tEG_{n})^{G}&\text{if }n\geq 0
\end{cases}
\]
and the maps are induced by the maps $EG/EG_{n-1}\to EG/EG_{n}$ and $\tEG_{n}\to
\tEG_{n+1}$.  The \term{Greenlees Tate spectral} sequence is the
associated conditionally convergent spectral sequence
\[
E^{2}_{i,j}=\tH^{-i}_{G}(\pi_{j}X)\Longrightarrow \pi^{tG}_{i+j}(X).
\]
\end{defn}

The Hesselholt-Madsen Tate filtration~\cite[\S4.3]{HMAnnals}
follows the pattern 
of~\eqref{eq:HMT}.  Because we use a version of this filtration for
the arguments in the remainder of the paper, it is convenient to set
up a point-set model for it rather than just a construction in the
stable category.  For an orthogonal $G$-spectrum $X$, using the chosen
lax symmetric monoidal fibrant replacement functor $R_{G}$, let
\begin{equation}\label{eq:Tij}
T_{G}X_{i,j}=(R_{G}(F(EG/EG_{j-1},R_{G}X)\sma \tEG_{i}))^{G}
\end{equation}
for any $i,j\geq 0$. If the filtration on $EG$ and $\tEG$ comes from a
$G$-CW structure as above, then $T_{G}X_{0,j}$ is a point-set model for
$X^{tG}_{-j}$ and $T_{G}X_{i,0}$ is a point-set model for $X^{tG}_{i}$
in the Greenlees Tate filtration.  We have canonical maps
$T_{G}X_{i,j}\to T_{G}X_{i',j'}$ for any $i\leq i'$, $j\geq j'$,
making $T_{G}X_{-,-}$ a functor from $(\bN,\oleq)\times (\bN,\ogeq)$ to
orthogonal $G$-spectra.  Using the functor minus, $-\colon (\bN,\oleq)\times
(\bN,\ogeq)\to (\bZ,\oleq)$, for any $n\in \bZ$, define the point-set
functor
\begin{equation}\label{eq:Thocolim}
\bar T_{n} X=\lhocolim_{i-j\leq n} T_{G}X_{i,j}
\end{equation}
from $(\bZ,\leq)$ to orthogonal spectra
using the categorical bar construction as the point-set model for
$\hocolim$.  Then for $n<n'$, we have an induced map $\bar T_{n} X\to \bar
T_{n'} X$. The consistent system of 
maps $T_{G}X_{i,j}\to T_{G}X$ induce a consistent system of maps
$\bar T_{n}X\to T_{G}X$ and a weak equivalence $\hocolim \bar
T_{n}X\to T_{G}X$.  Hesselholt-Madsen~\cite[4.3.4]{HMAnnals} proves
that the $E^{1}$-term of the spectral sequence associated to this
$\bZ$-indexed sequence is canonically naturally isomorphic as a chain complex
to 
\[
(\Tot^{\oplus}(\Hom(P_{*},\pi_{*}X)\otimes \tilde P_{*})^{G}
\]
(specifically, $E^{1}_{i,j}$ is the degree $i$ part of the total
complex for $\pi_{j}X$), and
\[
E^{2}_{i,j}\iso \tH^{-i}_{G}(\pi_{j}X).
\]
Moreover, Hesselholt-Madsen~\cite[4.3.6]{HMAnnals} constructs a zigzag
of maps consistent with the abutment to $\pi^{tG}_{*}X$ and inducing
an isomorphism on $E^{2}$ between this
spectral sequence and the Greenlees Tate spectral sequence, so this
spectral sequence also conditionally converges to the colimit
$\pi^{tG}_{*}X$. 

\begin{defn}\label{defn:HMTfilt}
Let $G$ be a finite group and let $X$ be an orthogonal $G$-spectrum.
The \term{Hesselholt-Madsen Tate filtration} on $X^{tG}$ is the
$\bZ$-indexed sequence
\[
\cdots\to \bar T_{-1}X\to \bar T_{0}X\to \bar T_{1}X\to \cdots,
\]
constructed in the previous paragraph. The \term{Hesselholt-Madsen Tate spectral} sequence is the
associated conditionally convergent spectral sequence whose
$E^{1}$-term is
\[
E^{1}_{i,j}=(\Tot^{\oplus}(\Hom(P_{*},\pi_{j}X)\otimes \tilde P_{*})^{G}_{i}
\]
(for $P_{*}$ the cellular chain complex of $EG$, a locally finite free
$\bZ[G]$-resolution of $\bZ$); it is isomorphic to the Greenlees Tate
spectral sequence from $E^{2}$ onward.
\end{defn}

As constructed, the Hesselholt-Madsen Tate filtration is a filtration in
the traditional sense of $\colim_{n} \bar T_{n}X \overto{\simeq}X^{tG}$.
Recall that a map $f\colon A\to B$ in any topologically enriched
category is called an \term{$h$-cofibration} when it satisfies the
\term{homotopy extension property}: Given a map $g\colon B\to C$, a
homotopy of $g\circ f\colon A\to C$ may be extended to a homotopy of
$g\colon B\to C$.  For orthogonal spectra, this is equivalent to the
map
\[
B\cup_{A\sma \{0\}_{+}}A\sma I_{+}\to B\sma I_{+}
\]
admitting a retraction, and so $h$-cofibrations are preserved by many
point-set constructions including smash product and gluing.
They also have important homotopy colimit properties including:
\begin{itemize}
\item If $A\to B$ is an $h$-cofibration of orthogonal spectra
and $A\to B$ is any map of orthogonal spectra, then the pushout
$B\cup_{A}C$ represents the homotopy pushout (left derived functor of
pushout).  In particular, the
quotient $B/A$ represents the left derived quotient, the homotopy
cofiber in the stable category.
\item If $A_{0}\to A_{1}\to \dotsb $ is a system of $h$-cofibrations
of orthogonal spectra,
then the colimit represents the homotopy colimit.
\end{itemize}
It is well-known and straightforward
to prove that the categorical bar construction model of the homotopy
colimit has the property that the inclusion of a subcategory induces
an $h$-cofibration on homotopy colimits.  In the particular case of
the Hesselholt-Madsen Tate filtration, this is the following observation.

\begin{prop}\label{prop:hcof}
The maps $\bar T_{n}X\to \bar T_{n+1}X$ in the point-set model of the
Hesselholt-Madsen Tate filtration above are $h$-cofibrations.
\end{prop}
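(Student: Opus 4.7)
The plan is to unwind the definition of $\bar T_n X$ as a geometric realization of a two-sided bar construction and exhibit the map $\bar T_n X \to \bar T_{n+1} X$ as the realization of a levelwise wedge summand inclusion.

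Write $\aI_n \subset (\bN,\oleq)\times (\bN,\ogeq)$ for the full subcategory of pairs $(i,j)$ with $i-j \leq n$. Then the categorical bar construction gives a proper simplicial orthogonal spectrum $B_\bullet(\aI_n, T_G X_{-,-})$ with
\[
B_k(\aI_n, T_G X_{-,-}) = \bigvee_{(i_0,j_0)\to \cdots \to (i_k,j_k)} T_G X_{i_0,j_0},
\]
wedge indexed by composable chains of morphisms in $\aI_n$, whose geometric realization is $\bar T_n X$. The inclusion $\aI_n \subset \aI_{n+1}$ induces a simplicial map $B_\bullet(\aI_n,T_G X_{-,-}) \to B_\bullet(\aI_{n+1},T_G X_{-,-})$ that at each simplicial level is precisely the inclusion of the subwedge indexed by chains landing in $\aI_n$.

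First I would observe that the inclusion of a wedge summand $A \hookrightarrow A \vee B$ is an $h$-cofibration of orthogonal spectra: the retraction of $(A\vee B)\cup_{A\sma\{0\}_+} A\sma I_+ \to (A\vee B)\sma I_+$ is built by sending the $B$ summand via the constant homotopy and using the given retraction on $A\sma I_+$. Thus the map $B_\bullet(\aI_n,T_G X_{-,-}) \to B_\bullet(\aI_{n+1},T_G X_{-,-})$ is a levelwise $h$-cofibration. Second, since the degeneracies of the bar construction are themselves wedge summand inclusions (insertion of identity morphisms), both simplicial objects are \emph{proper} (Reedy cofibrant with respect to the $h$-cofibration structure). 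Third, I would invoke the standard fact that geometric realization of a levelwise $h$-cofibration between proper simplicial orthogonal spectra is an $h$-cofibration; this is verified by inducting on the skeletal filtration of the realization, where each skeleton is obtained by a pushout along a cofibration of the form $\partial\Delta^k_+ \sma X_k \cup \Delta^k_+ \sma L_k X_k \to \Delta^k_+ \sma X_k$, and $h$-cofibrations are preserved by pushouts and sequential colimits.

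The only point that takes any care is the properness statement for $B_\bullet(\aI_n, T_G X_{-,-})$, i.e., that the latching maps are $h$-cofibrations; but this is a formal consequence of the degeneracies being wedge summand inclusions together with the fact that $h$-cofibrations are closed under pushout and finite unions of subcomplexes. With these pieces in hand the proposition follows immediately.
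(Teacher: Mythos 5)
Your argument is correct and is exactly the ``well-known and straightforward'' fact the paper invokes without written proof: identify $\bar T_n X$ as the realization of the bar construction $B_\bullet(\aI_n, T_G X_{-,-})$, observe that the inclusion $\aI_n \subset \aI_{n+1}$ induces a levelwise wedge-summand inclusion of proper simplicial orthogonal spectra, and conclude via the latching/skeletal argument. This is the same route the paper intends, just written out in full.
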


Technically, both the filtrations and the $E^{1}$-terms of the
spectral sequences in Definitions~\ref{defn:GTfilt}
and~\ref{defn:HMTfilt} depend on the choice of $G$-CW structure on
$EG$.  While it is tempting to use one that gives the minimal
resolution, we will use the $G$-CW structure on $EG$ coming from the standard
two-sided bar construction model:  We use the model of $EG$
constructed as the geometric realization of the simplicial $G$-space 
\[
B_{n}(G,G,*)=G\times (G^{n})\times *
\]
with the usual face and degeneracy maps (induced by
multiplication/projection and by inclusion of the unit, respectively)
and the $G$-action on the lefthand factor.  The geometric realization
filtration is evidently the cellular filtration of a $G$-CW
structure.  The
following two lemmas are well-known but in particular follow from the
more delicate study of the structure we perform in
Sections~\ref{sec:PosFilt} and ~\ref{sec:NegFilt}.

\begin{lem}\label{lem:stddiag}
For the standard bar construction model of $EG$, the diagonal map
$EG\to EG\times EG$ admits an equivariant cellular approximation that
is homotopy coassociative and homotopy counital through equivariant
cellular maps. 
\end{lem}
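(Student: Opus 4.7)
The plan is to build a cellular approximation simplicially via an Alexander--Whitney style construction. The simplicial diagonal $B_{\bullet}(G,G,*)\to B_{\bullet}(G,G,*)\times B_{\bullet}(G,G,*)$ is $G$-equivariant, but its geometric realization is not cellular: the product CW structure on $|X|\times|X|$ is the geometric realization of $X\times X$ with its shuffle decomposition, and the image of the diagonal of $\Delta^{n}$ in $\Delta^{n}\times\Delta^{n}$ runs transversely to the shuffle cells. To obtain a cellular approximation I would choose a natural piecewise linear cellular approximation of the diagonal $\Delta^{n}\to\Delta^{n}\times\Delta^{n}$, for instance the map realizing the Alexander--Whitney coproduct, whose image is the union of the ``front-face/back-face'' prisms $[0,\dots,i]\times[i,\dots,n]$ inside the shuffle decomposition. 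Because this prescription is natural in simplicial maps, it assembles into a cellular map $\Delta'\colon|X|\to|X|\times|X|$ for any simplicial set $X$, homotopic to the honest diagonal on each simplex and therefore globally.

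Equivariance is automatic. The construction depends only on the simplicial structure of $B_{\bullet}(G,G,*)$ and not on the vertex labels, so the left $G$-action is preserved by $\Delta'$. Moreover, because the $G$-action on $EG=|B_{\bullet}(G,G,*)|$ is free with $G$-CW structure given by the geometric realization filtration, $EG\times EG$ inherits a free $G$-CW structure in which $\Delta'$ is cellular.

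For the coassociativity and counitality, observe that both composites $(\Delta'\times\id)\circ\Delta'$ and $(\id\times\Delta')\circ\Delta'$ are $G$-equivariant cellular maps $EG\to EG\times EG\times EG$ approximating the same continuous map, namely the triple diagonal; likewise both composites $(\pi_{1})\circ\Delta'$ and $(\pi_{2})\circ\Delta'$ with the projections are cellular approximations of the identity. The equivariant cellular approximation theorem, applied to the free $G$-CW complexes $EG$ and $(EG)^{\times n}$, supplies $G$-homotopies between any two such approximations that are themselves cellular; this yields the required homotopy coassociativity and homotopy counitality through equivariant cellular maps.

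The main obstacle is ensuring the homotopies genuinely pass \emph{through cellular maps} rather than through merely continuous equivariant maps. This is standard once one has a free $G$-CW structure on source and target, and so reduces to verifying freeness of the product $G$-CW structure on $EG\times EG$ (immediate since $G$ acts freely on $EG$) and that the naturality of the chosen cellular diagonal permits both iterated diagonals to be recognized as cellular. This is precisely the content extracted from the more careful operadic discussion in Section~\ref{sec:NegFilt}, so the present lemma can be viewed as a corollary of that construction; here I sketch the classical argument sufficient for the use in this section.
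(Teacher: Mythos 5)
Your starting point is sound and matches the paper's (the paper notes explicitly that the coalgebra structure on $P_{*}$ comes from the Alexander--Whitney map, and equivariance does indeed follow from naturality in the simplicial direction). But the final step, where you dispose of what you yourself correctly flag as ``the main obstacle,'' has a genuine gap: the equivariant cellular approximation theorem does not produce what the lemma requires.

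The cellular approximation theorem gives a \emph{cellular homotopy}, i.e., a map $H\colon EG\times I\to (EG)^{\times 3}$ that is cellular for the product CW structure on $EG\times I$. For that structure, $(EG\times I)^{(n)}=(EG^{(n)}\times\partial I)\cup(EG^{(n-1)}\times I)$, so cellularity of $H$ only guarantees $H_{t}(EG^{(n-1)})\subset ((EG)^{\times3})^{(n)}$ for $t\in(0,1)$; the intermediate time slices $H_{t}$ raise filtration by one and are \emph{not} cellular. That is weaker than what ``homotopy coassociative through equivariant cellular maps'' means and needs to mean here: in Section~3 the lemma is used to produce \emph{filtered} homotopies between the two maps $\bar TW_{\ell}\sma\bar TX_{m}\sma\bar TY_{n}\to\bar T(W\sma X\sma Y)_{\ell+m+n}$, so that the two associations induce the \emph{same} map on $E^{1}$ of the Hesselholt--Madsen spectral sequence. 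A cellular homotopy in the product-CW sense gives a chain homotopy that raises filtration by one, which does not force the induced $E^{0}$-maps (hence $E^{1}$-maps) to agree; you really do need each $H_{t}$ to be cellular. Having a \emph{free} $G$-CW structure on source and target is irrelevant to this problem --- that only controls equivariance, not the filtration behavior of the time slices.

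What actually resolves this (and what you gesture at but do not carry out) is the operadic parametrization of Section~\ref{sec:NegFilt}. Theorem~\ref{thm:coaction} produces an entire contractible family of cellular (i.e., filtered) approximations to the $n$-fold diagonal, parametrized by $\oC_{1}(n)$, and the two iterated diagonals arise from two specific points of $\oC_{1}(3)$, namely $c\circ_{1}c$ and $c\circ_{2}c$ for the chosen $c\in\oC_{1}(2)$; any path between them in $\oC_{1}(3)$ yields a homotopy every time slice of which is an equivariant filtered map. This is not a cosmetic difference: without such a parametrized family, there is no elementary source of homotopies whose slices are all cellular, and your appeal to the equivariant cellular approximation theorem does not supply one. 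So the lemma should be regarded, as you note at the end, as a consequence of the construction in Section~\ref{sec:NegFilt} rather than of the classical cellular approximation theorem; the ``classical argument'' as you have written it does not close the gap it is meant to close.
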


\begin{lem}\label{lem:stdmult}
For the standard bar construction model of $EG$, the space $\tEG$
admits a pairing $\tEG\sma \tEG\to \tEG$ that is equivariant and
filtered and is homotopy unital and associative though equivariant
filtered maps.
\end{lem}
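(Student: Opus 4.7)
The plan is as follows. Using the pushout model $\tEG = S^{0}\cup_{EG\times\partial I}(EG\times I)$, in which $EG\times\{0\}$ collapses to the non-basepoint (the would-be unit) and $EG\times\{1\}$ to the basepoint, $\tEG$ inherits a filtration from the cellular filtration of $EG$ coming from the bar construction, with $\tEG_{0}=S^{0}$ and $\tEG_{n}$ built using $EG_{n-1}$.

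I would then define $\mu \colon \tEG \sma \tEG \to \tEG$ by a formula of the shape
\[
\mu([x,s]\sma [y,t]) = [\phi(x,y,s,t),\min(s+t,1)],
\]
where $\phi\colon EG\times EG\times I^{2}\to EG$ is an equivariant cellular map (for the diagonal $G$-action on $EG\times EG$) satisfying the boundary conditions $\phi(x,y,0,t)=y$ and $\phi(x,y,s,0)=x$. The second coordinate uses truncated addition on $I$, which is strictly unital at $0$ and strictly absorbing at $1$, so the boundary conditions on $\phi$ guarantee that $\mu$ is well-defined on the smash and is \emph{strictly} two-sided unital, with unit the canonical map $S^{0}\to\tEG$. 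Such a $\phi$ can be constructed inductively on the $G$-CW structure of $EG\times EG\times I^{2}$: both projections $EG\times EG\to EG$ are equivariant and can be replaced by equivariant cellular maps that agree after restriction to the diagonal, and an equivariant cellular interpolation extends over the product by the universal property of $EG$ as a free contractible $G$-CW complex together with standard equivariant obstruction theory. Filteredness of $\mu$ reduces to making $\phi$ cellular for the product filtration.

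For homotopy associativity through equivariant filtered maps, the two iterated pairings $\mu(\mu\sma\id)$ and $\mu(\id\sma\mu)$ agree on the interval coordinate because truncated addition is strictly associative; their $EG$-coordinates differ by two equivariant cellular maps $EG^{3}\times I^{3}\to EG$ with matching boundary conditions on a sub-$G$-CW subcomplex. Any two such equivariant cellular maps with agreeing boundary data are joined by an equivariant cellular homotopy relative to that subcomplex (again by the universal property of $EG$), and this supplies the required associativity homotopy as an equivariant filtered map; the unit homotopies are produced by the same mechanism, though in our setup they are in fact strict.

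The main obstacle is the careful bookkeeping needed to keep the cellular approximation of $\phi$ and of every coherence homotopy within the bar construction filtration, rather than just within the coarse cellular filtration of $EG$. The cleanest route is to defer this bookkeeping to the more detailed constructions of Sections~\ref{sec:PosFilt} and~\ref{sec:NegFilt}, which produce $\tEG$ as a strictly filtered algebra over a chosen model of the little $1$-cubes operad $\oC_{1}$ with all structure maps equivariant and filtered; the binary operation of that algebra yields $\mu$, and the higher operadic operations yield the unital and associativity homotopies as explicit equivariant filtered maps. A standard Eilenberg-Zilber/shuffle comparison then identifies the operadic filtration with the simplicial bar filtration on the standard model, completing the proof.
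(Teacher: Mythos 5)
Your reduction to Sections~\ref{sec:PosFilt} and~\ref{sec:NegFilt} is the right recognition --- the paper itself defers the proof of this lemma (and of Lemma~\ref{lem:stddiag}) to those sections --- but both halves of your write-up need repair.

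For the direct construction, the assertion that ``filteredness of $\mu$ reduces to making $\phi$ cellular for the product filtration'' is off by one. If $\phi\colon EG\times EG\times I^{2}\to EG$ is cellular for the product $G$-CW filtration (with $I^{2}$ contributing $2$ to the degree), then $\phi(EG_{i-1}\times EG_{j-1}\times I^{2})\subset EG_{i+j}$, so $\mu$ lands in $\tEG_{i+j+1}$ rather than $\tEG_{i+j}$. To hit the correct target, $\phi$ (away from the collapsed locus $s+t\geq 1$) must factor through $EG\times EG\times\Delta^{1}$ via the interpolation parameter $t/(s+t)$ and be cellular for \emph{that} product, so that the interval direction contributes only $1$. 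With this adjustment the binary pairing works, but the coherence homotopies live on $EG^{n}\times I^{n}$ with analogous factorizations and quickly become awkward to control by hand. This degree-counting difficulty is exactly what Section~\ref{sec:PosFilt} solves in one stroke: the model $\tEOG{\oC_{1}}$ of Construction~\ref{cons:tEOG} is equipped with the coarser pseudocellular filtration of Definition~\ref{defn:secondfiltration} (which contracts consecutive repeats, effectively absorbing the extra cube dimension), and Proposition~\ref{prop:posfiltmult} then shows \emph{all} of the operad action maps are strictly filtered, producing the pairing together with every higher coherence homotopy at once.

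Your description of the comparison with the standard filtration is also not what the paper does. There is no Eilenberg--Zilber/shuffle step in the proof of this lemma: Section~\ref{sec:PosFilt} introduces the $\Delta$-set realization $\tWG$ of $WG\subdot$ with a pseudocellular coarsening of its realization filtration, shows that $\tWG\to\tEG$ is filtered and a level-wise equivariant weak equivalence, and gives an explicit filtered map $\tEOG{\oC_{1}}\to\tWG$ that is a level-wise equivariant weak equivalence. (The overlapping little $1$-cubes coaction on geometric realizations in Section~\ref{sec:NegFilt} --- where the Milnor/barycentric coordinate manipulations in the spirit of Eilenberg--Zilber do appear --- is the mechanism behind Lemma~\ref{lem:stddiag}, not behind this multiplicative statement.) Finally, the operad action is constructed on the model $\tEOG{\oC_{1}}$ rather than on $\tEG$ itself, so the lemma should be read through the above zigzag of filtered level-wise equivalences.
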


Since the space of equivariant maps $\tEG\sma \tEG\to \tEG$ is
homotopy discrete with components corresponding to the trivial map
and a weak equivalence, the multiplication on $\tEG$ in
Lemma~\ref{lem:stdmult} is compatible with the multiplication
constructed on the models in Section~\ref{sec:lax}.  In particular, the
maps in the previous two lemmas induce the same pairing in the
stable category $T_{G}(X)\smal T_{G}(Y)\to T_{G}(X\smal Y)$ as
Corollary~\ref{cor:pssymmon}. 

We can use the preceding two lemmas to give the Hesselholt-Madsen Tate
spectral sequence a natural lax monoidal structure starting at the
$E^{1}$-term.  The equivariant cellular model of the diagonal map $EG
\to EG \times EG$ induces equivariant maps
\[
EG_{n-1}\to (EG_{i-1}\times EG)\cup (EG\times EG_{j-1})\subset EG\times EG
\]
for $i+j=n$, which in turn induce equivariant maps
\begin{multline*}
EG/EG_{i+j-1}\to 
(EG\times EG)/((EG_{i-1}\times EG)\cup (EG\times EG_{j-1}))\\
\iso (EG/EG_{i-1})\sma (EG/EG_{j-1})
\end{multline*}
and equivariant maps
\begin{multline*}
F(EG/EG_{i-1},R_{G}X)\sma 
F(EG/EG_{j-1},R_{G}Y)\\
\to
F(EG/EG_{i-1}\sma EG/EG_{j-1},R_{G}X\sma R_{G}Y)\\
\to
F(EG/EG_{i+j-1},R_{G}(X\sma Y)).
\end{multline*}
Likewise the equivariant cellular model for the
multiplication $\tEG \sma \tEG \to \tEG$ induces
\[
(X\sma \tEG_{i})\sma (Y\sma \tEG_{j})\to (X\sma Y)\sma \tEG_{i+j}.
\]
Returning to $T_{G}$, we then obtain natural maps
\[
T_{G}X_{i,j}\sma T_{G}Y_{i',j'}\to T_{G}(X\sma Y)_{i+i',j+j'}.
\]
These are functorial in $i,i',j,j'$ and define maps on the
Hesselholt-Madsen Tate filtration
\[
\bar T X_{m}\sma \bar T Y_{n}\to \bar T(X\sma Y)_{m+n},
\]
respecting the $(\bN,\oleq)\times (\bN,\oleq)$-structure on both
sides.  This in turn induces a pairing 
\begin{equation}\label{eq:HMTpair}
E^{1}_{*,*}(X)\otimes E^{1}_{*,*}(Y)\to E^{1}_{*,*}(X\smal Y)
\end{equation}
converging to the standard pairing on $\pi^{tG}_{*}$.
The homotopy coassociativity of $EG$ and homotopy associativity of
$\tEG$ then give us filtered homotopies between the two maps
\[
\bar TW_{\ell}\sma \bar TX_{m}\sma \bar TY_{n}
\to \bar T(W\sma X\sma Y)_{\ell+m+n}
\]
and so both associations induce the same map
\[
E^{1}_{*,*}(W)\otimes E^{1}_{*,*}(X)\otimes E^{1}_{*,*}(Y)\to 
E^{1}_{*,*}(W\smal X\smal Y).
\]
Similar observations apply to the unit using the canonical map $\bS\to
T_{G}\bS_{0,0}$.  Hesselholt-Madsen~\cite[4.3.5]{HMAnnals} shows that
such a pairing induces the usual pairing on Tate cohomology on the
$E^{2}$-term.

We can describe this pairing algebraically as follows.
Lemma~\ref{lem:stddiag} endows the resolution $P_{*}$ with an equivariant
coassociative and counital comultiplication $P_{*}\to P_{*}\otimes
P_{*}$ and Lemma~\ref{lem:stdmult} endows $\tilde P$ with an equivariant
differential graded algebra structure.  In fact, for our construction
in Section~\ref{sec:NegFilt}, the coalgebra structure on $P_{*}$ is
induced by the Alexander-Whitney map.  The resulting differential graded
algebra plays a central role in our formulas, and we use the following
notation. 

\begin{notn}\label{notn:CCfg}
Let $\CCdg$ be the differential graded algebra
\[
(\Tot^{\oplus}(P_{*},\bZ)\otimes \tilde P)^{G},
\]
where $P_{*}$
denotes the cellular chain complex of the standard bar construction
model of $EG$ and where the multiplication is induced by Lemmas~\ref{lem:stddiag}
and~\ref{lem:stdmult} and the unit by the augmentation $P_{0}\to \bZ$
and isomorphism $\bZ\to \tilde P_{0}$.  
Let $\CC_{*,*}$ be the 
bigraded ring which is $\CC_{*}$ concentrated in degree $0$ in the
second grading (the \term{internal} grading).
\end{notn}

The pairing on $E^{1}$-terms~\eqref{eq:HMTpair} makes the $E^{1}$-term
for $\bS$ into a differential graded algebra, which acts on both sides
on the $E^{1}$-term for any orthogonal $G$-spectrum.  By the formula
for the $E^{1}$-term in Definition~\ref{defn:HMTfilt}, we see that as
a bigraded ring, the $E^{1}$-term for $\bS$ is $\CC_{*}\otimes
\pi_{*}\bS$.  In particular, the $E^{1}$-term for every orthogonal
$G$-spectrum is naturally a bimodule over $\CC_{*,*}$.  

Using the fact that $P_{0}=\bZ[G]$ and $\tilde P_{0}=\bZ$, we get a natural
map 
\[
\pi_{*}X\to (\Hom(P_{0},\pi_{*}X)\otimes \tilde P_{0})^{G}\to E^{1}_{0,*}(X)
\]
where $x\in \pi_{j}X$ goes to the unique equivariant homomorphism 
$P_{0}\to \pi_{j}X$ sending $1$ to $x$.  It is easy to see that this
is a monoidal natural transformation.  The following proposition is
then clear from inspection of the multiplication.

\begin{prop}\label{prop:CCfg}
For any orthogonal $G$-spectrum $X$, the $E^{1}$-term of the
Hesselholt-Madsen Tate spectral sequence is a bimodule over
$\CC_{*,*}$.  The map of left modules from $\CC_{*,*}\otimes \pi_{*}X$ to
the $E^{1}$-term for $X$ is a map of
$\CC_{*,*}$-bimodules and an isomorphism.
\end{prop}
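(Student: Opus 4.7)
The plan is to verify the proposition's two claims essentially by direct inspection, using the freeness of $P_{*}$ and $\tilde{P}_{*}$ as $\bZ[G]$-modules (coming from the $G$-CW structure on $EG$). The key algebraic input I would use is that for any finitely generated free $\bZ[G]$-module $N$ and any $\bZ[G]$-module $M$, the natural norm map $N^{G}\otimes_{\bZ}M\to(N\otimes_{\bZ}M)^{G}$ (with diagonal action on the right) is an isomorphism; this reduces to $N=\bZ[G]$, where it becomes the standard identification $(\bZ[G]\otimes M)^{G}\iso M$ via the induction-coinduction coincidence for finite groups.

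For the bimodule structure, first take $X=\bS$, which has trivial $G$-action on $\pi_{*}\bS$. The simplification of the diagonal tensor when one factor has trivial action gives $E^{1}_{*,*}(\bS)\iso \CC_{*}\otimes\pi_{*}\bS$ as bigraded rings, with $\CC_{*,*}$ the internal-degree-zero part. The pairing~\eqref{eq:HMTpair} applied to the unit maps $\bS\sma X\to X$ and $X\sma \bS\to X$ then endows $E^{1}_{*,*}(X)$ with left and right $\CC_{*,*}$-module structures, and these commute because the three-fold pairing on $E^{1}$-terms is associative, which follows in turn from coassociativity of the coproduct on $P_{*}$ (Lemma~\ref{lem:stddiag}) and associativity of the product on $\tilde{P}_{*}$ (Lemma~\ref{lem:stdmult}). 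Defining the candidate map by $c\otimes x\mapsto c\cdot\iota(x)$, it is a left $\CC_{*,*}$-module map by construction, so the bimodule property reduces to the centrality assertion $c\cdot\iota(x)=\iota(x)\cdot c$. Unwinding the convolution formula for the pairing, and using the counit of $\Delta$ and unitality of $\tilde{P}_{0}=\bZ$, both products collapse to the same element because the coefficient $\phi$ takes values in $\bZ$ and integer scalars commute with elements of $\pi_{*}X$.

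For the isomorphism, I would work bidegree by bidegree. Because $P_{a}$ is a finitely generated free abelian group, there is a natural $G$-equivariant isomorphism $\Hom_{\bZ}(P_{a},\bZ)\otimes_{\bZ}\pi_{j}X\iso \Hom_{\bZ}(P_{a},\pi_{j}X)$ with diagonal $G$-action on the left. This identifies $E^{1}_{i,j}(X)$ with $\bigoplus_{a+b=i}(\Hom_{\bZ}(P_{a},\bZ)\otimes\tilde{P}_{b}\otimes\pi_{j}X)^{G}$. Taking $N=\Hom_{\bZ}(P_{a},\bZ)\otimes\tilde{P}_{b}$---which is a finitely generated free $\bZ[G]$-module, since $\Hom_{\bZ}(P_{a},\bZ)\iso P_{a}$ and $\tilde{P}_{b}$ are $\bZ[G]$-free in each degree---and $M=\pi_{j}X$, the norm-map lemma produces an abstract isomorphism $\CC_{i,0}\otimes\pi_{j}X\iso E^{1}_{i,j}(X)$. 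The remaining verification is that this abstract isomorphism coincides with the multiplication-defined map $c\otimes x\mapsto c\cdot\iota(x)$: both are $\CC_{*,*}$-linear and both send $1\otimes x$ to $\iota(x)$, and the convolution formula for the pairing is precisely what realizes the norm-averaging over $G$. This last matching is what the excerpt's phrase ``clear from inspection of the multiplication'' refers to, and it is the main---though ultimately routine---content of the proof.
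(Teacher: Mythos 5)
Your reading of the paper's terse ``clear from inspection of the multiplication'' is the right one: identify $E^1_{i,j}(X)\iso\CC_{i,0}\otimes\pi_jX$ by untwisting the $\bZ[G]$-free modules and match this with the left-multiplication map $c\otimes x\mapsto c\cdot\iota(x)$, where $\iota$ denotes the map $\pi_*X\to E^1_{0,*}(X)$ of the paragraph preceding the proposition. However, the centrality step has a genuine gap, and centrality is exactly the crux of the bimodule claim. You argue that $c\cdot\iota(x)=\iota(x)\cdot c$ holds ``using the counit of $\Delta$ and unitality of $\tilde P_0$'' because the scalar takes values in $\bZ$. But $\iota(x)$ is the \emph{equivariant} function $P_0=\bZ[G]\to\pi_jX$ sending $g\mapsto gx$, which is \emph{not} the counit $\varepsilon\colon\bZ[G]\to\bZ$ tensored with $x$ unless $x$ is $G$-fixed; conflating the two is where the argument breaks. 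For the Alexander--Whitney coalgebra structure referenced in Section~\ref{sec:NegFilt}, the $(a,0)$-component of $\Delta$ on a nondegenerate $a$-cell $p$ is (up to sign) $p\otimes(\text{last vertex of }p)$ while the $(0,a)$-component is $(\text{first vertex of }p)\otimes p$. So for $\alpha\in\Hom(P_a,\bZ)$ the two convolutions send $p$ to $\alpha(p)\cdot\bigl((\text{last vertex of }p)\cdot x\bigr)$ and to $\bigl((\text{first vertex of }p)\cdot x\bigr)\cdot\alpha(p)$, and these differ by a group translate whenever the endpoints of $p$ differ, which is automatic on any nondegenerate $1$-cell. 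Concretely, with $G=\bZ/2=\{e,s\}$, the $W$-model gives $P_1\iso\bZ[G]$ after normalization, and taking $c'$ the generator of $\Hom_{\bZ[G]}(P_1,\bZ)\subset\CC_{-1,0}$ and any $x\in\pi_jX$ with $sx\ne x$, one finds $c'\cdot\iota(x)\ne\iota(x)\cdot c'$. So centrality, and hence the bimodule compatibility as you have formulated it, actually requires $G$ to act trivially on $\pi_*X$ (which does hold in every application in the paper, since $THH$ is restricted from the connected group $\bT$, but is not a hypothesis of the proposition and is not supplied by the ``integer scalars commute'' observation).

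A secondary issue: the ``norm map'' lemma is misstated. The naive inclusion $N^G\otimes M\hookrightarrow N\otimes M$ does not land in $(N\otimes M)^G$ when $M$ has nontrivial $G$-action; the map that is an isomorphism for $N$ a finitely generated free $\bZ[G]$-module and $G$ finite is the transfer $(N\otimes M)_G\to(N\otimes M)^G$ precomposed with the inverse transfer $N^G\iso N_G$. The content of your lemma is correct, but not via the map you named, and in any event fixing this does not repair the centrality gap above. You also write $a+b=i$ where the bigrading should read $b-a=i$, but this is plainly a slip.
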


Regarding $\CC_{*,*}\otimes \pi_{*}(-)$ as a functor from $G$-spectra
to $\CC_{*,*}$-bimodules,
the unit of $\CC_{*,*}\otimes \pi_{*}\bS$ and the pairing
\[
(\CC_{*,*}\otimes \pi_{*}(X))\otimes (\CC_{*,*}\otimes \pi_{*}(Y))\to
\CC_{*,*}\otimes \pi_{*}(X\sma Y)
\]
induced by the multiplication of $\CC_{*,*}$ and the usual pairing on
$\pi_{*}$ makes $\CC_{*,*}\otimes \pi_{*}(-)$ into a lax monoidal
functor.  The isomorphism of the
previous proposition is then a monoidal natural transformation from this
functor to the $E^{1}$-term of the Hesselholt-Madsen spectral sequence.

We now turn to the case of the circle group $\bT$.  We write $\bC(1)$
for $\bC$ with the standard action of $\bT$ as the group of unit
complex numbers, $S(\bC(1)^{n})$ for the unit sphere in $\bC(1)^{n}$,
and $S^{\bC(1)^{n}}$ for the one-point compactification of
$\bC(1)^{n}$. When working in the $\bT$-equivariant stable category, we
write $S^{n\bC(1)}$ for the suspension spectrum of $S^{\bC(1)^{n}}$
for $n\in \bN$, and we extend this notation to representation spheres
$S^{n\bC(1)}$ for all $n\in \bZ$.  We have the standard bar
construction model for $E\bT$, which comes with a filtration from
geometric realization, but to better match the numbering in the finite
group case, we define $E\bT_{2n+1}=E\bT_{2n}$ and let $E\bT_{2n}$ be
the geometric realization $n$-skeleton; we use the corresponding
numbering for the filtration on $\tET$.  In this numbering, there are
well-known $\bT$-equivariant homotopy equivalences
\[
E\bT_{2n}\simeq S(\bC(1)^{n}), \qquad \tET_{2n}\simeq S^{\bC(1)^{n}}
\]
as well as $\bT$-equivariant homotopy equivalences
\[
E\bT/E\bT_{2n-1}=E\bT/E\bT_{2n-2}\simeq E\bT_{+}\sma S^{\bC(1)^{n}}.
\]
These $\bT$-equivariant homotopy equivalences give us natural
isomorphisms in the $\bT$-equivariant stable category 
\[
X^{E\bT/E\bT_{2n-1}}\simeq X^{E\bT}\smal S^{-n\bC(1)}
\qquad \text{and}\qquad
X^{E\bT}\sma \tET_{2n}\simeq X^{E\bT}\smal S^{n\bC(1)}.
\]
The \term{Greenlees Tate filtration} is then
\begin{multline}\label{eq:TGTfilt}
\cdots \to (X^{E\bT}\smal S^{-n\bC(1)})^{\bT}\to \cdots 
\to (X^{E\bT}\smal S^{-\bC(1)})^{\bT}\to
(X^{E\bT})^{\bT}\\
\to
(X^{E\bT}\smal S^{\bC(1)})^{\bT}\to \cdots \to
(X^{E\bT}\smal S^{n\bC(1)})^{\bT}\to \cdots
\end{multline}
(even indices only displayed, odd indices equal to previous even
index, with $(X^{ET})^{\bT}$ in index $0$).
Using the identification of the quotient $S^{\bC(1)}/S^{0}$ as
$\bT_{+}\sma S^{1}$, the associated graded spectra are 
\[
(\bT_{+}\sma X^{E\bT}\smal S^{(n-1)\bC(1)}\sma S^{1})^{\bT}
\simeq (\bT_{+}\sma \Sigma^{2n-1}X)^{\bT}
\simeq \Sigma^{2n}X,
\]
where the last weak equivalence is the Adams isomorphism.  The
\term{Greenlees Tate spectral sequence} is the spectral sequence associated to the $\bZ$-indexed
Greenlees Tate filtration~\eqref{eq:TGTfilt}.  It has
\begin{equation}\label{eq:TGTSSE2}
E^{1}_{i,j}=E^{2}_{i,j}=\pi_{j}X \Longrightarrow \pi^{t\bT}_{i+j}X,
\end{equation}
conditionally converging to the colimit $\pi^{t\bT}_{i+j}X$.  (In
general $E^{2r-1}=E^{2r}$ for all $r$ since the filtration is
concentrated in even indices.)
Moreover, because the associative and commutative pairing (in the
equivariant stable homotopy category) 
\[
S^{m\bC(1)}\smal S^{n\bC(1)}\overto{\simeq}S^{(m+n)\bC(1)}
\]
is compatible with the maps in the Greenlees Tate filtration, the 
Greenlees Tate spectral sequence has a natural associative and unital
pairing on $E^{1}$ compatible with the pairing on $\pi^{t\bT}_{*}$.  In
terms of the description of the $E^{1}$-term in~\eqref{eq:TGTSSE2},
the pairing on $E^{1}$ is induced by $\pi_{*}X\otimes \pi_{*}Y\to
\pi_{*}(X\smal Y)$.

Although the Greenlees Tate filtration and Greenlees Tate spectral
sequence are ideal for many purposes, we have not succeeded in making
the pairing coherent enough for the argument in
Section~\ref{sec:main}.  For that argument we work with the
Hesselholt-Madsen filtration, which does not have as a clean a
spectral sequence for $G=\bT$; in particular it does not agree at
$E^{2}$ with the Greenlees Tate spectral sequence
(see~\ref{thm:CCT}--\ref{thm:HMvsG} below).  We construct a 
point-set model for the Hesselholt-Madsen filtration exactly as in
Definition~\ref{defn:HMTfilt}, except using the doubled indexing for
the filtration on $E\bT$ and $\tET$ as in the preceding paragraph.

\begin{defn}\label{defn:THM}
For $G=\bT$, the \term{Hesselholt-Madsen Tate filtration} is the filtration
by $h$-cofibrations
\[
\cdots \to \bar T_{-1} X\to \bar T_{0} X\to \bar T_{1}X\to \cdots 
\]
where $\bar T_{n}X=\hocolim_{i-j\leq n}T_{\bT}X_{i,j}$ for 
\[
T_{\bT}X_{i,j}=(R_{\bT}(F(E\bT/E\bT_{j-1},R_{\bT}X)\sma \tET_{i}))^{\bT}.
\]
The \term{Hesselholt-Madsen Tate spectral sequence} is the spectral
sequence associated to this filtration.
\end{defn}

In the finite group case, conditional convergence of the
Hesselholt-Madsen spectral sequence followed from conditional
convergence of the Greenlees Tate spectral sequence, where it was
clear from construction.  For the case $G=\bT$, we prove it
separately.  As the details are not needed in the main argument, we
defer the proof of the following lemma to Section~\ref{sec:CCS}.

\begin{lem}\label{lem:HMTcc}
For $G=\bT$, the Hesselholt-Madsen Tate spectral sequence converges
conditionally to the colimit $\pi^{t\bT}_{*}X$. 
\end{lem}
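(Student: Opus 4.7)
The plan is to verify Boardman's criterion for conditional convergence of the $\bZ$-indexed sequence $\{\bar T_n X\}$, which requires both that $\colim_n \bar T_n X \simeq X^{t\bT}$ (identifying the abutment) and that $\holim_n \bar T_n X \simeq *$ (vanishing at $-\infty$); the $\lim^{1}$ condition then follows from the Milnor exact sequence.

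The abutment statement is immediate from the construction.  As $n \to +\infty$ the constraint $i - j \leq n$ becomes vacuous, so the natural map $\colim_n \bar T_n X \to \hocolim_{(i,j) \in \bN \times \bN} T_{\bT} X_{i,j}$ is a weak equivalence.  The latter equals $T_{\bT} X \simeq X^{t\bT}$, using that $j = 0$ is initial in $(\bN, \geq)$ (giving $F(E\bT/E\bT_{-1}, X) = F(E\bT, X) = X^{E\bT}$) and $\hocolim_i \tET_i \simeq \tET$.

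The heart of the argument is showing $\holim_n \bar T_n X \simeq *$.  The plan is to reduce this to the corresponding vanishing for the Greenlees Tate filtration, which holds directly by the ``shape'' identity
\[
\holim_m T^{G}_{2m} X \simeq \holim_j F(E\bT/E\bT_{j-1}, X)^{\bT} \simeq F\bigl(\hocolim_j E\bT/E\bT_{j-1},\, X\bigr)^{\bT} \simeq F(*, X)^{\bT} \simeq *.
\]
Using the $\bT$-equivariant weak equivalences $E\bT/E\bT_{2l-1} \simeq E\bT_{+} \sma S^{l\bC(1)}$ and $\tET_{2k} \simeq S^{k\bC(1)}$ together with the adjunction identity $F(E\bT_{+} \sma S^{l\bC(1)}, X) \simeq X^{E\bT} \sma S^{-l\bC(1)}$, one computes in the stable category
\[
T_{\bT} X_{2k, 2l} \simeq (X^{E\bT} \sma S^{(k-l)\bC(1)})^{\bT} \simeq T^{G}_{2(k-l)} X,
\]
so that the values $T_{\bT} X_{i, j}$ depend up to equivalence only on the difference $i - j$, and in particular every term along the frontier $i - j = 2m$ stably identifies with $T^{G}_{2m} X$ independent of $k$.

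The main obstacle is assembling these pointwise identifications into a global stable equivalence $\bar T_{2m} X \simeq T^{G}_{2m} X$ that is compatible with the filtration maps.  The indexing poset $\aI_{2m} = \{(i, j) \in \bN \times \bN : i - j \leq 2m\}$ is not a Grothendieck (op)fibration over either coordinate---its many maximal elements along the frontier diagonal obstruct a naive cofinal sequence---so a Fubini-style iteration of the hocolim requires explicit analysis.  My plan is (i) to show the nerve $N(\aI_{2m})$ is contractible via the poset map $\phi\colon (i, j) \mapsto (0, j)$ onto the totally ordered subchain $\{(0, l) : l \geq 2|m|\}$, which is naturally $\leq \id$ through morphisms in $\aI_{2m}$; and (ii) to exploit the comb-like structure of $\aI_{2m}$: each column $\{(i, j) : 0 \leq i \leq j + 2m\}$ for fixed $j$ is a linear chain of $h$-cofibrations (in the sense of Proposition~\ref{prop:hcof}) whose hocolim computes to the frontier value $T_{\bT} X_{j + 2m, j} \simeq T^{G}_{2m} X$.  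Combining (i) and (ii) with the uniform stable identification above yields $\bar T_{2m} X \simeq T^{G}_{2m} X$ in $\StabG[\bT]$ compatibly with the filtration, and the vanishing $\holim_n \bar T_n X \simeq *$ follows by transfer from the Greenlees calculation.
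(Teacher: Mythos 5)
Your setup (Boardman's criterion, identification of the abutment) is fine, but the central reduction is wrong: an identification $\bar T_{2m}X\simeq T^{G}_{2m}X$ compatible with the filtration would force the Hesselholt--Madsen and Greenlees Tate spectral sequences to coincide from $E^1$ on, contradicting the paper's explicit observation in Section~\ref{sec:TSS} that for $G=\bT$ they disagree already at $E^2$.  Compare the Greenlees $E^1$-term~\eqref{eq:TGTSSE2} (one copy of $\pi_j X$ in spot $(i,j)$) with the Hesselholt--Madsen $E^1$-term spelled out after Theorem~\ref{thm:CCT} (an infinite direct sum $\bigoplus_{m\ge0}\pi_jX\oplus\bigoplus_m\pi_{j+1}X$).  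The logical gap is in combining your steps (i) and (ii): contractibility of the indexing nerve together with each column's hocolim being equivalent to a common object proves nothing about the total hocolim --- the pushout $\ast\leftarrow S^k\to \ast$ over a three-element poset with contractible nerve is $S^{k+1}$, not $\ast$.  Here the frontier points $(j+n,j)$ are pairwise incomparable maximal elements of $\aI_n$, so the hocolim does not collapse to any one of them; instead (working with $\bar U$ before $\bT$-fixed points as in Section~\ref{sec:CCS}) $\bar U_nX$ is the iterated homotopy pushout
\[
\bar UX_{n,0}\cup_{\bar UX_{n,1}}\bar UX_{n+1,1}\cup_{\bar UX_{n+1,2}}\bar UX_{n+2,2}\cup\cdots,
\]
which is the cofiber of an explicit map $\bigvee_{\ell\ge0}X^{E\bT}\smal S^{(n-1)\bC(1)}\to\bigvee_{\ell\ge0}X^{E\bT}\smal S^{n\bC(1)}$ between countable wedges, and has strictly more homotopy than a single copy of $X^{E\bT}\smal S^{n\bC(1)}$.

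The paper's own proof accepts the wedge description rather than fighting it.  From the cofiber presentation, the required vanishing $\holim_n(R_\bT\bar U_nX)^\bT\simeq *$ reduces to $\holim_n\bigl(\bigvee(X^{E\bT}\smal S^{-n\bC(1)})^\bT\bigr)\simeq *$, which is immediate when $\pi_*X^{E\bT}$ is bounded above (by coconnectivity of $S^{-n\bC(1)}$ as $n\to\infty$) and follows in general by passing up the Postnikov tower of $X^{E\bT}$.  That Postnikov-tower argument is the essential content your proposal is missing; there is no shortcut through the Greenlees filtration.
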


Lemmas~\ref{lem:stddiag} and~\ref{lem:stdmult} hold for $G=\bT$
using these filtrations.  We therefore again obtain a monoidal
structure on the spectral sequence as in~\eqref{eq:HMTpair}.  The
$E^{1}$-term for $\bS$ is a bigraded ring that acts on both sides on
the $E^{1}$-term for an arbitrary $X$.  We make the following
computation in Section~\ref{sec:CCS}, where we specify the generators
precisely.

\begin{thm}\label{thm:CCS}
For $G=\bT$, the $E^{1}$-term for the Hesselholt-Madsen $\bT$-Tate spectral
sequence for $X=\bS$ is as a bigraded ring the free graded commutative
$\pi_{*}\bS$-algebra on a generator $x$ in bidegree $(2,0)$, a generator
$y$ in bidegree $(2,-1)$, and a generator $z$ in bidegree $(-2,0)$ subject
to the relation $y^{2}=0$.  
\end{thm}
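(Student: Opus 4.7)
The plan is to compute the $E^{1}$-term directly from the definition of the Hesselholt-Madsen filtration by identifying the double associated graded of the bifiltration $T_{\bT}\bS_{i,j}$, then reading off the ring structure from the monoidal pairing. By the $h$-cofibration properties of Proposition~\ref{prop:hcof} and standard homotopy colimit technology, the successive filtration quotient $\bar T_{n}\bS/\bar T_{n-1}\bS$ splits as a wedge over $(i,j)\in\bN^{2}$ with $i-j=n$ of the double associated graded pieces
\[
\mathrm{gr}_{i,j} \htp \bigl(R_{\bT}\bigl(F(E\bT_{j}/E\bT_{j-1},\bS)\sma(\tET_{i}/\tET_{i-1})\bigr)\bigr)^{\bT}.
\]
In the doubled indexing these vanish unless $i$ and $j$ are both even, and normalization of the standard bar construction identifies $E\bT_{2b}/E\bT_{2b-2}\htp\bT_{+}\sma S^{2b}$ (with $\bar\bT=\bT/\{e\}\htp S^{1}$ contributing the $S^{b}$-factor from $b$ copies, smashed with $\Delta^{b}/\partial\Delta^{b}\htp S^{b}$) and $\tET_{2a}/\tET_{2a-2}\htp\bT_{+}\sma S^{2a-1}$ for $a,b\geq 1$; the boundary cases $E\bT_{0}/E\bT_{-1}\htp\bT_{+}$ and $\tET_{0}\htp S^{0}$ are handled separately.

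The next step is to compute these fixed-point spectra. Using Spanier-Whitehead duality $F(\bT_{+},\bS)\htp\Sigma^{-1}\bT_{+}$ (as $\bT$-spectrum) together with the Adams isomorphism $(\bT_{+}\sma\bT_{+})^{\bT}\htp\Sigma\bT_{+}$ (applied to the diagonal $\bT$-action after the standard change of coordinates), one computes $\mathrm{gr}_{2a,2b}\htp\Sigma^{2(a-b)-1}\bT_{+}\htp\Sigma^{2(a-b)-1}\bS\vee\Sigma^{2(a-b)}\bS$ for $a,b\geq 1$, together with analogous formulas $\mathrm{gr}_{0,2b}\htp\Sigma^{-2b}\bS$, $\mathrm{gr}_{2a,0}\htp\Sigma^{2a-1}\bT_{+}$, and $\mathrm{gr}_{0,0}\htp\bS$ for the boundary cases. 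Passing to $\pi_{*}$ yields, at each bidegree, one $\pi_{q}\bS$-summand for each monomial $x^{a}z^{b}$ (from the top wedge factor of each $\mathrm{gr}_{2a,2b}$) and one $\pi_{q+1}\bS$-summand for each monomial $x^{a-1}z^{b}y$ (from the bottom wedge factor), matching the claimed algebra additively.

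With the additive picture in hand, I would then identify the ring generators via the monoidal pairing from the $G=\bT$ analogue of Lemmas~\ref{lem:stddiag} and~\ref{lem:stdmult} (discussed after Definition~\ref{defn:THM}): take $z\in E^{1}_{-2,0}$ to be the generator of $\mathrm{gr}_{0,2}\htp\Sigma^{-2}\bS$, take $x\in E^{1}_{2,0}$ to be the generator of the top $\Sigma^{2}\bS$-summand of $\mathrm{gr}_{2,0}$, and take $y\in E^{1}_{2,-1}$ to be the generator of the bottom $\Sigma^{1}\bS$-summand of the same piece. Tracking the Alexander-Whitney cellular diagonal on the bar-construction $E\bT$ together with the compatible cellular multiplication on $\tET$ shows inductively that $x^{a}z^{b}$ detects the top generator of $\mathrm{gr}_{2a,2b}$ and $x^{a}z^{b}y$ detects the bottom generator, producing a surjective bigraded ring map from the claimed algebra to $E^{1}_{*,*}$, which is an isomorphism by the count from the previous paragraph.

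The main obstacle I expect is verifying the relation $y^{2}=0$ and checking that no other hidden relations occur. The class $y^{2}$ would live in $E^{1}_{4,-2}$, detected in the bottom $\Sigma^{3}\bS$-summand of $\mathrm{gr}_{4,0}$; however, the induced pairing on bar-construction associated graded pieces factors through the shuffle product on $\bar\bT^{\wedge 2}$, which is antisymmetric and so vanishes on the square of the $1$-dimensional fundamental generator of $\bar\bT\htp S^{1}$. Similarly, the absence of a relation $xz=1$ (which would collapse the algebra to $\pi_{*}\bS[x,x^{-1}]\otimes\Lambda(y)$, the shape of the Greenlees $E^{2}$-term rather than the larger Hesselholt-Madsen $E^{1}$) follows because $xz$ is detected in $\mathrm{gr}_{2,2}$ at the bi-index $(a,b)=(1,1)$, which is a different wedge summand from the unit in $\mathrm{gr}_{0,0}$. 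This careful shuffle and sign bookkeeping for the cellular multiplication on associated graded pieces is the core technical content.
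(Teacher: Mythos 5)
Your outline of the proof correctly reproduces the additive structure: the wedge decomposition of the Hesselholt--Madsen filtration quotients into double-graded pieces, the identification of $E\bT_{2b}/E\bT_{2b-2}$ and $\tET_{2a}/\tET_{2a-2}$ as free cells on $\bT$, and the count of summands via duality and the Adams isomorphism matches the paper's formula for $E^{1}_{*,*}(X)$. However, there is a genuine gap at the multiplicative step, and it is precisely where the paper does all its work.

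First, your route computes $\pi^{\bT}_{*}$ of each graded piece directly from the equivalence $\mathrm{gr}_{2a,2b}\simeq\Sigma^{2(a-b)-1}\bT_{+}$, then asks the reader to accept that "tracking the Alexander--Whitney cellular diagonal\ldots shows inductively that $x^{a}z^{b}$ detects the top generator." This would require knowing that the chosen splittings $\Sigma^{\bullet}\bT_{+}\simeq\Sigma^{\bullet}\bS\vee\Sigma^{\bullet+1}\bS$ are compatible with the smash pairings, and that the various equivalences with $\bT_{+}$-cells can be rigged to be coherent across $(a,b)$. But the Adams isomorphism $(\bT_{+}\sma Z)^{\bT}\simeq\Sigma Z$ is not monoidal in any usable way, and this is exactly why the paper avoids $\pi^{\bT}_{*}$ directly: Proposition~\ref{prop:pifixed} lets one pass to $\pi_{*}$ of the underlying spectrum (where the pairings are computed on explicit generators $a_{m},b_{m},a'_{m},b'_{m}$ with oriented representation spheres) and then recover $\pi^{\bT}_{*}$ as $\ker\sigma$. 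The correction terms $z^{n}=b'_{n}+n\eta a'_{n}$ and $x^{m}=b'_{0}\otimes b_{m}+m\eta b'_{0}\otimes a_{m}$ encode the $\eta$-shifts that your "take the generator of the top summand" prescription does not resolve, and the coherence problem your approach leaves open is what Lemma~\ref{lem:homeo} settles.

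Second, your explanation of $y^{2}=0$ via antisymmetry of a shuffle product is misleading: the multiplication on $\tET$ is given via the filtered $A_{\infty}$ model of Section~\ref{sec:PosFilt}, not a shuffle product, and the honest reason $y^{2}=0$ is a dimension count — $y^{2}$ lands in $\pi_{2}(\mathrm{gr}_{4,0})$, but $\mathrm{gr}_{4,0}\simeq\Sigma^{3}\bS\vee\Sigma^{4}\bS$ so $\pi_{2}=\pi_{-1}\bS\oplus\pi_{-2}\bS=0$ (equivalently, in the paper's $\pi_{*}$ picture, $\pi_{2}(CC\bar U\bS_{4,0})=\bZ$ on $a'_{0}\otimes a_{2}$ but $\sigma(a'_{0}\otimes a_{2})\neq0$, so $\ker\sigma=0$). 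The conclusion is right; the explanation is not. The remaining claims (surjectivity, no hidden relations) ride on the unproven multiplicative identification, so the proposal as written does not constitute a proof of the theorem.
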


In the previous statement, for the $\pi_{*}\bS$ action, we regard
elements of $\pi_{*}\bS$ as concentrated in degree 0 for the first
grading, i.e., elements of $\pi_{n}\bS$ are in bidegree $(0,n)$.  In
analogy with Notation~\ref{notn:CCfg}, we use the following notation
in the case $G=\bT$, so that the $E^{1}$-term of the Hesselholt-Madsen
Tate spectral sequence for $\bS$ is isomorphic as a bigraded ring to
$\CC_{*,*}\otimes \pi_{*}\bS$.

\begin{notn}\label{notn:CCT}
Let $\CC_{*,*}=\bZ[x,y,z]/y^{2}$ where $x$ is in bidegree $(2,0)$ and $y$
is in bidegree $(2,-1)$, and $z$ is in bidegree $(-2,0)$.
\end{notn}

For an arbitrary orthogonal $\bT$-spectrum $X$, using the canonical isomorphism 
\[
\CC_{*,*}\otimes \pi_{*}X\iso 
(\CC_{*,*}\otimes \pi_{*}\bS)\otimes_{\pi_{*}\bS} \pi_{*}X,
\]
the $E^{1}$-term of the Hesselholt-Madsen $\bT$-Tate spectral sequence for
$X$ naturally becomes a bimodule over $\CC_{*,*}$.  We have a canonical
map $\pi_{*}X\to E^{1}_{0,*}$ induced by the identification of 
\[
i^{*}X\simeq (R_{\bT}F(E\bT_{0},R_{\bT}X))^{\bT}
\]
as the homotopy cofiber of $T_{\bT}X_{0,1}\to T_{\bT}X_{0,0}$, where
$i^{*}$ denotes the underlying non-equivariant spectrum.  We
then get an induced a map of left $\CC_{*,*}$-modules from
$\CC_{*,*}\otimes \pi_{*}X$ to the $E^{1}$-term for $X$.  We prove in
Section~\ref{sec:CCS} the following theorem.

\begin{thm}\label{thm:CCT}
The map from $\CC_{*,*}\otimes \pi_{*}X$ to the $E^{1}$-term of the
Hesselholt-Madsen $\bT$-Tate spectral sequence for $X$ is an isomorphism of
$\CC_{*,*}$-bimodules and a monoidal transformation. 
\end{thm}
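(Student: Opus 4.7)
The plan is to reduce the statement for general $X$ to the known case $X = \bS$ (Theorem~\ref{thm:CCS}) by identifying $E^{1}_{*,*}(X)$ as an appropriate scaling of $E^{1}_{*,*}(\bS)$ by $\pi_{*}X$. First I would write the natural map explicitly: it is assembled from the canonical map $\pi_{*}X \to E^{1}_{0,*}(X)$ given in the paragraph preceding the theorem, the isomorphism $\CC_{*,*} \otimes \pi_{*}\bS \cong E^{1}_{*,*}(\bS)$ from Theorem~\ref{thm:CCS}, and the external pairing~\eqref{eq:HMTpair} applied with one factor equal to $\bS$. The $\CC_{*,*}$-bimodule structure on $E^{1}_{*,*}(X)$ is obtained by acting on both sides via~\eqref{eq:HMTpair}, and the map respects this structure by the associativity and unitality of the pairing, which in turn reduces to the homotopy coassociativity of the diagonal of $E\bT$ and the homotopy associativity of the multiplication on $\tET$ from Lemmas~\ref{lem:stddiag} and~\ref{lem:stdmult}.

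For the isomorphism claim, I would compute $E^{1}_{n,*}(X) = \pi_{*}(\bar T_{n} X / \bar T_{n-1} X)$ directly by analyzing the associated graded pieces of the Hesselholt-Madsen filtration. Each graded piece is built from homotopy cofibers of the $T_{\bT}X_{i,j}$ and, after filtering down to individual $(i,j)$ contributions, involves taking the $\bT$-fixed points of the derived smash product of $X$ with $\bT$-free quotients of $\tET$ and $E\bT$ (in the doubled indexing, these are the quotients $\tET_{2k}/\tET_{2k-2}$ and $E\bT/E\bT_{2\ell-2}$ modulo the next step). Proposition~\ref{prop:hcof} ensures that the point-set filtration quotients model the correct homotopy cofibers, and the Adams isomorphism identifies $\pi_{*}$ of each piece with a specific shift of $\pi_{*}X$. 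This is precisely the calculation underlying Theorem~\ref{thm:CCS} in the case $X=\bS$, and it carries over unchanged for general $X$ because the relevant constructions depend $\pi_{*}\bS$-linearly on the ``coefficients'' $\pi_{*}X$. The natural map from $\CC_{*,*} \otimes \pi_{*}X$ then matches this computation term by term on generators. Monoidality of the transformation follows from the construction: by Theorem~\ref{thm:CCS} the ring structure on $\CC_{*,*}$ coincides with the one induced by~\eqref{eq:HMTpair} on $E^{1}_{*,*}(\bS)$, so once we know the pairings match on the generators coming from $\bS$-factors, they agree throughout by the bimodule property already established.

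The main obstacle will be executing the direct computation above cleanly. While the abstract strategy is clear, verifying that each associated graded piece really is a shift of $\pi_{*}X$ (rather than some twisted version depending nontrivially on the $\bT$-action on $X$) requires carefully tracking the Adams isomorphism through the iterated point-set construction of $\bar T_{n}$, using Proposition~\ref{prop:hcof} to pass between point-set and derived quotients. The key observation making this work is that the relevant $\bT$-free quotients of $\tET$ and $E\bT$ are, up to equivariant equivalence, free $\bT$-cells smashed with non-equivariant spheres, so the Adams isomorphism recovers $\pi_{*}(i^{*}X)$ in the expected degrees regardless of the $\bT$-action on $X$; this is what allows the reduction from general $X$ to $\bS$ to succeed.
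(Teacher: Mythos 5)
Your overall strategy — express the map in terms of the degree-zero transformation $\pi_*X\to E^1_{0,*}(X)$ and the $\CC_{*,*}$-bimodule structure coming from the external pairing with $E^1_{*,*}(\bS)$, then identify the $E^1(X)$-pieces and check the map matches generators — is the same route the paper takes.  The problem is in how you propose to carry out the verification step.

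The phrase ``regardless of the $\bT$-action on $X$'' is the place where a genuine gap enters.  It is true that the \emph{isomorphism type} of each graded piece of $E^1(X)$ is independent of the $\bT$-action: $\pi_*(CC\bar UX_{m,n})\iso\pi_*\bS\langle a'_n,b'_n\rangle\otimes_{\pi_*\bS}\pi_*\bS\langle a_m,b_m\rangle\otimes_{\pi_*\bS}\pi_*X$, a free module on $\pi_*X$ with degree shifts that only depend on $m,n$.  But knowing the abstract isomorphism type of both sides does not show the natural map between them is an isomorphism.  The paper's proof shows that the canonical map $\pi_*X\to\pi_*(C\bar UX_0)$ has the $\sigma$-twisted formula $v\mapsto b'_0\otimes v - a'_0\otimes\sigma v$, where $\sigma$ is the degree-one operator coming from the $\bT$-action on $X$ via $\piS(\bT)$.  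When you extend $\CC_{*,*}$-linearly, every formula in \eqref{eq:nonmultform} carries a $\sigma v$ term, and the injectivity/surjectivity check that finishes the proof runs through these explicit $\sigma$-twisted expressions (surjectivity being read off the generators of the image of $\sigma+\eta$ exactly as in Theorem~\ref{thm:CCS}).  If you had tried to write the generator of $E^1_{0,*}(X)$ corresponding to $v$ as simply $b'_0\otimes v$ — which is what ``$\bT$-action drops out'' would suggest — you would not even get an element of the $\bT$-fixed subgroup, since $\sigma(b'_0\otimes v)=\eta b'_0\otimes v + b'_0\otimes\sigma v\neq 0$ in general.  So the sentence ``the natural map from $\CC_{*,*}\otimes\pi_*X$ then matches this computation term by term on generators'' asserts exactly the thing that needs proof, and the proof requires precisely the $\sigma$-tracking your ``key observation'' dismisses.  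You should either carry out the explicit calculation of the canonical map $\pi_*X\to\pi_*(C\bar UX_0)$ (involving $\sigma$) and propagate it through the $\CC_{*,*}$-module structure, or replace the ``regardless of the $\bT$-action'' claim with a genuine abstract argument showing the external pairing $E^1(\bS)\otimes_{\pi_*\bS}\pi_*X\to E^1(X)$ is an isomorphism, which is not an automatic consequence of the freeness statements.

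A secondary point: you derive monoidality from the bimodule property, but the paper's logical order is the reverse — the bimodule statement is deduced from monoidality, which is checked directly by showing that $(1\otimes v)\otimes(1\otimes w)$ and $v\otimes w$ have the same image in $E^1_{*,*}(X\sma Y)$ (again via the $\sigma$-formula and the Leibniz rule $\sigma(v\sma w) = (\sigma v)\sma w + (-1)^{|v|}v\sma(\sigma w)$).  Your direction of inference would need an extra argument to close; the bimodule property over $\CC_{*,*}$ alone does not yield monoidality of the transformation without first pinning down the pairing on the $\pi_*X$-factor, which is where the $\sigma$-computation enters once more.
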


Non-multiplicatively, the previous theorem identifies the $E^{1}$-term of the
Hessel\-holt-Madsen Tate spectral sequence for an orthogonal
$\bT$-spectrum $X$ as having $E^{1}_{2i,j}$ a countable direct sum of
copies of $\pi_{j}X\oplus \pi_{j+1}X$ and $E^{1}_{2i-1,j}$ zero.
\iffalse
\[
E^{1}_{2i,j}=\begin{cases}
\biggl(\bigoplus\limits_{m\geq 0} \pi_{j}X\biggr)
\oplus 
\biggl(\bigoplus\limits_{m\geq 0} \pi_{j+1}X\biggr)&i>0\\
\noalign{\vskip1em}
\biggl(\bigoplus\limits_{m\geq 0} \pi_{j}X\biggr)
\oplus 
\biggl(\bigoplus\limits_{m\geq 1} \pi_{j+1}X\biggr)&i\leq 0.
\end{cases}
\]
\fi\par

Although the work above is all we need for the main results of the
paper, we can say more about the Hesselholt-Madsen $\bT$-Tate spectral
sequence and its relationship to the Greenlees $\bT$-Tate spectral
sequence.  We prove the following theorems in Section~\ref{sec:HMvsG}.

\begin{thm}\label{thm:HMvsG}
In the notation of Definition~\ref{defn:THM}, the inclusion of
$T_{\bT}X_{0,j}$ in $\bar T_{-j}X$ and $T_{\bT}X_{i,0}$ in $\bar
T_{i}X$ induce a (non-monoidal) filtered map from the Greenlees
filtration to the Hesselholt-Madsen filtration for $G=\bT$.  The
induced map on spectral sequences is split injective on $E^{1}=E^{2}$
and an isomorphism from $E^{3}$ onward.
\end{thm}

\begin{thm}\label{thm:HMd2}
In the Hesselholt-Madsen Tate spectral sequence for $G=\bT$, the
$d_{1}$ differential is $0$.  The $d_{2}$ differential satisfies the
Leibniz rule, is given on generators of $HM_{*,*}\otimes \pi_{*}(\bS)$ by 
\begin{align*}
d_{2}(x\otimes 1)&=1\otimes \eta\\
d_{2}(y\otimes 1)&=xz\otimes 1 +yz\otimes \eta -1\otimes 1\\
d_{2}(z\otimes 1)&=z^{2}\otimes \eta 
\end{align*}
and is given on elements $v$ of $\pi_{*}X$ by $d_{2}(1\otimes
v)=-z\otimes \tact v$, where $\tact$ denotes the action of the
fundamental class of $\pi_{1}\bT$ on $\pi_{*}X$. 
\end{thm}

The $E^{1}=E^{2}$ term of the Greenlees $\bT$-Tate spectral sequence
for $X$ is (multiplicatively) isomorphic to $\bZ[t,t^{-1}]\otimes
\pi_{*}X$, where $t$ is an element of $E^{1}_{-2,0}$ for $X=\bS$.  The
element $t$ has $d_{2}(t\otimes 1)=t^{2}\otimes \eta$ and is usually
chosen so that for $v\in \pi_{*}X$, $d_{2}(1\otimes v)=t\otimes \tact
v$.  With this choice of $t$ (which is now uniquely specified), the
map in Theorem~\ref{thm:HMvsG} then takes $t^{n}\otimes v$ to
\[
\begin{cases}
(-z)^{n}&n\geq 0\\
(-x)^{|n|}\otimes v-(-x)^{|n|-1}y\otimes \tact v&n<0.
\end{cases}
\]
From Theorem~\ref{thm:HMd2}, we see that the ring map
$HM_{*,*}\to \bZ[t,t^{-1}]$ sending $x$ to $-t^{-1}$, $y$ to $0$, and
$z$ to $-t$ induces a multiplicative map of spectral sequences from
the Hesselholt-Madsen $\bT$-Tate spectral sequence to the Greenlees
$\bT$-Tate spectral sequence that splits the (non-multiplicative) map
in Theorem~\ref{thm:HMvsG}.

%%%%%%%%%%%%%%%%%%%%%%%%%%%%%%%%%%%%%%%%
\section{Topological periodic cyclic homology}

Having reviewed the definition and basic properties of the Tate fixed
point functor in the previous three 
sections, we now review the definition and basic properties of
topological periodic cyclic homology ($TP$).  
We begin with a very rapid review of relevant aspects of the theory of
$THH$ of spectral categories and $\Mod_{R}$-categories for $R$ a
commutative ring orthogonal spectrum.  We define $TP$ of spectral
categories and, as discussed below, we rely on
the equivalence of $k$-linear dg categories and $\Mod_{Hk}$-categories to
define $TP$ of dg categories.

Let $\Spcat$ denote the category of small spectral categories and
spectral functors: An object of $\Spcat$ is a small category enriched
in orthogonal spectra and a morphism in $\Spcat$ is an enriched
functor.  As explained in~\cite[\S 3]{BM-tc}, the topological
Hochschild-Mitchell complex $\Ncyc$ yields a functor from orthogonal
spectra to orthogonal $\bT$-spectra.  Since we are working
with Borel equivalences, the left derived functor provides an adequate
model for $THH$.  (In other contexts, the construction is more subtle;
see~\cite[\S5]{ABGHLM}.)

To make sense of the left derived functor $THH$, we say that a small
spectral category is \term{pointwise cofibrant} if each mapping
spectrum $\aC(x,y)$ is a cofibrant orthogonal spectrum (in the
standard model structure).  A spectral
functor $F \colon \aC \to \aD$ is a \term{pointwise equivalence} if
the induced function on object sets is the identity and each map of
spectra $\aC(x,y) \to \aD(x,y)$ is a weak equivalence.  The argument
of~\cite[2.7]{BM-tc} shows that there is an endofunctor $Q$ of
$\Spcat$ that lands in pointwise cofibrant small spectral categories
and a natural transformation $Q \to \Id$ through pointwise
equivalences.  The topological Hochschild-Mitchell complex $\Ncyc$
composed with $Q$ then takes pointwise equivalences of spectral
categories to Borel equivalences of orthogonal
$\bT$-spectra. 

Although pointwise cofibrant replacement is technically convenient, we
are typically interested in less rigid notions of equivalence on
$\Spcat$.  We say that a functor $F \colon \aC \to \aD$ is a \term{Dwyer-Kan
equivalence} (or \term{DK-equivalence}) if the induced functor $\pi_{0}(\aC) \to \pi_{0}(\aD)$
is essentially surjective and each map of orthogonal spectra $\aC(x,y)
\to \aD(Fx,Fy)$ is a weak equivalence~\cite[5.1]{BM-tc}.  We say that
a functor $F \colon \aC \to \aD$ is a Morita equivalence if the
induced functor on ``triangulated closures'' is a DK-equivalence
(see~\cite[\S 5]{BM-tc}).  Here the triangulated closure of a small
spectral category $\aC$ is a spectral category $\aC'$ such that
$\Ho(\aC')$ is the pre-triangulated closure of
$\Ho(\aC)$~\cite[5.5]{BM-tc}.

The composite $\Ncyc\circ Q$ sends Morita
equivalences of spectral categories to Borel equivalences of
orthogonal $\bT$-spectra~\cite[5.12]{BM-tc}.  We define topological Hochschild
homology ($THH$) to be the resulting left derived functor
\[
THH \colon \Ho^{M}(\Spcat) \to \StabGB[\bT],
\]
where $\Ho^{M}(\Spcat)$ denotes the homotopy category of spectral
categories obtained by formally inverting the Morita equivalences.
The work of Shipley~\cite[4.2.8--9]{ShipleyD} and
Patchkoria-Sagave~\cite[3.8]{PatchkoriaSagave}
(cf.~\cite[3.5]{BM-tc}) shows that $THH(\aC)$ coincides with the
classical B\"okstedt construction of topological Hochschild homology
as functors to $\StabGB[\bT]$.

\begin{defn}
\term{Topological periodic cyclic homology} 
\[
TP\colon \Ho^{M}(\Spcat)\to \Stab
\]
is the composite derived functor $TP(\aC)=THH(\aC)^{t\bT}$.
\end{defn}

The smash product of spectral categories endows $\Spcat$ with a
symmetric monoidal structure.  This smash product can be left derived using
the pointwise cofibrant replacement functor $Q$ \cite[4.1]{BGT2}.  The
standard Milnor product argument makes $\Ncyc$ a strong symmetric
monoidal functor, which passes to the homotopy category (inverting
Morita equivalences) to make $THH$ a strong symmetric monoidal
functor~\cite[6.8,6.10]{BGT2}.  As an immediate consequence of this discussion
and Corollary~\ref{cor:pssymmon}, we have:

\begin{prop}
$TP$ has the canonical structure of a lax symmetric
monoidal functor $\Ho^{M}(\Spcat)\to \Stab$. 
\end{prop}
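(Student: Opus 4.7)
The plan is to obtain the lax symmetric monoidal structure on $TP$ by composing the strong symmetric monoidal structure on $THH$ (as recalled just above the statement from~\cite[6.8, 6.10]{BGT2}) with the lax symmetric monoidal structure on the Tate fixed point functor (Corollary~\ref{cor:pssymmon}). Since $THH$ lands in $\StabGB[\bT]$ and the Tate functor $(-)^{t\bT}$ is defined precisely on this Borel category, the composition is syntactically well-formed.

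Concretely, I would proceed in two short steps. First, I would recall the strong symmetric monoidal structure on $THH\colon \Ho^{M}(\Spcat)\to \StabGB[\bT]$: the smash product of spectral categories is left-derivable through the pointwise cofibrant replacement $Q$, the Milnor product argument identifies $\Ncyc(Q\aC\wedge Q\aD)$ with $\Ncyc Q\aC\wedge \Ncyc Q\aD$ up to coherent natural isomorphism in the Borel $\bT$-equivariant stable category, and this assembly passes through the Morita localization. Second, I would invoke Corollary~\ref{cor:pssymmon} to equip $(-)^{t\bT}$ with the structure of a lax symmetric monoidal functor $\StabGB[\bT]\to \Stab$, with comparison map induced by the pairing $J T^{\oL}_{\bT}X\wedge J T^{\oL}_{\bT}Y\to J T^{\oL}_{\bT}(X\wedge Y)$ of Theorem~\ref{thm:pssymmon}.

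The conclusion is then formal: the composite of a strong symmetric monoidal functor with a lax symmetric monoidal functor is lax symmetric monoidal, with structure maps given by
\[
TP(\aC)\smal TP(\aD) = THH(\aC)^{t\bT}\smal THH(\aD)^{t\bT}\to (THH(\aC)\smal THH(\aD))^{t\bT}\to THH(\aC\wedge \aD)^{t\bT}=TP(\aC\wedge \aD),
\]
and the unit $\bS\to THH(\bS)^{t\bT}=TP(\bS)$ coming from the unit of the Tate functor composed with the unit isomorphism for $THH$. The coherence (associativity, unitality, symmetry) diagrams for the composite are forced by the corresponding diagrams for the two factors.

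There is essentially no obstacle, since both ingredients are already in place; the only thing to verify is that the two structures live on compatible categories, which they do by construction of Tate fixed points on $\StabGB$. If anything, the mild subtlety is that the lax structure from Corollary~\ref{cor:pssymmon} is obtained via the point-set lift $JT^{\oL}_{\bT}$ to $\SpG[\bT]_{\BH}$-modules, so strictly speaking the composite $TP$ inherits a lift to a lax symmetric monoidal functor $\Ho^{M}(\Spcat)\to \Ho(\SpG[\bT]_{\BH})$; forgetting along the equivalence $\Ho(\SpG[\bT]_{\BH})\simeq \Stab$ then yields the stated proposition.
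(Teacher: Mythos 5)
Your proposal is correct and follows the paper's own argument essentially verbatim: the paper states the proposition as an immediate consequence of the strong symmetric monoidal structure on $THH$ and Corollary~\ref{cor:pssymmon}, which is exactly the composition you describe. The remark about $JT^{\oL}_{\bT}$ providing a point-set lift that one then forgets along the equivalence $\Ho(\SpG[\bT]_{\BH})\simeq\Stab$ is a correct (and slightly more explicit) reading of what Corollary~\ref{cor:pssymmon} furnishes.
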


For $R$ a commutative ring orthogonal spectrum, the definition of
$\Mod_{R}$-cat\-egory is completely analogous to the definition of
spectral category, using the category of $R$-modules in
place of the category of orthogonal spectra.  We denote the category
of $\Mod_{R}$-categories as $\Spcat_{R}$ and the homotopy category
obtained by formally inverting the Morita equivalences as
$\Ho^{M}(\Spcat_{R})$.  When $R$ is cofibrant as a commutative ring
orthogonal spectrum (which we can assume without loss of
generality), $\Ncyc$ preserves Morita equivalences between pointwise
cofibrant $\Mod_{R}$-categories (where we understand cofibrant in the
sense of the standard model structure on the category of $R$-modules).  We then have the following
proposition for $TP$ of $\Mod_{R}$-categories, using the model 
$JT^{\oL}_{\bT}\Ncyc(R)$ (in Notation~\ref{notn:TwO}; see also
Theorem~\ref{thm:pssymmon}). 

\begin{prop}
Let $R$ be a cofibrant commutative orthogonal ring spectrum.  Then
$TP$ has the canonical structure of a lax symmetric monoidal functor\break
$\Ho^{M}(\Spcat_{R})\to \Ho(\Mod_{TP(R)})$.
\end{prop}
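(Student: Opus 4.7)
The plan is to combine the strong symmetric monoidal structure of $\Ncyc$ (as extended from spectral categories to $\Mod_{R}$-categories) with the point-set lax symmetric monoidal structure on $JT^{\oL}_{\bT}$ provided by Theorem~\ref{thm:pssymmon}, and then pass to homotopy categories.

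First, I would observe that the discussion preceding the previous proposition extends to the relative setting: when $R$ is a cofibrant commutative ring orthogonal spectrum, the Milnor product argument gives $\Ncyc(R)$ the natural structure of a commutative ring orthogonal $\bT$-spectrum and, for any $\Mod_{R}$-category $\aC$, makes $\Ncyc(\aC)$ into a $\Ncyc(R)$-module. Moreover, there is a natural isomorphism $\Ncyc(\aC\otimes_{R}\aD) \iso \Ncyc(\aC)\sma_{\Ncyc(R)}\Ncyc(\aD)$ upgrading $\Ncyc$ to a strong symmetric monoidal functor $\Spcat_{R}\to \Mod_{\Ncyc(R)}$ in orthogonal $\bT$-spectra. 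Composing with the pointwise cofibrant replacement $Q$ and using the hypothesis that $\Ncyc\circ Q$ preserves Morita equivalences between pointwise cofibrant $\Mod_{R}$-categories, we obtain a strong symmetric monoidal left derived functor on $\Ho^{M}(\Spcat_{R})$ landing in the derived category of $\Ncyc(R)$-modules.

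Second, by Theorem~\ref{thm:pssymmon}, the point-set functor $JT^{\oL}_{\bT}$ is lax symmetric monoidal $\SpT\to \SpT_{\BH}$, and so sends the commutative ring orthogonal $\bT$-spectrum $\Ncyc(QR)$ to a commutative monoid $JT^{\oL}_{\bT}\Ncyc(QR)$ in $\SpT_{\BH}$ (which, by construction, is a point-set model for $TP(R)$). A lax symmetric monoidal functor canonically carries modules to modules, so for every $\Mod_{R}$-category $\aC$ the object $JT^{\oL}_{\bT}\Ncyc(Q\aC)$ acquires a natural $JT^{\oL}_{\bT}\Ncyc(QR)$-module structure, together with coherent natural comparison maps
\[
JT^{\oL}_{\bT}\Ncyc(Q\aC)\sma_{JT^{\oL}_{\bT}\Ncyc(QR)} JT^{\oL}_{\bT}\Ncyc(Q\aD)
\to JT^{\oL}_{\bT}\Ncyc(Q(\aC\otimes_{R}\aD))
\]
assembled from the lax structure maps of $JT^{\oL}_{\bT}$ and the strong monoidal structure of $\Ncyc$. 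Associativity, commutativity, and unitality of this structure follow from the corresponding coherences for $\Ncyc$ and for $JT^{\oL}_{\bT}$.

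Third, I would pass to homotopy categories. Because $\SpT_{\BH}$ is a genuinely symmetric monoidal category of $S$-modules, the point-set relative smash product $\sma_{JT^{\oL}_{\bT}\Ncyc(QR)}$ has a well-behaved left derived functor computing the derived smash product over $TP(R)$, and the above comparison maps descend to a lax symmetric monoidal transformation
\[
TP(\aC)\smal_{TP(R)}TP(\aD)\to TP(\aC\otimes_{R}\aD)
\]
in $\Ho(\Mod_{TP(R)})$.

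The main obstacle is homotopical, not formal: one must know that $\Ncyc \circ Q$ really does descend to $\Ho^{M}(\Spcat_{R})\to \Ho(\Mod_{\Ncyc(R)})$ taking the relative smash product to the derived relative smash product, and that $JT^{\oL}_{\bT}$ interacts correctly with this derived relative smash product. The first point is exactly the content of the cofibrant/pointwise cofibrant hypotheses, while the second is the reason it is essential to work inside $\SpT_{\BH}$ (rather than in raw $\oL(1)$-spectra), where the module and algebra categories over commutative monoids are themselves symmetric monoidal model categories in which derived relative smash products are available.
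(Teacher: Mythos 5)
Your proposal follows the same route the paper has in mind: combine the strong symmetric monoidal structure of $\Ncyc$ (restricted to $\Mod_{R}$-categories, landing in $\Ncyc(R)$-modules) with the point-set lax symmetric monoidal structure of $JT^{\oL}_{\bT}$ from Theorem~\ref{thm:pssymmon}, using the cofibrancy and Morita-invariance hypotheses to pass to homotopy categories with the model $JT^{\oL}_{\bT}\Ncyc(R)$ for $TP(R)$. The paper leaves this implicit and you have spelled it out correctly; there is no substantive difference in approach.
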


Next, let $\dgcatk$ denote the category of small $k$-linear dg categories for a
commutative ring $k$: An object consists of a small category 
enriched in chain complexes of $k$-modules and a morphism is an
enriched functor.  Choosing a cofibrant commutative ring orthogonal
spectrum $Hk$ representing the Eilenberg-Mac Lane ring spectrum, we
have a symmetric monoidal equivalence of homotopy categories
$\Ho(\dgcatk)\simeq \Ho(\SpcatHk)$ and $\Ho^{M}(\dgcatk)\simeq
\Ho^{M}(\SpcatHk)$, q.v.~\cite[\S2]{BM-tc}.  Using this equivalence, we
obtain $TP$ as a lax symmetric monoidal functor 
\[
\Ho^{M}(\dgcatk)\to \Ho(\Mod_{TP(k)})
\]
where we write $TP(k)$ for $JT^{\oL}_{\bT}(\Ncyc(Hk))$.  

%%%%%%%%%%%%%%%%%%%%%%%%%%%%%%%%%%%%%%%%
\section[Proof of Theorem~\ref{main:Kunneth}]{A filtration argument (Proof of Theorem~\ref{main:Kunneth})}\label{sec:main}\label{SEC:MAIN}

The purpose of this section is to give an outline of the proof of the
main theorem, assuming the existence of a point-set model of $TP$ with
a filtration that satisfies certain properties.  Since construction of
a model satisfying these properties is technically involved, it is
useful to abstract out the key principles of the argument here,
leaving the construction and verification of properties to
Sections~\ref{sec:start}--\ref{sec:end}.  The same basic outline also
proves an analogous theorem for $C_{p}$-Tate of $THH$ in the same
context ($k$-linear dg categories for $k$ a perfect field of
characteristic $p>0$).  Given the properties of the point-set model
described below, we prove the main theorem using a comparison of
spectral sequences argument.

For the purposes of this section, let $G$ be a finite group or the
circle group $\bT$ of unit complex numbers, and we consider a point-set functor
\[
T^{M}\colon \SpGbS\to \SpbS
\]
where $\SpGbS$ denotes the category of
orthogonal $G$-spectra with a structure map from $\bS$ and likewise
$\SpbS$ denotes the category of orthogonal spectra with a structure map
from $\bS$.  We intend to apply this functor to the $THH$ of
$Hk$-categories, where any choice of object of $\aX$ induces a map of
orthogonal $\bT$-spectra $\bS\to N^{\cy}(\aX)$. We ask for $T^{M}$ to come
with the following additional structure:
\begin{enumerate}
\item\label{e:monoidal} $T^{M}$ is a lax monoidal functor.
\item\label{e:filt} $T^{M}$ comes with a natural filtration by $h$-cofibrations
\[
\cdots \to T^{M}_{-1}\to T^{M}_{0}\to T^{M}_{1}\to \cdots
\]
with $\holim T^{M}_{-n}\simeq *$ and $\colim T^{M}_{n}=T^{M}$.
\item\label{e:fm} The filtration is compatible with the monoidal structure in the
sense that the structure map $\bS\to T^{M}$ has a natural lift to $T^{M}_{0}$
and we have natural maps
\[
T^{M}_{m}(X)\sma T^{M}_{n}(Y)\to T^{M}_{m+n}(X\sma Y)
\]
for all $m,n\in \bZ$, which induce in the colimit the lax monoidal
transformation $T^{M}(X)\sma T^{M}(Y)\to T^{M}(X\sma Y)$.
\end{enumerate}
Moreover, we require that as a functor to the stable category $T^{M}$ is
naturally isomorphic to $(-)^{tG}$ by an isomorphism taking the
filtration to the Hesselholt-Madsen Tate filtration, Definition~\ref{defn:THM}
(case $G=\bT$) or~\ref{defn:HMTfilt} (case $G$ finite), and
we fix such an isomorphism.  

The Hesselholt-Madsen Tate filtration comes with a map $\pi_{*}X$ into
$E^{1}_{0,*}$; it derives from a natural transformation from the
underlying non-equivariant orthogonal spectrum $i^{*}X$ to the $0$th
filtration quotient.  We will need the following additional technical
hypothesis about the filtration $T^{M}_{*}X$ and this map.

\begin{hyp}\label{hyp:gr0}
The given natural transformation in the stable category\break
$i^{*}X \to T^{M}_{0}(X)/T^{M}_{-1}(X)$ comes from a zigzag of point-set 
monoidal natural transformations.
\end{hyp}

Specifically, we construct in Section~\ref{sec:Moore} a zigzag of
point-set monoidal functors of the form
\[
i^{*}X\overto{\simeq}RR(X)\overfrom{\simeq} TT(X)\to T^{M}_{0}X/T^{M}_{-1}X
\]
where $RR$ and $TT$ are specific point-set monoidal functors
constructed there. 

Assuming the existence of the functor $T^{M}$ with properties above,
the remainder of the section discusses and outlines a proof of the following theorem.

\begin{thm}\label{thm:main}
Let $G=\bT$ or $C_{p}$ for a prime $p$.  Let $k$ be a perfect field of characteristic
$p$, and let $X$ and $Y$ be $G$-equivariant $N^{\cy}(Hk)$-modules
under $N^{\cy}(Hk)$ with
the property that 
$\pi_{*}(X)$ and $\pi_{*}(Y)$ are finitely generated as graded modules
over $THH_{*}(k)\iso \pi_{*}(N^{\cy}(Hk))$.  Then the induced map 
\[
T^{M}(X)\smal_{T^{M}(N^{\cy}Hk)} T^{M}(Y)\to
T^{M}(X \smal_{N^{\cy}(Hk)}Y)
\]
is a weak equivalence.
\end{thm}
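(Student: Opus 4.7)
The plan is to prove the theorem by comparing spectral sequences on both sides of the map, using the filtration on $T^{M}$ from properties (\ref{e:filt})--(\ref{e:fm}), the identification of the Hesselholt-Madsen Tate spectral sequence from Proposition~\ref{prop:CCfg} (for finite $G$) and Theorem~\ref{thm:CCT} (for $G=\bT$), and the strong K\"unneth theorem for $THH$ over $Hk$ from Section~\ref{sec:THH}. Property (\ref{e:fm}) ensures that the comparison map in the theorem is filtration-preserving, so it suffices to exhibit spectral sequences on both sides that converge strongly to the respective homotopy groups and to show that the induced map is an isomorphism on $E^{2}$-pages.

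First I would apply the filtration $\{T^{M}_{n}\}$ to the RHS to obtain the Hesselholt-Madsen Tate spectral sequence for $N^{\cy}(B\sma_{Hk}C)$; by Proposition~\ref{prop:CCfg}/Theorem~\ref{thm:CCT}, its $E^{2}$-page is $\CC_{*,*}\otimes THH_{*}(B\sma_{Hk}C)$ as a $\CC_{*,*}$-bimodule. On the LHS, property (\ref{e:fm}) pairs the filtrations on $T^{M}(N^{\cy}B)$ and $T^{M}(N^{\cy}C)$, and descending through the bar-construction model of $\smal_{T^{M}(N^{\cy}Hk)}$ produces a K\"unneth-type filtration whose associated spectral sequence has $E^{2}$-page
\[
(\CC_{*,*}\otimes THH_{*}(B))\otimes^{\bL}_{\CC_{*,*}\otimes THH_{*}(k)}(\CC_{*,*}\otimes THH_{*}(C))\iso \CC_{*,*}\otimes \bigl(THH_{*}(B)\otimes^{\bL}_{THH_{*}(k)}THH_{*}(C)\bigr),
\]
using that $\CC_{*,*}$ is flat over $\bZ$. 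The strong K\"unneth theorem for $THH$ over $Hk$ supplies a strongly convergent spectral sequence $\Tor^{THH_{*}(k)}_{*,*}(THH_{*}(B),THH_{*}(C))\Longrightarrow THH_{*}(B\sma_{Hk}C)$; since $THH_{*}(k)=k[\sigma]$ with $|\sigma|=2$ by B\"okstedt, a graded PID, $\Tor^{THH_{*}(k)}_{i}$ vanishes for $i\geq 2$, and the finite-generation hypothesis on $THH_{*}(B)$ and $THH_{*}(C)$ guarantees strong convergence with only finitely many nonzero terms in each bidegree. This identifies the two $E^{2}$-pages and produces the desired isomorphism of abutments.

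The main obstacle will be constructing the filtration on the LHS and proving the $E^{1}$-identification claimed above. One must verify that smashing with the filtered objects $T^{M}_{n}$ commutes well enough with the bar construction defining $\smal_{T^{M}(N^{\cy}Hk)}$ (the $h$-cofibration condition in (\ref{e:filt}) is used here to make the required pushouts and colimits homotopically meaningful), that the resulting bifiltration totalizes to the expected filtration, and that the pairings from (\ref{e:fm}) together with Proposition~\ref{prop:CCfg}/Theorem~\ref{thm:CCT} combine to give the claimed tensor-product description of $E^{1}$. Hypothesis~\ref{hyp:gr0} is precisely what is needed at the level of $0$th associated graded pieces to identify the $\CC_{*,*}$-bimodule structure on $E^{1}$ as arising from $THH_{*}$ of the input, and so to make the comparison with the $THH$ K\"unneth spectral sequence work.
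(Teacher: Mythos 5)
Your overall strategy---filter both sides of the comparison map and identify pages using Proposition~\ref{prop:CCfg}/Theorem~\ref{thm:CCT}---is the right skeleton, but the outline misidentifies what the lefthand page actually is and omits the step that carries all the arithmetic content.  In the paper, the lefthand spectral sequence is the one associated to the $T^{M}_{*}$-filtration on the filtered derived balanced smash product (via a cofibrant filtered $T^{M}_{*}A$-module replacement $TY'_{*}$ of $T^{M}_{*}Y$, as in Section~\ref{sec:filtmod}); its $E^{1}$-term is $\pi^{\Gr}_{*,*}$ of that object, and Lemma~\ref{lem:EMSS} proves---by a \emph{trigraded} Eilenberg--Moore comparison, using that $\CC_{*,*}$ is torsion free---that this $E^{1}$-term is already $\CC_{*,*}\otimes THH_{*}(B\sma_{Hk}C)$, i.e.\ literally equal to the RHS $E^{1}$-term.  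You instead assert that the lefthand $E^{2}$-page is $\CC_{*,*}\otimes\bigl(THH_{*}(B)\otimes^{\bL}_{THH_{*}(k)}THH_{*}(C)\bigr)$, and then want to feed in the $THH$ K\"unneth spectral sequence to ``identify the abutments.''  That object is not a page of the filtration spectral sequence; it is the $E^{2}$-page of an \emph{auxiliary} spectral sequence computing the $E^{1}$-term.  And appealing to identification of abutments is circular---the abutments are what we are trying to show agree, and inserting an extra spectral sequence between the two pages does not produce an isomorphism of spectral sequences induced by the comparison map.

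More seriously, you have not addressed convergence of the lefthand spectral sequence, which is where the hypotheses on $k$ and $G$ actually bite and is the hard part of the theorem.  The filtration is $\bZ$-indexed, and conditional convergence (Lemma~\ref{lem:lhss}) is established by a descent argument whose ingredients do not appear in your outline: Proposition~\ref{prop:kfinite} (smallness of $i_{*}Hk$ in the Borel derived category of $N^{\cy}(Hk)$-modules, which uses the B\"okstedt computation $THH_{*}(k)=k[t]$ and collapse of the homotopy fixed point spectral sequence); the Postnikov tower of Proposition~\ref{prop:Post}, writing $Y$ as a homotopy limit of stages built from finite wedges of shifts of $i_{*}Hk$, where finiteness uses that $\pi_{n}Y$ is finitely generated over $k$; Proposition~\ref{prop:smashfinite} and Proposition~\ref{prop:holim} to push conditional convergence through; and Theorem~\ref{main:finite} (used via Proposition~\ref{prop:gradedequiv} to show the map into the homotopy limit is an objectwise equivalence).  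Your remark that finite generation makes the $THH$ K\"unneth spectral sequence strongly convergent concerns the auxiliary spectral sequence, not the one you need to converge.  Without this convergence step the final appeal to the Boardman comparison theorem~\cite[7.2]{Boardman-SpectralSequences} is not available, and the argument does not close.
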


This implies Theorem~\ref{main:Kunneth} and its analogue for $G=C_{p}$
as follows.  Let $\aX'$ and $\aY'$ be pointwise cofibrant $Hk$-spectral
categories modeling smooth and proper $k$-linear dg categories $\aX$
and $\aY$, and
take $X$ and $Y$ in the statement of Theorem~\ref{thm:main} to be
$N^{\cy}(\aX')$ and $N^{\cy}(\aY')$, using any object of $\aX$ and
$\aY$ to obtain the
structure maps $N^{\cy}(Hk)\to N^{\cy}(\aX')$ and  $N^{\cy}(Hk)\to
N^{\cy}(\aY')$.   Although the map in
Theorem~\ref{main:Kunneth} implicitly uses the lax symmetric monoidal
structure on $T_{G}$ constructed in Section~\ref{sec:lax}, we show in
Section~\ref{sec:compare} that the monoidal structure on the model
$T^{M}$ induces the same map in the stable category.  Since
$THH_{*}(k)\iso k[t]$ (with $t$ in degree $2$) is a Noetherian ring,
it follows from Theorem~\ref{main:THHfg}, which is proved
independently in Section~\ref{sec:spthhfg}, that the $THH_{*}$ of a
smooth and proper $k$-linear dg category is finitely generated over
$THH_{*}(k)$.  The K\"unneth theorem for $THH$,
\[
N^{\cy}(\aX')\sma_{N^{\cy}(Hk)}N^{\cy}(\aY')\simeq N^{\cy}(\aX'\sma_{Hk}\aY'),
\]
q.v.~Theorem~\ref{thm:thhkun}, gives us a weak equivalence 
\[
T^{M}(X \smal_{N^{\cy}(Hk)}Y)\simeq
T^{M}(N^{\cy}(\aX'\sma_{Hk}\aY'))\simeq TP(\aX\otimes \aY).
\]
Putting this all together reduces
Theorem~\ref{main:Kunneth} to the statement of Theorem~\ref{thm:main}.

We now outline the proof of Theorem~\ref{thm:main}.
For convenience, write $A=N^{\cy}(Hk)$. Without loss of generality, we can take $X$ and
$Y$ in the statement to be cofibrant in the category of $G$-equivariant
$A$-modules under $A$; then $X\sma_{A}Y$ represents the derived smash
product $X\smal_{N^{\cy}(Hk)}Y$.  The filtration on
$T^{M}(X\sma_{A}Y)$ induces a spectral sequence, the Hesselholt-Madsen Tate spectral
sequence for $X\sma_{A}Y$.  In our work below, we call this the
\term{righthand spectral sequence} as it is the codomain in a map of
spectral sequences.

We construct the \term{lefthand spectral sequence} as follows.  As we
review in Section~\ref{sec:filtmod}, the compatibility of the filtration with the lax monoidal
structure on $T^{M}$ allow us to construct $T^{M}X\sma_{T^{M}\!A}T^{M}Y$ as a filtered
object.  Actually, we are more interested in the derived smash product
$T^{M}X\smal_{T^{M}\!A}T^{M}Y$, and we argue in Section~\ref{sec:filtmod} that we can model the
derived smash product by a filtered object where the comparison map to
$T^{M}X\sma_{T^{M}\!A}T^{M}Y$ is filtered; we choose such a model and denote it as
$T^{M}X\smal_{T^{M}\!A}T^{M}Y$.  The filtration on $T^{M}X\smal_{T^{M}\!A}T^{M}Y$ then gives the
lefthand spectral sequence.  The compatibility of the filtration with
the lax monoidal structure of $T^{M}$ implies that the map
\begin{equation}\label{eq:Hand}
T^{M}X\smal_{T^{M}\!A}T^{M}Y\to T^{M}(X\sma_{A}Y)
\end{equation}
is filtered, and that gives us a map of spectral sequences from the
lefthand spectral sequence to the righthand spectral sequence.  The
main step in the proof of Theorem~\ref{thm:main} is the following
result, which we prove in Section~\ref{sec:Hand}.

\begin{thm}\label{thm:Hand}\label{THM:HAND}
The map of spectral sequences from the lefthand spectral sequence to
the righthand spectral sequence is an isomorphism on $E^{1}$-terms.
\end{thm}

To apply standard spectral sequence comparison techniques, we need the
following two results on the convergence of the spectral sequences. 

\begin{prop}\label{prop:rhss}
The righthand spectral sequence is a half-plane
spectral sequence with entering differentials, conditionally converging to
$\pi_{*}(T^{M}(X\sma_{A}Y))$.
\end{prop}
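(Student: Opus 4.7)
The plan is to deduce both assertions from the structural properties (\ref{e:filt}) of the filtration together with the identification of the associated spectral sequence as the Hesselholt-Madsen Tate spectral sequence for $X\sma_{A}Y$. The filtration $\{T^{M}_{n}(X\sma_{A}Y)\}_{n\in\bZ}$ is by $h$-cofibrations with $\colim_{n}T^{M}_{n}\simeq T^{M}(X\sma_{A}Y)$ and $\holim_{n}T^{M}_{-n}\simeq *$. Since the structure maps are $h$-cofibrations, $\colim$ commutes with $\pi_{*}$ on this sequence, identifying the abutment with $\pi_{*}(T^{M}(X\sma_{A}Y))$; the vanishing of the homotopy limit of the negative tail is precisely Boardman's hypothesis (\cite[\S7]{Boardman-SpectralSequences}) for conditional convergence to the colimit.

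For the half-plane structure, I would use Theorem~\ref{thm:CCT} (in the case $G=\bT$), or the analogue supplied by Proposition~\ref{prop:CCfg} (in the case $G=C_{p}$), to identify the $E^{1}$-term with $\CC_{*,*}\otimes \pi_{*}(X\sma_{A}Y)$; its vanishing lines are governed by those of $\pi_{*}(X\sma_{A}Y)$. The key point is connectivity: $B$ and $C$ are cofibrant $Hk$-algebras and hence connective, so $THH(B)$ and $THH(C)$ are connective modules over the connective ring spectrum $THH(Hk)$, and the derived smash product $X\sma_{A}Y\simeq THH(B)\smal_{THH(Hk)}THH(C)$ is connective as a non-equivariant spectrum. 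Therefore $\pi_{j}(X\sma_{A}Y)=0$ for $j<0$, and the explicit formula displayed at the end of Section~\ref{sec:TSS} yields $E^{1}_{i,j}=0$ for $j<-1$, giving the required half-plane support. Since the differentials $d_{r}$ have bidegree $(-r,r-1)$, the source $(i+r,j-r+1)$ of an entering differential into a fixed position $(i,j)$ leaves the support as soon as $r>j+2$; only finitely many $d_{r}$ enter each position, which is precisely the half-plane with entering differentials condition.

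The only substantive input is the connectivity of the derived smash product $THH(B)\smal_{THH(Hk)}THH(C)$, which follows from standard connectivity estimates for derived smash products of connective modules over connective ring spectra; everything else is a formal consequence of the properties (\ref{e:filt}) of $T^{M}$ and the $E^{1}$-computation of Section~\ref{sec:TSS}. I do not anticipate a serious obstacle in executing this plan.
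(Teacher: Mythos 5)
Your overall strategy is the same as the paper's: read off conditional convergence from the vanishing of $\holim T^{M}_{-n}$ (this is exactly property~(ii), verified in Lemma~\ref{lem:HMTcc} for $G=\bT$ and \cite[4.3.6]{HMAnnals} for $G=C_{p}$), and deduce the half-plane condition from boundedness of $\pi_{*}(X\sma_{A}Y)$ together with the explicit form of the $E^{1}$-term.

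However, your justification of boundedness is incorrect. You assert that ``$B$ and $C$ are cofibrant $Hk$-algebras and hence connective,'' but cofibrancy in the model category of $Hk$-algebras does not imply connectivity: a smooth and proper dg $k$-algebra (which is the generality in which Theorem~\ref{thm:main} is applied) will typically have homotopy in negative degrees, and so will the corresponding cofibrant $Hk$-algebra. Consequently you cannot conclude that $X\sma_{A}Y$ is connective, nor that $E^{1}_{i,j}=0$ for $j<-1$. The correct input is the hypothesis of Theorem~\ref{thm:main} itself: $THH_{*}(B)$ and $THH_{*}(C)$ are finitely generated over $THH_{*}(k)\iso k[t]$, a graded ring concentrated in non-negative (even) degrees. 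Finitely generated graded modules over such a ring are bounded below, so $\pi_{*}X$ and $\pi_{*}Y$ are bounded below, hence so is $\pi_{*}(X\sma_{A}Y)$ (the smash being over the connective ring $A$), and the formula of Theorem~\ref{thm:CCT}/Proposition~\ref{prop:CCfg} then gives a lower bound on $E^{1}_{i,j}$ in $j$ that is independent of $i$. The rest of your discussion of why only finitely many differentials enter a fixed position is fine once you replace ``vanishes for $j<-1$'' by ``vanishes for $j$ below a fixed bound.''
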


\begin{proof}
We have $\pi_{*}A\iso THH_{*}(k)\iso k[t]$ (with $t$ in degree 2) is
connective, and by hypothesis in Theorem~\ref{thm:main}, $\pi_{*}X$ and
$\pi_{*}Y$ are finitely generated over $\pi_{*}A$; they are therefore
bounded below.  It follows that 
$X\sma_{A}Y$ is bounded below and so the explicit formula
for the $E^{1}$-term in Theorem~\ref{thm:CCT} (in the case $G=\bT$) or
Proposition~\ref{prop:CCfg} (in the case $G=C_{p}$) shows that 
$E^{1}_{i,*}$ is bounded below with bound independent of $i$.  
Conditional convergence is Lemma~\ref{lem:HMTcc} in the case $G=\bT$
and~\cite[4.3.6]{HMAnnals} in the case $G=C_{p}$.
\end{proof}

In Section~\ref{sec:cc}, we prove the following lemma.

\begin{lem}\label{lem:lhss}\label{LEM:LHSS}
The lefthand spectral sequence is conditionally convergent, converging
to $\pi_{*}(T^{M}X\smal_{T^{M}\!A}T^{M}Y)$. 
\end{lem}

Theorem~\ref{thm:Hand}, Proposition~\ref{prop:rhss}, and
Lemma~\ref{lem:lhss} together imply that the map~\eqref{eq:Hand} is a weak
equivalence (by~\cite[7.2]{Boardman-SpectralSequences}), which proves
Theorem~\ref{thm:main}.

%%%%%%%%%%%%%%%%%%%%%%%%%%%%%%%%%%%%%%%%
\section{Filtered modules over filtered ring orthogonal spectra}\label{sec:filtmod}

This section begins the argument for
Theorem~\ref{thm:Hand} and Lemma~\ref{lem:lhss}.
Theorem~\ref{thm:Hand} compares a spectral sequence constructed from a
smash product of Tate filtrations to the spectral sequence of the Tate
filtration of the
smash product.  To develop the tools to do this, we study the homotopy
theory of filtered modules over filtered ring orthogonal spectra and
the derived smash product.  We use the following terminology in this
section and the next two.

\begin{defn}
A \term{filtered spectrum} $X_{*}$ consists of a sequence of maps of
orthogonal spectra
\[
\cdots \to X_{-1}\to X_{0}\to X_{1}\to \cdots,
\]
or equivalently a functor from the poset $(\bZ,{\leq})$ to the
category of orthogonal spectra.  The category of filtered spectra has
objects the filtered spectra and maps the natural transformations.  We
make filtered spectra a symmetric monoidal category using the Day
convolution~\cite[\S21]{MMSS} for the symmetric monoidal product $+$ on
$(\bZ,{\leq})$.  Let $\bS_{*}$ denote the filtered spectrum satisfying
$\bS_{n}=*$ for $n<0$ and $\bS_{n}=\bS$ and maps the identity maps for
$n\geq 0$. 
\end{defn}

The Day convolution formulation of the smash product is a shortcut to
produce the strong symmetric monoidal structure on the category, but in this
case the construction is easy to describe.  Given filtered spectra
$X_{*}$ and $Y_{*}$, the smash product $Z_{*}=X_{*}\sma Y_{*}$ is
given by the formula
\[
Z_{n}=\bigcup_{i+j=n}X_{i}\sma Y_{j}
\]
or more formally,
\begin{multline*}
Z_{n}=\lcolim_{s\to -\infty,t\to\infty}
   X_{s}\sma Y_{n-s}
    \cup_{X_{s}\sma Y_{n-s-1}}
    X_{s+1}\sma Y_{n-s-1}
      \cup_{X_{s+1}\sma Y_{n-s-2}}\cdots\\ \cdots 
      \cup_{X_{t-2}\sma Y_{n-t+1}}
      X_{t-1}\sma Y_{n-t+1}
        \cup_{X_{t-1}\sma Y_{n-t}}
        X_{t}\sma Y_{n-t}.
\end{multline*}
The smash product $Z_{*}$ comes with canonical maps $X_{i}\sma
Y_{j}\to Z_{i+j}$ and is characterized by the property that maps of
filtered spectra $Z_{*}\to W_{*}$ are in natural bijective
correspondence with systems of maps $X_{i}\sma Y_{j}\to W_{i+j}$ that
make the evident diagrams in $i$ and $j$ commute.  The spectrum
$\bS_{*}$ is the unit for the smash product. 

\begin{defn}\label{defn:ringmod}
A \term{filtered associative ring spectrum} consists of a filtered
spectrum $A_{*}$, a map $\eta\colon \bS_{*}\to A_{*}$, and a map
$\mu\colon A_{*}\sma A_{*}\to A_{*}$ satisfying the usual monoid relations (unit
and associativity diagrams).  A \term{filtered left $A_{*}$-module} (resp.,
\term{filtered right $A_{*}$-module}) consists of a filtered spectrum $M_{*}$
and a map $\xi\colon A_{*}\sma M_{*}\to M_{*}$ (resp., $\xi \colon
M_{*}\sma A_{*}\to M_{*}$) satisfying the usual action relations (unit
and associativity diagrams).
\end{defn}

Using the characterization of the smash product, we obtain an external
formulation of filtered associative ring spectra: a filtered
associative ring spectrum consists of a filtered spectrum $A_{*}$
together with a map $\eta_{0}\colon \bS\to A_{0}$ and maps 
\[
\mu_{i,j}\colon A_{i}\sma A_{j}\to A_{i+j}
\]
such that the following unit diagrams
\[
\xymatrix@C+1pc{%
\bS\sma A_{n}\ar[r]^-{\eta_{0}\sma\id}\ar[dr]_-{\iso}
&A_{0}\sma A_{n}\ar[d]^-{\mu_{0,n}}
&A_{n}\sma A_{0}\ar[d]_-{\mu_{n,0}}
&A_{n}\sma \bS\ar[l]_-{\id\sma\eta_{0}}\ar[dl]^-{\iso}\\
&A_{n}&A_{n}
}
\]
and associativity and structure map diagrams 
\[
\xymatrix@C+2pc{%
A_{i}\sma A_{j}\sma A_{k}
  \ar[r]^-{\mu_{i,j}\sma \id}\ar[d]_-{\id\sma\mu_{j,k}}
&A_{i+j}\sma A_{k}\ar[d]^-{\mu_{i+j,k}}
\hspace{-2em}&%
A_{i}\sma A_{j}\ar[d]_-{\mu_{i,j}}\ar[r]^-{a_{i,m}\sma a_{j,n}}
&A_{m}\sma A_{n}\ar[d]^-{\mu_{m,n}}
\\%
A_{i}\sma A_{j+k}\ar[r]_-{\mu_{i,j+k}}&A_{i+j+k}
\hspace{-2em}&%
A_{i+j}\ar[r]_-{a_{i+j,m+n}}&A_{m+n}
}
\]
commute, where we have written $a_{k,l}\colon A_{k}\to A_{l}$ for the
structure maps of $A_{*}$.  Filtered left and right $A_{*}$-modules
admit a similar external formulation.

\begin{example}
Let $T^{M}_{*}$ be a functor satisfying the properties laid out in
Section~\ref{sec:main}.  For $A$ an associative ring orthogonal
$\bT$-spectrum, $T^{M}_{*}\!A$ has the canonical structure of a filtered
associative ring spectrum.  If $X$ and $Y$ are right and left
$A$-modules, then $T^{M}_{*}X$ and $T^{M}_{*}Y$ have the canonical
structure of right and left filtered $T^{M}_{*}\!A$-modules.
\end{example}

The argument in Section~\ref{sec:main} uses a filtered version of the
balanced smash product, which is constructed as follows.

\begin{defn}
Let $A_{*}$ be a filtered associative ring spectrum, let $M_{*}$ be a filtered
right $A_{*}$-module and let $N_{*}$ be a filtered left
$A_{*}$-module.  Define the balanced smash product
$M_{*}\sma_{A_{*}}N_{*}$ to be the coequalizer
\[
\xymatrix{%
M_{*}\sma A_{*}\sma N_{*}\ar[r]<.75ex>\ar[r]<-.75ex>&
M_{*}\sma N_{*}\ar[r]&M_{*}\sma_{A_{*}}N_{*}
}
\]
where one map is induced by the right $A_{*}$-action on $M_{*}$ and
the other is induced by the left $A_{*}$-action on $N_{*}$.  The
coequalizer is a filtered spectrum (typically with no extra
structure). 
\end{defn}

Since smash products and coequalizers commute with sequential
colimits, we see that 
\[
\colim (M_{*}\sma_{A_{*}}N_{*})\iso (\colim M_{*})\sma_{(\colim
A_{*})}(\colim N_{*}).
\]
In other words, the balanced smash product above gives a filtration on
the balanced smash product of the underlying unfiltered modules.

A filtered spectrum naturally gives a spectral sequence on homotopy
groups.  To avoid the ambiguity of writing $E^{1}_{i,j}(X)$ for the
$E^{1}$-term, we introduce the following notation.

\begin{defn}\label{defn:piGr}
For a filtered spectrum $X_{*}$, let
$\pi^{\Gr}_{i,j}X_{*}=\pi_{i+j}C(X_{i},X_{i-1})$, where
$C(X_{i},X_{i-1})$ denotes the homotopy cofiber of the structure map
$X_{i-1}\to X_{i}$.
\end{defn}

It is also useful to work directly with the orthogonal spectra
$C(X_{n},X_{n-1})$ whose homotopy groups represent $\pi^{\Gr}_{n,*}$,
the associated graded spectra of the filtration.  We follow the
traditional route of defining a point-set functor $\Gr$ with good structural
properties, and regard $C(X_{n},X_{n-1})$ as a model for the left
derived functor.  Before defining $\Gr$, we define the target
category. 

\begin{defn}
The category of graded spectra is the category of functors from the
discrete category $\bZ$ to orthogonal spectra: A \term{graded
spectrum} $X_{*}$ consists of a sequence of orthogonal spectra 
$X_{n}$ for each $n\in \bZ$; a map of graded spectra $X_{*}\to Y_{*}$
consists of a sequence of maps of orthogonal spectra  $X_{n}\to Y_{n}$
for $n\in \bZ$.  The category of graded spectra becomes a symmetric
monoidal category using the Day convolution for the symmetric monoidal
product $+$ on $\bZ$:
\[
(X_{*}\sma Y_{*})_{n}=\bigvee_{i+j=n}X_{i}\sma Y_{j}.
\]
\end{defn}

\begin{defn}
The \term{associated graded} functor $\Gr$ from filtered spectra to
graded spectra is defined by 
\[
(\Gr X_{*})_{n}:=\Gr_{n}(X_{*}):=X_{n}/X_{n-1}.
\]
\end{defn}

We emphasize that in the definition, $X_{n}/X_{n-1}$ denotes the
point-set quotient and not the homotopy cofiber.  The functor $\Gr$
has a right adjoint $Z$ that makes a graded spectrum into a filtered
spectrum using the trivial map for structure maps.  In particular
$\Gr$ preserves colimits.  It also clearly preserves smash products:

\begin{prop}\label{prop:Grmon}
$\Gr$ is a strong symmetric monoidal functor.
\end{prop}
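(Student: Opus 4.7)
The plan is to verify strong symmetric monoidality directly from the definitions, first checking the unit and then producing a natural isomorphism $\Gr(X_* \sma Y_*) \iso \Gr X_* \sma \Gr Y_*$. The unit check is immediate: by definition $\bS_*$ has $\bS_n = \bS$ with identity structure maps for $n \geq 0$ and $\bS_n = *$ for $n < 0$, so $\Gr_0(\bS_*) \iso \bS$ and $\Gr_n(\bS_*) = *$ for $n \neq 0$, which is precisely the Day convolution unit on graded spectra.

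For the main isomorphism, I would first unpack the formula for $(X_* \sma Y_*)_n$ as the colimit of $X_i \sma Y_j$ over the sub-poset $\{(i,j) \in \bZ\times\bZ \,:\, i+j \leq n\}$. The subobject $(X_* \sma Y_*)_{n-1}$ corresponds to the subposet $\{i+j \leq n-1\}$; for each pair $(i,j)$ with $i+j = n$ it contains the ``boundary'' $X_{i-1} \sma Y_j \cup_{X_{i-1} \sma Y_{j-1}} X_i \sma Y_{j-1}$, and distinct summands $X_i \sma Y_j$ and $X_{i'} \sma Y_{j'}$ on the line $i+j = n$ interact in the colimit only through terms indexed by $(i'',j'')$ with $i''+j'' \leq n-1$. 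Consequently, collapsing $(X_* \sma Y_*)_{n-1}$ produces a wedge decomposition
\[
\Gr_n(X_* \sma Y_*) \iso \bigvee_{i+j=n} (X_i \sma Y_j)/(X_{i-1} \sma Y_j \cup_{X_{i-1} \sma Y_{j-1}} X_i \sma Y_{j-1}).
\]
Each summand is naturally isomorphic to $(X_i/X_{i-1}) \sma (Y_j/Y_{j-1})$ via the standard pushout identity $(B/A) \sma (D/C) \iso (B \sma D)/(A \sma D \cup_{A \sma C} B \sma C)$, which holds because the smash product of based orthogonal spectra preserves colimits in each variable. Assembling over $(i,j)$ yields exactly $(\Gr X_* \sma \Gr Y_*)_n$ by the definition of Day convolution on graded spectra.

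It remains to check that the resulting isomorphism is natural and respects the unit, associativity, and symmetry constraints. Naturality is evident from functoriality of pushouts. The coherence diagrams reduce summand-by-summand (indexed by pairs or triples of integers summing to $n$) to the analogous coherence of the smash product on orthogonal spectra, so no work beyond bookkeeping is required. The only mildly subtle point is the disjointness observation that distinct $(i,j)$-summands on the line $i+j=n$ meet only through $(X_* \sma Y_*)_{n-1}$; this is where the specific shape of the Day convolution filtration is genuinely used, and once recorded the rest is formal.
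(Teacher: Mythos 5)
Your proof is correct, and in fact it supplies an argument where the paper itself gives none: the paper simply asserts that $\Gr$ ``clearly preserves smash products'' in the sentence preceding the proposition, reading it off from the explicit union/pushout description of the Day convolution filtration. Your verification is exactly that reading spelled out. The unit check is immediate, the wedge decomposition of $\Gr_n(X_*\sma Y_*)$ comes from observing that consecutive pieces $X_i\sma Y_j$ and $X_{i+1}\sma Y_{j-1}$ on the line $i+j=n$ meet only along $X_i\sma Y_{j-1}\subset (X_*\sma Y_*)_{n-1}$, and the identification of each summand uses the elementary fact that $(B\sma D)/(A\sma D\cup_{A\sma C}B\sma C)\iso(B/A)\sma(D/C)$, valid because smash preserves colimits in each variable. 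You are also right that the coherence diagrams (associativity, symmetry, unit) reduce summand-by-summand to those of the underlying symmetric monoidal category; and you correctly work throughout with the point-set quotient $X_n/X_{n-1}$, which is what the paper intends. No gaps.
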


We define a \term{graded associative ring spectrum} and \term{graded
left }and\term{ right modules} over a graded associative ring spectrum
in terms of the smash product with unit maps, multiplication, and
action maps in the usual way for graded spectra just as we did for
filtered spectra in Definition~\ref{defn:ringmod}.  The previous
proposition implies that $\Gr$ takes filtered associative ring spectra
to graded associative ring spectra and filtered left and right modules
over a filtered associative ring spectrum $A_{*}$ to graded left and
right modules over the graded associative ring spectrum $\Gr A_{*}$.
The previous proposition and the fact that $\Gr$ preserves colimits 
then implies that $\Gr$ preserves the balanced smash product.

\begin{prop}
Let $A_{*}$ be a filtered associative ring spectrum, let $M_{*}$ be a filtered
right $A_{*}$-module and let $N_{*}$ be a filtered left
$A_{*}$-module.  Then there is a canonical natural isomorphism 
\[
\Gr(M_{*}\sma_{A_{*}}N_{*})\iso (\Gr M_{*})\sma_{(\Gr A_{*})}(\Gr N_{*}).
\]
\end{prop}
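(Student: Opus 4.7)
The plan is to deduce the isomorphism as a formal consequence of two properties of $\Gr$ that have already been established in the excerpt: it is strong symmetric monoidal (Proposition~\ref{prop:Grmon}) and it preserves colimits (because it admits the right adjoint $Z$ that equips a graded spectrum with the trivial filtration structure maps).

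First I would recall that the balanced smash product $M_{*}\sma_{A_{*}}N_{*}$ is by definition the coequalizer in filtered spectra of the two maps
\[
\xymatrix{
M_{*}\sma A_{*}\sma N_{*}\ar[r]<.75ex>\ar[r]<-.75ex>&
M_{*}\sma N_{*}
}
\]
induced by the right $A_{*}$-action on $M_{*}$ and the left $A_{*}$-action on $N_{*}$. Applying $\Gr$ and using that $\Gr$ preserves colimits, the image is the coequalizer of the induced pair of maps
\[
\xymatrix{
\Gr(M_{*}\sma A_{*}\sma N_{*})\ar[r]<.75ex>\ar[r]<-.75ex>&
\Gr(M_{*}\sma N_{*}).
}
\]
Next I would apply Proposition~\ref{prop:Grmon} to identify these terms with $(\Gr M_{*})\sma (\Gr A_{*})\sma (\Gr N_{*})$ and $(\Gr M_{*})\sma (\Gr N_{*})$, respectively, through the canonical strong monoidal structure maps. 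Because the right action map $M_{*}\sma A_{*}\to M_{*}$ and the left action map $A_{*}\sma N_{*}\to N_{*}$ pass through $\Gr$ to the right and left action maps of the graded modules $\Gr M_{*}$ and $\Gr N_{*}$ over the graded ring $\Gr A_{*}$ (this was noted just before the proposition), the two maps in the coequalizer diagram are identified with the two action maps whose coequalizer defines $(\Gr M_{*})\sma_{(\Gr A_{*})}(\Gr N_{*})$.

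Combining, the isomorphism
\[
\Gr(M_{*}\sma_{A_{*}}N_{*})\iso (\Gr M_{*})\sma_{(\Gr A_{*})}(\Gr N_{*})
\]
is assembled from the strong monoidal isomorphism and the preservation of coequalizers, and is visibly natural in $M_{*}$ and $N_{*}$. There is no real obstacle: the only thing to check carefully is that the coequalizer-diagram arrows match up, which is immediate from naturality of the strong monoidal structure applied to the action maps. I would expect the entire argument to take at most a few lines in the final write-up.
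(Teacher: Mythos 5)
Your argument is correct and is essentially the proof the paper intends: the proposition is deduced from Proposition~\ref{prop:Grmon} (strong symmetric monoidality) together with the fact that $\Gr$ preserves colimits because it admits the right adjoint $Z$, exactly as you lay out. The paper states these two ingredients immediately before the proposition precisely so that the result follows by applying $\Gr$ to the defining coequalizer and matching the arrows by naturality, which is what you do.
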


We now turn to the homotopy theory of filtered and graded spectra. To
avoid confusion we do not use the unmodified phrase ``weak
equivalence'' in the context of filtered and graded spectra.  We always write
\term{objectwise weak equivalence} for a map $X_{*}\to Y_{*}$ that is
a weak equivalence $X_{n}\to Y_{n}$ for all $n$.  We call a map of
filtered spectra $X_{*}\to Y_{*}$ a
\term{total weak equivalence} when it induces a weak equivalence
\[
\lhocolim_{n\to \infty}X_{n}\to \lhocolim_{n\to\infty} Y_{n}.
\]
Objectwise weak equivalences are total weak equivalences, but not vice
versa.  However, the following result provides a useful converse under
an additional hypothesis.

\begin{prop}\label{prop:gradedequiv}
Let $f\colon X_{*}\to Y_{*}$ be a map of filtered spectra that induces an
isomorphism on $\pi^{\Gr}_{*,*}$.  Then $f$ is a total weak equivalence if
and only if it is an objectwise weak equivalence.
\end{prop}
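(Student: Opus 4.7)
The plan is to prove the nontrivial direction (total weak equivalence plus $\pi^{\Gr}_{*,*}$-isomorphism implies objectwise weak equivalence) by passing to the homotopy cofiber filtration of $f$ and showing all of its terms are contractible. The easy direction is immediate: homotopy colimits of sequential diagrams of orthogonal spectra preserve weak equivalences, so any objectwise weak equivalence is a total weak equivalence.

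For the harder direction, I would form the termwise homotopy cofiber $Z_{*}$ of $f\colon X_{*}\to Y_{*}$, a filtered spectrum with $Z_{n} = C(Y_{n}, X_{n})$ (the homotopy cofiber of $f_{n}$) and the evident induced structure maps. The first step is to identify the graded pieces of $Z_{*}$. Applying the $3\times 3$ lemma for homotopy cofibers to the diagram whose rows are the defining cofiber sequences $X_{n-1}\to Y_{n-1}\to Z_{n-1}$ and $X_{n}\to Y_{n}\to Z_{n}$ and whose first two columns are the filtration cofiber sequences, one gets a canonical weak equivalence between $C(Z_{n},Z_{n-1})$ and the homotopy cofiber of the induced map
\[
C(X_{n},X_{n-1}) \to C(Y_{n},Y_{n-1}).
\]
The hypothesis that $f$ induces an isomorphism on $\pi^{\Gr}_{*,*}$ means exactly that each of these maps is a weak equivalence of orthogonal spectra, so $C(Z_{n},Z_{n-1})\simeq *$ for every $n$. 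Equivalently, every structure map $Z_{n-1}\to Z_{n}$ is a weak equivalence.

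The second step is to use the total weak equivalence hypothesis. Sequential homotopy colimits commute with homotopy cofibers, so
\[
\lhocolim_{n} Z_{n} \;\simeq\; C\bigl(\lhocolim_{n} X_{n},\,\lhocolim_{n} Y_{n}\bigr),
\]
and this is contractible because $f$ is a total weak equivalence. On the other hand, for each fixed $n_{0}$ the map $Z_{n_{0}}\to \lhocolim_{n}Z_{n}$ computes a sequential colimit of $\pi_{*}$-isomorphisms (from the first step) and therefore is itself a weak equivalence. Combining these two facts yields $Z_{n_{0}}\simeq *$ for every $n_{0}$, which is precisely the statement that $f_{n_{0}}\colon X_{n_{0}}\to Y_{n_{0}}$ is a weak equivalence.

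The only mild subtlety, and the step most deserving of care in the write-up, is the point-set handling of $C(X_{n},X_{n-1})$ relative to the structure map $X_{n-1}\to X_{n}$, which need not be an $h$-cofibration in general; one either works entirely with the homotopy cofiber (the derived quotient) or replaces $X_{*}$ and $Y_{*}$ by objectwise-equivalent filtered spectra whose structure maps are $h$-cofibrations, so that the $3\times 3$ argument above can be run on the point-set level. Neither adjustment affects the hypothesis or conclusion, so the proposition follows.
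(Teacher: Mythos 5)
Your argument is correct. The one direction really is trivial, and the other direction goes through as you describe: the $3\times 3$ lemma identifies $C(Z_{n},Z_{n-1})$ with the cofiber of the graded map, which is contractible by hypothesis, so each structure map of $Z_{*}$ is an equivalence; commuting cofibers with sequential homotopy colimits and using the total weak equivalence hypothesis gives $\hocolim Z_{n}\simeq *$; and since each $Z_{n_0}\to\hocolim Z_{n}$ is then a weak equivalence, each $Z_{n_0}$ is contractible.

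The paper's proof has the same basic shape but runs the cofiber decomposition along the filtration rather than across $f$: it inducts from the hypothesis on $C(X_{n+1},X_{n})\to C(Y_{n+1},Y_{n})$ to show $C(X_{n+i},X_{n})\to C(Y_{n+i},Y_{n})$ is an equivalence for all $i$, passes to the colimit in $i$ to get $C(X_{\infty},X_{n})\to C(Y_{\infty},Y_{n})$ an equivalence, and then concludes by two-out-of-three using $X_{\infty}\to Y_{\infty}$. Yours forms the objectwise cofiber $Z_{*}$ of $f$ and shows it is a contractible filtered spectrum. The two are essentially dual packagings of the same idea, and neither has a clear advantage; yours has the small virtue of making the contractibility statement ("$Z_{*}$ is objectwise trivial") the explicit goal of each step, while the paper's avoids constructing the auxiliary filtered spectrum and reads a bit more directly as a diagram chase on homotopy groups. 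You are also right that a careful write-up needs to distinguish homotopy cofibers from point-set quotients (or first replace by a reasonably filtered model), a point the paper elides by using $C(-,-)$ notation for the homotopy cofiber throughout.
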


\begin{proof}
We only need to show the direction that assumes $f$ is a total weak
equivalence; the converse is clear.
Write $X_{\infty}$ for $\hocolim X_{n}$ and $Y_{\infty}$ for $\hocolim
Y_{n}$.  The hypothesis is then that the map $C(X_{n+1},X_{n})\to
C(Y_{n+1},Y_{n})$ is a weak equivalence for all $n$; by induction, we see that
$C(X_{n+i},X_{n})\to C(Y_{n+i},Y_{n})$ is a weak equivalence for all
$n,i$, and passing to the homotopy colimit (in $i$), we see that $C(X_{\infty},X_{n})\to
C(Y_{\infty},Y_{n})$ is a weak equivalence for all $n$.  When $f$ is a
total weak equivalence, then $X_{\infty}\to Y_{\infty}$ is a weak equivalence,
and it follows that $X_{n}\to Y_{n}$ is a weak equivalence for all $n$.  
\end{proof}

Standard results~\cite[11.6.1]{Hirschhorn},\cite[4.1]{SSAlgMod}
provide a closed model structure on filtered spectra for the
objectwise weak equivalences.

\begin{prop}\label{prop:filt1}
The category of filtered spectra has a compactly generated topological
model structure in the sense of~\cite[5.9,5.12]{MMSS} with weak
equivalences and fibrations defined objectwise.
\end{prop}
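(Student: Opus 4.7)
The plan is to build the model structure as the projective (objectwise) model structure on the diagram category of orthogonal spectra indexed by the poset $(\bZ,{\leq})$, transferred along the family of evaluation functors $\mathrm{ev}_n \colon X_* \mapsto X_n$. Since the positive stable model structure on $\Sp$ is compactly generated and topological (as established in~\cite{MMSS}), and since $(\bZ,{\leq})$ is a small category, the general machinery of Hirschhorn~\cite[11.6.1]{Hirschhorn} and Schwede-Shipley~\cite[4.1]{SSAlgMod} applies verbatim.

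Concretely, I would first introduce the left adjoint $F_n \colon \Sp \to \Sp^{(\bZ,\leq)}$ to $\mathrm{ev}_n$, which sends an orthogonal spectrum $X$ to the filtered spectrum whose $m$th term is $X$ for $m \geq n$ (with identity structure maps) and $*$ for $m < n$. Then, starting from chosen generating cofibrations $I$ and generating acyclic cofibrations $J$ for orthogonal spectra, I would take as generating cofibrations and acyclic cofibrations for filtered spectra the sets
\[
I_{\mathrm{filt}} = \{F_n i \mid i \in I,\ n \in \bZ\}, \qquad J_{\mathrm{filt}} = \{F_n j \mid j \in J,\ n \in \bZ\}.
\]
By adjunction, a map $X_* \to Y_*$ has the right lifting property with respect to $I_{\mathrm{filt}}$ (respectively $J_{\mathrm{filt}}$) if and only if each $X_n \to Y_n$ is an acyclic fibration (respectively fibration), which matches the proposed fibrations and weak equivalences.

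Next I would verify the hypotheses of the recognition theorem: the two sets permit the small object argument because smallness is detected objectwise and the domains of $I$ and $J$ are small in $\Sp$; relative $J_{\mathrm{filt}}$-cell complexes are objectwise acyclic cofibrations, hence objectwise weak equivalences, because colimits in the functor category are computed objectwise and the corresponding statement holds in $\Sp$. The topological enrichment follows formally: the category of filtered spectra inherits a topological enrichment from $\Sp$ by taking mapping spaces to be the equalizer of the obvious diagram of mapping spaces of orthogonal spectra, and the pushout-product and unit axioms~\cite[5.12]{MMSS} reduce objectwise to the corresponding axioms in $\Sp$.

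There is no real obstacle here; the argument is entirely formal transfer of a compactly generated topological model structure along evaluations from a small indexing category. The only care needed is to note that our indexing poset $(\bZ,{\leq})$, while infinite, is small, so the generating sets remain sets rather than proper classes, and that the topological enrichment of $\Sp$ is inherited rather than reconstructed.
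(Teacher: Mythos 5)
Your proposal is correct and matches the paper's intent: the paper gives no explicit argument but simply cites~\cite[11.6.1]{Hirschhorn} and~\cite[4.1]{SSAlgMod}, which produce exactly the projective model structure on the functor category $\Sp^{(\bZ,\leq)}$ that you construct by hand via the adjoints $F_n \dashv \mathrm{ev}_n$ and the generating sets $\{F_n i\}$, $\{F_n j\}$. You are unwinding the citation rather than doing anything different.
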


\begin{prop}\label{prop:filt2}
The categories of filtered associative ring spectra and filtered left
and right modules over a filtered associative ring spectrum have
compactly generated topological closed model category structures with weak
equivalences and fibrations defined objectwise.
\end{prop}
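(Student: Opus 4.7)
The plan is to lift the model structure on filtered spectra (Proposition~\ref{prop:filt1}) to the categories of filtered associative ring spectra and filtered modules via a standard transfer argument, following the template of~\cite{SSAlgMod} (as invoked in the previous proposition). The forgetful functors from filtered ring spectra and from filtered $A_{*}$-modules down to filtered spectra each admit left adjoints: the free associative algebra functor $\mathbb{T}(X_{*})=\bigvee_{n\geq 0} X_{*}^{\sma n}$ in the ring case, and $A_{*}\sma (-)$ (or $(-)\sma A_{*}$) in the module case. I would take the generating cofibrations and generating acyclic cofibrations in each category to be the images under these free functors of the generating (acyclic) cofibrations for the objectwise model structure on filtered spectra, and then run the small object argument using the compactness already established in Proposition~\ref{prop:filt1}.

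The main technical input required is the \emph{monoid axiom} of Schwede--Shipley for the Day convolution symmetric monoidal structure on filtered spectra, that is, that transfinite compositions of pushouts of maps of the form $(\text{acyclic cofibration})\sma Z_{*}$ are objectwise weak equivalences. Since the filtered smash product $X_{*}\sma Y_{*}$ is built levelwise as a finite colimit of ordinary smash products $X_{i}\sma Y_{j}$ along $h$-cofibrations, and since the objectwise weak equivalences and cofibrations are defined levelwise, this reduces to the corresponding monoid axiom for orthogonal spectra under $\sma$, which is standard~\cite[5.13]{MMSS}. The pushout-product axiom likewise reduces to the corresponding statement for orthogonal spectra levelwise and the combinatorics of the symmetric monoidal structure on $(\bZ,\oleq)$. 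With these two axioms in hand, the transfer theorems of~\cite[4.1]{SSAlgMod} give both model structures simultaneously and identify their weak equivalences and fibrations with those detected by the forgetful functor to filtered spectra, i.e.\ objectwise.

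The topological enrichment is automatic from the fact that both free functors are continuous and the forgetful functors create the topological structure from filtered spectra (where it was verified in Proposition~\ref{prop:filt1}); compact generation follows because the generating (acyclic) cofibrations are images of compact generators under left adjoints, and left adjoints preserve smallness with respect to their own image. The main obstacle I anticipate is verifying the monoid axiom cleanly in the filtered setting: one has to track that the pushouts arising in the small object argument remain objectwise acyclic cofibrations, which requires checking that the Day convolution with an arbitrary filtered spectrum $Z_{*}$ preserves objectwise acyclic cofibrations---this is where one uses the filtration-combinatorics of $(\bZ,\oleq)$ together with the fact that acyclic cofibrations of orthogonal spectra are closed under smash with arbitrary orthogonal spectra and pushouts along $h$-cofibrations.
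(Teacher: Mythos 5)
Your proposal is correct and follows essentially the same route the paper takes: the paper itself gives no written proof of Proposition~\ref{prop:filt2}, instead citing the standard lifting theorem~\cite[4.1]{SSAlgMod} (already invoked just above Proposition~\ref{prop:filt1}), and filling in the transfer via the free algebra and free module functors together with the monoid and pushout-product axioms is exactly what that citation encodes. One small caveat: your remark that the Day convolution is "built levelwise as a finite colimit of ordinary smash products along $h$-cofibrations" is not true for arbitrary filtered spectra, but you don't actually need that generality --- for a generating acyclic cofibration $F_n(j)$ one has $(F_n(j)\sma Z_*)_m \iso j\sma Z_{m-n}$, so the monoid axiom reduces levelwise to the monoid axiom for orthogonal spectra directly, which is the cleaner way to phrase the key step.
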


We call the homotopy categories of the preceding model structures the
\term{filtered derived category} of left and right $A_{*}$-modules.
We have a derived smash product of an $A_{*}$-module and a filtered
spectrum, denoted $\smal$, which may be constructed by cofibrant
approximation of either object.  Using this, we have an evident notion
of homotopical module in the derived category.  For us, the case that
is most useful is that of homotopical left $B_{*}$-modules in the
filtered derived category of right $A_{*}$-modules, so we make this
definition explicitly; other types of homotopical modules are defined
analogously.

\begin{defn}
Let $A_{*}$ and $B_{*}$ be a filtered associative ring spectra.  A \term{homotopical
left $B_{*}$-module} in the filtered derived category of right
$A_{*}$-modules consists of a right $A_{*}$-module $N_{*}$ and a map
\[
\xi\colon  B_{*}\smal N_{*}\to N_{*}
\]
in the filtered derived category of right $A_{*}$-modules 
satisfying the usual unit and associativity conditions.
\end{defn}

The next proposition, a special case of~\cite[8.2]{LewisMandell2},
studies the homotopy theory of the balanced smash product.  It implies
in particular that the filtered derived smash product may be computed
by deriving either variable.

\begin{prop}\label{prop:filt3}
Let $A_{*}$ be a filtered associative ring spectrum.
The left derived bifunctor $\Tor^{A_{*}}(-,-)$ of the balanced
product $(-)\sma_{A_{*}}(-)$ exists and can be constructed by
cofibrant replacement of either variable.  In particular, for each
right module $M_{*}$ and 
left module $N_{*}$, $\Tor^{A_{*}}(-,N_{*})$ is the left
derived functor of $(-)\sma_{A_{*}}N_{*}$ and $\Tor^{A_{*}}(M_{*},-)$
is the left derived functor of $M_{*}\sma(-)$.
\end{prop}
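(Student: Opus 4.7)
The plan is to follow the standard strategy for constructing derived balanced smash products over an associative ring object in a monoidal model category, as in~\cite{LewisMandell2}. First, I would observe that the category of filtered spectra, equipped with the Day convolution and the objectwise model structure of Proposition~\ref{prop:filt1}, is a cofibrantly generated symmetric monoidal model category. This reduces to the pushout-product axiom for filtered spectra, which I would check by combining the pushout-product axiom for orthogonal spectra with the observation that the Day convolution over $(\bZ,\leq)$ distributes pushout-products objectwise (since $+\colon(\bZ,\leq)\times(\bZ,\leq)\to(\bZ,\leq)$ is a coproduct-preserving bifunctor). The module model structures in Proposition~\ref{prop:filt2} are then obtained by the usual lifting argument, with generating cofibrations of the form $A_{*}\sma f$ (resp.\ $f\sma A_{*}$) for $f$ a generating cofibration of filtered spectra.

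Next, I would verify the relative pushout-product axiom for the balanced smash product viewed as a bifunctor
\[
(-)\sma_{A_{*}}(-)\colon \Mod_{A_{*}}^{\mathrm{right}}\times \Mod_{A_{*}}^{\mathrm{left}}\to (\text{filtered spectra}).
\]
The key computation is that on free modules one has $(A_{*}\sma X_{*})\sma_{A_{*}}(A_{*}\sma Y_{*})\iso A_{*}\sma X_{*}\sma Y_{*}$, so the pushout-product of a generating cofibration $A_{*}\sma f$ with a generating cofibration $A_{*}\sma g$ becomes $A_{*}$ smashed with the pushout-product of $f$ and $g$ in filtered spectra. This is a cofibration, and acyclic if either $f$ or $g$ is. The pushout-product axiom for arbitrary cofibrations then follows by the standard transfinite induction on cellular presentations, using that colimits along $h$-cofibrations commute with the balanced smash product and that smashing with a cofibrant filtered spectrum preserves cofibrations.

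From this pushout-product axiom, Ken Brown's lemma gives that for any cofibrant right module $M_{*}$ the functor $M_{*}\sma_{A_{*}}(-)$ preserves all weak equivalences between cofibrant left modules, and symmetrically on the other side. The existence of the bifunctor $\Tor^{A_{*}}(-,-)$ defined by $(M_{*},N_{*})\mapsto QM_{*}\sma_{A_{*}}QN_{*}$ (for functorial cofibrant replacements $Q$) is then immediate. To see that cofibrant replacement in a single variable suffices, I would argue that when $M_{*}$ is already cofibrant, the map $QN_{*}\to N_{*}$ is a weak equivalence of left $A_{*}$-modules, and applying $M_{*}\sma_{A_{*}}(-)$ preserves this equivalence by the flatness statement just established; hence $M_{*}\sma_{A_{*}}QN_{*}\to M_{*}\sma_{A_{*}}N_{*}$ models the derived functor, and symmetrically for the other variable.

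The main obstacle is the verification of the pushout-product axiom itself: concretely, checking that transfinite compositions of cellular attachments in $A_{*}$-modules interact correctly with the coequalizer defining $\sma_{A_{*}}$ and with the Day convolution. This is routine but notationally cumbersome; the essential input is that in filtered spectra every generating cofibration is an $h$-cofibration objectwise, so that the relevant pushouts and sequential colimits are all homotopically well-behaved.
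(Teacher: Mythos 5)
The paper does not give an argument here at all: the proposition is stated as a special case of a general result of Lewis--Mandell on modules in monoidal model categories, and the ``proof'' is the citation \cite[8.2]{LewisMandell2}. Your proposal tries to reprove that machinery from scratch, which is the correct underlying strategy, but it has a genuine logical gap at the key step.

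The problem is in how you pass from the pushout-product axiom to the conclusion that cofibrant replacement in a \emph{single} variable suffices. Ken Brown's lemma applied to your pushout-product axiom gives: for cofibrant $M_{*}$, the functor $M_{*}\sma_{A_{*}}(-)$ preserves weak equivalences \emph{between cofibrant} left modules. You then apply this to the cofibrant replacement map $QN_{*}\to N_{*}$, but $N_{*}$ is not assumed cofibrant, so the hypothesis fails and the conclusion does not follow from what you proved. This is not a cosmetic issue: the whole content of ``can be constructed by cofibrant replacement of either variable'' is precisely that $M_{*}\sma_{A_{*}}(-)$ preserves weak equivalences with \emph{arbitrary} (not necessarily cofibrant) target, and the Quillen/Ken-Brown formalism by itself never gives that. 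What you actually need is the EKMM-style flatness statement (cf.\ \cite[III.3.8]{EKMM}, or the relevant input to \cite[8.2]{LewisMandell2}): if $M_{*}$ is a cell $A_{*}$-module, then $M_{*}\sma_{A_{*}}(-)$ preserves all weak equivalences. That is proved by transfinite induction on the cell filtration of $M_{*}$, reducing via $(A_{*}\sma S)\sma_{A_{*}}N_{*}\iso S\sma N_{*}$ to the fact that smashing a cofibrant filtered spectrum with $(-)$ preserves objectwise weak equivalences, with the passage up the cell filtration handled by $h$-cofibrations. You gesture at this cellular induction in your second paragraph, but you use it only to establish that the pushout-product of cofibrations is a cofibration, and you never use it to prove weak-equivalence preservation against non-cofibrant targets. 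Repair is straightforward (replace the Ken Brown step with the cellular-induction flatness argument), but as written the proof does not establish the single-variable claim, which is the part of the proposition that is actually used in Section~\ref{sec:Hand} and Section~\ref{sec:cc}.

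A secondary, smaller point: you invoke ``the relative pushout-product axiom for the balanced smash product,'' but the balanced product lands in filtered spectra rather than in a module category, so you should be explicit that the statement you are proving is ``(generating cofibration of right modules) $\Box_{A_{*}}$ (generating cofibration of left modules) is a cofibration of filtered spectra, acyclic if either factor is acyclic,'' and that the reduction to generating cofibrations on \emph{both} sides requires an argument because you are inducting in two variables simultaneously; this is where Lewis--Mandell's general formalism does real work.
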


Given this proposition, there is no source of confusion for which
derived functor $\smal$ denotes; we now switch to writing
$M_{*}\smal_{A_{*}}N_{*}$ in place of $\Tor^{A_{*}}(M_{*},N_{*})$.

As a consequence of the proposition, if $M_{*}$ has the structure of
a homotopical left $A_{*}$-module in the category of filtered right
$A_{*}$-modules, then $M_{*}\smal_{A_{*}}(-)$ has the natural
structure of a homotopical left $A_{*}$-module in filtered spectra.

For Propositions~\ref{prop:filt1}--\ref{prop:filt3}, analogous
statements hold in context of graded spectra with easier proofs. As
discussed above, the functor $\Gr$ from filtered spectra to graded
spectra has a right adjoint, and from the description above, it is
clear that the right adjoint preserves objectwise fibrations and objectwise weak
equivalences.  Thus, $\Gr$ is a Quillen left adjoint; its left derived
functor $\LGr$ exists and may be constructed by applying $\Gr$ to a
cofibrant replacement. Alternatively, as per the motivation for
introducing $\Gr$, we can use a homotopy cofiber construction.  For
work below it is useful to have a wide class of objects where the
point-set functor $\Gr$ models the derived functor.  We introduce the
following terminology.

\begin{defn}
A filtered spectrum $X_{*}$ is \term{reasonably filtered} when the
structure maps $X_{n}\to X_{n+1}$ are all $h$-cofibrations.
\end{defn}

In what follows, ``reasonably filtered'' will always refer to the
underlying filtered spectrum.  We elide ``reasonably filtered
filtered\dots'' to ``reasonably filtered\dots'' for filtered spectra,
filtered associative ring spectra, and filtered left and right
modules.

\begin{prop}
$\Gr$ preserves objectwise weak equivalences between reasonably filtered spectra.
\end{prop}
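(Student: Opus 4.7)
The plan is to show that for a reasonably filtered spectrum $X_*$, the point-set quotient $X_n/X_{n-1}$ already models the homotopy cofiber $C(X_n,X_{n-1})$, after which the proposition reduces to the familiar fact that weak equivalences induce weak equivalences on homotopy cofibers.

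First I would unwind the definitions: since $X_*$ is reasonably filtered, each structure map $X_{n-1}\to X_n$ is an $h$-cofibration. One of the bulleted properties of $h$-cofibrations recalled just before Proposition~\ref{prop:hcof} says exactly that for an $h$-cofibration $A\to B$, the point-set quotient $B/A$ represents the left derived quotient, i.e., the homotopy cofiber in the stable category. Applying this to $X_{n-1}\to X_n$ gives a canonical weak equivalence
\[
\Gr_n(X_*)=X_n/X_{n-1}\;\htp\; C(X_n,X_{n-1}),
\]
natural in $X_*$ among reasonably filtered spectra, and similarly for $Y_*$.

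Next, given an objectwise weak equivalence $f\colon X_*\to Y_*$ between reasonably filtered spectra, the induced square
\[
\xymatrix{
X_{n-1}\ar[r]\ar[d]_{f_{n-1}}&X_n\ar[d]^{f_n}\\
Y_{n-1}\ar[r]&Y_n
}
\]
has both vertical maps weak equivalences, so the induced map on homotopy cofibers $C(X_n,X_{n-1})\to C(Y_n,Y_{n-1})$ is a weak equivalence. Combining with the natural weak equivalence from the previous paragraph yields a commutative diagram
\[
\xymatrix{
\Gr_n(X_*)\ar[r]^-{\Gr_n(f)}\ar[d]_{\htp}&\Gr_n(Y_*)\ar[d]^{\htp}\\
C(X_n,X_{n-1})\ar[r]^-{\htp}&C(Y_n,Y_{n-1})
}
\]
in which three of the four maps are weak equivalences, forcing $\Gr_n(f)$ to be a weak equivalence as well. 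This holds for every $n$, so $\Gr f$ is an objectwise weak equivalence.

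There is no real obstacle here; the only point that requires care is the identification of the point-set quotient with the homotopy cofiber, and that is precisely what reasonable filteredness is designed to guarantee via the properties of $h$-cofibrations recalled earlier in the paper.
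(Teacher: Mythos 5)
Your argument is correct and is exactly the one the paper presupposes by recalling, just before Proposition~\ref{prop:hcof}, that when $A\to B$ is an $h$-cofibration the point-set quotient $B/A$ represents the homotopy cofiber; the paper itself leaves this proposition unproved as routine. One cosmetic point: the natural comparison map goes $C(X_n,X_{n-1})\to X_n/X_{n-1}$ (collapse the cone on $X_{n-1}$), so the vertical maps in your square should point upward into $\Gr_n$ rather than downward out of it, but since 2-out-of-3 for weak equivalences is direction-agnostic this has no bearing on the argument.
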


Cofibrant objects in the model category of filtered spectra are
reasonably filtered, and so $\Gr$ computes the derived functor $\LGr$
on all reasonably filtered spectra.  When $A_{*}$ is a reasonably
filtered associative ring spectrum, cofibrant left and right filtered
$A_{*}$-modules are reasonably filtered, and so the functors $\Gr$
from filtered left and right $A_{*}$-modules to graded left and right
$\Gr A_{*}$-modules have left derived functors, calculated by applying
$\Gr$ to reasonably filtered replacements.  We also have the following
observation about the balanced smash product.

\begin{prop}
Let $A_{*}$ be a reasonably filtered associative ring spectrum, let
$M_{*}$ be a cofibrant right $A_{*}$-module and let $N_{*}$
be a cofibrant left $A_{*}$-module.  Then the balanced
smash product $M_{*}\sma_{A_{*}}N_{*}$ is a reasonably filtered
spectrum. 
\end{prop}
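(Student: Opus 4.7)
The plan is to combine the cell structure of cofibrant filtered modules with the closure properties of $h$-cofibrations of orthogonal spectra. The model structure of Proposition~\ref{prop:filt2} on filtered right $A_{*}$-modules is compactly generated by cells of the form $F_{i}(\partial D^{k}_{+})\sma A_{*}\to F_{i}(D^{k}_{+})\sma A_{*}$, where $F_{i}$ is the left adjoint to evaluation at level $i$: $(F_{i}T)_{n}=*$ for $n<i$ and $(F_{i}T)_{n}=T$ with identity structure maps for $n\geq i$. Smashing such a generating cofibration over $A_{*}$ with $N_{*}$ gives $F_{i}(\partial D^{k}_{+})\sma N_{*}\to F_{i}(D^{k}_{+})\sma N_{*}$, whose level-$n$ structure map is $(\partial D^{k}_{+}\to D^{k}_{+})\sma (N_{n-i-1}\to N_{n-i})$. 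Because $N_{*}$ is reasonably filtered (being cofibrant), the structure maps of $N_{*}$ are $h$-cofibrations; hence each cell smashed with $N_{*}$ is a reasonably filtered spectrum.

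I would then induct on a cellular approximation $M'_{*}$ of $M_{*}$. Suppose inductively that a partial cellular $A_{*}$-module $X_{*}$ satisfies the property that $X_{*}\sma_{A_{*}}N_{*}$ is reasonably filtered. Attaching a cell gives a pushout $Y_{*}=X_{*}\cup_{F_{i}(\partial D^{k}_{+})\sma A_{*}}(F_{i}(D^{k}_{+})\sma A_{*})$, and since $(-)\sma_{A_{*}}N_{*}$ is a left adjoint, it commutes with this pushout to give
\[
Y_{*}\sma_{A_{*}}N_{*}\iso (X_{*}\sma_{A_{*}}N_{*})\cup_{F_{i}(\partial D^{k}_{+})\sma N_{*}}(F_{i}(D^{k}_{+})\sma N_{*}).
\]
A level-$n$ check using the fact that $h$-cofibrations of orthogonal spectra are closed under pushout and under pushout-product shows that the structure map $(Y_{*}\sma_{A_{*}}N_{*})_{n}\to (Y_{*}\sma_{A_{*}}N_{*})_{n+1}$ is an $h$-cofibration, so $Y_{*}\sma_{A_{*}}N_{*}$ is again reasonably filtered. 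Transfinite composition of $h$-cofibrations is an $h$-cofibration, so passing to the sequential colimit over the cellular filtration yields a reasonably filtered spectrum $M'_{*}\sma_{A_{*}}N_{*}$. Finally, $M_{*}$ is a retract of $M'_{*}$ as a right $A_{*}$-module, so $M_{*}\sma_{A_{*}}N_{*}$ is a retract of $M'_{*}\sma_{A_{*}}N_{*}$ as a filtered spectrum, and retracts of $h$-cofibrations are $h$-cofibrations.

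The main obstacle is the levelwise pushout-product step at cell attachment. The structure map of $Y_{*}\sma_{A_{*}}N_{*}$ at level $n+1$ fits into a cube whose edges consist of the structure maps of $X_{*}\sma_{A_{*}}N_{*}$ and of $N_{*}$ smashed with the cellular inclusion $\partial D^{k}_{+}\to D^{k}_{+}$. Writing this map as an iterated pushout and invoking the pushout-product closure of $h$-cofibrations (which ultimately comes from the fact that smashing with $I_{+}$ preserves $h$-cofibrations) supplies the required conclusion at each level.
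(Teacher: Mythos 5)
The paper does not actually supply a proof of this proposition (it is one of several stated as an observation in Section~\ref{sec:filtmod}), but your cell-by-cell induction is the natural argument and it is correct.  The core point you correctly identified is that a generating cofibration of filtered right $A_{*}$-modules smashed over $A_{*}$ with $N_{*}$ collapses to $F_{i}(K)\sma N_{*}$ with $(F_{i}K\sma N_{*})_{n}\iso K\sma N_{n-i}$, so at a cell attachment the level-$n$ to level-$(n{+}1)$ structure map factors as a pushout of an $h$-cofibration (from the inductive hypothesis on the smaller module) followed by a pushout of the pushout-product of the generating $q$-cofibration with the $h$-cofibration $N_{n-i}\to N_{n+1-i}$, and the pushout-product of a $q$-cofibration with an $h$-cofibration is an $h$-cofibration.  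Transfinite composition and retract closure then handle the cellular colimit and the passage from a cell complex to a general cofibrant $M_{*}$.

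Two small notational points, neither affecting the argument.  First, the generating cofibrations of filtered right $A_{*}$-modules should carry an orthogonal-spectrum shift in addition to the filtration shift: they have the form $F_{i}(F_{V}\partial D^{k}_{+})\sma A_{*}\to F_{i}(F_{V}D^{k}_{+})\sma A_{*}$, where the outer $F_{i}$ is your ``insert at filtration level $i$'' functor and $F_{V}$ is the shift desuspension of orthogonal spectra; your argument only uses that the underlying spectrum map $K\to L$ is an $h$-cofibration, which it is, so nothing changes.  Second, the phrase \emph{``whose level-$n$ structure map is $(\partial D^{k}_{+}\to D^{k}_{+})\sma(N_{n-i-1}\to N_{n-i})$''} is garbled: a map of filtered spectra does not have a ``structure map,'' and the structure map of either the source or the target is $\id\sma(N_{n-i}\to N_{n+1-i})$.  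What you actually need there, and what you articulate correctly in your final paragraph, is the pushout-product $K\sma N_{n+1-i}\cup_{K\sma N_{n-i}}L\sma N_{n-i}\to L\sma N_{n+1-i}$.  It would be worth stating that identification explicitly when you factor the structure map of $Y_{*}\sma_{A_{*}}N_{*}$.
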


Boardman~\cite[5.10]{Boardman-SpectralSequences} defines conditional convergence of
spectral sequences in terms
of $\lim$ and $\lim^{1}$.  In the current context of filtered spectra,
the spectral sequence associated to the filtration on $X_{*}$ is
conditionally convergent if and only if $\holim_{n} X_{-n}\simeq *$.
With this in mind, we make the following definition.

\begin{defn}\label{defn:condconv}
A filtered spectrum $X_{*}$ is \term{conditionally convergent} when\break $\holim_{n}
X_{-n}\simeq *$ 
\end{defn}

A first easy observation about conditional convergence of filtered
spectra is that it is invariant under objectwise weak equivalences, and so
conditional convergence may be studied in the homotopy category of
filtered spectra. 

Because homotopy limits commute with cofiber sequences and with other
homotopy limits, the following propositions are clear.

\begin{prop}
If $X_{*}$ and $Y_{*}$ are conditionally convergent, then for any map
$X_{*}\to Y_{*}$, the homotopy cofiber is conditionally convergent.
\end{prop}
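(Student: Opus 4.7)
The plan is to use the fact, invoked by the author in the sentence preceding the proposition, that homotopy limits in the stable category commute with cofiber sequences. Since in $\Stab$ every cofiber sequence is also a fiber sequence, this gives me a direct route from the hypothesis on $X_{*}$ and $Y_{*}$ to the conclusion on the homotopy cofiber.

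First I would fix a map $f\colon X_{*}\to Y_{*}$ and form the homotopy cofiber $C_{*}$ objectwise. So at each index $n$ there is a cofiber sequence of orthogonal spectra
\[
X_{n}\to Y_{n}\to C_{n},
\]
and these assemble into a sequence of maps of filtered spectra $X_{*}\to Y_{*}\to C_{*}$ compatible with the filtration structure maps. Restricting to the tower of negative indices, I get a map of towers $\{X_{-n}\}_{n}\to \{Y_{-n}\}_{n}$ whose termwise homotopy cofiber is the tower $\{C_{-n}\}_{n}$.

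Next I would apply $\holim_{n}$. Because each level is a cofiber sequence of orthogonal spectra, and cofiber sequences in $\Stab$ are fiber sequences, the tower of cofiber sequences gives (after passing to homotopy limits) a fiber sequence
\[
\holim_{n} X_{-n}\to \holim_{n} Y_{-n}\to \holim_{n} C_{-n}
\]
in the stable category; this is exactly the statement that homotopy limits commute with fiber sequences, applied levelwise in $n$. By the conditional convergence hypothesis the first two terms are contractible, so the third is contractible, which is the definition of conditional convergence of $C_{*}$ (Definition~\ref{defn:condconv}).

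There is no real obstacle here; the only thing to be a little careful about is that ``homotopy cofiber'' for filtered spectra is taken objectwise (so that the filtration structure maps of $C_{*}$ are induced from those of $X_{*}$ and $Y_{*}$), and that one is free to use the stable-category description of cofiber sequences as fiber sequences before taking $\holim$ in $n$. Both are automatic from the way the filtered derived category has been set up in Proposition~\ref{prop:filt1} and the preceding discussion.
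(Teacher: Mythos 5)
Your proof is correct and is precisely the argument the paper has in mind: the paper states this proposition without proof, saying only that ``homotopy limits commute with cofiber sequences,'' and your write-up just unwinds that one-line justification (objectwise cofiber, levelwise cofiber = fiber sequences in the stable setting, pass to $\holim_n$, conclude contractibility from the two hypotheses). Nothing to add.
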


\begin{prop}\label{prop:holim}
If $d\mapsto X_{*}(d)$ is a small diagram and each $X_{*}(d)$ is
conditionally convergent, then $\holim_{d}X_{*}(d)$ is conditionally
convergent. 
\end{prop}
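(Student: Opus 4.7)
The plan is to reduce conditional convergence of $\holim_{d}X_{*}(d)$ to the given conditional convergence of each $X_{*}(d)$ by interchanging the two homotopy limits. Conditional convergence of a filtered spectrum $Y_{*}$ means $\holim_{n}Y_{-n}\simeq *$, so what must be shown is that $\holim_{n}(\holim_{d}X_{*}(d))_{-n}\simeq *$.

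First, I would observe that homotopy limits in the category of filtered spectra are computed objectwise in the filtration index: for each integer $m$, we have a natural weak equivalence $(\holim_{d}X_{*}(d))_{m}\simeq \holim_{d}X_{m}(d)$. This is standard because the evaluation functor $X_{*}\mapsto X_{m}$ from filtered spectra to orthogonal spectra preserves limits and fibrations (fibrations being defined objectwise by Proposition~\ref{prop:filt1}), hence preserves homotopy limits. In particular, taking $m=-n$ and then homotopy limiting over $n$ gives
\[
\holim_{n}(\holim_{d}X_{*}(d))_{-n}\simeq \holim_{n}\holim_{d}X_{-n}(d).
\]

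Next, I would invoke the Fubini-type fact that homotopy limits commute: there is a natural equivalence $\holim_{n}\holim_{d}X_{-n}(d)\simeq \holim_{d}\holim_{n}X_{-n}(d)$. By hypothesis each $X_{*}(d)$ is conditionally convergent, so $\holim_{n}X_{-n}(d)\simeq *$ for each $d$. Therefore
\[
\holim_{d}\holim_{n}X_{-n}(d)\simeq \holim_{d}*\simeq *,
\]
which completes the argument.

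The only step with any content is the interchange of homotopy limits, but this is entirely routine for small diagrams in any reasonable model category; in the topologically enriched setting of Proposition~\ref{prop:filt1} it follows from the standard identification of $\holim$ as a suitable enriched end together with the Fubini theorem for ends. There is no real obstacle; the proposition is essentially formal from the definition of conditional convergence together with the preceding sentence in the paper noting that homotopy limits commute with each other.
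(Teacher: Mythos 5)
Your argument is correct and matches the paper's own (implicit) reasoning: the paper derives this proposition from the preceding remark that homotopy limits commute with other homotopy limits, which is precisely the Fubini interchange you use. Nothing further is needed.
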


For a filtered spectrum $X_{*}$, the suspension $\Sigma^{m}X_{*}$ is
defined objectwise,\break $(\Sigma^{m}X_{*})_{n}=\Sigma^{m}X_{n}$, where we
understand $\Sigma^{m}X=X\sma F_{-m}S^{0}$ when $m<0$.  (Here
$F_{-m}S^{0}$ is a particular cofibrant model of $S^{m}$; see
\cite[1.3]{MMSS}).  The shift $X_{*}[t]$ is defined by
$(X_{*}[t])_{n}=X_{n-t}$.  The following proposition is also clear.

\begin{prop}
If $X_{*}$ is conditionally convergent, then so is any suspension and shift.
\end{prop}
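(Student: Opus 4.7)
The plan is to handle suspension and shift separately, as the two cases are essentially independent and each reduces to a standard property of sequential homotopy limits in the stable category.

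For the shift $X_*[t]$, we have $(X_*[t])_n = X_{n-t}$, so conditional convergence becomes the assertion that $\holim_n X_{-n-t} \simeq *$. The tower $\{X_{-n-t}\}_{n \geq 0}$ is obtained from the tower $\{X_{-n}\}_{n \geq 0}$ by precomposition with the cofinal functor $n \mapsto n+t$ on $(\bN,\leq)$; cofinal functors preserve homotopy limits up to equivalence, so $\holim_n X_{-n-t} \simeq \holim_n X_{-n} \simeq *$. This step is purely formal.

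For the suspension $\Sigma^m X_*$, we need $\holim_n \Sigma^m X_{-n} \simeq *$. Since $\Sigma^m$ is an equivalence on the stable category with inverse $\Sigma^{-m}$ (represented on the point-set level by smashing with $F_{-m}S^0$ or $S^m$ as appropriate, both cofibrant), it preserves all homotopy limits up to canonical equivalence. Thus $\holim_n \Sigma^m X_{-n} \simeq \Sigma^m \holim_n X_{-n} \simeq \Sigma^m * \simeq *$, giving conditional convergence of $\Sigma^m X_*$.

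The only mildly delicate point is justifying the commutation of $\Sigma^m$ with the sequential homotopy limit when $m<0$; this is immediate from the fact that $\Omega^{|m|}$ is a right Quillen functor on orthogonal spectra in the stable model structure (in fact, for $m > 0$ the suspension $\Sigma^m$ is itself a Quillen left adjoint whose derived functor is an equivalence, so its derived right adjoint $\Sigma^{-m}$ also preserves homotopy limits, which suffices by invertibility). Since neither step presents any real obstacle, the proof amounts to recording these two elementary observations.
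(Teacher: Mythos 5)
Your proof is correct and fills in exactly the routine argument the paper elides by calling the proposition ``clear'': the shift case is a cofinality argument for sequential homotopy limits, and the suspension case follows because the (derived) suspension is an equivalence of the stable category and thus commutes with homotopy limits. The only minor imprecision is in phrasing the shift case via ``the cofinal functor $n\mapsto n+t$ on $(\bN,\leq)$,'' which is literally a functor only for $t\geq 0$; for $t<0$ the inclusion goes the other way (the original tower becomes a cofinal subtower of the shifted one), but either way the homotopy limits agree and the argument goes through unchanged.
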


For a filtered associative ring spectrum $A_{*}$, a finite cell
filtered right $A_{*}$-module is a filtered right $A_{*}$-module that
can be built in finitely many stages using cofiber sequences involving
suspensions and shifts.

\begin{prop}\label{prop:smashfinite}
If $N_{*}$ is a conditionally convergent left $A_{*}$-module and
$M_{*}$ is a finite cell filtered right $A_{*}$-module, then
$M_{*}\smal_{A_{*}}N_{*}$ is conditionally convergent.
\end{prop}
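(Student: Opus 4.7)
The plan is to argue by induction on the cell structure of $M_{*}$.

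First, I would handle the case of a single cell, namely $M_{*}=\Sigma^{m}A_{*}[t]$ for some $m,t\in\bZ$. Since $A_{*}$ is the unit for $\smal_{A_{*}}$, we have a natural equivalence
\[
(\Sigma^{m}A_{*}[t])\smal_{A_{*}}N_{*}\simeq \Sigma^{m}N_{*}[t],
\]
and the preceding proposition (that suspension and shift preserve conditional convergence) shows the right-hand side is conditionally convergent whenever $N_{*}$ is. More generally, for any wedge of such shifted suspensions of $A_{*}$, say $F_{*}=\bigvee_{\alpha}\Sigma^{m_{\alpha}}A_{*}[t_{\alpha}]$, we have $F_{*}\smal_{A_{*}}N_{*}\simeq\bigvee_{\alpha}\Sigma^{m_{\alpha}}N_{*}[t_{\alpha}]$; in the finite cell setting only finitely many summands appear at each cell-attachment, so conditional convergence passes to the wedge.

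Next I would perform the inductive step. A finite cell filtered right $A_{*}$-module $M_{*}$ is built in finitely many stages from the initial object by attaching finitely many cells, so we may assume we have a cofiber sequence
\[
\Sigma^{m}A_{*}[t]\to M'_{*}\to M_{*}
\]
in the filtered derived category of right $A_{*}$-modules, where $M'_{*}$ is built from strictly fewer cells. Smashing with $N_{*}$ over $A_{*}$ (using the derived smash product) produces a cofiber sequence
\[
\Sigma^{m}N_{*}[t]\to M'_{*}\smal_{A_{*}}N_{*}\to M_{*}\smal_{A_{*}}N_{*}
\]
of filtered spectra. By the inductive hypothesis together with the base case above, the first two terms are conditionally convergent. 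The proposition that homotopy cofibers of conditionally convergent filtered spectra are conditionally convergent then shows that the third term is conditionally convergent, completing the induction.

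I do not expect any serious obstacle here; the mild technical point is ensuring that the cell-attachment cofiber sequence survives to a cofiber sequence after $\smal_{A_{*}}N_{*}$, which is standard given Proposition~\ref{prop:filt3} (one may compute the derived smash product by cofibrantly replacing $M_{*}$, preserving the cellular filtration). The argument is then purely formal bookkeeping: the class of right $A_{*}$-modules $M_{*}$ for which $M_{*}\smal_{A_{*}}N_{*}$ is conditionally convergent contains the suspensions and shifts of $A_{*}$ and is closed under finite cofiber sequences, hence contains all finite cell filtered right $A_{*}$-modules.
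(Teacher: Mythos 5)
Your proof is correct and takes essentially the same approach as the paper's one-line argument: the paper simply observes that $M_{*}\smal_{A_{*}}N_{*}$ is built in finitely many stages from suspensions and shifts of $N_{*}$ via cofiber sequences, which is the induction you spell out.
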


\begin{proof}
$M_{*}\smal_{A_{*}}N_{*}$ is built in finitely many stages using
cofiber sequences involving suspensions and shifts of $N_{*}$.
\end{proof}

%%%%%%%%%%%%%%%%%%%%%%%%%%%%%%%%%%%%%%%%
\section[Proof of Theorem~{\ref{thm:Hand}}]{Comparison of the lefthand and righthand spectral sequences\sbreak (Proof of Theorem~\ref{thm:Hand})}\label{sec:Hand}

This section is devoted to the proof of Theorem~\ref{thm:Hand}.  We
prove a slightly more general result: We assume that that $A$ is a
commutative ring orthogonal $\bT$-spectrum, and $X$ and $Y$ are
cofibrant $A$-modules under $A$.  (We can also take $G$ to be any
closed subgroup of $\bT$.)

Let $TY'_{*}$ be a filtered $T^{M}_{*}\!A$-module cofibrant
replacement for $T^{M}_{*}Y$.  In particular $\Gr TY'_{*}\to
\Gr T^{M}_{*}Y$ is a weak equivalence.
Theorem~\ref{thm:Hand} asserts that the filtered map
\begin{equation}\label{eq:Hand2}
T^{M}_{*}X\sma_{T^{M}_{*}\!A}TY'_{*}\to T^{M}_{*}(X\sma_{A}Y)
\end{equation}
induces a weak equivalence on $\pi^{\Gr}_{*,*}$.
Note that this is a map of homotopical left $T^{M}_{*}\!A$-modules
and hence the induced map on $\pi^{\Gr}_{*,*}$ is a map of left
$\pi^{\Gr}_{*,*}T^{M}_{*}\!A$-modules.  Theorem~\ref{thm:CCT} (in the
case $G=\bT$) or Proposition~\ref{prop:CCfg} (in the case $G$ is
finite) gives an isomorphism of bigraded rings
\[
\pi^{\Gr}_{*,*}T^{M}_{*}\!A\iso \CC_{*,*}\otimes \pi_{*}A.
\]
In particular~\eqref{eq:Hand2} is a map of left
$\CC_{*,*}$-modules. 

By Hypothesis~\ref{hyp:gr0}, we have a zigzag of monoidal functors,
which as indicated takes the form
\[
i^{*}\overto{\simeq} RR\overfrom{\simeq}TT\to \Gr_{0}T^{M}_{*},
\]
where $i^{*}$ denotes the forgetful functor to non-equivariant
orthogonal spectra.
Choosing cofibrant replacements $TTY'\to TT(Y)$ and $RRY'\to
RR(Y)$ in 
the categories of left $TT(A)$-modules and left $RR(A)$-modules,
respectively, we can then choose lifts $TTY'\to \Gr_{0}TY'$ and $TTY'\to
RRY'$ (in left $TT(A)$-modules) and $i^{*}Y\to RRY'$ (in left
$i^{*}A$-modules).  Since the natural transformations are monoidal, we
get a commutative diagram of graded spectra
\[
\xymatrix@R-1pc{%
i^{*}X\sma_{i^{*}A}i^{*}Y\ar[r]^{=}\ar[d]_-{\simeq}
&i^{*}X\sma_{i^{*}A}i^{*}Y\ar[d]^-{\simeq}\\
RR(X)\sma_{RR(A)}RRY'\ar[r]&RR(X\sma_{A}Y)\\
TT(X)\sma_{TT(A)}TTY'\ar[r]\ar[u]^-{\simeq}\ar[d]
&TT(X\sma_{A}Y)\ar[u]_-{\simeq}\ar[d]\\
\Gr  T^{M}_{*}X\sma_{\Gr T^{M}_{*}\!A}\Gr TY'\ar[r]
&\Gr (T^{M}_{*}(X\sma_{A}Y))
}
\]
where we view the ungraded spectra as concentrated in degree $0$.  On
the righthand side, since 
\[
\pi^{\Gr}_{*,*}(T^{M}_{*}(X\sma_{A}Y))\iso
\pi_{*,*}\Gr(T^{M}_{*}(X\sma_{A}Y))
\]
is a left
$\CC_{*,*}$-module, we get an induced map of $\CC_{*,*}$-modules
\[
\CC_{*,*}\otimes \pi_{*}(i^{*}X\sma_{i^{*}A}i^{*}Y)\to
\pi^{\Gr}_{*,*}(T^{M}(X\sma_{A}Y))
\]
which by construction (and Hypothesis~\ref{hyp:gr0}) is the usual
isomorphism.  We are now reduced to proving the following lemma.

\begin{lem}\label{lem:EMSS}\label{LEM:EMSS}
The induced map of bigraded abelian groups
\[
\CC_{*,*}\otimes \pi_{*}(i^{*}X\sma_{i^{*}A}i^{*}Y)\to 
\pi^{\Gr}_{*,*}(T^{M}_{*}X\sma_{T^{M}_{*}\!A}TY')
\]
is an isomorphism.
\end{lem}

\begin{proof}
We have trigraded Eilenberg-Moore spectral sequences to compute both
sides. On the left we take the tensor of the torsion free bigraded
abelian group $\CC_{*,*}$ with the Eilenberg-Moore spectral sequence
for the smash product $i^{*}X\sma_{i^{*}A}i^{*}Y$; this has $E^{2}$-term
\[
E^{2}_{i,j,*}=\CC_{j,*}\otimes \Tor^{\pi_{*}A}_{i,*}(\pi_{*}X,\pi_{*}Y).
\]
On the right the usual Eilenberg--More spectral sequence for the
balanced smash product of associated graded modules is naturally
trigraded with $E^{2}$-term
\[
E^{2}_{i,*,*}=\Tor^{\pi^{\Gr}_{*,*}A}_{i}(\pi^{\Gr}_{*,*}X,\pi^{\Gr}_{*,*}Y)
\iso 
\Tor^{\CC_{*,*}\otimes \pi_{*}A}_{i,*,*}(\CC_{*,*}\otimes \pi_{*}X,\CC_{*,*}\otimes \pi_{*}Y).
\]
Since the map is induced by maps of rings and modules, we get a
homomorphism of spectral sequences.  Since the isomorphism
$\CC_{*,*}\otimes \pi_{*}Z\iso \pi^{\Gr}_{*,*}Z$ of
Theorem~\ref{thm:CCT} or Proposition~\ref{prop:CCfg} is induced by the
same map $\pi_{*}Z\to \pi^{\Gr}_{*,*}T^{M}Z$ as in
Hypothesis~\ref{hyp:gr0}, the induced map on $E^{2}$-terms
\[
\CC_{*,*}\otimes \Tor^{\pi_{*}A}_{*,*}(\pi_{*}X,\pi_{*}Y)
\to 
\Tor^{\CC_{*,*}\otimes \pi_{*}A}_{*,*,*}(\CC_{*,*}\otimes \pi_{*}X,\CC_{*,*}\otimes \pi_{*}Y)
\]
is the evident isomorphism.
\end{proof}

%%%%%%%%%%%%%%%%%%%%%%%%%%%%%%%%%%%%%%%%
\section[Proof of Lemma~{\ref{lem:lhss}}]{Conditional convergence of the lefthand spectral sequence\sbreak (Proof of Lemma~\ref{lem:lhss})}\label{sec:cc}

We employ the terminology of Section~\ref{sec:filtmod}.  In this
terminology, Lemma~\ref{lem:lhss} is precisely the assertion that the
filtered spectrum $T^{M}_{*}X\sma_{T^{M}_{*}\!A}TY'$ is conditionally
convergent.  The proof relies properties specific
to perfect fields of finite characteristic.

As indicated in Section~\ref{sec:filtmod}, our homotopical work is in
the Borel equivariant stable category, where we can specify an object
as an orthogonal spectrum indexed on $\{\bR^{n}\}$ with a (point-set)
$\bT$-action.  We denote by $i_{*}Hk$ the Eilenberg-Mac~Lane spectrum
$Hk$ (indexed on $\{\bR^{n}\}$) with
the trivial action.  For any fixed model of $Hk$ as a commutative ring
orthogonal spectrum, the augmentation map $N^{\cy}(Hk)\to i_{*}Hk$ is
a point-set
map of commutative ring orthogonal spectra with $\bT$-action, in
particular making $i_{*}Hk$ an $N^{\cy}(Hk)$-module.  On the point-set
level the map is induced by the multiplication map
$Hk\sma \dotsb \sma Hk\to Hk$, or viewing $N^{\cy}(Hk)$ as the tensor
$i_{*}Hk\otimes \bT$ in the point-set category of commutative ring orthogonal
spectra with $\bT$-action, it is induced by the map of $\bT$-spaces
$\bT\to \bT/\bT$.  The following proposition is specific to the case of
perfect fields of finite characteristic.

\begin{prop}\label{prop:kfinite}
Let $k$ be a perfect field of finite characteristic.  In the
Borel derived category of left $N^{\cy}(Hk)$-modules, $i_{*}Hk$ is finite.
\end{prop}

\begin{proof}
We have $THH_{*}(k)\iso k[t]$ (with $t$ in degree $2$), so it suffices
to show that there exists a map in the Borel stable category
$\Sigma^{2}\bS\to THH(k)$ sending the fundamental class to $t$, or
equivalently, that $t$ is in the image of the map
$\pi^{h\bT}_{*}THH(k)\to \pi_{*}THH(k)$.  The homotopy fixed point
spectral sequence is conditionally convergent in this case and
concentrated in even degrees, and so strongly convergent with
$E_{2}=E_{\infty}$.  In particular, the map $\pi^{h\bT}_{*}THH(k)\to
\pi_{*}THH(k)$ is surjective.
\end{proof}

Throughout this section, when we refer to $i_{*}Hk$ as an equivariant
$N^{\cy}(Hk)$-module, we always mean the structure in
Proposition~\ref{prop:kfinite}.  In the argument for
Lemma~\ref{lem:lhss}, we will use Postnikov towers built from $i_{*}Hk$.

\begin{prop}\label{prop:Post}
Let $A=N^{\cy}(Hk)$ and let $X$ be an equivariant $A$-module.  If
$\pi_{n}X=0$ for $n<N$ then there exists a tower of equivariant
$A$-modules 
\[
\cdots \to X_{m+1}\to X_{m} \to \cdots \to X_{N}\to X_{N-1}=*
\]
and a map of equivariant $A$-modules from $X$ to the system such that:
\begin{enumerate}
\item The map $X\to \holim X_{m}$ is a Borel equivalence.
\item Each homotopy fiber $Fib(X_{m+1}\to X_{m})$ is Borel
equivalent as an equivariant $A$-module to a wedge of copies of $\Sigma^{m+1} i_{*}Hk$.
\end{enumerate}
\end{prop}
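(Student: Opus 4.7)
The plan is to build the standard Postnikov tower of $X$ in the Borel derived category $\Ho^{B}(\Mod_{A})$ and identify its layers using $i_{*}Hk$ as the Eilenberg--Mac Lane building block. The central input is that $\pi_{*}A = THH_{*}(k) \cong k[t]$ with $t$ in degree $2$: since $\pi_{*}X$ is thereby a $k[t]$-module, each group $\pi_{n}X$ carries a natural $k$-vector-space structure, and I fix a $k$-basis $\{v^{(n)}_{\alpha}\}_{\alpha \in I_{n}}$ for each $n \geq N$.

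The key preliminary step is a representability statement: for any equivariant $A$-module $Y$ and any $n \in \bZ$ there is a natural $k$-linear isomorphism
\[
\Ho^{B}(\Mod_{A})(\Sigma^{n} i_{*}Hk,\, Y) \;\cong\; \pi_{n}Y.
\]
This rests on two facts. First, the underlying non-equivariant spectrum of $i_{*}Hk$ is (equivalent to) $Hk$, so in the non-equivariant stable category one recovers the classical duality for $Hk$-modules. Second, $i_{*}Hk$ is small in $\Ho^{B}(\Mod_{A})$ by Proposition~\ref{prop:kfinite}, which lets maps out of $i_{*}Hk$ commute with arbitrary wedges; in particular, for any $k$-vector space $V$ with basis $B$, $\bigvee_{\alpha \in B}\Sigma^{n} i_{*}Hk$ is an Eilenberg--Mac Lane $A$-module with $\pi_{n}$ equal to $V$ and other $\pi_{q}$ vanishing.

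With representability in hand I construct the tower by induction on $m \geq N-1$. Set $X_{N-1} = *$. Given a map $X \to X_{m}$ of equivariant $A$-modules that is an isomorphism on $\pi_{q}$ for $q \leq m$ and has $\pi_{q}X_{m} = 0$ for $q > m$, use representability to lift each basis vector $v^{(m+1)}_{\alpha} \in \pi_{m+1}X$ to a map $\Sigma^{m+1} i_{*}Hk \to X$ in $\Ho^{B}(\Mod_{A})$; wedging produces $\psi \colon W_{m+1} := \bigvee_{\alpha \in I_{m+1}}\Sigma^{m+1} i_{*}Hk \to X$. The composite $W_{m+1} \to X \to X_{m}$ lives in $\prod_{\alpha}\pi_{m+1}X_{m} = 0$, so is canonically null; a choice of null-homotopy produces a homotopy fibre sequence
\[
W_{m+1} \to X_{m+1} \to X_{m}
\]
together with a factorization $X \to X_{m+1} \to X_{m}$. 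The long exact sequence on homotopy, using that $W_{m+1}$ has homotopy concentrated in degree $m+1$ equal to $\pi_{m+1}X$, confirms that $X \to X_{m+1}$ is an isomorphism on $\pi_{q}$ for $q \leq m+1$ and that $\pi_{q}X_{m+1} = 0$ for $q > m+1$, closing the induction and identifying $\mathrm{Fib}(X_{m+1} \to X_{m}) \simeq W_{m+1}$. Conditional convergence $X \simeq \holim_{m}X_{m}$ then follows because for each fixed $q$ the inverse system $\pi_{q}X_{m}$ is eventually constant at $\pi_{q}X$, so $\lim^{1}$ vanishes.

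The main obstacle is the representability lemma. Its non-equivariant analogue, that $Hk$ represents $k$-cohomology for $Hk$-modules, is classical, but here the module structure is only over $A = N^{\cy}(Hk)$ acting through the augmentation $A \to i_{*}Hk$ of Proposition~\ref{prop:kfinite}, and we must work in the Borel-equivariant setting. The smallness provided by Proposition~\ref{prop:kfinite} is essential to commute $\Hom$ out of $i_{*}Hk$ with infinite wedges, so that the basis-indexed wedges $W_{m+1}$ behave homotopically as expected; once this is in place the remaining Postnikov construction is a standard exercise in stable homotopy theory.
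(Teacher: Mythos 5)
Your ``representability lemma''---that $\Ho^{B}(\Mod_{A})(\Sigma^{n}i_{*}Hk,Y)\cong\pi_{n}Y$ for every equivariant $A$-module $Y$---is false, and this is exactly where the content of the proposition lives. The classical duality you invoke is the statement that $Hk$ represents $\pi_{*}$ \emph{in the category of $Hk$-modules}, where $Hk$ is the unit. Here $i_{*}Hk$ is an $A$-module by restriction along the augmentation $A=N^{\cy}(Hk)\to i_{*}Hk$; it is a proper quotient of $A$, not the unit of $\Mod_{A}$. Concretely, by Proposition~\ref{prop:kfinite} there is a cofiber sequence of $A$-modules $\Sigma^{2}A\overto{t}A\to i_{*}Hk$, so $\dR F_{A}(i_{*}Hk,Y)$ sits in a fiber sequence $\dR F_{A}(i_{*}Hk,Y)\to Y\overto{t}\Sigma^{-2}Y$; already non-equivariantly $\pi_{n}\dR F_{A}(i_{*}Hk,Y)$ involves the kernel and cokernel of $t$ on $\pi_{*}Y$, not $\pi_{n}Y$ itself. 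For instance $\Ho(\Mod_{A})(\Sigma^{-3}Hk,Hk)\cong k\neq 0$ while $\pi_{-3}Hk=0$, so the proposed comparison map is neither injective nor an isomorphism. Smallness of $i_{*}Hk$ makes maps out of it commute with wedges, but does not rescue the identification with $\pi_{*}$; that commutation is genuine but beside the point.

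The paper's proof attacks the dual problem and the duality goes through precisely because the \emph{target}, not the source, is $i_{*}Hk$. One computes
\[
\Ho^{B}(\Mod_{A})(X,\Sigma^{N}i_{*}Hk)\cong
\pi^{h\bT}_{-N}\dR F_{A}(X,i_{*}Hk)\cong
\pi^{h\bT}_{-N}\dR F_{i_{*}Hk}(i_{*}Hk\smal_{A}X,i_{*}Hk),
\]
the last step by the extension/restriction-of-scalars adjunction for $A\to i_{*}Hk$. Then the Hurewicz theorem for connective $A$-modules gives that $i_{*}Hk\smal_{A}X$ is $N$-connective with $\pi_{N}\cong\pi_{N}X$; $k$-linear duality for $Hk$-modules forces $\dR F_{i_{*}Hk}(-,i_{*}Hk)$ to be $(-N)$-coconnective with $\pi_{-N}\cong(\pi_{N}X)^{\vee}$; and a homotopy fixed point spectral sequence argument then shows the edge map $\pi^{h\bT}_{-N}\to\pi_{-N}$ is an isomorphism. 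None of these steps appear in your proposal, and they cannot be replaced by the (false) short-cut. As a secondary point, even granting maps $W_{m+1}\to X$, your recipe for manufacturing $X_{m+1}$ from a null-homotopy of the composite to $X_{m}$ does not produce the claimed extension $W_{m+1}\to X_{m+1}\to X_{m}$ directly; what one actually gets is a lift $W_{m+1}\to\mathrm{Fib}(X\to X_{m})$, and one must invert its $(m{+}1)$-truncation and pass to a $k$-invariant $X_{m}\to\Sigma W_{m+1}$. That detour is repairable; the representability error is not.
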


\begin{proof}
By way of notation, recall from Section~\ref{sec:spec} that $\dR
F_{A}(-,-)$ denotes the equivariant derived mapping spectrum for the derived
category of left $A$-modules, an object of the $\bT$-equivariant
stable category.  In contrast, $\dR^{B}F^{\bT}_{A}(-,-)$ denotes the derived
mapping spectrum in the Borel derived category of left $A$-modules, an
object of the (non-equivariant) stable category, and the relationship
is that $\dR^{B}F^{\bT}_{A}(-,-)$ is the homotopy fixed point spectrum
of $\dR F_{A}(-,-)$.   We then have isomorphisms 
\begin{multline*}
\Ho^{B}(\Mod_{A})(X,\Sigma^{n}i_{*}Hk)\iso
\pi_{-n}\dR^{B}F^{\bT}_{A}(X,i_{*}Hk)
\iso \pi^{h\bT}_{-n}\dR F_{A}(X,i_{*}Hk)\\
\iso \pi^{h\bT}_{-n}\dR F_{i_{*}Hk}(i_{*}Hk\smal_{A}X,i_{*}Hk),
\end{multline*}
the last isomorphism induced by change of scalars.  By the Hurewicz
theorem for (non-equivariant) $A$-modules and the hypothesis that
$\pi_{n}X=0$ for $n<N$, we have that the map $\pi_{N}X\to
\pi_{N}(i_{*}Hk\smal_{A}X)$ is an isomorphism and $\pi_{n}(i_{*}Hk\smal_{A}X)=0$
for $n<N$.  We see that 
$\pi_{n}\dR F_{i_{*}Hk}(i_{*}Hk\smal_{A}X,i_{*}Hk)=0$
for $n>-N$, and so from the homotopy fixed point spectral sequence, we
deduce that the map
\begin{multline*}
\pi^{h\bT}_{-N}\dR F_{i_{*}Hk}(i_{*}Hk\smal_{A}X,i_{*}Hk)\to
\pi_{-N}\dR F_{i_{*}Hk}(i_{*}Hk\smal_{A}X,i_{*}Hk)\\
\iso\pi_{-N}\dR F_{Hk}(Hk\smal_{i^{*}A}i^{*}X,Hk)
\end{multline*}
is an isomorphism.  This constructs enough maps $X\to \Sigma^{N}i_{*}Hk$ to
produce a map $X\to \bigvee \Sigma^{N}i_{*}Hk$ that induces an isomorphism
on $\pi_{N}$.  The remainder of the construction is parallel to the
usual construction of the Postnikov tower, using the techniques above
to construct the maps.
\end{proof}

We are ready to prove Lemma~\ref{lem:lhss}.

\begin{proof}[Proof of Lemma~\ref{lem:lhss}]
We use the Postnikov tower construction of Proposition~\ref{prop:Post}
applied to $Y$.  By construction, each $Y_{m}$ is built in finitely
many fiber sequences from a wedge of copies of $\Sigma^{n}i_{*}Hk$.  These are finite
wedges since $\pi_{n}X$ is finitely generated over $k$.  We also know
from Proposition~\ref{prop:kfinite} that $i_{*}Hk$ is finite in the Borel
derived category of $A$-modules.  It follows 
that $T^{M}_{*}Y_{m}$ is a finite filtered $T^{M}_{*}\!A$-module, and by
Proposition~\ref{prop:smashfinite},
$T^{M}_{*}X\smal_{T^{M}_{*}\!A}T^{M}_{*}Y_{m}$ 
is conditionally convergent.  

Proposition~\ref{prop:filt3} allows us to choose a cofibrant filtered
$T^{M}_{*}\!A$-module replacement $TX'_{*}\to T^{M}_{*}X$ so that
$TX'_{*}\sma_{T^{M}_{*}\!A}(-)$ models
$T^{M}X\smal_{T^{M}_{*}\!A}(-)$.  Proposition~\ref{prop:holim} now
implies that
\[
\holim_{m} (TX'_{*}\sma_{T^{M}_{*}\!A}T^{M}_{*}Y_{m})
\]
is conditionally convergent.  We have a map of filtered spectra
\[
TX'_{*}\sma_{T^{M}_{*}\!A}T^{M}Y_{*}\to  
\holim_{m} (TX'\sma_{T^{M}_{*}\!A}T^{M}_{*}Y_{m})
\]
that induces an isomorphism on $\pi^{\Gr}_{*,*}$ since $\pi_{*}X$ is
bounded below (for example, see Lemma~\ref{lem:EMSS}).  The proof is
completed by showing that this map is an objectwise weak equivalence.
We do this by applying Proposition~\ref{prop:gradedequiv} and showing
that the map is a total weak equivalence.  To see that the map is a
total weak equivalence, it suffices to observe that $TX'$ is small as
a $T^{M}\!A$-module.  This is the content of Theorem~\ref{main:finite}
in the case of main interest when $X=N^{\cy}(\aX')$ and $G=\bT$, and
follows in the current generality by its generalization,
Theorem~\ref{thm:genfinite} (together with
Proposition~\ref{prop:finite}), proved in Section~\ref{sec:tpfg}. 
\end{proof}

%%%%%%%%%%%%%%%%%%%%%%%%%%%%%%%%%%%%%%%%
\section{Constructing the filtered model: The positive filtration}
\label{sec:PosFilt}\label{sec:start}

In this section, we start the construction of the filtered functor
$T^{M}$ outlined in Section~\ref{sec:main}.  As in the construction of
the Hesselholt-Madsen Tate filtration in Section~\ref{sec:TSS}, we
construct the integrally graded filtration from a positive filtration,
arising from a filtration on $\tEG$, and a negative filtration,
arising from a filtration on $EG$.  In both cases, the technical
work is to construct the multiplicative structure we require.  This
section handles the work for the positive filtration and the next one
the work for the negative filtration.

The positive filtration on the Tate fixed points arises from the
$G$-cellular filtration on $\tEG$.  It should have $S^{0}$ in
filtration level zero and free $G$-cells in every positive degree in
the pattern specified by the cell structure of the standard model
discussed in Section~\ref{sec:TSS}.  Recall that when $G=\bT$, we
double the natural filtration degrees as explained there.
However, for the purposes of this section, we
will work with the natural filtration degrees in order to give a
uniform treatment, introducing a different notation to avoid confusion.

To construct a multiplicative version of this filtration, we use a
model $\tEOT{\oO}$ for an $A_{\infty}$ operad $\oO$ (with $\oO(0)=*$)
and construct a 
new filtration, the ``pseudocellular filtration'', which is coarser
than the homogeneous filtration~\eqref{eq:firstfiltration}.  Here we
understand an $A_{\infty}$ operad to have spaces the homotopy type of
CW complexes, the identity element to be a non-degenerate basepoint,
and to come with a weak equivalence of operads $\oO\to \Ass$, where
$\Ass$ denotes the associative operad $\Ass(n)=\Sigma_{n}$.  We have a
corresponding non-$\Sigma$ operad $\ooO$ where $\ooO(n)$ is the
component of $\oO(n)$ lying over the identity permutation in
$\Sigma_{n}$; then $\oO$ is canonically isomorphic to the induced
operad ($\oO(n)\iso \ooO(n)\times \Ass(n)$).  In this notation,
$\tEOG{\oO}$ is formed by starting with $S^{0}$ and iteratively gluing
on the cells $\ooO(n)\times G^{n}\times I^{n}$.

\begin{defn}\label{defn:secondfiltration}
Let $\oO$ be an $A_{\infty}$ operad with $\oO(0)=*$.
We define the \term{pseudocellular filtration}, an increasing filtration on $\tEOG{\oO}$, as
follows.  We put $S^{0}$ in filtration level $0$.  For an element
$(g_{1},\ldots,g_{n})\in G^{n}$, let 
\[
q(g_{1},\ldots,g_{n})=n-(\delta(g_{1},g_{2})+\cdots +\delta(g_{n-1},g_{n})),
\]
where $\delta(g,h)=1$ if $g=h$ and $0$ otherwise.  An
element of $\tEOG{\oO}$ in the image of 
\[
a,(g_{1},\ldots,g_{n}),(t_{1},\ldots,t_{n})\in \ooO(n)\times G^{n}\times I^{n}
\]
is placed in filtration level $q(g_{1},\ldots,g_{n})$.  For the
purposes of this section, we
write $\tEOG{\oO}^{pc}_{n}$ for the subspace in pseudocellular
filtration level $n$.  For $G$, finite, we define
$\tEOG{\oO}_{n}=\tEOG{\oO}^{pc}_{n}$, and for $G=\bT$, we define
$\tEOG{\oO}_{2n+1}=\tEOG{\oO}_{2n}=\tEOG{\oO}^{pc}_{n}$.
\end{defn}

Although different representatives of the same point may have
different $q$-values, the filtration is well-defined: The point lies
in all filtration levels of all representatives and all higher
levels.  The function $q$ subtracts from the homogeneous degree the
number of consecutive repeats; an element in homogeneous
filtration level $n$ is also in pseudocellular filtration level $n$
(and possibly lower).  Since $G$ acts diagonally on $G^{n}$, the
filtration is $G$-equivariant.  The filtration is multiplicative in
the following sense.

\begin{prop}\label{prop:posfiltmult}
The operad action maps $\ooO(m)_{+}\sma \tEOG{\oO}^{(m)}\to
\tEOG{\oO}$ preserve filtration using the pseudocellular filtration on
$\tEOG{\oO}$ and the smash power of the pseudocellular
filtration on $\tEOG{\oO}^{(m)}$.
\end{prop}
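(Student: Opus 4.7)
The plan is to verify the proposition by working with explicit representatives of elements of $\tEOG{\oO}$ and reducing the filtration-preservation statement to a combinatorial inequality about the function $q$.

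First, I would make explicit the action of $\ooO(m)$ on $\tEOG{\oO}^{(m)}$ at the level of representatives. Given $a\in \ooO(m)$ and representatives $(a_i,\vec g_i,\vec t_i)\in \ooO(n_i)\times G^{n_i}\times I^{n_i}$ for elements $y_1,\ldots,y_m$ of $\tEOG{\oO}$, the operad action produces the element represented by
\[
\bigl(a\circ (a_1,\ldots,a_m),\ (\vec g_1,\ldots,\vec g_m),\ (\vec t_1,\ldots,\vec t_m)\bigr)\in \ooO(N)\times G^{N}\times I^{N},
\]
where $N=n_1+\cdots+n_m$, using the non-$\Sigma$ operad composition on the first factor and concatenation of tuples on the other two. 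This follows from the formula for the free $\oO$-algebra on which $\tEOG{\oO}$ is a quotient.

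Next, I would record the key subadditivity of $q$ under concatenation: for tuples $\vec g_i = (g_{i,1},\ldots,g_{i,n_i})$,
\[
q(\vec g_1,\ldots,\vec g_m)=\sum_{i=1}^{m} q(\vec g_i) -\sum_{i=1}^{m-1}\delta(g_{i,n_i},g_{i+1,1})\leq \sum_{i=1}^{m} q(\vec g_i),
\]
which is immediate from the definition of $q$ as length minus adjacent-repeat count, since concatenation can only introduce additional adjacent equalities at the $m-1$ join points (and the formula holds unchanged when some $n_i=0$, with the convention that the $i$th join contribution is absent). Since an element of $\tEOG{\oO}$ lies in $\tEOG{\oO}^{pc}_n$ precisely when \emph{some} representative has $q$-value at most $n$, this inequality shows that if each $y_i$ lies in pseudocellular filtration level $n_i$ (with a chosen witnessing representative), the image has a representative with $q\leq n_1+\cdots+n_m$, and so lies in $\tEOG{\oO}^{pc}_{n_1+\cdots+n_m}$.

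Finally, I would dispatch the edge cases. When any $y_i$ equals the basepoint of $\tEOG{\oO}$, the smash $\ooO(m)_+\sma \tEOG{\oO}^{(m)}$ collapses this input to its basepoint, which maps to the basepoint of $\tEOG{\oO}$ (in filtration $0$). When $y_i$ equals the non-basepoint of $S^{0}$, it corresponds to the unique representative with $n_i=0$; concatenation simply omits position $i$, and the inequality above continues to hold. For $G=\bT$, where the filtration is doubled via $\tEOG{\oO}_{2n}=\tEOG{\oO}_{2n+1}=\tEOG{\oO}^{pc}_n$, an input in level $m_i$ lies in pseudocellular level $\lfloor m_i/2\rfloor$, so the image lies in pseudocellular level $\sum\lfloor m_i/2\rfloor$, hence in filtration $2\sum\lfloor m_i/2\rfloor\leq \sum m_i$. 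I do not expect a significant obstacle here: the content is the combinatorial subadditivity of $q$, and the only technical point requiring care is that the pseudocellular filtration is defined by \emph{existence} of a low-$q$ representative, so one must choose representatives realizing the input filtrations before invoking the inequality on their concatenation.
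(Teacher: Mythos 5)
Your argument matches the paper's exactly: both make the operad action explicit on representatives (composed operad element, concatenated group and time tuples) and reduce the claim to the subadditivity of $q$ under concatenation, which holds because the $m-1$ joins can only introduce additional adjacent repeats. One minor imprecision: your parenthetical equality for $q$ of a concatenation does not actually hold when some $n_i=0$ (removing an empty factor can create a new adjacent repeat between its former neighbors, e.g.\ concatenating $(a,b)$, $()$, $(b,c)$), but the \emph{inequality} you invoke remains valid and is all that is used; the paper's displayed equality has the same caveat, left implicit.
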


\begin{proof}
Representing an element $x$ of $\tEOG{\oO}^{(m)}$ as 
\[
a^{i},(g^{i}_{1},\ldots,g^{i}_{n_{i}}),(t_{1},\ldots,t_{n_{i}}) 
\]
for $i=1,\ldots,m$, and given an element $a\in \ooO(m)$, the
composition takes $a,x$ to an element $y$ of $\tEOG{\oO}$ that is
represented by 
\[
b,(h_{1},\ldots,h_{n}),(u_{1},\ldots,u_{n})
\]
where $b=a\circ (a_{1},\ldots,a_{i})$, $n=n_{1}+\cdots+n_{m}$, and the
$h_{k}$'s and $u_{k}$'s are the lists obtained by flattening the
arrays of $g^{i}_{j}$'s and $t^{i}_{j}$'s (respectively)
lexicographically with the lower index first.
The element $x$ is in filtration level
\[
\sum_{i=1}^{m} q(g_{1},\ldots,g_{n_{i}})
=\sum_{i=1}^{m}\left(n_{i}-\sum_{j=1}^{n_{i}-1}\delta(g^{i}_{j},g^{i}_{j+1})\right)
\]
which is at least as big as 
\[
\sum_{i=1}^{m}\left(n_{i}-\sum_{j=1}^{n_{i}-1}\delta(g^{i}_{j},g^{i}_{j+1})\right)-\sum_{i=1}^{m-1}\delta(g^{i}_{n_{i}},g^{i+1}_{1})
=n-\sum_{k=1}^{n-1}\delta(h_{k},h_{k+1}), 
\]
which filtration level contains $y$.
\end{proof}

To show that this filtration is equivalent to the standard one,
consider the standard (simplicial) filtration on $EG$ coming from the
bar construction $EG=B(G,G,*)$ where
\[
B_{n}(G,G,*)=G\times (G^{n}).
\]
We have the same filtration when we look at the $W$-construction, the
geometric realization of the simplicial set
\[
WG_{n}=G^{n+1},
\]
where the faces are induced by projection maps 
\[
d_{i}(g_{0},\ldots,g_{n})=(g_{0},\ldots,\hat g_{i},\ldots,g_{n})
\]
and the degeneracies by
diagonal maps 
\[
s_{i}(g_{0},\ldots,g_{n})=(g_{0},\ldots,g_{i-1},g_{i},g_{i},g_{i+1},\ldots,g_{n}).
\]
This has the diagonal $g$-action, and the map $WG\subdot\to B\subdot(G,G,*)$ defined by 
\[
g_{0},\ldots,g_{n}\mapsto g_{0},(g_{0}^{-1}g_{1},\ldots,g_{n-1}^{-1}g_{n})
\]
is a $G$-equivariant simplicial isomorphism.  We get another model
closer to the operadic construction by
regarding $WG\subdot$ as a $\Delta$-set, forgetting the degeneracies.
Let $WG^{\Delta }$ denote the $\Delta$-set geometric realization,
where we iteratively glue $WG_{n}\times \Delta[n]$ along $WG_{n}\times
\partial \Delta[n]$ without collapsing the degeneracies.  The $\Delta$-set
geometric realization gives a filtration but not one that corresponds
to the filtration on $WG$: The map $WG^{\Delta}\to WG$ is filtered and
an equivariant weak equivalence, but the filtration pieces do not map by weak
equivalences.  We can fix this by enlarging the filtration making use
of the forgotten degeneracies: We define the pseudocellular filtration
on $WG^{\Delta}$ by taking $WG^{\Delta}_{n}$ to consist of all
elements in the image of $WG_{m}\times \Delta[m]^{\Delta}$ for $m\leq
n$, where $\Delta[m]^{\Delta}$ denotes the $\Delta$-set geometric
realization of $\Delta[m]$.  Using this filtration, the map
$WG^{\Delta}\to WG$ is an equivariant weak equivalence on each
filtration level. 

\begin{defn}
Let $\tWG$ be the cofiber of the map $WG^{\Delta}_{+}\to S^{0}$
collapsing $WG^{\Delta}$ to the non-basepoint.  Let $\tWG_{n}$ be the
cofiber of $(WG^{\Delta}_{n-1})_{+}\to S^{0}$ for the pseudocellular
filtration on $WG^{\Delta}$, where we understand $WG^{\Delta}_{-1}$ to
be the empty set.
\end{defn}

The equivariant weak equivalence $WG^{\Delta}\to EG$ induces an
equivariant weak equivalence $\tWG\to \tEG$, and the following
proposition is clear by construction. 

\begin{prop}
The canonical map $\tWG\to \tEG$ is filtered for the standard
filtration on $\tEG$ and an equivariant weak equivalence on each
filtration level.
\end{prop}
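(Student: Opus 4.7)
The plan is to reduce both assertions to a single statement about the map of underlying $G$-spaces $WG^{\Delta}_{n-1} \to EG_{n-1}$, and then identify that map as the comparison between the $\Delta$-set realization and the simplicial realization of $\mathrm{sk}_{n-1}(WG)$.

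First I would handle the filtration-preserving claim. Unwinding definitions, since $\tWG_n$ is the cofiber of $(WG^{\Delta}_{n-1})_+ \to S^0$ and $\tEG_n$ is the cofiber of $(EG_{n-1})_+ \to S^0$, what needs to be checked is that the canonical map $WG^{\Delta} \to EG$ (via the simplicial isomorphism $WG \to B(G,G,*)$) takes $WG^{\Delta}_{n-1}$ into $EG_{n-1}$. This is immediate from construction: an element of $WG^{\Delta}_{n-1}$ lies in the image of $WG_m \times \Delta[m]^{\Delta}$ for some $m \leq n-1$, hence maps into $\mathrm{sk}_{n-1}(WG) = EG_{n-1}$. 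Second, since $WG^{\Delta}_{n-1}$ and $EG_{n-1}$ are built by iteratively attaching free $G$-cells (so the maps into the reduced cones are equivariant $h$-cofibrations), the induced map of cofibers $\tWG_n \to \tEG_n$ is an equivariant weak equivalence as soon as $WG^{\Delta}_{n-1} \to EG_{n-1}$ is.

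Next I would identify the two sides. By the pseudocellular definition, $WG^{\Delta}_{n-1}$ is exactly the $\Delta$-set realization $|\mathrm{sk}_{n-1}(WG)|^{\Delta}$ obtained by gluing $WG_m \times \Delta[m]^{\Delta}$ for $m \leq n-1$ along face relations only, without collapsing degeneracies; meanwhile $EG_{n-1}$ is the simplicial realization $|\mathrm{sk}_{n-1}(WG)|$. Both are free $G$-CW complexes (since $G$ acts freely and diagonally on each $WG_m = G^{m+1}$), and the comparison map is $G$-equivariant, so by equivariant Whitehead it suffices to verify that the underlying non-equivariant map is a weak equivalence.

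The remaining ingredient is the classical fact that for any simplicial set $X$ with injective degeneracies (automatic for simplicial sets), the canonical map $|X|^{\Delta} \to |X|$ is a homotopy equivalence; applied skeleton-by-skeleton this gives $|\mathrm{sk}_{n-1}(WG)|^{\Delta} \to |\mathrm{sk}_{n-1}(WG)|$. One proves this by induction on $n$: the pushout square expressing $|\mathrm{sk}_{n-1} X|^{\Delta}$ from $|\mathrm{sk}_{n-2} X|^{\Delta}$ attaches cells $WG_{n-1} \times \Delta[n-1]^{\Delta}$ along their $\Delta$-set boundaries, and the comparison to the corresponding pushout for $|\mathrm{sk}_{n-1} X|$ collapses degenerate cells via maps that are cofibrant and homotopy equivalences on each matching piece (equivalently, this is a Reedy-cofibrant versus usual colimit comparison of $\Delta^{\mathrm{op}}$-diagrams).

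The main obstacle is not any single step but the bookkeeping to ensure that the pseudocellular filtration really does coincide with $|\mathrm{sk}_{n-1}(WG)|^{\Delta}$ (as opposed to a related construction that separates non-degenerate and degenerate simplices), and to confirm that the resulting comparison is compatible with the standard simplicial filtration on $EG$ through the isomorphism $WG \cong B(G,G,*)$. Once that identification is in place, the weak equivalence assertion is an instance of a well-known comparison and the filtration assertion is formal.
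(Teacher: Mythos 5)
Your proposal is correct and fills in exactly the argument the paper has in mind when it declares the proposition ``clear by construction'': the assertion that $WG^{\Delta}_{n-1}\to EG_{n-1}$ is an equivariant weak equivalence is stated in the paragraph just before (the whole point of the pseudocellular filtration), and passing to cofibers of the $h$-cofibrations into $S^{0}$ gives the result on $\tWG_{n}\to\tEG_{n}$. Your identification of $WG^{\Delta}_{n-1}$ with the fat realization $\|\,\mathrm{sk}_{n-1}(WG)\,\|$ is the right bookkeeping step and is correct: an element of $\|WG\|$ is in the image of some $WG_{m}\times\Delta[m]^{\Delta}$ with $m\leq n-1$ precisely when its representing simplex is a degeneracy of a simplex of dimension at most $n-1$, which is the levelwise description of $\mathrm{sk}_{n-1}(WG)\subset WG$.

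One small imprecision to flag: the parenthetical ``automatic for simplicial sets'' only covers $G$ finite. For $G=\bT$ the object $WG$ with $WG_{m}=G^{m+1}$ is a simplicial \emph{space}, and the comparison $\|\mathrm{sk}_{n-1}X\|\to|\mathrm{sk}_{n-1}X|$ requires the degeneracies to be cofibrations (Segal's ``good'' condition, or Reedy cofibrancy). That hypothesis does hold here---$G=\bT$ is a compact Lie group so the relevant diagonal/unit inclusions are NDR pairs and the latching maps are cofibrations---and in fact this is what your pushout-comparison sentence is really using, but it should be stated for simplicial spaces rather than invoked as a property of simplicial sets. Likewise the equivariant Whitehead step uses that both sides admit free $G$-CW structures; for $G=\bT$ this is because the cells $G^{m+1}\times\Delta^{m}$ can be subdivided into free $G$-cells, not because the action on $G^{m+1}$ is discrete. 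With these two wordings tightened, the argument is complete and is the intended one.
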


Next we define a map $\tEOG{\oO}\to \tWG$.
Concretely, $\tWG$ is built from $S^{0}$ by iteratively attaching
$G^{n}\times \Delta[n-1]\times I$ along $G^{n}\times \partial
(\Delta[n-1]\times I)$ by the map that identifies an element
\[
(g_{1},\ldots,g_{n}),x,t
\]
of $G^{n}\times \partial (\Delta[n-1]\times I)\subset G^{n}\times \Delta[n-1]\times I $ with 
\[
(g_{1},\ldots,\hat g_{i},\ldots,g_{n}), y,t
\]
when $x=d^{i-1}(y)$ for some $y\in \Delta[n-2]$, with the basepoint if
$t=1$, and with the non-basepoint element of $S^{0}=\tWG_{0}$ when
$t=0$.  We can identify the image of $G^{m}\times \Delta[m]^{\Delta}$
in $WG^{\Delta}$ in terms of repeated coordinates.  Concisely, a
point is in $\tWG_{n}$ exactly when it has a representative
$(g_{1},\ldots,g_{n}),x,t$ with $q(g_{1},\ldots,g_{n})\leq n$, where
$q$ is the function defined in Definition~\ref{defn:secondfiltration}.

We define the map $\tEOG{\oO}\to \tWG$ as follows.  We send
$S^{0}=\tEOG{\oO}_{0}$ by the identity into $S^{0}=\tWG_{0}$, and we
send the element of $\tEOG{\oO}$ represented by 
\[
a,(g_{1},\ldots,g_{n}),(t_{1},\ldots,t_{n}),
\]
to the basepoint if $\sum t_{i}\geq 1$, to the non-basepoint element
of $S^{0}$ if $t_{i}=0$ for all $i$, and otherwise to the element
represented by
\[
(g_{1},\ldots,g_{n}),x,t
\]
where $t=\sum t_{i}$ and $x$ has barycentric coordinates
\[
t_{1}/t, t_{2}/t,\ldots, t_{n}/t.
\]
It is clear from the gluing relations on $\tEOG{\oO}$ that this is
well-defined, continuous, and equivariant.  Moreover, it is clear from
the construction of the filtration that it is filtered for the
pseudocellular filtration.   The following proposition completes the
work we need on the pseudocellular filtration.

\begin{prop}
The map $\tEOG{\oO}^{pc}_{n}\to \tWG_{n}$ is an equivariant weak
equivalence for all $n$.
\end{prop}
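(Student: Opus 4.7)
The plan is to prove the proposition by induction on $n$. First I would reduce to a non-equivariant statement: both $\tEOG{\oO}^{pc}_n$ and $\tWG_n$ are constructed from $S^{0}$ by attaching cells of the form (something) $\times G^{k} \times \text{(interval data)}$, with $G$ acting freely on the $G^{k}$ factor. By the same argument given just after Construction~\ref{cons:tEOG}, this forces $(\tEOG{\oO}^{pc}_n)^{H} = S^{0} = (\tWG_n)^{H}$ for every nontrivial closed $H < G$, and the map is the identity on these fixed-point subspaces. So it suffices to show the underlying map of non-equivariant spaces is a weak equivalence.

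For the inductive step, observe that both inclusions $\tEOG{\oO}^{pc}_{n-1} \subset \tEOG{\oO}^{pc}_n$ and $\tWG_{n-1} \subset \tWG_n$ are cofibrations, so by the induction hypothesis and the long exact sequence of a cofibration it is enough to show that the induced map of cofibers
\[
\tEOG{\oO}^{pc}_n / \tEOG{\oO}^{pc}_{n-1} \to \tWG_n / \tWG_{n-1}
\]
is a weak equivalence. On the target, unwinding the description of $\tWG_n$ shows that a point not in $\tWG_{n-1}$ is represented by a tuple $(g_{1},\dots,g_{k})$ with exactly $k-n$ consecutive repeats and an interior point of $\Delta[k-1]\times I$. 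All faces with $t_i \in \{0,1\}$ or with any simplex coordinate equal to $0$ lie in $\tWG_{n-1}$, and the higher-dimensional cells contributed by degenerate $(g_0,\dots,g_k)$ can be collapsed against the non-degenerate ones using the $\Delta$-set structure inside $\Delta[m]^{\Delta}$. The upshot is a canonical identification
\[
\tWG_n / \tWG_{n-1} \simeq (G^{n}_{\neq})_{+}\sma S^{n},
\]
where $G^{n}_{\neq}\subset G^{n}$ is the subset of tuples with no consecutive repeats.

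On the source side, the same kind of analysis applies: $\tEOG{\oO}^{pc}_n / \tEOG{\oO}^{pc}_{n-1}$ is assembled from the cells $\ooO(k)_{+}\sma (G^{k})_{+} \sma I^{k}$ with $q(g_{1},\dots,g_{k})=n$ (so from each arity $k\geq n$ with $k-n$ consecutive repeats), modulo the operadic identifications at $t_{i}\in\{0,1\}$ and the boundary collapses into $\tEOG{\oO}^{pc}_{n-1}$. I would introduce a secondary filtration by the number of consecutive repeats $k-n$. The bottom piece ($k=n$) is
\[
\ooO(n)_{+}\sma (G^{n}_{\neq})_{+}\sma (I^{n}/\partial I^{n})
 \simeq (G^{n}_{\neq})_{+}\sma S^{n},
\]
using contractibility of $\ooO(n)$; and the restriction of our map to this piece is the $\ooO(n)\to *$ collapse smashed with the standard homeomorphism $I^{n}/\partial I^{n}\to (\Delta[n-1]\times I)/\partial$ arising from the radial parametrization $t\mapsto (t/|t|,|t|)$ on $\{|t|\leq 1\}$. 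This identifies the bottom piece with $\tWG_n/\tWG_{n-1}$.

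The main obstacle will be verifying that the higher-arity pieces ($k-n>0$) of the secondary filtration are contractible, so that only the bottom piece contributes to the homotopy type. These pieces are built from tuples with consecutive repeats, and the operadic relation $[g_{i},0]=\text{unit}$ combined with the composition $\circ_{i}\eta$ (for the unit $\eta\colon\bS\to\ooO(1)$) provides identifications between arity $k$ cells and arity $k-1$ cells that exactly mirror the degeneracy identifications used to build $\Delta[m]^{\Delta}$; contractibility then follows from the contractibility of $\ooO(k)$ uniformly in $k$. An alternative, perhaps cleaner route would be to express both filtration quotients as geometric realizations of parallel simplicial/operadic diagrams and deduce the equivalence levelwise, but in either approach the careful bookkeeping of how $q$ behaves under the operadic composition with the unit is the technical heart of the argument.
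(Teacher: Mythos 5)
Your reduction to a non\-equivariant statement and your inductive setup are the same as the paper's, and your ``alternative, cleaner route'' at the end (compare the two cell structures levelwise) is in fact essentially what the paper does.  But the main route you sketch has a concrete gap at exactly the step you flag as the technical heart: the higher pieces ($m = k-n > 0$) of your secondary filtration are \emph{not} contractible.  Running your own analysis one step further: for $m>0$ the $m$-th secondary-filtration quotient of $\tEOG{\oO}^{pc}_n/\tEOG{\oO}^{pc}_{n-1}$ is a wedge, indexed by the choice of which $m$ adjacent pairs repeat, of pieces of the form $\ooO(n+m)_{+}\sma (G^{n}/sG^{n})\sma S^{n+m}$, with the analogous non\-contractible pieces $(G^{n}/sG^{n})\sma ((\Delta^{n+m-1}\times I)/\partial)$ on the $\tWG$ side.  (Every face of the $m$-repeat cell does collapse --- $t_i\in\{0,1\}$ and $sG$ go to lower primary filtration, $t_i=0$ on a repeated coordinate goes to lower secondary --- but what remains is a sphere of dimension $n+m$, not a point.)  Your heuristic that ``the operadic relation $[g_i,0]=\text{unit}$ mirrors the degeneracy identifications used to build $\Delta[m]^{\Delta}$'' reverses the key feature of the $\Delta$-set realization: $\Delta[m]^{\Delta}$ is precisely the model in which the degeneracies are \emph{not} collapsed, which is why these extra cells are present at all.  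The $t_i = 0$ identification lowers $q$, not the arity at fixed $q$, so it does not dispose of them either.  Relatedly, your claim $\tWG_n/\tWG_{n-1}\simeq (G^{n}_{\neq})_{+}\sma S^{n}$ needs to be $(G^{n}/sG^{n})\sma S^{n}$ when $G=\bT$ (these differ for positive-dimensional $G$), and even so is not the full homotopy type of the quotient for the reason above.

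What rescues your framework --- and is what the paper actually proves --- is that you never need to compute the quotient in closed form.  It suffices to observe that the map $\tEOG{\oO}^{pc}_n\to\tWG_n$ carries each ``cell'' $\ooO(n+m)\times G^{n}\times I^{n+m}$ to the corresponding $G^{n}\times\Delta[n+m-1]\times I$ by a homotopy equivalence (using contractibility of $\ooO(n+m)$ and your radial reparametrization of $I^{n+m}$, which is a degree\-one map, not a homeomorphism), and is also a homotopy equivalence on each cell boundary; then the gluing lemma, applied over $m$ and over $n$, gives the result directly.  Equivalently, in your quotient picture: show the map is a weak equivalence on each \emph{piece} of the secondary filtration, not that the higher pieces vanish.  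That replaces a false claim with a correct one and recovers the paper's proof.
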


\begin{proof}
For every non-trivial $H<G$, the induced map on $H$-fixed points is
the identity map $S^{0}\to S^{0}$, so it suffices to show that it is a
non-equivariant weak equivalence.  For both $\tEOG{\oO}$ and $\tWG$,
the $n$th piece of the filtration is built from the $(n-1)$st piece of
the filtration by attaching cells of a certain form along boundaries.  For
$\tEOG{\oO}$, these cells are of the form
\[
\ooO(n)\times G^{n}\times I^{n+m}
\]
with boundary
\[
(\ooO(n)\times sG^{n}\times I^{n+m})\cup (\ooO(n)\times G^{n}\times \partial I^{n+m})\subset \ooO(n)\times G^{n}\times I^{n+m}
\]
for $m\geq 0$, where $sG^{n}$ denotes the subspace where at least one
coordinate is the same as the following coordinate.  The cells for $\tWG$
are in one-to-one correspondence but of the form
\[
G^{n}\times \Delta [n-1+m]\times I
\]
with boundary
\[
(sG^{n}\times \Delta [n-1+m]\times I)\cup (G^{n}\times \partial (\Delta[n-1+m]\times I))\subset G^{n}\times \Delta [n-1+m]\times I.
\]
The map $\tEOG{\oO}^{pc}_{n}\to \tWG_{n}$ sends each cell of
$\tEOG{\oO}^{pc}_{n}$ to the corresponding cell of $\tWG_{n}$ by a homotopy
equivalence, which is a homotopy equivalence on the boundary. 
\end{proof}

\section{Constructing the filtered model: The negative filtration}
\label{sec:NegFilt}

This section continues the work on constructing the filtered functor
$T^M$ by constructing the negative part of the Hesselholt-Madsen
multiplicative filtration.  The negative filtration arises from the
$\bT$-cellular filtration of $E\bT$ and the diagonal map $E\bT\to
E\bT\times E\bT$ (partly) induces the multiplication.  Of course, the
diagonal is not compatible with the filtration; we must modify
the diagonal to make it cellular.  Our approach is to parametrize
different diagonal maps using a variant of the little $1$-cubes
operad, the ``overlapping little $1$-cubes'' operad.

\begin{defn}\label{defn:ol1co}
The \term{overlapping little $1$-cubes} $\oCO$ has $n$th
space the subspace of elements $((x_{1},y_{1}),\ldots,(x_{n},y_{n}))$ of $(I^{2})^{n}$
satisfying $x_{i}<y_{i}$, with $\Sigma_{n}$ acting in the usual way on
$(-)^{n}$.  Composition is just like in the little $1$-cubes operad:
\begin{multline*}
((x_{1},y_{1}),\ldots,(x_{n}y_{n}))\circ_{i}((x'_{1},y'_{1}),\ldots,(x'_{n'},y'_{n'}))
\\=
((x_{1},y_{1}),\ldots,(x_{i-1},y_{i-1}),
(x_{i}+(y_{i}-x_{i})x'_{1},x_{i}+(y_{i}-x_{i})y'_{1}),\\\ldots,
(x_{i}+(y_{i}-x_{i})x'_{n},x_{i}+(y_{i}-x_{i})y'_{n}),
(x_{i+1},y_{i+1}),\ldots,(x_{n},y_{n}))
\end{multline*}
\end{defn}

Identifying $I^{2}$ with the increasing linear endomorphisms of $I$ by
$(x,y) \leftrightarrow f_{(x,y)}$ where
$f_{(x,y)}(t)=x+(y-x)t$, the composition formula can be written more
conceptually as 
\begin{multline*}
(f_{(x_{1},y_{1})},\ldots,f_{(x_{n},y_{n})})\circ_{i}
(f_{(x'_{1},y'_{1})},\ldots,f_{(x'_{n'},y'_{n'})})\\=
(f_{(x_{1},y_{1})},\ldots,f_{(x_{i-1},y_{i-1})},f_{(x_{i},y_{i})}\circ
f_{(x'_{1},y'_{1})},\\\ldots,
f_{(x_{i},y_{i})}\circ f_{(x'_{n'},y'_{n'})},f_{(x_{i},y_{i})},\ldots,f_{(x_{n},y_{n})}).
\end{multline*}
Note that the distinguished point $((0,1),\ldots,(0,1))$ in
$\oCO(n)$ (corresponding to the identity map $I\to I$)
induces a map of operads from the commutative operad into
$\oCO$; this map is a spacewise equivariant homotopy
equivalence.  We have a canonical map from the little $1$-cubes operad
$\oC_{1}$ to $\oCO$ as the spacewise inclusion of the subsets
where the open intervals $(x_{1},y_{1}),\ldots,(x_{n},y_{n})\subset I$
do not overlap.

The purpose of the overlapping little $1$-cubes operad $\oCO$
is that it has a natural coaction on the geometric realization of a
simplicial space, generalizing the diagonal map.  We recall that for a
space $X$, the space of continuous maps
$\End^{\op}_{X}(n)=Map(X,X^{n})$ naturally forms an operad, and a
\term{coaction} of an operad $\oO$ on the space $X$ is a map of
operads $\oO\to \End^{\op}_{X}$.  For example, the set of diagonal maps $X\to
X^{n}$ give a coaction of $\Com$ on any space.  The theorem is the following.

\begin{thm}\label{thm:coaction}
The operad $\oCO$ has a natural coaction on the geometric
realization of a simplicial space $|Z\subdot|$ such that the composite coaction of
$\Com$ is the set of diagonal maps and the composite coaction of
$\oC_{1}$,
\[
|Z\subdot|\times \oC_{1}(n)\to |Z\subdot|^{n}
\]
is filtered for the simplicial filtration on $|Z\subdot|$.
\end{thm}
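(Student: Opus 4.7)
The plan is to construct the coaction explicitly via coordinate-wise application of a piecewise linear ``clamping'' map on the simplex coordinates. For $(x,y) \in I^2$ with $x \le y$, define the continuous retraction $g_{(x,y)} \colon I \to I$ by
\[
g_{(x,y)}(s) = \max\bigl(\min((s-x)/(y-x),\,1),\,0\bigr),
\]
so that $g_{(x,y)}$ is identically $0$ on $[0,x]$, equals the inverse of $f_{(x,y)}$ on $[x,y]$, and is identically $1$ on $[y,1]$. For a simplicial space $Z\subdot$ and a point $[z,(t_1,\ldots,t_m)] \in |Z\subdot|$ with $z \in Z_m$ and $0 \le t_1 \le \cdots \le t_m \le 1$, set
\[
\phi_{(x,y)}[z,(t_1,\ldots,t_m)] := [z,(g_{(x,y)}(t_1),\ldots,g_{(x,y)}(t_m))],
\]
and define the coaction $|Z\subdot| \times \oCO(n) \to |Z\subdot|^n$ by sending $(p,((x_1,y_1),\ldots,(x_n,y_n)))$ to $(\phi_{(x_1,y_1)}(p),\ldots,\phi_{(x_n,y_n)}(p))$.

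The first step is to verify that $\phi_{(x,y)}$ descends to the coend defining $|Z\subdot|$. Since $g_{(x,y)}$ is non-decreasing and fixes both endpoints of $I$, applying $g_{(x,y)}$ coordinate-wise commutes with every cosimplicial structure map $\alpha_{*} \colon \Delta^m \to \Delta^n$ induced by a morphism in $\Delta$, which is exactly what is needed. Continuity and $\Sigma_n$-equivariance are immediate. For the operad axioms, the key calculation is the identity
\[
g_{(x',y')} \circ g_{(x,y)} = g_{(x + (y-x)x',\ x + (y-x)y')},
\]
which matches the composition formula for $\oCO$ given in Definition~\ref{defn:ol1co}; this shows the assignment $c \mapsto \phi_c$ is a map of operads $\oCO \to \End^{\op}_{|Z\subdot|}$. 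When all $(x_i,y_i)=(0,1)$, $g_{(0,1)}$ is the identity on $I$ and the coaction reduces to the diagonal, giving compatibility with $\Com$.

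The heart of the proof is the filtered property for the restricted coaction on $\oC_{1} \subset \oCO$. Given $[z,(t_1,\ldots,t_m)] \in |Z\subdot|_m$ and $((x_1,y_1),\ldots,(x_n,y_n)) \in \oC_{1}(n)$ (so $y_i \le x_{i+1}$), convert $t$ to barycentric coordinates $(u_0,\ldots,u_m)$ with $u_j = t_{j+1} - t_j$ (where $t_0 = 0$, $t_{m+1} = 1$). A direct computation shows that $\phi_{(x_i,y_i)}[z,t]$ has at least $|\{j : t_j \le x_i\}|$ leading zero barycentric coordinates and at least $|\{j : t_j \ge y_i\}|$ trailing zero barycentric coordinates: the values $g_{(x_i,y_i)}(t_j)$ are collapsed to $0$ or $1$ on those indices, producing consecutive runs of $0$'s in the barycentric differences. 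By the simplicial relations in the coend, deleting each zero barycentric coordinate corresponds to applying a face map, so the image lies in filtration level at most $k_i := |\{j : x_i < t_j < y_i\}|$. Since the open intervals $(x_i,y_i)$ are pairwise disjoint by the non-overlapping condition, $\sum_i k_i \le m$, so the image sits in product filtration level $m$ of $|Z\subdot|^n$.

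The main obstacle will be the careful handling of boundary cases where some $t_j$ coincides with $x_i$ or $y_i$ exactly: the count of zero barycentric coordinates can jump discretely as $t_j$ crosses these values, and one must verify that the filtration bound is maintained continuously across such transitions. This works out because the equivalence relation defining $|Z\subdot|$ absorbs the discrete jump via the appropriate face-map identification, and the non-overlapping condition $y_i \le x_{i+1}$ ensures that a value $t_j = y_i = x_{i+1}$ is excluded from both open intervals and so contributes to neither $k_i$ nor $k_{i+1}$, preserving the bound $\sum_i k_i \le m$ even in the degenerate cases.
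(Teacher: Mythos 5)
Your construction is essentially identical to the paper's: the same weakly increasing left inverse $g_{(x,y)}$ applied coordinatewise in Milnor coordinates, the same check that it commutes with cosimplicial structure maps because it is order-preserving and fixes $0,1$, the same composition identity $g_{(x',y')}\circ g_{(x,y)}=g_{(x+(y-x)x',\,x+(y-x)y')}$, and the same observation that for non-overlapping intervals each Milnor coordinate is moved into $(0,1)$ by at most one $g_{(x_i,y_i)}$, giving the filtration bound. The worry in your final paragraph is unnecessary — the filtration bound is a closed condition and the count $k_i$ only decreases as a $t_j$ hits an endpoint — but you correctly conclude there is no obstruction.
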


We emphasize that the target uses the cartesian product filtration for
the filtration on $|Z\subdot|^{n}$ rather than the simplicial filtration
on $|Z\subdot^{n}|$.  We apply this to the standard model of $EG$
formed from the bar construction.  In the case when $G$ is finite, the simplicial
filtration is just the cellular filtration.  In the case when $G=\bT$,
it is a renumbering of the cellular filtration which has
one free $\bT$-cell in each even dimension. (The cartesian product
filtration on $E\bT^{n}$ for $n>1$ is not closely related to a
cellular filtration since $\bT$ is positive dimensional.)  Recall that
we
define $EG_{n}$ to be the $n$th geometric filtration level when $G$ is finite
and $EG_{2n+1}=EG_{2n}$ to be the $n$th geometric filtration level when
$G=\bT$.  Then for any partition
$n=n_{1}+\cdots +n_{m}$, the map 
\[
EG\times  \oC_{1}(m)\to EG\times \cdots \times EG
\]
takes the subspace $EG_{n-1}\times \oC_{1}(m)$ to the subspace
\[
(EG^{m})_{n-1}\subset 
(EG_{n_{1}-1}\times EG\times \cdots \times EG)\cup \cdots 
\cup (EG\times\cdots \times EG\times  EG_{n_{m}-1}).
\]
We therefore get a map of based $G$-spaces
\[
EG/EG_{n-1} \sma \oC_{1}(m)_{+}\to EG/EG_{n_{1}-1}\sma \cdots \sma EG/EG_{n_{m}-1}.
\]
Then for any orthogonal $G$-spectra
$X_{1},\ldots,X_{m}$, the cotensor adjunction induces a map
of orthogonal $G$-spectra
\begin{multline*}
\oC_{1}(m)_{+}\sma F(EG/EG_{n_{1}-1},X_{1})\sma \cdots \sma
F(EG/EG_{n_{m}-1},X_{m})\\
\to F(EG/EG_{n-1},X_{1}\sma \cdots \sma X_{m}).
\end{multline*}

In the next section, we convert this structure into a monoidal
structure using the usual Moore trick to convert $\oC_{1}$ structures to
strictly associative structures.

We conclude the section with the proof of Theorem~\ref{thm:coaction}.

\begin{proof}[Proof of Theorem~\ref{thm:coaction}]
Recall the Milnor coordinates $u_{1}\leq u_{2}\leq \cdots \leq u_{n}$
on the standard $n$-simplex $\Delta[n]$ give a homeomorphism of
$\Delta[n]$ with a subspace of $I^{n}$ and relate to
the barycentric coordinates $t_{0},\ldots,t_{n}$ by the formulas
\[
u_{j}=\sum_{i=0}^{j-1} t_{i}\qquad t_{j}=u_{j+1}-u_{j}
\]
(where in the righthand formula $u_{0}=0$ and $u_{n+1}=1$).
Then any element of the geometric realization of a simplicial space
$Z\subdot$ is specified (non-uniquely) by an element $z\in Z_{n}$
and Milnor coordinates $u_{1}\leq \cdots \leq u_{n}$, and an element
of $\oCO(m)$ is specified by a sequence of subintervals of
$I$, $(x_{1},y_{1}),\ldots,(x_{m},y_{m})$.  On this element, 
the coaction map has the form
\begin{multline*}
(z,u_{1}\leq \cdots \leq u_{n}), ((x_{1},y_{1}),\ldots,(x_{m},y_{m}))
\in (Z_{n}\times \Delta[n])\times \oCO(m)
\\
\mapsto (z,v^{1}_{1}\leq \cdots \leq v^{1}_{n}),\ldots,
(z,v^{m}_{1}\leq \cdots \leq v^{m}_{n})\in (Z_{n}\times \Delta[n])^{m}.
\end{multline*}
To specify the $v^{i}_{j}$'s, we note that the linear function
$f_{(x,y)}$ has a unique weakly increasing left inverse function
$g_{(x,y)}\colon I\to I$ given by the formula
\[
g_{(x,y)}(t)=\begin{cases}
0&t<x\\
(t-x)/(y-x)&x\leq t\leq y\\
1&t>y.
\end{cases}
\]
We let $v^{i}_{j}=g_{(x_{i},y_{i})}(u_{j})$.

This formula describes a continuous map 
\[
(Z_{n}\times\Delta[n])\times \oCO(m)\to 
(Z_{n}\times\Delta[n])^{m}\to |Z\subdot|^{m}.
\]
This map is compatible with the simplicial gluing instructions and so
constructs a map 
\[
|Z\subdot|\otimes \oCO(m)\to |Z\subdot|^{m}
\]
or equivalently a map $\oC(m)\to \End^{\op}_{|Z\subdot|}(m)$.
A straightforward check of the explicit formulas shows that this is a
map of operads.

For the composite coaction of $\Com$, we are looking at the case 
\[
(x_{1},y_{1}),\ldots, (x_{m},y_{m})=(0,1),\ldots,(0,1)
\]
and $g_{(0,1)}$ is the identity endomorphism of $I$ and so
$v^{i}_{j}=u_{j}$.  The coaction is therefore the set of diagonal maps.

For the composite coaction of $\oC_{1}$, we are looking at the case
when the open intervals $(x_{1},y_{1}),\ldots, (x_{m},y_{m})$ are
non-overlapping.  In this case, for each fixed $j$, at most one
$v^{i}_{j}$ can lie in $(0,1)$ (i.e., be different from $0$ or $1$).
It follows that each $v^{i}_{1}\leq \cdots \leq v^{i}_{n}$ lies in a
$n_{i}$-face of $\Delta[n]$ for some $0\leq n_{i}\leq n$ that can be
chosen so that $n_{1}+\cdots+n_{m}\leq n$.  Thus, the image of
$(Z_{n}\times \Delta[n])\times \oC_{1}(m)$ in $(Z_{n}\times
\Delta[n])^{m}$ lands in the $n$th filtration level.
\end{proof}

\section[Verifying the hypotheses
of Section~\ref{sec:main}]%
{Constructing the filtered model and verifying the hypotheses
of Section~\ref{sec:main}}\label{sec:Moore}\label{sec:end}

In this section, we construct the functor $T^{M}$ postulated in
Section~\ref{sec:main} and establish the properties stated there.
Specifically, we construct $T^{M}$ as a lax monoidal functor
(property~\eqref{e:monoidal}), filtered (property~\eqref{e:filt}) so
that the filtration on $T$ is lax monoidal (property~\eqref{e:fm}).
Moreover, we show that $T$ is naturally isomorphic to $(-)^{tG}$ (as
functors to the stable category) via an isomorphism that takes the
filtration on $T^{M}$ to the Hesselholt-Madsen Tate filtration 
and satisfies Hypothesis~\ref{hyp:gr0}.

As in Section~\ref{sec:TSS}, we construct the integer graded filtration
out of a functor from the poset $(\bN,\oleq)\times (\bN,\ogeq)$ to orthogonal
spectra. 

\begin{cons}\label{cons:lastlabel}
For $i,j\in \bN\times \bN$, let 
\[
\bar T_{i,j}X
=(R_{G}(F(EG/EG_{j-1},R_{G}(X))\sma \tEOG{\oC_{1}}_{i}))^{G}
\]
where $R_{G}$ is a lax symmetric monoidal fibrant approximation functor,
$EG$ is the standard bar construction model, and $\oC_{1}$ is the
Boardman-Vogt little $1$-cubes operad with $\tEOG{\oC_{1}}$ as in
Construction~\ref{cons:tEOG}.  The filtration on $EG$ is the standard
simplicial filtration when $G$ is finite and twice that when $G=\bT$, just
as we used in Sections~\ref{sec:TSS}
and~\ref{sec:NegFilt}, and the filtration on $\tEOG{\oC_{1}}$ is the renumbered
pseudocellular filtration as defined in Definition~\ref{defn:secondfiltration}.
\end{cons}

Using the work of the previous two sections, $\bar T_{*,*}X$ comes
with canonical maps
\begin{equation}\label{eq:fass}
\ooCo(n)_{+}\sma 
(\bar T_{i_{1},j_{1}}X_{1} \sma \cdots \sma \bar T_{i_{n},j_{n}}X_{n})
\to \bar T_{i,j}(X_{1}\sma\cdots\sma X_{n})
\end{equation}
for $i_{k},j_{k}\in \bN\times \bN$ and $i=\sum i_{k}$, $j=\sum j_{k}$,
using the maps
\begin{gather*}
EG/EG_{j-1}\sma \ooCo(n)_{+}\to 
EG/EG_{j_{1}-1}\sma \cdots \sma EG/EG_{j_{n}-1}\hbox to 0pt{,%
\qquad and\hss}\\
\ooCo(n)_{+}\sma 
(\tEOG{\oC_{1}}_{i_{1}}\sma \cdots \sma \tEOG{\oC_{1}}_{i_{n}})
\to \tEOG{\oC_{1}}_{i}
\end{gather*}
of the previous two sections.  Moreover, these maps are consistent
with the operadic multiplication in the obvious way.  For the
construction postulated in Section~\ref{sec:main}, we need to rectify
the $\ooCo$ in the formulas above to $\oAss$; the standard trick for
doing this is the Moore construction.  

\begin{cons}\label{cons:mu}
Let $\bar T^{\bar M}_{*,*}X=(\bar T_{*,*}X)\sma \bR^{>0}_{+}$ where
$\bR^{>0}$ denotes the set of positive real numbers.  Let
$\mu_{n}\colon (\bR^{>0})^{n}\to \ooCo(n)$ be the map that takes
$\ell_{1},\ldots,\ell_{n}\in (\bR^{>0})^{n}$ to the element of
$\ooCo(n)$ consisting of the subintervals
\[
[0,\frac{\ell_{1}}{\ell}], [\frac{\ell_{1}}{\ell},\frac{\ell_{1}+\ell_{2}}{\ell}],\ldots, [\frac{\ell_{1}+\cdots +\ell_{n-1}}{\ell},1]
\]
where $\ell=\ell_{1}+\cdots+\ell_{n}$.  Let 
\[
\mu \colon \bar T^{\bar M}_{i,j}X \sma T^{\bar M}_{i',j'}Y\to \bar
T^{\bar M}_{i+i',j+j'}(X\sma Y)
\]
be the map 
\[
\bar T_{i,j}X\sma \bR^{>0}_{+}\sma 
\bar T_{i',j'}Y\sma \bR^{>0}_{+}\to
\bar T_{i+i',j+j'}(X\sma Y) \sma \bR^{>0}_{+}
\]
obtained using the canonical map $\bar T_{i,j}X\sma \bar T_{i',j'}Y\to
\bar T_{i+i',j+j'}(X\sma Y)$ above for $\mu_{2}\colon \bR^{>0}\times
\bR^{>0}\to \ooCo(2)$ and the map $+\colon \bR^{>0}\times \bR^{>0}\to
\bR^{>0}$. 
\end{cons}

For example, for $\ell_{1},\ell_{2}\in \bR^{>0}$, the restriction of
the map above to
\[
\bar T_{i,j}X\sma \{\ell_{1}\}_{+}\sma 
\bar T_{i',j'}Y\sma \{\ell_{2}\}_{+}
\]
uses $\mu_{2}(\ell_{1},\ell_{2})$ for the map
\[
\bar T_{i,j}X\sma 
\bar T_{i',j'}Y\to \bar T_{i+i',j+j'}(X\sma Y)
\]
and restricts to
\[
\bar T_{i,j}X\sma \{\ell_{1}\}_{+}\sma 
\bar T_{i',j'}Y\sma \{\ell_{2}\}_{+}\to
\bar T_{i+i',j+j'}(X\sma Y) \sma \{\ell_{1}+\ell_{2}\}_{+}.
\]
Since 
\[
\mu_{2}(\ell_{1},\mu_{2}(\ell_{2},\ell_{3}))=\mu_{3}(\ell_{1},\ell_{2},\ell_{3})=\mu_{2}(\mu_{2}(\ell_{1},\ell_{2}),\ell_{3})
\] 
and $+\colon \bR^{>0}\times \bR^{>0}\to \bR^{>0}$ is
associative, the diagram
\[
\xymatrix{%
\bar T^{\bar M}_{i,j}X\sma 
\bar T^{\bar M}_{i',j'}Y\sma 
\bar T^{\bar M}_{i'',j''}Z
\ar[r]^-{\id\sma \mu}
\ar[d]_-{\mu\sma\id}
&\bar T^{\bar M}_{i,j}X\sma 
(\bar T^{\bar M}_{i'+i'',j'+j''}(Y\sma Z)\ar[d]^-{\mu}\\
\bar T^{\bar M}_{i+i',j+j'}(X\sma Y)\sma 
\bar T^{\bar M}_{i'',j''}Z
\ar[r]_-{\mu}
&\bar T^{\bar M}_{i+i'+i'',j+j'+j''}(X\sma Y\sma Z)
}
\]
commutes for all orthogonal $G$-spectra $X,Y,Z$, and all $i,i',i'',j,j',j''\geq 0$.

\begin{cons}
Define the filtered spectrum $T^{\bar M}_{*}X$ as 
\[
T^{\bar M}_{n}X=\lhocolim_{i-j\leq n}T_{i,j}^{\bar M}X
\]
using the bar construction model for the homotopy colimit as in
Section~\ref{sec:TSS}.  Let $T^{\bar M}X=\colim T^{\bar M}_{n}X$.
\end{cons}

As in Proposition~\ref{prop:hcof}, $T^{\bar M}_{*}X$ is reasonably filtered.
The maps 
\[
\bar T^{\bar M}_{i,j}X\to (R_{G}(F(EG_{+},R_{G}(X))\sma \tEOG{\oC_{1}}))^{G}=T^{\oC_{1}}_{G}X
\]
therefore induce a weak equivalence $T^{\bar M}X\to T_{G}X$.  (See
Construction~\ref{cons:PSTate} and
Notation~\ref{notn:TwO} for the definition of $T_{G}X$ and
$T^{\oC_{1}}_{G}$, respectively.)

By construction $T^{\bar M}_{*}$ inherits an associative pairing 
\[
\mu\colon T^{\bar M}_{*}X\sma T^{\bar M}_{*}Y\to T^{\bar M}_{*}(X\sma Y)
\]
in the category of filtered spectra.  To make this pairing unital as
well, we need a slight modification that takes as input a unital
spectrum.

\begin{cons}
For $X$ an orthogonal $G$-spectrum under $\bS$, we have a canonical
map $\bS\to \bar T_{0,0}X$, whence a map $\bS\to \bar T_{i,0}X$ for
$i\geq 0$.  Let 
\[
\bar T^{M}_{i,j}X=\bar T^{\bar M}_{i,j}X=\bar T_{i,j}X\sma \bR^{>0}_{+}
\]
for $j>0$ and define $\bar T^{M}_{i,0}X$ as the pushout
\[
\xymatrix{%
\bS\sma \bR^{>0}_{+}\ar[d]\ar[r]&\bar T_{i,0}X\sma \bR^{>0}_{+}\ar[d]\\
\bS\sma \bR^{\mathstrut\geq 0}_{\mathstrut+}\ar[r]&\bar T^{M}_{i,0}X
}
\]
where $\bR^{\geq 0}$ denotes the space of non-negative real number.
Define $T^{M}_{*}X$ by $T^{M}_{n}X=\hocolim_{i-j\leq n}\bar
T^{M}_{i,j}X$ and let $T^{M}X=\colim T^{M}_{n}X$.
\end{cons}

The lax associative pairing on $\bar T^{\bar M}_{*,*}$ extends to $\bar
T^{M}_{*,*}$ where it is now lax monoidal using the canonical map 
\[
\bS\iso \bS\sma\{0\}_{+}\to \bar T^{\bar M}_{0,0}(\bS)
\]
as unit.
We then get a lax symmetric monoidal structure on the functor
$T^{M}_{*}$.  This gives the first part of the following theorem,
which shows that $T^{M}_{*}$ satisfies the hypotheses
required for the argument of Section~\ref{sec:main}.

\begin{thm}
$T^{M}_{*}$ is a lax monoidal functor from orthogonal $G$-spectra
under $\bS$ to reasonably filtered spectra, is naturally
isomorphic to the Tate functor in the homotopy category of filtered
spectra, and satisfies Hypothesis~\ref{hyp:gr0}.
\end{thm}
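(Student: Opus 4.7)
The plan is to verify the three assertions in sequence.

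For the lax monoidal structure, Construction~\ref{cons:mu} already exhibits the candidate associative pairing $\mu$, and the extension via $\bR^{\geq 0}_+$ together with the canonical map $\bS \to \bar T_{0,0}(\bS)$ supplies the unit. Associativity on the nose is immediate from the identity $\mu_2(\ell_1, \mu_2(\ell_2,\ell_3)) = \mu_3(\ell_1,\ell_2,\ell_3) = \mu_2(\mu_2(\ell_1,\ell_2), \ell_3)$, the operadic coherence of~\eqref{eq:fass}, and associativity of $+$ on $\bR^{\geq 0}$; unitality is a direct check using the normalization at $0 \in \bR^{\geq 0}$. The monoidal structure descends to $T^M_n X = \hocolim_{i-j \leq n} \bar T^M_{i,j} X$ through the categorical bar model of $\hocolim$, because the indexing functor $+ \colon (\bN,\leq) \times (\bN,\geq) \to (\bZ,\leq)$ is monoidal.

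For the identification with the Tate functor as filtered objects, I produce a zigzag of filtered objectwise weak equivalences between $T^M_* X$ and the Hesselholt-Madsen filtration $\bar T_* X$ of Definitions~\ref{defn:THM} and~\ref{defn:HMTfilt}. The Moore-parameter factors $\bR^{>0}_+$ and $\bR^{\geq 0}_+$ are contractible, so their collapse is a filtered objectwise weak equivalence to a model whose only remaining discrepancy with $\bar T_* X$ is the use of $\tEOG{\oC_1}$ with the renumbered pseudocellular filtration rather than $\tEG$ with the standard cellular filtration. The composite $\tEOG{\oC_1} \to \tWG \to \tEG$ is filtered and, by the final proposition of Section~\ref{sec:PosFilt} together with the analogous comparison $\tWG \to \tEG$, an equivariant weak equivalence at each filtration level. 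Applying the homotopy-invariant construction $(R_G(F(EG/EG_{j-1}, R_GX) \sma -))^G$ and then passing to homotopy colimits yields the desired filtered equivalence.

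For Hypothesis~\ref{hyp:gr0}, I construct the zigzag $i^*X \to RRX \leftarrow TTX \to T^M_0 X / T^M_{-1} X$ of point-set monoidal natural transformations. Because $\tEOG{\oC_1}_0 = S^0$ and $EG_0 = G$ in the bar construction, the quotient $T^M_0 X / T^M_{-1} X$ is modeled on the nose by $(R_G F(G_+, R_G X))^G$ smashed with the contractible quotient $\bR^{\geq 0}_+ / \bR^{>0}_+$, and the evaluation-at-identity map $F(G_+, Z)^G \iso Z$ is a strong symmetric monoidal isomorphism assembling $G$-invariant $Z$-valued functions into their common value. Take $RRX := i^*(R_G X)$, a lax monoidal functor via $R_G$, so the unit map $X \to R_G X$ induces a monoidal weak equivalence $i^*X \to RRX$; define $TTX$ to be a Moore-parametrized interpolation mapping monoidally to $RRX$ by collapsing the parameter and to $T^M_0 X / T^M_{-1} X$ through the evaluation isomorphism.

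The main obstacle will be the last step: arranging that both arms out of $TTX$ are strictly point-set monoidal rather than only homotopy-monoidal. This requires aligning the Moore parameter on $TTX$ with the one built into the pairing $\mu$ on $T^M_*$ so that the pairing $TTX \sma TTY \to TT(X \sma Y)$ intertwines on the nose with the evaluation isomorphism $F(G_+,-)^G \iso (-)$. Strict associativity of $\mu$ established in the first part, combined with the strong monoidality of the evaluation isomorphism, make this verification manageable but notationally heavy; once it is in hand, checking that both arms are weak equivalences reduces to contractibility of the Moore parameter and the level-wise equivalences from Section~\ref{sec:PosFilt}.
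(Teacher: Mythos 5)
Your handling of the first two assertions tracks the paper closely and is sound: the lax monoidality is exactly the $\mu_2/\mu_3$ coherence from Construction~\ref{cons:mu} passed through the bar-construction homotopy colimit, and the comparison to the Hesselholt--Madsen filtration runs through the collapse of the Moore parameter (i.e., the natural transformation $\bar T^M_{i,j}X\to\bar T_{i,j}X$) plus the filtered levelwise equivalences $\tEOG{\oC_1}{}^{pc}_n\to\tWG_n\to\tEG_n$ established in Section~\ref{sec:PosFilt}.

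For Hypothesis~\ref{hyp:gr0}, however, there are two problems. First, your identification of the point-set quotient is wrong: $T^M_0 X / T^M_{-1} X$ is \emph{not} ``modeled on the nose by $(R_G F(G_+, R_G X))^G$ smashed with a contractible quotient.'' The functor $T^M_n$ is a categorical bar-construction homotopy colimit over the indexing poset $\{i-j\leq n\}$, and the point-set quotient of two such homotopy colimits does not collapse to a smash product of $(R_G F(G_+,R_GX))^G$ with anything; neither does it commute past the inner $R_G$ and $(-)^G$. The paper does not claim, and does not need, such an isomorphism: it instead constructs a specific model $TTX=\bar T^M_{0,0}X\cup_{\bar T^M_{0,1}X}(\bar T^M_{0,1}X\sma I)$ of the homotopy cofiber and only ever produces a \emph{map} $TTX\to T^M_0/T^M_{-1}$, not an isomorphism. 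Second, the step you flag as ``the main obstacle'' --- making both arms out of $TTX$ strictly point-set monoidal and then verifying that they are weak equivalences --- is in fact where the whole content of the proof lives, and your sketch does not supply it. Your choice $RRX=i^*(R_GX)$ is also different from the paper's $RRX=(R_G F((EG_0)_+,R_GX))^G$, and that difference matters: the paper's choice admits a \emph{point-set} monoidal map $\bar T^M_{0,0}X\to RRX$ coming from the collapse $EG_+\to(EG_0)_+$, whose monoidality is automatic because the $\oCO$-coaction on the zeroth filtration level of $EG$ is the plain diagonal; the factorization through $TTX$ then follows by observing that $\bar T^M_{0,1}X$ maps to the zero spectrum in $RRX$ via the point-set fiber sequence $F(EG/EG_0,-)\to F(EG_+,-)\to F((EG_0)_+,-)$. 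With your $RRX=i^*(R_GX)$, there is no comparably direct point-set monoidal map from $TTX$, so the acknowledged gap does not close along your route.
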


\begin{proof}
We constructed the lax symmetric monoidal structure above.  For the
comparison we note that the inclusion of $\bS$ in $T_{0,0}X$ and the
collapse map $\bR\to *$ induce a natural transformation
\[
\bar T^{M}_{i,j}X\to \bar T_{i,j}X
\]
of functors from 
orthogonal $G$-spectra to $(\bN,\oleq)\times
(\bN,\ogeq)$-diagrams in orthogonal spectra.  This map 
is a weak equivalence for all $X$; see~\cite[6.2]{Mandell-Smash} (or
Proposition~\ref{prop:UooCo} below).   

To verify Hypothesis~\ref{hyp:gr0}, we let
\[
TT(X)=\bar T^{M}_{0,0}X\cup_{\bar T^{M}_{0,1}X}(\bar T^{M}_{0,1}X\sma I),
\]
the standard model of the homotopy cofiber (where we use $1$ as the
basepoint of $I$).  This functor has a monoidal structure coming from
the unit of $T^{M}_{0,0}$ and the pairing on $T^{M}_{*,*}$ and the
pairing $\max$ on $I$.  This is canonically isomorphic as a monoidal
functor (reversing the direction of the interval) to the quotient of
the categorical bar construction homotopy colimit for the diagram 
\[
\bar T^{M}_{0,1}X\to \bar T^{M}_{0,0}X
\]
by the inclusion of $T^{M}_{0,1}X$.
We then have a natural transformation of monoidal functors $TT\to
(T^{M}_{0}/T^{M}_{-1})$ induced by the inclusion in the diagram of
$\bar T^{M}_{0,0}$ and $\bar T^{M}_{0,1}$.  

We define 
\[
RR(X)=(R_{G}(F((EG_{0})_{+},R_{G}X)))^{G}.
\]
We then have the monoidal natural transformation 
\[
i^{*}X\to i^{*}R_{G}R_{G}X\to (R_{G}(F((EG_{0})_{+},R_{G}X)))^{G}= RR(X).
\]
To construct the map $TT(X)\to
RR(X)$, we use the map 
\begin{multline*}
\bar T_{0,0}^{M}X\to \bar T_{0,0}X=
(R_{G}(F(EG_{+},R_{G}X)\sma \bS))^{G}\\\to
(R_{G}(F((EG_{0})_{+},R_{G}X)\sma \bS))^{G}\iso RR(X)
\end{multline*}
Since 
\[
F(EG/EG_{0},R_{G}X)\to 
F(EG_{+},R_{G}X)\to
F(EG_{0+},R_{G}X)
\]
is a point-set fiber sequence, we see that the map $TT(X)\to
RR(X)$ is a weak equivalence.  The composite map $\bar
T^{M}_{0,1}X\to RR(X)$ factors through 
\[
(R_{G}(F(EG/EG_{0},R_{G}X)\sma \bS))^{G},
\]
and therefore is the point-set trivial map.  As a consequence, the
natural transformation $TT\to
RR$ factors through the monoidal natural transformation
$TT\to \bar T^{M}_{0,0}/\bar T^{M}_{0,1}$.  Thus, it remains to show
that the transformation $T^{M}_{0,0}\to RR$ is monoidal,
but this is clear because the coaction of $\oCO$ on the zeroth filtration
level of $EG$ is the diagonal.
\end{proof}

%%%%%%%%%%%%%%%%%%%%%%%%%%%%%%%%%%%%%%%%
\section{The \texorpdfstring{$E^1$}{E1}-term of the Hesselholt-Madsen
\texorpdfstring{$\bT$}{T}-Tate spectral sequence}\label{sec:CCS}

In this section we prove Lemma~\ref{lem:HMTcc} and
Theorems~\ref{thm:CCS} and~\ref{thm:CCT}.  All three results require a
careful study of the filtration, and this is where we begin.   In this
section, $E^{1}_{*,*}(X)$ will always refer to the $E^{1}$-term of the
Hesselholt-Madsen Tate spectral sequence for an orthogonal
$\bT$-spectrum $X$.

Our calculation of the $E^{1}$-term will use a filtration on
$X^{E\bT}\sma \tET$ corresponding to the Hesselholt-Madsen Tate
filtration before taking $\bT$-fixed points.  Let
\[
\bar UX_{i,j}=F(E\bT/E\bT_{j-1},R_{\bT}X)\sma \tET_{i}
\]
and let
\[
\bar U_{n}X = \lhocolim_{i-j\leq n}UX_{i,j}, \qquad \bar{\bar U}_{n}X=\lhocolim_{i-j\leq n}R_{\bT}UX_{i,j}.
\]
Because the point-set fixed point functor commutes with the
categorical bar construction of homotopy colimits and the derived
fixed point functor commutes with homotopy colimits, we have the
following proposition.  (See~\eqref{eq:Thocolim} for the notation
$\bar T_{n}X$.)

\begin{prop}
The canonical map $(\bar{\bar U}_{n}X)^{\bT}\to \bar
T_{n}X$ is an isomorphism and the canonical map $(\bar{\bar
U}_{n}X)^{\bT}\to (R_{\bT}(\bar U_{n}X))^{\bT}$ is a weak equivalence.
\end{prop}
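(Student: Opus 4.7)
The plan is to reduce both claims to standard commutation properties of the point-set $\bT$-fixed point functor with the categorical bar construction and with the lax fibrant replacement functor $R_{\bT}$.

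For the first isomorphism, I would unwind definitions: by construction $T_{\bT}X_{i,j}=(R_{\bT}(\bar U X_{i,j}))^{\bT}$, so
\[
\bar T_{n}X=\lhocolim_{i-j\leq n}(R_{\bT}(\bar U X_{i,j}))^{\bT},\qquad
(\bar{\bar U}_{n}X)^{\bT}=\Bigl(\lhocolim_{i-j\leq n}R_{\bT}(\bar U X_{i,j})\Bigr)^{\bT}.
\]
Since the categorical bar construction is built from iterated coproducts (indexed over nerves of the poset $\{(i,j):i-j\leq n\}$) and geometric realization of simplicial orthogonal $\bT$-spectra, and the point-set $\bT$-fixed point functor on orthogonal $\bT$-spectra preserves coproducts and commutes with geometric realization (applied spacewise, inheriting these commutations from based $\bT$-spaces), the two expressions are identified by a natural isomorphism. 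This is precisely the claim.

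For the weak equivalence, my plan is to argue that both sides compute the derived $\bT$-fixed points of $\bar U_{n}X$. First, the natural transformation $\mathrm{id}\to R_{\bT}$ induces objectwise weak equivalences $\bar U X_{i,j}\to R_{\bT}\bar U X_{i,j}$ of orthogonal $\bT$-spectra, and the categorical bar construction preserves objectwise weak equivalences, so the induced map $\bar U_{n}X\to \bar{\bar U}_{n}X$ is a weak equivalence of $\bT$-spectra. Applying the (weak-equivalence preserving) functor $R_{\bT}$ gives a weak equivalence $R_{\bT}\bar U_{n}X\to R_{\bT}\bar{\bar U}_{n}X$ between fibrant objects, which taking point-set $\bT$-fixed points preserves (since fixed points preserves weak equivalences between fibrant objects). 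It therefore suffices to prove that $(\bar{\bar U}_{n}X)^{\bT}\to (R_{\bT}\bar{\bar U}_{n}X)^{\bT}$ is a weak equivalence, i.e., that $\bar{\bar U}_{n}X$ already has the correct derived fixed points. Using the commutation just established in part one, combined with the fact that each $R_{\bT}\bar U X_{i,j}$ is (positive stably) fibrant so that $(R_{\bT}\bar U X_{i,j})^{\bT}$ already computes derived $\bT$-fixed points, we get
\[
(\bar{\bar U}_{n}X)^{\bT}\iso \lhocolim(R_{\bT}\bar U X_{i,j})^{\bT}\simeq \lhocolim (R_{\bT}\bar U X_{i,j})^{h\bT},
\]
and since derived fixed points commute with homotopy colimits, the right side is the derived fixed points of $\bar{\bar U}_{n}X$, as needed.

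The main obstacle is the last step: confirming that the point-set $\bT$-fixed points of the bar construction homotopy colimit of the pointwise-fibrant family $\{R_{\bT}\bar U X_{i,j}\}$ indeed compute the derived $\bT$-fixed points. Positive stable fibrancy is not in general preserved under homotopy colimits, but the categorical bar construction proceeds via iterated pushouts along $h$-cofibrations of coproducts and realizations — operations that commute with point-set $\bT$-fixed points and preserve the property of having point-set fixed points equal to derived fixed points. Carefully bookkeeping these commutations (using that the indexing category is a poset so the bar construction is free and the face/degeneracy maps are honest $h$-cofibrations into pointwise fibrant objects) is the essential technical content of the proof.
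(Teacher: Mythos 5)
Your argument uses exactly the two ingredients the paper offers in its one-sentence justification preceding the proposition (``the point-set fixed point functor commutes with the categorical bar construction of homotopy colimits and the derived fixed point functor commutes with homotopy colimits''), so the approach is essentially the same; you have simply unpacked it, and your added detour through $R_{\bT}\bar{\bar U}_{n}X$ is correct but not strictly needed — once you know point-set fixed points pass through the bar construction and that $(R_{\bT}\bar U X_{i,j})^{\bT}$ represents the derived fixed points, both sides of the comparison are immediately recognized as models for the derived fixed points of $\bar U_{n}X$, using that derived fixed points commute with homotopy colimits.

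One point you should fix before this could be considered a complete argument: in your display you write $\lhocolim (R_{\bT}\bar U X_{i,j})^{h\bT}$, but the right object here is the genuine derived $\bT$-fixed points $\dR(\bar U X_{i,j})^{\bT}$, not the homotopy fixed points. Since $R_{\bT}$ is a fibrant replacement in the (positive) genuine stable model structure, $(R_{\bT}-)^{\bT}$ models the genuine derived fixed points; the spectra $\bar U X_{i,j}=F(E\bT/E\bT_{j-1},R_{\bT}X)\sma\tET_i$ are not free (they have nontrivial fixed points coming from the $S^{0}$ inside $\tET_i$), so the genuine fixed points and the homotopy fixed points genuinely differ. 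This distinction matters for the argument as well: homotopy fixed points $F(E\bT_{+},-)^{\bT}$ do \emph{not} commute with homotopy colimits (if they did, the Tate construction would be trivial), whereas genuine derived fixed points do, which is exactly the fact you and the paper are invoking. Writing $(-)^{h\bT}$ therefore invokes the wrong functor and, taken literally, the commutation claim would fail; your surrounding prose makes clear you intend the genuine derived fixed points, but the notation should match.
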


This proposition then gives a canonical identification of the
$E^{1}$-term of the Hesselholt-Madsen Tate spectral sequence for $X$
as
\[
E^{1}_{s,t}(X)=\pi^{\bT}_{s+t}(\bar U_{s} X/ \bar U_{s-1}X),
\]
with pairing induced by the pairing on $\bar U_{*}(-)$ (itself induced by the
pairing on $\bar U(-)_{*,*}$).  We can now dispense with $\bar
T_{*}X$, $T_{\bT}X_{i,j}$, and
$\bar{\bar U}_{*}X$ and work exclusively with the equivariant
$\bT$-spectra $\bar U_{*}(-)$ and $\bar U(-)_{*,*}$.

To study the filtration quotients $\bar U_{i} X/ \bar U_{i-1}X$,
we need another homotopy colimit. Let 
\[
\bar UX^{\oleq,\ogeq}_{m,n}=\lhocolim_{i\leq m,j\geq n}\bar UX_{i,j}.
\]
Since $(m,n)$ is the final object in the category in the homotopy colimit,
the inclusion $\bar UX_{m,n}\to \bar UX^{\oleq,\ogeq}_{m,n}$ is a 
homotopy equivalence of orthogonal $\bT$-spectra.  It is easy to see that
for $n\geq 0$
\[
\bar U_{n}X=(\bar UX^{\oleq,\ogeq}_{n,0})\cup_{(\bar UX^{\oleq,\ogeq}_{n,1})}
(\bar UX^{\oleq,\ogeq}_{n+1,1})
\cup_{(\bar UX^{\oleq,\ogeq}_{n+1,2})}
(\bar UX^{\oleq,\ogeq}_{n+2,2})\cup
\cdots 
\]
and for $n\leq 0$
\[
\bar U_{n}X=(\bar UX^{\oleq,\ogeq}_{0,-n})\cup_{(\bar UX^{\oleq,\ogeq}_{0,-n+1})}
(\bar UX^{\oleq,\ogeq}_{1,-n+1})
\cup_{(\bar UX^{\oleq,\ogeq}_{1,-n+2})}
\bar (UX^{\oleq,\ogeq}_{2,-n+2})\cup
\cdots. 
\]
This gives us what we need to prove Lemma~\ref{lem:HMTcc}, conditional
convergence of the Hesselholt-Madsen Tate spectral sequence.

\begin{proof}[Proof of Lemma~\ref{lem:HMTcc}]
Using the weak equivalence 
\begin{multline*}
\bar UX_{i,j}=F(E\bT/E\bT_{j-1},R_{\bT}X)\sma \tET_{i}\\
\simeq (X^{E\bT}\smal S^{-j\bC(1)}\smal S^{i\bC(1)})\simeq (X^{E\bT}\smal S^{(i-j)\bC(1)}),
\end{multline*}
we see in particular that $\bar U_{n}X$ is weakly
equivalent to the cofiber of a map
\[
\bigvee_{\ell\geq 0} X^{E\bT}\smal S^{(n-1)\bC(1)}\to 
\bigvee_{\ell\geq 0} X^{E\bT}\smal S^{n\bC(1)}
\]
where the map on the $\ell$th summand is the Greenlees Tate inclusion into the $\ell$th
summand minus the Greenlees Tate inclusion into the $(\ell+1)$st summand.
Conditional convergence in the sense
of~\cite[5.10]{Boardman-SpectralSequences} is equivalent to showing
that $\holim_{n}(R_{\bT}\bar U_{n}X)^{\bT}\simeq *$.  Because of the
identification 
of $\bar U_{n}X$ as a homotopy cofiber above, it suffices to show 
\[
\holim_{n}\big(\bigvee (X^{E\bT}\smal S^{-n\bC(1)})^{\bT}\big)\simeq *
\]
or, equivalently,
\[
\holim_{n}\big(\bigvee ((X^{E\bT})^{E\bT/E\bT_{2n-1}})^{\bT}\big)\simeq *
\]
This is clear when $\pi_{*}X^{E\bT}$ is bounded above and follows in
the general by considering the Postnikov tower for $X^{E\bT}$.
\end{proof}

Returning to the discussion of the quotients $\bar U_{s} X/ \bar
U_{s-1}X$, the work above allows us to identify these in terms of the
double homotopy cofibers of maps for the 
$\bar UX_{i,j}$.  Writing $C(B,A)$ for the homotopy cofiber of an
understood map $A\to B$, we have that for $s>0$, $\bar U_{s} X/ \bar U_{s-1}X$ is weakly
equivalent to the cofiber of the inclusion
\begin{multline*}
(\bar UX^{\oleq,\ogeq}_{s-1,0})\cup_{(\bar UX^{\oleq,\ogeq}_{s-1,1})}
(\bar UX^{\oleq,\ogeq}_{s,1})
\cup_{(\bar UX^{\oleq,\ogeq}_{s,2})}
(\bar UX^{\oleq,\ogeq}_{s+1,2})\cup
\cdots\\
\to
\bar U_{s}X=(\bar UX^{\oleq,\ogeq}_{s,0})\cup_{(\bar UX^{\oleq,\ogeq}_{s,1})}
(\bar UX^{\oleq,\ogeq}_{s+1,1})
\cup_{(\bar UX^{\oleq,\ogeq}_{s+1,2})}
(\bar UX^{\oleq,\ogeq}_{s+2,2})\cup
\cdots 
\end{multline*}
which is easily seen to be the wedge of double homotopy cofibers
\[
%\label{eq:quotpos}
\bigvee_{n\geq 0} 
  C(C(\bar UX_{s+n,n},\bar UX_{s+n-1,n}),
    C(\bar UX_{s+n,n+1},\bar UX_{s+n-1,n+1})).
\]
Similarly, for $s\leq 0$, $\bar U_{s} X/ \bar U_{s-1}X$ is weakly
equivalent to the wedge
\begin{multline*}
%\label{eq:quotneg}
C(\bar UX_{0,-s},\bar UX_{0,-s+1})\ \vee\\
\bigvee_{n>0} 
  C(C(\bar UX_{n,n-s},\bar UX_{n-1,n-s}),
    C(\bar UX_{n,n-s+1},\bar UX_{n-1,n-s+1})).
\end{multline*}
(The difference in formulas arises because for $s>0$, $\bar UX_{s,0}$
admits maps from $\bar UX_{s-1,0}$ and $\bar UX_{s,1}$, whereas for
$s\leq 0$, $\bar UX_{0,-s}$ only admits a map from $\bar
UX_{0,-s+1}$.)

For convenience of later reference, we will use the following
abbreviations
\begin{equation}\label{eq:abbrev}
\begin{aligned}
C\bar UX_{n}&=C(\bar UX_{0,n},\bar UX_{0,n+1})\\
CC\bar UX_{m,n}&=C(C(\bar UX_{m,n},\bar UX_{m-1,n}),C(\bar UX_{m,n+1},\bar UX_{m-1,n+1})).
\end{aligned}
\end{equation}
We note that since the pairing on $\bar U_{*}(-)$ is induced by the
pairing on $\bar U(-)_{*,*}$, the pairings on the associated graded is
the wedge of pairings on the pieces
\begin{equation}\label{eq:newpair}
\begin{aligned}
CC\bar UX_{m,n}\sma CC\bar UY_{m',n'}&\to CC\bar U(X\sma Y)_{m+m',n+n'}\\
C\bar UX_{n}\sma CC\bar UY_{m',n'}&\to CC\bar U(X\sma Y)_{m',n+n'}\\
CC\bar UX_{m,n}\sma C\bar UY_{n'}&\to CC\bar U(X\sma Y)_{m,n+n'}\\
C\bar UX_{n}\sma C\bar UY_{n'}&\to C\bar U(X\sma Y)_{n+n'}.
\end{aligned}
\end{equation}
We proceed by studying these pairings; results translate
back to $E^{1}$-terms using the identification
\begin{equation}\label{eq:E1}
\bigoplus_{s,t}E^{1}_{s,t}(X)=\bigoplus_{n\geq 0} \pi^{\bT}_{q}(C\bar
UX_{n})\oplus \bigoplus_{m>0,n\geq 0}\pi^{\bT}_{q}(CC\bar UX_{m,n})
\end{equation}
where the $n,q$ summand on the right lies in the $s=n$, $t=q+n$
summand on the left and the $m,n,q$ summand on the right lies in the
$s=m-n$, $t=q-m+n$ summand on the left.

We have canonical identifications of the homotopy cofibers
%\label{eq:cofU}
\begin{align*}
C(\bar UX_{m,n},\bar UX_{m-1,n})
  &\simeq F(E\bT/E\bT_{\lfloor (n-1)/2\rfloor},R_{\bT}X)\sma (S^{\bC(1)^{\lfloor m/2\rfloor}}/S^{\bC(1)^{\lfloor (m-1)/2\rfloor}})\\
C(\bar UX_{m,n},\bar UX_{m,n+1})
  &\simeq F(E\bT_{\lfloor n/2 \rfloor}/E\bT_{\lfloor (n-1)/2\rfloor},R_{\bT}X)\sma S^{\bC(1)^{\lfloor m/2\rfloor}};
\end{align*}
the first cofiber is trivial if $m$ is odd and the second is trivial
if $n$ is odd.  It follows that the double homotopy cofibers are
trivial if either $m$ or $n$ is odd, and when both are even, we have
\begin{multline*}
%\label{eq:cofcofU}
CC\bar UX_{2m,2n}=
C(C(\bar UX_{2m,2n},\bar UX_{2m-1,2n}),C(\bar UX_{2m,2n+1},\bar UX_{2m-1,2n+1}))\\
\simeq F(E\bT_{2n}/E\bT_{2n-1},R_{\bT}X)\sma (S^{\bC(1)^{m}}/S^{\bC(1)^{m-1}}),
\end{multline*}
Since $E\bT_{2n}/E\bT_{2n-1}$ and $S^{\bC(1)^{m}}/S^{\bC(1)^{m-1}}$
are all of the form $\bT_{+}\sma Z$ for some based $\bT$-spaces $Z$
(depending on $m$ or $n$),
the associated graded pieces $\bar U_{s} X/ \bar U_{s-1}X$ are all
(non-canonically) of the form $\bT_{+}\sma Z$ for some spectra $Z$.
The next step is to review the relationship between the homotopy
groups $\pi_{*}$ and $\pi_{*}^{\bT}$ 
of such spectra.

Writing $\piS(\bT)$ for the stable homotopy groups of $\bT_{+}$, the
inclusion of the unit $S^{0}\to \bT_{+}$ and the collapse map
$\bT_{+}\to S^{0}$ induce a canonical splitting 
\[
\piS(\bT)\iso \pi_{*}\bS \oplus \pi_{*}(\Sigma \bS).
\]
We denote the canonical generator in $\piS[0](\bT)$ as $1$ and the
generator in $\piS[1](\bT)$ (determined by the orientation on $\bT$ as the
unit complex numbers) as $\tact$; the multiplication on $\bT$ makes $\piS(\bT)$
a graded ring, and $1$ is the unit in this structure.  It is
well-known (and essentially the definition of the Hopf invariant) that
the element $\tact$ satisfies $\tact^{2}=\eta \tact$ where $\eta$
denotes the non-zero element of $\pi_{1}\bS\iso \bZ/2$.  For any orthogonal
$\bT$-spectrum $X$, we then have a canonical action of $\piS(\bT)$ on
$\pi_{*}X$, or equivalently, a degree $1$ operator $\tact$ satisfying
$\tact^{2}x=\eta \tact x$.  

\begin{prop}\label{prop:pifixed}
Let $X$ be an orthogonal $\bT$-spectrum which is isomorphic in the
equivariant stable category to $\bT_{+}\sma Z$ for some orthogonal
$\bT$-spectrum $Z$.  Then the map $\pi_{*}^{\bT}X\to \pi_{*}X$ is
injective and is surjective onto the kernel of multiplication by
$\tact$, which is equal to the image of multiplication by $\tact +\eta$.
\end{prop}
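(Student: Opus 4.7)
Plan. All three assertions depend only on the isomorphism class of $X$ in the $\bT$-equivariant stable category, so I would first reduce to the case that $X$ is literally $\bT_+\sma Z$; then, using the standard $\bT$-equivalence $\bT_+\sma Z\iso \bT_+\sma i^*Z$ implemented by $(g,z)\mapsto (g,g^{-1}z)$, I may further assume that $\bT$ acts trivially on $Z$. The Adams isomorphism, already invoked in the proof of Lemma~\ref{lem:HMTcc}, then gives $X^\bT\htp \Sigma Z$, so $\pi^\bT_n X\iso \pi_{n-1}(Z)$. Non-equivariantly, $\bT_+\htp S^0\vee S^1$ yields the splitting $\pi_n(X)\iso \pi_n(Z)\oplus \sigma\pi_{n-1}(Z)$, with the two summands corresponding to the action of $1$ and $\sigma$ from $\piS[*](\bT)=\pi_*\bS\oplus \sigma\pi_*\bS$.

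The $\piS(\bT)$-action on $\pi_*(X)$ is determined by the algebra structure on $\piS[*](\bT)$ and its defining relation $\sigma^2=\eta\sigma$, which forces
\[
\sigma\cdot(\alpha,\sigma\beta)=(0,\sigma(\alpha+\eta\beta)),\qquad (\sigma+\eta)(\alpha,\sigma\beta)=(\eta\alpha,\sigma\alpha),
\]
the second using $2\eta=0$. Direct inspection shows that both the kernel of $\sigma\colon \pi_n X\to \pi_{n+1}X$ and the image of $\sigma+\eta\colon \pi_{n-1}X\to\pi_n X$ equal the subgroup
\[
\{(\eta\beta,\sigma\beta) : \beta\in \pi_{n-1}(Z)\},
\]
which proves the last assertion of the proposition.

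The remaining task is to identify the forgetful map $\pi_{n-1}(Z)\iso \pi^\bT_n(X)\to \pi_n(X)$ with $\beta\mapsto (\eta\beta,\sigma\beta)$; granting this, injectivity is automatic (the second coordinate $\beta\mapsto \sigma\beta$ is already injective) and the image is precisely $\ker\sigma$. By naturality in $Z$, this reduces to the universal case $Z=S^0$, where the statement becomes that the generator of $\pi^\bT_1(\bT_+)\iso \bZ$ is sent to $\sigma+\eta\in \pi_1\bS\oplus \pi_0\bS=\pi_1(\bT_+)$. The $\sigma$-component of the image is forced by the fact that the Adams isomorphism $(\bT_+)^\bT\htp \Sigma S^0$ is compatible with the non-equivariant collapse $\bT_+\to S^1$. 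The $\eta$-component is more delicate and constitutes the technical heart of the proof: it encodes the nontrivial Hopf-invariant contribution arising from the adjoint-representation twist built into the Adams isomorphism for $\bT$, and can be verified by explicit computation in a free $\bT$-CW model for the underlying spectrum of $\bT_+$ (or equivalently by tracking the fundamental class through the explicit zigzag implementing the Adams equivalence). This last step is where I expect the main work to lie.
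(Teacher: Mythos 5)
You follow the same route as the paper: reduce to trivial $\bT$-action on $Z$ via the shear, compute with $\sigma^{2}=\eta\sigma$ to identify $\ker\sigma=\operatorname{im}(\sigma+\eta)=\{(\eta\beta,\sigma\beta)\}$ inside $\pi_{*}Z\oplus\sigma\pi_{*}Z$, and identify the forgetful map $\pi^{\bT}_{*}X\to\pi_{*}X$ with $(\sigma+\eta)\otimes(-)$ by computing the dimension-shifted transfer $\Sigma\bS\to\bT_{+}$ and appealing to naturality in $Z$. The step you flag as the technical heart --- that this transfer equals $\sigma+\eta\cdot 1$ --- is exactly the input the paper also invokes without proof, so you have located the content correctly (though your heuristic ``adjoint-representation twist'' is slightly off, since $\mathrm{ad}_{\bT}$ is trivial; the $\eta$-component is more naturally seen as the framed-bordism class of $\bT$ with its Lie-group framing).
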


\begin{proof}
The image of $\pi_{*}^{\bT}X$ in $\pi_{*}X$ is clearly contained in
the kernel of multiplication by $\tact$.  The kernel of
multiplication by $\tact$ clearly contains the image of multiplication
by $\tact+\eta$; an easy calculation shows that they are equal under
the hypotheses.  It suffices
to consider the case when $Z$ comes from a non-equivariant orthogonal
spectrum (using the isomorphism between the diagonal action and the
action on just $\bT$).  In the case $Z=\bS$, the transfer gives a weak
equivalence $\pi_{*}\Sigma \bS\to (\bT_{+}\sma \bS)^{\bT}$ and the
composite 
\[
\pi_{*}\Sigma \bS \to \pi_{*}(\bT_{+}\sma \bS)^{\bT} \to
\pi_{*}(\bT_{+}\sma \bS)\iso \piS(\bT)
\]
takes the fundamental class of $\pi_{1}\Sigma \bS$ to the element $\tact
+\eta 1$; the theorem follows in this case.  For
arbitrary $Z$, naturality of the transfer map implies that
\[
\pi_{*}\Sigma Z\to \pi_{*}(\bT_{+}\sma Z)^{\bT} \to
\pi_{*}(\bT_{+}\sma Z)\iso \piS(\bT)\otimes_{\pi_{*}\bS}\pi_{*}Z
\]
takes an element $x\in \pi_{n}Z\iso \pi_{n+1}\Sigma Z$ to
$(\tact+\eta 1)\otimes x=(\tact +\eta)(1\otimes x)$.
\end{proof}

Now we are ready to start identifying elements in $\pi_{*}(C\bar U
\bS_{n})$ and $\pi_{*}(CC\bar U\bS_{m,n})$. We can then apply
Proposition~\ref{prop:pifixed} (and~\eqref{eq:E1}) to compute $E^{1}_{*,*}(\bS)$ and with some more work, $E^{1}_{*,*}(X)$ for general $X$.  Using the
canonical orientation on $S^{\bC(1)^{m}}$, we get canonical generators
\[
a_{m}\in \pi_{2m-1}\Sigma^{\infty}(S^{\bC(1)^{m}}/S^{\bC(1)^{m-1}}),\qquad 
b_{m}\in \pi_{2m}\Sigma^{\infty}(S^{\bC(1)^{m}}/S^{\bC(1)^{m-1}})
\]
where $b_{m}$ is the image of the fundamental class of
$S^{\bC(1)^{m}}$ and $a_{m}$ maps to the fundamental class in
$S^{\bC(1)^{m-1}}$;
$\pi_{*}\Sigma^{\infty}(S^{\bC(1)^{m}}/S^{\bC(1)^{m-1}})$ is then the
free $\pi_{*}\bS$-module generated by $a_{m}$ and $b_{m}$.  In terms
of the action of $\piS(\bT)$, it is easy to see geometrically that
$\tact a_{1}=b_{1}$.  Using the cofiber sequence 
\[
\bT_{+}\to S^{0}\to S^{\bC(1)}\to S^{\bC(1)}/S^{0}\iso \Sigma\bT_{+}
\]
we see that $\tact b_{1}=\eta b_{1}$; the identification
\[
S^{\bC(1)^{m}}/S^{\bC(1)^{m-1}}\iso (S^{\bC(1)})^{(m-1)}\sma (S^{\bC(1)}/S^{0})
\]
shows that in general
\begin{equation}\label{eq:actab}
\begin{aligned}
\tact a_{m}&=b_{m}+(m-1)\eta a_{m}\\
\tact b_{m}&=m\eta b_{m}.
\end{aligned}
\end{equation}
To produce corresponding formulas for
$F(E\bT_{2n}/E\bT_{2n-1},R_{\bT}\bS)$, we use the following lemma proved
in the next section.

\begin{lem}\label{lem:homeo}\label{LEM:HOMEO}
There exists a unique system of isomorphisms in the Borel stable category
$E\bT/E\bT_{2n-1}\overto{\simeq} E\bT_{+}\sma S^{\bC(1)^{n}}$ such
that:
\begin{enumerate}
\item The diagram
\[
\xymatrix@-1pc{%
E\bT/E\bT_{2n-1}\ar[d]_{\simeq}\ar[r]
&E\bT/E\bT_{2i-1}\sma E\bT/E\bT_{2j-1}\ar[d]^{\simeq}\\
E\bT_{+}\sma S^{\bC(1)^{n}}\ar[r]
&(E\bT_{+}\sma S^{\bC(1)^{i}})\sma(E\bT_{+}\sma S^{\bC(1)^{j}})
}
\]
commutes for all $i+j=n$, where the top horizontal map is induced
by the filtered approximation to the diagonal on $E\bT$ constructed in
Section~\ref{sec:PosFilt} and the bottom horizontal map is induced by
the diagonal map on $E\bT$ and the homeomorphism 
\[
S^{\bC(1)^{n}}=
S^{\bC(1)^{i+j}}\iso 
S^{\bC(1)^{i}\oplus \bC(1)^{j}}\iso 
S^{\bC(1)^{i}}\sma S^{\bC(1)^{j}}.
\]
\item The diagram
\[
\xymatrix@-1pc{%
E\bT/E\bT_{2m-1}\ar[d]_{\simeq}\ar[r]&E\bT/E\bT_{2m}\ar[d]^{\simeq}\\
E\bT_{+}\sma S^{\bC(1)^{m}}\ar[r]&E\bT_{+} \sma S^{\bC(1)^{m+1}}
}
\]
commutes for $m=0$, where the bottom map is induced by the
inclusion of $\bC(1)^{m}$ in $\bC(1)^{m+1}$ as the first $m$
coordinates.  
\end{enumerate}
Moreover, the diagram in (ii) commutes for all $m\geq 0$.
\end{lem}

The lemma gives us a 
canonical weak equivalence 
\[
F(E\bT/E\bT_{2m-1},R_{\bT}X)\simeq
X^{E\bT}\smal S^{-m\bC(1)},
\]
and we then define generators
$a'_{m}, b'_{m}$ for $\pi_{*}F(E\bT_{2m}/E\bT_{2m-1},R_{\bT}\bS)$ as
above.  Namely, $b'_{m}$ is the generator of $\pi_{-2m}$ in the image
of the fundamental class of 
\[
\pi_{-2m}F(E\bT/E\bT_{2m-1},R_{\bT}\bS)\iso
\pi_{-2m}(\bS^{E\bT}\smal S^{-m\bC(1)})\iso \pi_{-2m}S^{-m\bC(1)}
\]
(for the standard orientation)
and  $a'_{m}$ is the generator of $\pi_{-2m-1}$ that maps to the
fundamental class of 
\[
\pi_{-2m-2}F(E\bT/E\bT_{2m}R_{\bT}\bS)=
\pi_{-2m-2}F(E\bT/E\bT_{2m+1}R_{\bT}\bS)
\iso \pi_{-2m-2}S^{-(m+1)\bC(1)}.
\]
As above, we have
\begin{equation}\label{eq:actabp}
\begin{aligned}
\tact a'_{m}&=b'_{m}+(-m-1)\eta a'_{m}=b'_{m}+(m-1)\eta a'_{m}\\
\tact b'_{m}&=-m\eta b'_{m}=m\eta b'_{m}.
\end{aligned}
\end{equation}

Using the elements introduced above, we can identify the homotopy
groups of $CC\bar UX_{2m,2n}$ as 
\[
\pi_{*}(CC\bar UX_{2m,2n})\iso
\pi_{*}\bS\langle a'_{n},b'_{n}\rangle\otimes_{\pi_{*}\bS} 
\pi_{*}\bS\langle a_{m},b_{m}\rangle 
\otimes_{\pi_{*}\bS} \pi_{*}X
\]
using the map
\begin{multline*}
\pi_{*}(F(E\bT_{2n}/E\bT_{2n-1},R_{\bT}\bS))\otimes_{\pi_{*}\bS}
\pi_{*}\Sigma^{\infty}(S^{\bC(1)^{m}}/S^{\bC(1)^{m-1}})
\otimes_{\pi_{*}\bS} \pi_{*}X\\\to
\pi_{*}(F(E\bT_{2n}/E\bT_{2n-1},R_{\bT}\bS)\sma
(S^{\bC(1)^{m}}/S^{\bC(1)^{m-1}})\sma X)
\\\to
\pi_{*}(F(E\bT_{2n}/E\bT_{2n-1},R_{\bT}X)\sma
(S^{\bC(1)^{m}}/S^{\bC(1)^{m-1}}))=\pi_{*}(CC\bar UX_{2m,2n}),
\end{multline*}
where $\pi_{*}\bS\langle-\rangle$ denotes the free $\pi_{*}\bS$-module
on the given generators (in their natural degrees).  We likewise have
\[
\pi_{*}(C\bar UX_{2n})\iso
\pi_{*}\bS\langle a'_{n},b'_{n}\rangle\otimes_{\pi_{*}\bS} 
\pi_{*}X.
\]

We next describe the pairing for $X=\bS$.  The induced pairing on
homotopy groups of the pairing in~\eqref{eq:newpair} in this case
takes the form 
\begin{multline*}
%\label{eq:sumpair}
(\pi_{*}\bS\langle a'_{n},b'_{n}\rangle \otimes_{\pi_{*}\bS} 
\pi_{*}\bS\langle a_{m},b_{m}\rangle)
\otimes_{\pi_{*}\bS}
(\pi_{*}\bS\langle a'_{n'},b'_{n'}\rangle \otimes_{\pi_{*}\bS} 
\pi_{*}\bS\langle a_{m'},b_{m'}\rangle)
\\\to 
\pi_{*}\bS\langle a'_{n+n'},b'_{n+n'}\rangle \otimes_{\pi_{*}\bS} 
\pi_{*}\bS\langle a_{m+m'},b_{m+m'}\rangle.
\end{multline*}
the analogous formulas for the other three pairings, dropping either
$\langle a_{m},b_{m}\rangle$, $\langle a_{m'},b_{m'}\rangle$, or both
tensor factors also hold. 

Looking at the pairing on $\bar U\bS_{m,n}$, it is clear that the
pairing on the\break 
$F(E\bT/E\bT_{2n-1},R_{\bT}\bS)$-part and on the $S^{m\bC(1)}$-part
are independent of each other; Lemma~\ref{lem:homeo} describes what
happens on the former, and on the latter it is given by the usual
isomorphism
\[
S^{m\bC(1)}\sma S^{m'\bC(1)}\simeq S^{(m+m')\bC(1)}.
\]
This implies that for the pairings above
we have $b'_{n}\otimes b'_{n'}\mapsto b'_{n+n'}$ and $b_{m}\otimes
b_{m'}\mapsto b_{m+m'}$.  In addition, we see that $a'_{n}\otimes
a'_{n'}\mapsto 0$ and $a_{m}\otimes a_{m'}\mapsto 0$ for dimension reasons.
Since the pairings on the spectrum level are $\bT$-equivariant, the
algebraic maps above commute with the action of $\tact$. Then
\[
\tact (a_{m}\otimes a_{m'})= (\tact a_{m})\otimes
a_{m'}-a_{m}\otimes (\tact a_{m'}) 
= b_{m}\otimes a_{m'}-a_{m}\otimes b_{m'}+(m-m')\eta a_{m}\otimes a_{m'}
\]
goes to $0$ since $a_{m}\otimes a_{m'}\mapsto 0$, and we see that
$b_{m}\otimes a_{m'}$ and $a_{m}\otimes b_{m'}$ go to the same element
of $\pi_{*}\bS\langle a_{m+m'},b_{m+m'}\rangle$.  
For dimension reasons, these elements go to an integral multiple of
$a_{m+m'}$; since 
\begin{align*}
\tact (b_{m}\otimes a_{m'})
&=(\tact b_{m})\otimes a_{m'}+b_{m}\otimes (\tact a_{m'})\\
&=m\eta b_{m}\otimes a_{m'}+b_{m}\otimes b_{m'}+(m'-1)\eta b_{m}\otimes a_{m'}
\\
&=b_{m}\otimes b_{m'}+(m+m'-1)\eta b_{m}\otimes a_{m'},
\end{align*}
we see $b_{m}\otimes a_{m'}\mapsto a_{m+m'}$.  The same
observations apply to $b'_{n}\otimes a'_{n'}$ and $a'_{n}\otimes
b'_{n'}$ to show that these both map to $a'_{n+n'}$.  We summarize
this in the following proposition.

\begin{prop}\label{prop:mult}
The pairings of homotopy groups $\pi_{*}$ from~\eqref{eq:newpair} are
induced by the tensor products of the maps
\begin{align*}
b'_{n}\otimes b'_{n'}&\mapsto b'_{n+n'}&
b_{m}\otimes b_{m'}&\mapsto b_{m+m'}\\
b'_{n}\otimes a'_{n'}&\mapsto a'_{n+n'}&
b_{m}\otimes a_{m'}&\mapsto a_{m+m'}\\
a'_{n}\otimes b'_{n'}&\mapsto a'_{n+n'}&
a_{m}\otimes b_{m'}&\mapsto a_{m+m'}\\
a'_{n}\otimes a'_{n'}&\mapsto 0&
a_{m}\otimes a_{m'}&\mapsto 0.
\end{align*}
\end{prop}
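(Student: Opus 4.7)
The plan is to reduce all four pairings to two elementary ingredients: an external-tensor-product decomposition of each pairing, and $\bT$-equivariance via the $\sigma$-action. I focus on the first pairing of~\eqref{eq:newpair}; the other three are restrictions obtained by dropping one or both $\langle a,b\rangle$ tensor factors, so the same arguments transfer verbatim.

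First, I would note that the point-set pairing $\bar U\bS_{m,n}\sma \bar U\bS_{m',n'}\to \bar U\bS_{m+m',n+n'}$ is the external smash product of two independent pairings: one on the $F(E\bT/E\bT_{2n-1},R_{\bT}\bS)$-factor, induced by the filtered diagonal approximation of Section~\ref{sec:NegFilt} together with the lax monoidal fibrant replacement $R_{\bT}$, and one on the $S^{\bC(1)^m}$-factor, induced by the canonical homeomorphism $S^{\bC(1)^m}\sma S^{\bC(1)^{m'}}\cong S^{\bC(1)^{m+m'}}$. This splits the computation into independent statements about the unprimed and primed generators, and Lemma~\ref{lem:homeo}(i) ensures that the primed case mirrors the unprimed one exactly, since the diagonal approximation on $E\bT$ is compatible with the identifications $E\bT/E\bT_{2n-1}\simeq E\bT_+\sma S^{\bC(1)^n}$.

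The two ``$b\otimes b$'' identities are then immediate, since $b_m$ and $b'_n$ are images of fundamental classes and both components of the pairing preserve fundamental classes. The two ``$a\otimes a$'' identities are by a dimension count: $a_m\otimes a_{m'}$ lies in degree $2(m+m')-2$, but the target $\pi_*\bS\langle a_{m+m'},b_{m+m'}\rangle$ is concentrated in degrees $\geq 2(m+m')-1$ because $\pi_k\bS=0$ for $k<0$.

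The substantive step is the four ``mixed'' identities, which I would handle via $\bT$-equivariance. Because the spectrum-level pairing is $\bT$-equivariant with respect to the diagonal action on the source, the induced map on $\pi_*$ commutes with multiplication by $\sigma\in\piS[1](\bT)$; standard Koszul conventions for the diagonal $\bT$-action on an external smash product give the signed Leibniz rule $\sigma(x\otimes y)=(\sigma x)\otimes y+(-1)^{|x|} x\otimes(\sigma y)$. Combining this with $\sigma a_m = b_m+(m-1)\eta a_m$ and $\sigma b_m = m\eta b_m$, one computes $\sigma(a_m\otimes a_{m'})=b_m\otimes a_{m'}-a_m\otimes b_{m'}+(m-m')\eta a_m\otimes a_{m'}$, so equivariance together with $a_m\otimes a_{m'}\mapsto 0$ forces $b_m\otimes a_{m'}$ and $a_m\otimes b_{m'}$ to have the same image. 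For degree reasons this common image is an integer multiple of $a_{m+m'}$; evaluating $\sigma(b_m\otimes a_{m'})$ and matching coefficients of $b_{m+m'}$ against the known $\sigma$-action on $a_{m+m'}$ pins the integer down to $1$. The main obstacle is the careful sign-bookkeeping in the Leibniz rule — the one place where the argument could go astray if one is careless about Koszul conventions.
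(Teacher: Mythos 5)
Your argument is correct and matches the paper's own proof essentially step for step: both factor the pairing into independent actions on the $F(E\bT/E\bT_{2n-1},R_{\bT}\bS)$- and $S^{\bC(1)^m}$-parts (via Lemma~\ref{lem:homeo}), read off the $b\otimes b$ case from fundamental classes, kill $a\otimes a$ by connectivity of $\bS$, and then use $\bT$-equivariance plus the Leibniz rule for $\sigma$ — with exactly your Koszul sign $(-1)^{|x|}$ — to force $b_m\otimes a_{m'}$ and $a_m\otimes b_{m'}$ to coincide and to pin the coefficient to $1$ by matching the $b_{m+m'}$-component of $\sigma(b_m\otimes a_{m'})$. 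No meaningful divergence from the paper's route.
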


We see from the formulas above that $b_{0}\in \pi_{0}(C\bar U\bS_{0})$
acts by the identity and satisfies $\tact b_{0}=0$, and so gives the
identity element of $E^{1}_{0,0}(\bS)$.  We now identify
some other key elements of $\pi_{*}(\bar U\bS_{s}/\bar U\bS_{s-1})$.

\begin{notn}\label{notn:xyz}
Let 
\begin{align*}
\bar x&=b'_{0}\otimes b_{1}+\eta b'_{0}\otimes a_{1},
  &\bar x&\in \pi_{2}(CC\bar U\bS_{2,0})\\
\bar y&=a'_{0}\otimes b_{1}-b'_{0}\otimes a_{1}
  &\bar y&\in \pi_{1}(CC\bar U\bS_{2,0})\\
\bar z&=b'_{1}+\eta a'_{1},
  &\bar z&\in \pi_{-2}(C\bar U\bS_{2}).
\end{align*}
\end{notn}

Applying the formulas~\eqref{eq:actab} and~\eqref{eq:actabp} for the
action of $\tact$
and using Proposition~\ref{prop:pifixed} to identify
$\pi^{\bT}_{*}$ as a subset of $\pi_{*}$ for $CC\bar U\bS_{2,0}$ and
$C\bar U\bS_{2}$, we define elements $x,y,z$ in $E^{1}_{*,*}(\bS)$ as
follows. 

\begin{prop}
The elements $\bar x$, $\bar y$, and $\bar z$ lift to (unique) elements 
\begin{align*}
&x\in \pi^{\bT}_{2}(CC\bar U\bS_{2,0})\subset E^{1}_{2,0}(\bS)\\
&y \in \pi^{\bT}_{1}(CC\bar U\bS_{2,0})\subset E^{1}_{2,-1}(\bS)\\
&z \in \pi^{\bT}_{-2}(C\bar U\bS_{2})\subset E^{1}_{-2,0}(\bS).
\end{align*}
\end{prop}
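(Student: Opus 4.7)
The plan is to use Proposition~\ref{prop:pifixed} as the main tool. Since $CC\bar U\bS_{2,0}$ and $C\bar U\bS_{2}$ are each (non-canonically) of the form $\bT_{+}\sma Z$, the natural map $\pi^{\bT}_{*}\to \pi_{*}$ is injective onto the kernel of multiplication by $\sigma$. So uniqueness of the lifts is automatic, and existence reduces entirely to verifying that $\sigma\bar x=0$, $\sigma\bar y=0$, and $\sigma\bar z=0$ in $\pi_{*}$.

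To carry out these $\sigma$-computations, I would first record the Leibniz rule for $\sigma$ on tensor factors coming from the $\bT$-equivariance of the pairings in~\eqref{eq:newpair} together with the Hopf coproduct $\Delta_{*}\sigma=\sigma\otimes 1+1\otimes \sigma$ on $\piS(\bT)$. Combined with the Koszul sign convention this yields
\[
\sigma(u\otimes v)=(\sigma u)\otimes v+(-1)^{|u|}\,u\otimes (\sigma v),
\]
which is already implicit in the computation displayed earlier in the paper (the sign $-$ appearing in the formula for $\sigma(a_m\otimes a_{m'})$ matches $(-1)^{|a_m|}=(-1)^{2m-1}$).

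With this in hand I would plug in the identities $\sigma a_m=b_m+(m-1)\eta a_m$, $\sigma b_m=m\eta b_m$, $\sigma a'_m=b'_m+(m-1)\eta a'_m$, $\sigma b'_m=m\eta b'_m$ and the relation $2\eta=0$. For instance,
\[
\sigma(b'_0\otimes b_1)=\eta\,b'_0\otimes b_1,\qquad
\sigma(\eta b'_0\otimes a_1)=\eta\,b'_0\otimes b_1,
\]
so $\sigma\bar x=2\eta\,b'_0\otimes b_1=0$. Similarly,
\[
\sigma(a'_0\otimes b_1)=b'_0\otimes b_1-2\eta\,a'_0\otimes b_1=b'_0\otimes b_1,\qquad
\sigma(b'_0\otimes a_1)=b'_0\otimes b_1,
\]
so $\sigma\bar y=0$, and finally $\sigma b'_1=\eta b'_1$ together with $\sigma(\eta a'_1)=\eta b'_1$ gives $\sigma\bar z=2\eta b'_1=0$.

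There is no substantive obstacle here: once Proposition~\ref{prop:pifixed} has turned the problem into a linear algebra check, the only care needed is the Koszul sign in the Leibniz rule and the cancellations forced by $2\eta=0$ in $\pi_{*}\bS$. The three elements $\bar x$, $\bar y$, $\bar z$ were evidently chosen precisely so that these cancellations occur.
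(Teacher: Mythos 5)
Your approach is exactly the one the paper uses: the preamble to this proposition says the lifts are obtained by ``applying the formulas for the action of $\sigma$ and using Proposition~\ref{prop:pifixed}'', and your $\sigma$-annihilation computations (via the Leibniz rule $\sigma(u\otimes v)=(\sigma u)\otimes v+(-1)^{|u|}u\otimes(\sigma v)$ together with $2\eta=0$) are precisely the verification the authors had in mind. Both the existence and uniqueness reductions and the arithmetic are correct.
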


\iffalse
Check:
\[
\tact (b'_{0}\otimes b_{1}+\eta b'_{0}\otimes a_{1})=b'_{0}\otimes (\tact b_{1})+\eta b'_{0}\otimes (\tact a_{1})
=\eta b'_{0}\otimes b_{1}+\eta b'_{0}\otimes b_{1}=0
\]
\begin{multline*}
\tact (a'_{0}\otimes b_{1}-b'_{0}\otimes a_{1})
=(\tact a'_{0})\otimes b_{1}+a'_{0}\otimes (\tact b_{1})-b'_{0}\otimes (\tact a_{1})\\
=b'_{0}\otimes b_{1}+\eta a'_{0}\otimes b_{1}+\eta a'_{0}\otimes b_{1}-b'_{0}\otimes b_{1}=0
\end{multline*}
\[
\tact 
\]
\fi

We can now prove Theorems~\ref{thm:CCS} and~\ref{thm:CCT} 

\begin{proof}[Proof of Theorem~\ref{thm:CCS}]
Eliding notation for the distinction between an element of $\pi^{\bT}_{*}$ and its image in
$\pi_{*}$, the multiplication formulas of Proposition~\ref{prop:mult}
give us
\begin{align*}
x^{m}&=b'_{0}\otimes b_{m}+m\eta b'_{0}\otimes a_{m}\\
y^{2}&=0\\
z^{n}&=b'_{n}+n\eta a'_{n}.
\end{align*}
From here we see 
\begin{align*}
x^{m}z^{n}&=b'_{n}\otimes b_{m}+n\eta a'_{n}\otimes b_{m}+m\eta b'_{n}\otimes a_{m}+mn\eta^{2}a'_{n}\otimes a_{m}\\
x^{m}y&=a'_{0}\otimes b_{m+1}-b'_{0}\otimes a_{m+1}+m\eta a'_{0}\otimes a_{m+1}\\
x^{m}yz^{n}&=a'_{n}\otimes b_{m+1}-b'_{n}\otimes a_{m+1}+(n+m)\eta a'_{n}\otimes a_{m+1}\\
yz^{n}&=a'_{n}\otimes b_{1}-b'_{n}\otimes a_{1}+n\eta a'_{n}\otimes a_{1}
\end{align*}
for $m>0, n\geq 0$.  Since $\{a'_{n}\otimes a_{m},a'_{n}\otimes
b_{m},b'_{n}\otimes a_{m},b'_{n}\otimes b_{m}\}$ is a set of
generators for the free $\pi_{*}\bS$-module $\pi_{*}(CC\bar
U\bS_{2m,2n})$ and $\{a'_{n},b'_{n}\}$ is a set of generators for the
free $\pi_{*}\bS$-module $\pi_{*}(C\bar U\bS)_{2n}$, we see from the
formulas above that the map from the free (tri)graded commutative
$\pi_{*}\bS$-algebra on $\bar x,\bar y,\bar z$ (in the appropriate
degrees; q.v.~\ref{notn:xyz}) modulo $\bar y^{2}=0$ injects
into 
\[
\bigoplus_{n\geq 0} \pi_{*}(C\bar U\bS_{2n}) \oplus \bigoplus_{m>0,n\geq 0} \pi_{*}(CC\bar U\bS_{2m,2n})
\]
and it follows that it injects into 
\[
\bigoplus_{n\geq 0} \pi^{\bT}_{*}(C\bar U\bS_{2n}) \oplus \bigoplus_{m>0,n\geq 0} \pi^{\bT}_{*}(CC\bar U\bS_{2m,2n}).
\]
To see it is surjective, we note that by Proposition~\ref{prop:mult},
the elements 
\begin{align*}
(\tact +\eta)(b'_{n}\otimes b_{m})&=
(m+n+1)\eta b'_{n}\otimes b_{m}
&(\tact +\eta)b'_{n}&=(n+1)\eta b'_{n}\\
(\tact +\eta)(b'_{n}\otimes a_{m})&=
b'_{n}\otimes b_{m}+(n+m)\eta b'_{n}\otimes a_{m}
&(\tact +\eta)a'_{n}&=b'_{n}+n\eta a'_{n}\\
(\tact +\eta)(a'_{n}\otimes b_{m})&=
b'_{n}\otimes b_{m}+(n+m)\eta a'_{n}\otimes b_{m}\\
(\tact +\eta)(a'_{n}\otimes a_{m})&=
b'_{n}\otimes a_{m}-a'_{n}\otimes b_{m}
  +(n+m+1)\eta a'_{n}\otimes a_{m}\hskip -4em
\end{align*}
generate $\pi^{\bT}_{*}(CC\bar U\bS_{2m,2n})$ (in the left column) and
$\pi^{\bT}(C\bar U\bS_{2n})$ (in the right column) as
$\pi_{*}\bS$-modules.  From this we see that 
\begin{itemize}
\item $x^{m}z^{n},x^{m-1}yz^{n}$ generate $\pi_{*}^{\bT}(CC\bar
U\bS)_{2m,2n}$ for $m\geq 1$, $n\geq 0$
\item $z^{n}$ generates $\pi^{\bT}_{*}(C\bar U\bS_{2n})$ for $n\geq 0$.
\end{itemize}
The theorem now follows from~\eqref{eq:E1}.
\end{proof}

\begin{proof}[Proof of Theorem~\ref{thm:CCT}]
The cofiber $C\bar UX_{0}$ of $\bar UX_{0,1}\to \bar UX_{0,0}$ has a
ca\-nonical weak equivalence to
$R_{\bT}F(E\bT_{0}/ET_{-1},X)=R_{\bT}F(\bT_{+},R_{\bT}X)$.  The map 
\[
i^{*}X\to R_{\bT}F(\bT_{+},R_{\bT}X)^{\bT}\to
i^{*}R_{\bT}F(\bT_{+},R_{\bT}X)\simeq \dR F(\bT_{+},i^{*}X)
\]
in the non-equivariant stable category is adjoint to the $\bT$-action map 
\[
\bT_{+}\sma i^{*}X\to i^{*}X,
\]
so the map
\[
\piS\bT\otimes_{\pi_{*}\bS} \pi_{*}X\to \piS\bT\otimes_{\pi_{*}\bS} \pi_{*}(C\bar
UX_{0})\to \pi_{*}X
\]
sends $1\otimes x$ to $x$ and $\tact \otimes x$ to $\tact x$, where
the second map is induced by the counit (evaluation) 
\[
\bT_{+}\sma \dR F(\bT_{+},i^{*}X)\to i^{*}X.
\]
We note that the evaluation map is $\bT$-equivariant and so in the
case of $X=\bS$ satisfies
\begin{align*}
1\otimes b'_{0}&\mapsto 1&1 \otimes a'_{0}&\mapsto 0\\
\tact \otimes b'_{0}&\mapsto 0&\tact \otimes a'_{0}&\mapsto -1
\end{align*}
(the first formula coming from Lemma~\ref{lem:homeo}, the remainder
following for equivariance and dimension reasons). In terms of our
identification 
\[
\pi_{*}(C\bar UX_{0})\iso\pi_{*}\bS\langle a'_{0},b'_{0}\rangle\otimes_{\pi_{*}\bS}\pi_{*}X,
\]
we therefore have that the map $\pi_{*}X\to \pi_{*}(C\bar UX_{0})$ satisfies the
formula
\begin{equation}\label{eq:piX}
v\mapsto b'_{0}\otimes v-a'_{0}\otimes \tact v.
\end{equation}

By Proposition~\ref{prop:mult}, we see that the map in the statement
\[
\CC_{*,*}\otimes \pi_{*}X\to E^{1}_{*,*}(X)
\]
satisfies the following formulas
\begin{equation}\label{eq:nonmultform}
\begin{aligned}
z^{n}\otimes v&\mapsto
  %  (b'_{n}+n\eta a'_{n})(b'_{0}\otimes v-a'_{0}\otimes \tact v)
  (b_{n}+n\eta a'_{n})\otimes v-a'_{n}\otimes \tact v\\
x^{m}z^{n}\otimes v&\mapsto
  (b'_{n}\otimes b_{m}+n\eta a'_{n}\otimes b_{m}
    +m\eta b'_{n}\otimes a_{m}+mn\eta^{2}a'_{n}\otimes a_{m})\otimes v\\
  &\qquad\qquad -(a'_{n}\otimes b_{m} + m\eta a'_{n}\otimes a_{m})\otimes \tact v\\
x^{m}yz^{n}\otimes v&\mapsto
  (a'_{n}\otimes b_{m+1}-b'_{n}\otimes a_{m+1}+(n+m)\eta a'_{n}\otimes a_{m+1})\otimes v\\
  &\qquad\qquad +a'_{n}\otimes a_{m+1}\otimes \tact v\\
yz^{n}\otimes v&\mapsto
  % (a'_{n}\otimes b_{1}-b'_{n}\otimes a_{1}+n\eta a'_{n}\otimes a_{1})()
  (a'_{n}\otimes b_{1}-b'_{n}\otimes a_{1}+n\eta a'_{n}\otimes a_{1})\otimes v
  + a'_{n}\otimes a_{1}\otimes \tact v
\end{aligned}
\end{equation}
for all $m>0,n\geq 0$, from which it is clear that the map is
injective.  Surjectivity follows from looking at the image of
multiplication by $\tact+\eta $ just as in the proof of
Theorem~\ref{thm:CCS} above.

The bimodule statement follows from the monoidality statement, which
we now check.  For $v\in \pi_{*}X$, $w\in \pi_{*}Y$, and writing $v\sma
w$ for the smash product element in $\pi_{*}(X\sma Y)$, the map
\[
(\CC_{*}\otimes \pi_{*}X)\otimes (\CC_{*}\otimes \pi_{*}Y)\to
E^{1}_{*,*}(X)\otimes E^{1}_{*,*}(Y)\to E^{1}_{*,*}(X\sma Y)
\]
takes $(1\otimes v)\otimes (1\otimes w)$ to 
\begin{multline*}
(b'_{0}\otimes v-a'_{0}\otimes \tact v)
  (b'_{0}\otimes w-a'_{0}\otimes \tact w)\\
=b'_{0}\otimes (v\sma w)
   -a'_{0}\otimes ((\tact v)\sma w)
-(-1)^{|v|}a'_{0}\otimes (v\sma (\tact w))\\
=b'_{0}\otimes (v\sma w)-a'_{0}\otimes \tact (v\sma w).
\end{multline*}
This agrees with the image of $v\otimes w$ under the map
\[
(\CC_{*,*}\otimes \pi_{*}X)\otimes (\CC_{*}\otimes \pi_{*}Y)\to
\CC_{*,*}\otimes \pi_{*}(X\sma Y)\to E^{1}_{*,*}(X\sma Y)
\]
induced by the multiplication on $\CC_{*}$. More generally, for any
$\alpha \in \CC_{*}$, both maps send $(1\otimes x)\otimes
(\alpha\otimes y)$ to the same element of $E^{1}_{*,*}(X\sma Y)$. The
monoidality statement follows.
\iffalse
scratch
\begin{gather*}
(\tact +\eta)(a'_{n}\otimes v)=(b'_{n}+n\eta a'_{n})\otimes v-a'_{n}\otimes (\tact +\eta)v= b'_{n}\otimes v+a'_{n}\otimes \tact v+(n-1)\eta a'\otimes v\\
(\tact +\eta)(b'_{n}\otimes v)=n\eta b'_{n}\otimes v+b'_{n}\otimes \tact v\\
(\tact +\eta)(a'_{n}\otimes \tact v)=b'_{n}\otimes \tact v+n\eta a'_{n}\otimes \tact v
\end{gather*}

\begin{gather*}
(\tact +\eta)(b'_{0}\otimes v)=b'_{0}\otimes (\tact +\eta)v\\
(\tact +\eta)(a'_{0}\otimes v)=b'_{0}\otimes v + \eta a'_{0}\otimes \tact v.
\end{gather*}
\fi
\end{proof}

\section[Comparison of the Spectral Sequences]{Comparison of the
Hesselholt-Madsen and Greenlees \texorpdfstring{$\bT$}{T}-Tate Spectral Sequences}\label{sec:HMvsG}

We continue the work of the previous section to prove
Theorems~\ref{thm:HMvsG} and~\ref{thm:HMd2}, which are not needed in
the rest of the memoir.  As we use Theorem~\ref{thm:HMd2} to prove
Theorem~\ref{thm:HMvsG}, we begin there.

\begin{proof}[Proof of Theorem~\ref{thm:HMd2}]
Since the multiplication comes from a pairing of filtered objects, all
differentials satisfy the Leibniz rule.   
By definition, the connecting map $\partial$ for the cofiber sequence $S^{0}\to
S^{\bC(1)}\to S^{\bC(1)}/S^{0}$ takes $a_{1}\in
\pi_{1}(S^{\bC(1)}/S^{0})$ to the generator $1\in \pi_{0}(S^{0})$ and
takes the generator $b_{1}\in \pi_{2}(S^{\bC(1)}/S^{0})$ to $0$.  Likewise, in the
cofiber sequences for the maps $S^{-(m+1)\bC(1)}\to S^{-m\bC(1)}$
that arise from filtration on $E\bT$, 
the connecting maps satisfy $\partial a'_{m}=b'_{m+1}$ and $\partial
b'_{m}=0$.  We can then read off the formulas for $d_{2}x$, $d_{2}y$,
and $d_{2}z$ in the spectral sequence for $X=\bS$ from the definition of
these classes in Notation~\ref{notn:xyz}:
\begin{align*}
d_{2}x&=\eta b'_{0}=\eta \\
d_{2}y&=b'_{1}\otimes b_{1}-b'_{0}=b'_{1}\otimes b_{1}-1\\
d_{2}z&=\eta b'_{2}=z^{2}.
\end{align*}
To finish the identification of $d_{2}y$, we note that 
\begin{multline*}
xz+\eta yz
=(b'_{1}\otimes b_{1}+\eta a'_{1}\otimes b_{1} 
   + \eta b'_{1}\otimes a_{1}+\eta^{2}a'_{1}\otimes a_{1}) 
\\
+\eta 
(a'_{1}\otimes b_{1}-b'_{1}\otimes a_{1}+\eta a'_{1}\otimes a_{1})
=b'_{1}\otimes b_{1}.
\end{multline*}

For $v\in \pi_{*}X$, the tensor product notation in the statement
conflicts with the tensor product notation we have been using above.
For clarity, we continue to write elements in $\pi_{*}(C\bar U
X_{*})$ using the symbol $\otimes$, and switch to writing elements of
$HM_{*,*}\otimes X$ using the symbol $\boxtimes$.
For $v\in \pi_{*}X$, by~\eqref{eq:piX}, the element $1\boxtimes
v$ in $HM_{*,*}\otimes \pi_{*}X$ is the element $b'_{0}\otimes
v-a'_{0}\otimes \tact v$ in $\pi_{*}(C\bar U X_{*})$, giving the formula
\[
d_{2}(1\boxtimes v)=-b'_{1}\otimes \tact v\in \pi_{*}(C\bar U X_{0}).
\]
Since $1\boxtimes\tact v\in HM_{*,*}\otimes \pi_{*}X$ is the
element $b'_{0}\otimes \tact v-\eta a'_{0}\otimes \tact v\in
\pi_{*}(C\bar U X_{0})$, we see that
$z\boxtimes \tact v\in HM_{*,*}\otimes \pi_{*}X$ is the element
\[
(b'_{1}+\eta a'_{1})(b'_{0}\otimes \tact v-\eta a'_{0}\otimes \tact v)=
b'_{1}\otimes \tact v \in \pi_{*}(C\bar U X_{2}).
\]
This then gives the formula $d_{2}(1\boxtimes v)=-z\boxtimes \tact v$
in $HM_{*,*}\otimes \pi_{*}X$.
\end{proof}

\begin{proof}[Proof of Theorem~\ref{thm:HMvsG}]
It is clear from the description in the statement that the specified
map is a map of filtered spectra and induces a map of spectral
sequences. For the purposes of this proof, we write $GE^{2}_{*,*}$ for
the $E^{2}=E^{1}$ term of the Greenlees $\bT$-Tate spectral sequence
as a complex with its $d_{2}$ differential, and analogously
$HME^{2}_{*,*}$ for the $E^{2}=E^{1}$ term of the Hesselholt-Madsen
$\bT$-Tate spectral sequence.  The Greenlees spectral sequence arises
from $\pi^{\bT}_{*}$ of a filtered spectrum; write $GC_{s}$
for the level $n$ cofiber so that (by definition)
\[
GE^{2}_{s,t}=\pi^{\bT}_{s+t}GC_{s}.
\]
We have $GC_{n}\simeq *$ for $n$ odd and
\[
GC_{2n}\simeq C(S^{n\bC(1)},S^{(n-1)\bC(1)})\sma X
\]
for all $n\in \bZ$.
The filtered map from the Greenlees filtration to the
Hesselholt-Madsen filtration then sends $GC_{n}$ isomorphically (in
the stable category) to $C\bar UX_{|n|}$ for $n\leq 0$ or to $C(\bar
UX_{n,0},\bar U_{n-1,0})$ for $n>0$.  Let
\[
b''_{n}\in \pi_{2n}C(S^{n\bC(1)},S^{(n-1)\bC(1)})
\]
be the element that is the image
of the fundamental class of $S^{n\bC(1)}$ (for the standard
orientation)  and let
\[
a''_{n}\in \pi_{2n-1}C(S^{n\bC(1)},S^{(n-1)\bC(1)})
\]
be the element that the connecting map in the cofiber sequence takes
to the fundamental class of $S^{(n-1)\bC(1)}$.  Then 
\[
\pi_{*}GC_{2n}=\pi_{*}\bS\langle a''_{n},b''_{n}\rangle \otimes_{\pi_{*}\bS}\pi_{*}X
\]
and $GE^{2}_{*,*}$ is the kernel of the action of $\tact\in
\pi^{\bS}_{1}\bT$ (Proposition~\ref{prop:pifixed}) on the above, which can
be computed by the Leibniz rule from its action on each tensor factor.
In this notation, the map $GE^{2}_{*,*}\to HME^{2}_{*,*}$ is induced
by the map 
\begin{align*}
b''_{n}&\mapsto 
\begin{cases}
b'_{|n|}&n\leq 0\\
b'_{0}\otimes b_{n}&n>0
\end{cases}\\[1ex]
a''_{n}&\mapsto 
\begin{cases}
a'_{|n|}&n\leq 0\\
b'_{0}\otimes a_{n}&n>0
\end{cases}
\end{align*}
in the notation of the previous section. 

We introduce the notation $HMC_{*}$ for the graded spectrum analogous
to $GC_{*}$ that we studied (in terms of its further decomposition) in
the previous section,
\[
HMC_{n}=\bar U_{n}X/\bar U_{n-1}X\simeq \begin{cases}
C\bar UX_{|n|}\vee \bigvee\limits_{m>0} CC\bar UX_{m,m+|n|}&n\leq 0\\[.5ex]
\bigvee\limits_{m\geq n} CC\bar UX_{m,m-n}&n>0.\\
\end{cases}
\]
Since the filtration cofibers $GC_{*}$ and $HMC_{*}$ are trivial
in odd degrees, the $d_{2}$ differential makes
$\pi_{*}GC_{*}$ and $\pi_{*}HMC_{*}$ into differential graded
$\pi_{*}^{\bS}\bT$-modules.  To prove the theorem, it suffices to show
that the map $\pi_{*}GC_{*}\to \pi_{*}HMC_{*}$ is a chain homotopy
equivalence of differential graded $\pi_{*}^{\bS}\bT$-modules.  Since
the map for general $X$ is $(-)\otimes_{\pi_{*}\bS}\pi_{*}X$
applied to the map in the case $X=\bS$, it suffices to treat the case
$X=\bS$.

Write $i$ for the map $\pi_{*}GC_{*}\to \pi_{*}HMC_{*}$ and
define the map $r\colon \pi_{*}HMC_{*}\to \pi_{*}GC_{*}$ by 
\begin{align*}
b'_{n}\otimes b_{m}&\mapsto b''_{m-n}&b'_{n}&\mapsto b''_{n}\\
a'_{n}\otimes b_{m}&\mapsto a''_{m-n}&a'_{n}&\mapsto a''_{n}\\
b'_{n}\otimes a_{m}&\mapsto a''_{m-n}\\
a'_{n}\otimes a_{m}&\mapsto 0.
\end{align*}
As the differential takes $a_{n}\ (n>1)$, $a'_{n}\ (n\geq 0)$,
$a''_{n}\ (n\in \bZ)$ to $b_{n-1}$, $b'_{n+1}$, $b''_{n-1}$
(respectively), takes $b_{n}\ 
(n>0)$, $b'_{n}\ (n\geq 0)$, $b''_{n}\ (n\in \bZ)$ to $0$, and in
the exceptional cases satisfies
\begin{align*}
d(a'_{n}\otimes a_{1})&=b'_{n+1}\otimes a_{1}-a'_{n}\\
d(b'_{n}\otimes a_{1})&=b'_{n},
\end{align*}
we see that $r$ preserves the
differential.  Using the formulas for the action of $\tact$ in the
previous section (and the corresponding formulas in $\pi_{*}GC_{*}$),
we see that $r$ is a map of $\pi_{*}^{\bS}\bT$-modules.  Moreover, the
composite $r\circ i$ is the identity on $\pi_{*}GC_{*}$. 

To complete the argument, we construct a chain homotopy $B$ from
$i\circ r$ to the identity on $\pi_{*}HM_{*}$ that satisfies $\tact
B=-B\tact$.  Since $1-i\circ r=0$ on the image of $i$, we can regard
$1-i\circ r$ as a homomorphism $(\pi_{*}HM_{*})/i\to \pi_{*}HM_{*}$
and construct $B$ as $(1-i\circ r)\circ A$ for $A$ a nullhomotopy of
$(\pi_{*}HM_{*})/i$ satisfying $\tact A=-A\tact$.  As a
$\pi_{*}\bS$-module, $(\pi_{*}HM_{*})/i$ is free on elements
\begin{align*}
a'_{n}\otimes a_{m}\quad (m>0,n\geq 0),
&&a'_{n}\otimes b_{m}\quad (m>0,n\geq 0),\\
b'_{n}\otimes a_{m}\quad (m>0,n>0),&&
b'_{n}\otimes b_{m}\quad (m>0,n>0).
\end{align*}
We extend the notation of these elements to all $m\in \bZ,n\geq 0$ by
understanding the elements as zero when $m,n$ is outside the indicated
ranges above.  Then for all $m\in \bZ, n\geq 0$, the following formulas
are valid for the differential on $(\pi_{*}HM_{*})/i$:
\begin{align*}
d(a'_{n}\otimes a_{m})&=b'_{n+1}\otimes a_{m}-a'_{n}\otimes b_{m-1}\\
d(a'_{n}\otimes b_{m})&=b'_{n+1}\otimes b_{m}\\
d(b'_{n}\otimes a_{m})&=b'_{n}\otimes b_{m}\\
d(b'_{n}\otimes b_{m})&=0,
\end{align*}
and the following formulas are valid for
the action of $\tact$ on $(\pi_{*}HM_{*})/i$: 
\begin{align*}
\tact(a'_{n}\otimes a_{m})
  &=b'_{n}\otimes a_{m}-a'_{n}\otimes b_{m}+(m+n)\eta a'_{n}\otimes a_{m}\\
\tact(a'_{n}\otimes b_{m})&=b'_{n}\otimes b_{m}+(m+n-1)\eta a'_{n}\otimes b_{m}\\
\tact(b'_{n}\otimes a_{m})&=b'_{n}\otimes b_{m}+(m+n-1)\eta b'_{n}\otimes a_{m}\\
\tact(b'_{n}\otimes b_{m})&=(m+n)\eta b'_{n}\otimes b_{m}.
\end{align*}
For $m\geq 1$, let $A(a'_{0}\otimes a_{m})=0$, $A(a'_{0}\otimes
b_{m})=0$, and for $m\geq 1,n\geq 1$, let
\begin{align*}
A(a'_{n}\otimes a_{m})&=0\\
A(a'_{n}\otimes b_{m})&=a'_{n-1}\otimes a_{m}+\dotsb+a'_{0}\otimes a_{m-(n-1)}\\
A(b'_{n}\otimes a_{m})&=a'_{n-1}\otimes a_{m}+\dotsb+a'_{0}\otimes a_{m-(n-1)}\\
A(b'_{n}\otimes b_{m})
&=a'_{n-1}\otimes b_{m}+\dotsb+a'_{0}\otimes b_{m-(n-1)}\\
&\qquad \qquad 
-(b'_{n-1}\otimes a_{m}+\dotsb+b'_{0}\otimes a_{m-(n-1)})
\end{align*}
We note that $A(a'_{n}\otimes b_{m})=A(b'_{n}\otimes a_{m})$ and 
\[
A(b'_{n}\otimes b_{m})=-(\tact+(m-n-1)\eta)A(b'_{n}\otimes a_{m});
\]
from this it follows that $\tact A=-A\tact$.  For $m\geq 1,n=0$, we
have 
\begin{align*}
(Ad+dA)(a'_{0}\otimes a_{m})&=A(b'_{1}\otimes a_{m}-a'_{0}\otimes b_{m-1})
=a'_{0}\otimes a_{m}\\
(Ad+dA)(a'_{0}\otimes b_{m})&=A(b'_{1}\otimes b_{m})=a'_{0}\otimes b_{m}-b'_{0}\otimes a_{m}=a'_{0}\otimes b_{m}
\end{align*}
and in the remaining cases $m\geq 1,n\geq 1$, we have
\begin{multline*}
(Ad+dA)(a'_{n}\otimes a_{m})=A(b'_{n+1}\otimes a_{m}-a'_{n}\otimes b_{m-1})\\
=a'_{n}\otimes a_{m}+\dotsb+a'_{0}\otimes a_{m-n}
  -(a'_{n-1}\otimes a_{m-1}+\dotsb+a'_{0}\otimes_{m-n})=a'_{n}\otimes a_{m}
\end{multline*}
\begin{multline*}
(Ad+dA)(a'_{n}\otimes b_{m})
=A(b'_{n+1}\otimes b_{m})+d(a'_{n-1}\otimes a_{m}+\dotsb+a'_{0}\otimes a_{m-(n-1)})\\
=a'_{n}\otimes b_{m}+\dotsb+a'_{0}\otimes b_{m-n}-(b'_{n}\otimes a_{m}+\dotsb+b'_{0}\otimes a_{m-n})\\
\qquad\qquad +b'_{n}\otimes a_{m}-a'_{n-1}\otimes b_{m-1}+\dotsb+b'_{1}\otimes
a_{m-(n-1)}-a'_{0}\otimes b_{m-n}\\
=a'_{n}\otimes b_{m}-b'_{0}\otimes a_{m-n}=a'_{n}\otimes b_{m}
\end{multline*}
\begin{multline*}
(Ad+dA)(b'_{n}\otimes a_{m})
=A(b'_{n}\otimes b_{m-1})+d(a'_{n-1}\otimes a_{m}+\dotsb+a'_{0}\otimes a_{m-(n-1)})\\
=a'_{n-1}\otimes b_{m-1}+\dotsb+a'_{0}\otimes b_{m-n}-(b'_{n-1}\otimes a_{m-1}+\dotsb+b'_{0}\otimes a_{m-n})\\
\qquad\qquad +b'_{n}\otimes a_{m}-a'_{n-1}\otimes b_{m-1}+\dotsb+b'_{1}\otimes
a_{m-(n-1)}-a'_{0}\otimes b_{m-n}\\
=b'_{n}\otimes a_{m}-b'_{0}\otimes a_{m-n}=b'_{n}\otimes a_{m}
\end{multline*}
\begin{multline*}
(Ad+dA)(b'_{n}\otimes b_{m})
=d(a'_{n-1}\otimes b_{m}+\dotsb+a'_{0}\otimes b_{m-(n-1)}\\
\shoveright{-(b'_{n-1}\otimes a_{m}+\dotsb+b'_{0}\otimes a_{m-(n-1)}))}\\
=b'_{n}\otimes b_{m}+\dotsb+b'_{1}\otimes b_{m-(n-1)}
-(b'_{n-1}\otimes b_{m-1}+\dotsb+b'_{0}\otimes b_{m-n})\\
=b'_{n}\otimes b_{m}-b'_{0}\otimes b_{m-n}=b'_{n}\otimes b_{m}.
\end{multline*}
This completes the proof.
\end{proof}

The end of Section~\ref{sec:TSS} specifies a formula for the map from
the Greenlees $\bT$-Tate spectral sequence to the Hesselholt-Madsen
$\bT$-Tate spectral sequence in terms of the usual identification of
the $E^{2}$-term of the former and our notation for the $E^{2}$-term
of the latter.  Given that the inclusion of $\pi_{*}X$ in homological
degree zero agree for these spectral sequences, it can be seen by
naturality from the case $X=\Sigma^{\infty}\bT_{+}$, that the formula
there is the unique one consistent with the differentials of the
spectral sequences.  Alternatively, it can be calculated directly
as in~\eqref{eq:piX} that for an element $v\in \pi_{*}X$, under the
usual identification of the $E^{2}$-term of the Greenlees spectral
sequence, up to sign, the element $t^{n}\otimes v\in \bZ[t,t^{-1}]\otimes
\pi_{*}X$ goes to the element 
\[
\begin{cases}
b'_{n}\otimes v+n\eta a'_{n}\otimes v-a'_{n}\otimes \tact v&n\geq 0\\
b'_{0}\otimes b_{|n|}\otimes v+n\eta b'_{0}\otimes a_{|n|}\otimes v-b'_{0}\otimes a_{|n|}\otimes \tact v&n\leq 0.
\end{cases}
\]
The formulas at the end of Section~\ref{sec:TSS} can then be checked
directly and the sign determined to be $(-1)^{n}$.

\section[Proof of Lemma~{\ref{lem:homeo}}]{Coherence of the equivalences \texorpdfstring{$E\bT/E\bT_{2n-1}\simeq
E\bT_{+}\sma S^{\bC(1)^{n}}$}{}\sbreak (Proof of Lemma~\ref{lem:homeo})}

In this section we prove Lemma~\ref{lem:homeo}.  We first
observe uniqueness.  We note that 
\begin{multline*}
\StabGB[\bT](\Sigma^{\infty} E\bT_{+}\sma
S^{\bC(1)^{n}},\Sigma^{\infty} E\bT_{+}\sma S^{\bC(1)^{n}})
\\\iso
\StabG[\bT](\Sigma^{\infty} E\bT_{+}\sma S^{\bC(1)^{n}},\Sigma^{\infty} E\bT_{+}\sma S^{\bC(1)^{n}})
\\\iso \StabG[\bT](\Sigma^{\infty}E\bT_{+},\Sigma^{\infty}E\bT_{+})\iso
\pi^{h\bT}_{0}(\bS),
\end{multline*}
which is isomorphic to $\bZ$
by~\cite[{$\mathrm{B}^{\bT}$}]{Greenlees-SegalToric}.  Thus, for the
moment assume there exists a Borel equivalence
$\Sigma^{\infty}E\bT/E\bT_{2n-1}\simeq \Sigma^{\infty}E\bT_{+}\sma
S^{\bC(1)^{n}}$ for all $n\geq 0$.  Then there are exactly two choices
for each $n$, which induce opposite sign isomorphisms on $H_{2n}\iso
\bZ$, and diagrams involving only spectra Borel equivalent to
$\Sigma^{\infty}E\bT/E\bT_{2n-1}$ (for fixed $n$) commute
exactly when they commute on $H_{2n}$.  It follows that the diagram
in~(i) determines a unique choice in the case $n=0$ and in the other
cases a unique choice once the choice for $n=1$ is specified.  The
diagram in~(ii) determines a unique choice in the case $n=1$. 

The proof of existence is a construction, which we do on the space
level.  First, it is convenient to change models.  Recall that $S(\bC(1)^{n})$
denotes the unit sphere in $\bC(1)^{n}$, where $\bC(1)$ denotes the
complex numbers $\bC$ with the standard action of $\bT$ as the unit
complex numbers.  Let $S(\bC(1)^{\infty})$ be the union of
$S(\bC(1)^{n})$, where we include $\bC(1)^{n}$ in $\bC(1)^{n+1}$ as
the vectors with $0$ in the last coordinate.  Then
$S(\bC(1)^{\infty})$ is a model of $E\bT$ and so we have a unique
$\bT$-homotopy class of $\bT$-homotopy equivalence from
$S(\bC(1)^{\infty})$ to our standard model of $E\bT$.  Looking at the
induced map on $\bT$-quotients, the restriction 
\[
\bC P^{n}=S(\bC(1)^{n})/\bT\to E\bT/\bT\iso B\bT
\]
factors through $B\bT_{2n-2}$ (with our doubled filtration index
convention) by obstruction theory, and so the map $S(\bC(1)^{n})\to
E\bT$ factors up to $\bT$-homotopy through $E\bT_{2n-2}$ by principal
bundle theory.
This constructs a $\bT$-homotopy commutative diagram of $\bT$-spaces
\[
\xymatrix@-1pc{%
S(\bC(1))\ar[r]\ar[d]_{\simeq}&S(\bC(1)^{2})\ar[r]\ar[d]_{\simeq}
&S(\bC(1)^{3})\ar[r]\ar[d]_{\simeq}
&\cdots \ar[r]&S(\bC(1)^{\infty})\ar[d]^{\simeq}\\
E\bT_{0}\ar[r]&E\bT_{2}\ar[r]&E\bT_{4}\ar[r]&\cdots \ar[r]&E\bT.
}
\]
Choosing homotopies, it suffices to construct and study the maps using the model
$S(\bC(1)^{\infty})/S(\bC(1)^{n})$ in place of
$E\bT/E\bT_{2n-1}=E\bT/E\bT_{2n-2}$. 

\begin{rem}
A concrete construction of the maps $S(\bC(1)^{n})\to E\bT_{2n-2}$ is
as follows.  For the model $W\bT$ of Section~\ref{sec:PosFilt}, we can
use the map that sends an element 
\[
(z_{1},\dotsc,z_{n})\in S(\bC(1)^{n})\subset \bC(1)^{n}
\]
to the element represented by 
\[
\left(\frac{z_{i_{0}}}{||z_{i_{0}}||},\dotsc,\frac{z_{i_{m}}}{||z_{i_{m}}||}\right),
(||z_{i_{0}}||^{2},\dotsc,||z_{i_{m}}||^{2})\in \bT^{m+1}\times\Delta[m]
\]
where $z_{i_{0}},\dotsc,z_{i_{m}}$ drops the coordinates $z_{j}$ that
are $0$, and the element of $\Delta[m]$ is specified in barycentric
coordinates.  The corresponding map to the model $E\bT$ sends the
element to
\[
\left(\frac{z_{i_{0}}}{||z_{i_{0}}||},\frac{z_{i_{0}}^{-1}}{||z_{i_{0}}||}\frac{z_{i_{1}}}{||z_{i_{1}}||},
\dotsc,\frac{z_{i_{m-1}}^{-1}}{||z_{i_{m-1}}||}\frac{z_{i_{m}}}{||z_{i_{m}}||}\right),
(||z_{i_{0}}||^{2},\dotsc,||z_{i_{m}}||^{2})\in \bT^{m+1}\times\Delta[m].
\]
In the case $n=1$, this map is clearly the canonical isomorphism
$\bT\iso \bT\times \Delta[0]$.  More generally, the map is surjective and generically one-to-one
on the top dimensional cell, and so is a homology isomorphism on the
underlying non-equivariant homotopy spheres.  Since the map is also
$\bT$-equivariant and both sides are free $\bT$-cell complexes, the map is
a $\bT$-homotopy equivalence.
\end{rem}

\begin{cons}
We construct a $\bT$-homeomorphism 
\[
h_{n,q}\colon S(\bC(1)^{n+q})/S(\bC(1)^{n})\to
S^{\bC(1)^{n}}\sma S(\bC(1)^{q})_{+}
\]
by constructing a $\bT$-equivariant map 
\[
\bar h_{n,q}\colon S(\bC(1)^{n+q})\to
S^{\bC(1)^{n}}\sma S(\bC(1)^{q})_{+}
\]
that sends all of $S(\bC(1)^{n})$ to the basepoint.  For
$\vec x=(w_{1},\ldots,w_{n},z_{1},\ldots,z_{q})\in
\bC(1)^{n+q}$ with $||\vec x||=1$, write $\vec
w=(w_{1},\ldots,w_{n})\in \bC(1)^{n}$ and $\vec
z=(z_{1},\ldots,z_{q})\in \bC(1)^{q}$.  We have $\vec w\in
S(\bC(1)^{n})\subset S(\bC(1)^{n+q})$ exactly when $\vec z=0$, in
which case we define $\bar h_{n,q}(\vec x)$ to be the basepoint as required; otherwise
we define 
\[
\bar h_{n,q}(\vec x)=\frac{\vec w}{||\vec z||}\sma \frac{\vec z}{||\vec z||}
\in S^{\bC(1)^{n}}\sma S(\bC(1)^{q})_{+}.
\]
This is continuous since $\vec w/||\vec z||$ goes to the basepoint as
$\vec z$ goes to $0$.  The factorization $h_{n,q}$ is a
$\bT$-equivariant continuous bijection of compact spaces and hence a
$\bT$-homeomorphism. 
\end{cons}

The maps $h_{n,q}$ are consistent for varying $q$ to define a
$\bT$-homeomorphism
\[
h_{n,\infty}\colon S(\bC(1)^{\infty})/S(\bC(1)^{n})\overto{\iso}
S^{\bC(1)^{n}}\sma S(\bC(1)^{\infty})_{+}.
\]
Collapsing $S(\bC(1)^{\infty})$ to a point, we then get a composite map 
\[
h_{n}\colon S(\bC(1)^{\infty})/S(\bC(1)^{n})\to S^{\bC(1)^{n}}
\]
that is a Borel equivalence.

\begin{prop}\label{prop:di}
The diagram
\[
\xymatrix{%
S(\bC(1)^{\infty})/S(\bC(1)^{m})\ar[d]_{h_{m}}\ar[r]
&S(\bC(1)^{\infty})/S(\bC(1)^{m+1})\ar[d]^{h_{m+1}}\\
S^{\bC(1)^{m}}\ar[r]
&S^{\bC(1)^{m+1}}
}
\]
commutes up to $\bT$-homotopy where the top horizontal map
is the quotient map and the bottom horizontal map is induced by the
inclusion of $\bC(1)^{m}$ into $\bC(1)^{m+1}$.
\end{prop}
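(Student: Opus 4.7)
The plan is to produce an explicit $\bT$-equivariant homotopy between the two composites in the diagram. By construction, given a unit vector $\vec x = (w_{1},\ldots,w_{m},z_{1},z_{2},\ldots) \in S(\bC(1)^{\infty})$, writing $\vec w = (w_{1},\ldots,w_{m})$, $\vec z = (z_{1},z_{2},\ldots)$, and $\vec z' = (z_{2},z_{3},\ldots)$, the ``down-then-right'' composite sends $\vec x$ to $(w_{1},\ldots,w_{m},0)/\|\vec z\|$ (basepoint when $\vec z = 0$, i.e., when $\vec x \in S(\bC(1)^{m})$), while the ``right-then-down'' composite sends $\vec x$ to $(w_{1},\ldots,w_{m},z_{1})/\|\vec z'\|$ (basepoint when $\vec z' = 0$, i.e., when $\vec x \in S(\bC(1)^{m+1})$).

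To interpolate, for $s \in [0,1]$ I would set
\[
\vec w_{s} = (w_{1},\ldots,w_{m},\, s z_{1}), \qquad \vec z_{s} = (\sqrt{1-s^{2}}\, z_{1},\, z_{2},\, z_{3},\ldots),
\]
which rotates the first coordinate of the $\vec z$-block into the last coordinate of the $\vec w$-block while preserving the total norm: $\|\vec w_{s}\|^{2} + \|\vec z_{s}\|^{2} = \|\vec x\|^{2} = 1$. Define $H_{s}(\vec x) = \vec w_{s}/\|\vec z_{s}\| \in S^{\bC(1)^{m+1}}$ when $\vec z_{s} \neq 0$, and $H_{s}(\vec x) = $ basepoint otherwise. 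At $s=0$ this recovers route 1 and at $s=1$ this recovers route 2, so the only remaining work is to verify that $H_{s}$ is continuous as a function of $(\vec x, s)$, equivariant, and descends to the quotient by $S(\bC(1)^{m})$.

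Equivariance is immediate from the diagonal $\bT$-action, since both $\vec w_{s}$ and $\vec z_{s}$ are $\bT$-equivariant in $\vec x$ and the norm is $\bT$-invariant. Descent to the quotient follows from the fact that for $\vec x \in S(\bC(1)^{m})$ one has $\vec z = 0$, hence $\vec z_{s} = 0$ for all $s$, so $\vec x$ maps to the basepoint. For continuity, the only issue is the locus where $\vec z_{s} = 0$; since $\|\vec w_{s}\|^{2} + \|\vec z_{s}\|^{2} = 1$, we cannot have both numerator and denominator vanishing, and so $\vec w_{s}/\|\vec z_{s}\| \to \infty$ in $\bC(1)^{m+1}$ whenever $\|\vec z_{s}\| \to 0$, which is exactly the basepoint in the one-point compactification $S^{\bC(1)^{m+1}}$.

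The main obstacle, such as it is, is verifying continuity at $s=1$ on points $\vec x \in S(\bC(1)^{m+1}) \setminus S(\bC(1)^{m})$ (where $z_{1}\neq 0$ but $z_{2}=z_{3}=\cdots=0$): at $s=1$ the vector $\vec z_{1} = 0$ while $\vec w_{1} = (w_{1},\ldots,w_{m},z_{1}) \neq 0$, so $H_{1}$ must send such $\vec x$ to the basepoint by the escape-to-infinity argument, and this exactly matches the behavior of route 2 (which collapses $S(\bC(1)^{m+1})$ before applying $h_{m+1}$). Aside from this limiting behavior, the formula is manifestly continuous in $(\vec x, s)$, so the homotopy $H_{s}$ provides the required $\bT$-homotopy.
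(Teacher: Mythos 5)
Your proof is correct, and the verification of continuity at the degenerate locus is the key point and you have handled it properly: since $\|\vec w_{s}\|^{2}+\|\vec z_{s}\|^{2}=1$ the numerator and denominator of $\vec w_{s}/\|\vec z_{s}\|$ cannot vanish simultaneously, so the value escapes to the point at infinity as $\vec z_{s}\to 0$, and you correctly isolate the only delicate case (at $s=1$ on $S(\bC(1)^{m+1})\setminus S(\bC(1)^{m})$).

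The paper reaches the same conclusion by a slightly different device. Rather than constructing the interpolating homotopy $H_{s}$ directly, it inserts the shift map $i_{m}$, induced by $e_{n}\mapsto e_{n+1}$ for $n>m$ with $\bC(1)^{m}$ fixed, into the top row; with $i_{m}$ inserted the square commutes \emph{strictly}, since then both composites send $(w_{1},\ldots,w_{m},z_{1},z_{2},\ldots)$ to $(w_{1},\ldots,w_{m},0)/\|\vec z\|$, and the proposition follows because $i_{m}$ is $\bT$-equivariantly homotopic to the identity relative to $S(\bC(1)^{m})$. In effect your single explicit homotopy rotates $z_{1}$ across the filtration boundary, whereas the paper factors the same deformation through a reusable ``shift is homotopic to identity'' lemma plus a strict commutative square. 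Both approaches are essentially the same amount of work here; the paper's phrasing is a touch cleaner because it isolates the contractibility statement from the bookkeeping of $h_{m}$, while yours is more self-contained and makes the continuity question completely explicit.
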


\begin{proof}
Let $\bar i_{m}$ be the self-map of $S(\bC(1)^{\infty})$
induced by the map $\bC(1)^{\infty}\to \bC(1)^{\infty}$ that is the
identity on $\bC(1)^{m}$ but sends the standard basis vector $e_{n}$
to $e_{n+1}$ for $n>m$; let $i_{m}$ be the induced self-map of
$S(\bC(1)^{\infty})/S(\bC(1)^{m})$.  Then $\bar i_{m}$ is a $\bT$-equivariantly
homotopic to the identity through a homotopy that preserves
$S(\bC(1)^{m})$ pointwise, and therefore $i_{m}$ is also
$\bT$-equivariantly homotopic to the identity.  Since the diagram
\[
\xymatrix{%
S(\bC(1)^{\infty})/S(\bC(1)^{m})\ar[dr]_{h_{m}}\ar[r]^{i_{m}}
&S(\bC(1)^{\infty})/S(\bC(1)^{m})\ar[r]
&S(\bC(1)^{\infty})/S(\bC(1)^{m+1})\ar[d]^{h_{m+1}}\\
&S^{\bC(1)^{m}}\ar[r]
&S^{\bC(1)^{m+1}}
}
\]
commutes, the diagram in the statement commutes up to $\bT$-homotopy.
\end{proof}

If we choose $i,j\geq 0$, letting $n=i+j$, we define a $\bT$-equivariant map 
\[
\delta_{i,j}\colon S(\bC(1)^{\infty})/S(\bC(1)^{n})\to
S(\bC(1)^{\infty})/S(\bC(1)^{i})\sma
S(\bC(1)^{\infty})/S(\bC(1)^{j})
\]
as follows.  Writing an element of $S(\bC(1)^{\infty})$ as 
\[
\vec x=(v_{1},\ldots,v_{i},w_{1},\ldots,w_{j},z_{1},z_{2},\ldots),
\]
taking $\vec v=(v_{1},\ldots,v_{i},0,0,\ldots)\in \bC(1)^{\infty}$,
$\vec w=(w_{1},\ldots,w_{j},0,0,\ldots)\in \bC(1)^{\infty}$, and 
\[
\vec z=(\underbrace{0,\ldots,0}_{n},z_{1},z_{2},\ldots)\in \bC(1)^{\infty},
\]
we define 
\[
\delta_{i,j}(\vec x)=\frac{\vec v+\vec z}{||\vec v+\vec z||}\sma
\frac{\vec w+\vec z}{||\vec w+\vec z||} \in 
S(\bC(1)^{\infty})/S(\bC(1)^{i})\sma S(\bC(1)^{\infty})/S(\bC(1)^{j})
\]
where we understand the point as the basepoint when $\vec z=0$.  We
use this map in the following proposition.

\begin{prop}\label{prop:dii}
For any $i,j\geq 0$ and $n=i+j$, The diagram
\[
\xymatrix{%
S(\bC(1)^{\infty})/S(\bC(1)^{n})\ar[d]_{h_{n}}\ar[r]^-{\delta_{i,j}}
&S(\bC(1)^{\infty})/S(\bC(1)^{i})\sma
S(\bC(1)^{\infty})/S(\bC(1)^{j})\ar[d]^{h_{i}\sma h_{j}}\\
S^{\bC(1)^{n}}\ar[r]_-{\iso}
&S^{\bC(1)^{i}}\sma S^{\bC(1)^{j}}
}
\]
commutes up to $\bT$-homotopy where the bottom horizontal map is the
canonical isomorphism 
\[
S^{\bC(1)^{n}}=S^{\bC(1)^{i+j}}\iso S^{\bC(1)^{i}\oplus
\bC(1)^{j}}\iso 
S^{\bC(1)^{i}}\sma S^{\bC(1)^{j}}.
\]
\end{prop}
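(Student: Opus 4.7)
The plan is to prove that the diagram in fact commutes on the nose (from which $\bT$-homotopy commutativity is immediate) by unfolding both composites on a generic element. For $\vec x=(v_{1},\ldots,v_{i},w_{1},\ldots,w_{j},z_{1},z_{2},\ldots)\in S(\bC(1)^{\infty})$ with $\vec z\neq 0$, I would track what each composite does.

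For the right-then-down composite, set $\alpha_{v}=||\vec v+\vec z||$, so $(\vec v+\vec z)/\alpha_{v}$ is a unit vector with $v_{k}/\alpha_{v}$ in positions $1,\ldots,i$, zeros in positions $i+1,\ldots,n$, and $z_{k}/\alpha_{v}$ in positions $>n$. Its tail past position $i$ has norm $||\vec z||/\alpha_{v}$, so applying the homeomorphism $h_{i,\infty}$ and then collapsing the $S(\bC(1)^{\infty})_{+}$ factor yields $(v_{1},\ldots,v_{i})/||\vec z||\in S^{\bC(1)^{i}}$, with the $\alpha_{v}$ factors cancelling cleanly. The symmetric computation on the other smash factor gives $h_{j}((\vec w+\vec z)/\alpha_{w})=(w_{1},\ldots,w_{j})/||\vec z||\in S^{\bC(1)^{j}}$. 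For the down-then-right composite, $h_{n}(\vec x)=(v_{1},\ldots,v_{i},w_{1},\ldots,w_{j})/||\vec z||\in S^{\bC(1)^{n}}$, which under the canonical identification $S^{\bC(1)^{n}}\iso S^{\bC(1)^{i}}\sma S^{\bC(1)^{j}}$ induced by $\bC(1)^{n}\iso\bC(1)^{i}\oplus\bC(1)^{j}$ becomes exactly the same smash product. On the complementary closed set $S(\bC(1)^{n})\subset S(\bC(1)^{\infty})$ (where $\vec z=0$), both composites are constant at the basepoint by construction of $\delta_{i,j}$ and of $h_{n}$.

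Both maps are manifestly $\bT$-equivariant since every operation in sight (linear projection, norm computation, and rescaling) commutes with the diagonal scalar $\bT$-action, so the pointwise agreement above suffices. I therefore do not anticipate a genuine obstacle: the proof is essentially bookkeeping of normalizations. The only subtlety worth flagging is the coordinate shift built into the definition of $\delta_{i,j}$ — namely, that the $w$-entries are placed in positions $1,\ldots,j$ in the second factor rather than in positions $i+1,\ldots,n$ — which is precisely what makes the two applications of $h$ compatible with the canonical splitting of $\bC(1)^{n}$.
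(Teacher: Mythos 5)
Your calculation is correct, and you follow essentially the same strategy as the paper (unwinding both composites pointwise in the $\vec v$, $\vec w$, $\vec z$ notation). In fact you prove slightly more than the paper's own argument does: the diagram commutes strictly, not merely up to $\bT$-homotopy. The paper's proof asserts that the right-then-down composite sends $\vec x$ to
\[
\frac{\vec v}{\|\vec v+\vec z\|\,\|\vec z\|}\sma \frac{\vec w}{\|\vec w+\vec z\|\,\|\vec z\|},
\]
and then bridges the apparent gap by the homotopy
$t\mapsto \frac{\vec v}{\|\vec v+\vec z\|^{t}\,\|\vec z\|}\sma \frac{\vec w}{\|\vec w+\vec z\|^{t}\,\|\vec z\|}$. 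But, as you observe, when $h_{i}$ is applied to the unit vector $(\vec v+\vec z)/\|\vec v+\vec z\|$, both the block of the first $i$ coordinates (namely $\vec v/\|\vec v+\vec z\|$) and the norm of the remaining block (namely $\|\vec z\|/\|\vec v+\vec z\|$) carry the same overall factor $\|\vec v+\vec z\|$, which cancels in the ratio defining $h_{i}$. So $h_{i}$ yields exactly $\vec v/\|\vec z\|$, the extra normalization in the paper's formula is extraneous, and the paper's interpolating homotopy is in fact constant. Both approaches establish the proposition; yours is the tighter bookkeeping, and your observation about the coordinate shift built into $\delta_{i,j}$ is precisely the point that makes the two applications of $h$ compatible with the splitting of $\bC(1)^{n}$.
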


\begin{proof}
In the $\vec x;\vec v,\vec w,\vec z$ notation above the down-then-right map
sends $\vec x$ to 
\[
\frac{\vec v}{||\vec z||}\sma \frac{\vec w}{||\vec z||}
\]
while the right-then-down map sends $\vec x$ to 
\[
\frac{\vec v}{||\vec v+\vec z||\, ||\vec z||}\sma \frac{\vec w}{||\vec w+\vec z||\, ||\vec z||}.
\]
The homotopy 
\[
\frac{\vec v}{||\vec v+\vec z||^{t}\, ||\vec z||}\sma \frac{\vec w}{||\vec w+\vec z||^{t}\, ||\vec z||}.
\]
is $\bT$-equivariant.
\end{proof}

We are now ready to prove Lemma~\ref{lem:homeo}.

\begin{proof}
As discussed above, uniqueness follows immediately from existence.
We use the map in the Borel stable category given by the zigzag
\[
E\bT/E\bT_{2n-1}\overfrom{\simeq}S(\bC(1)^{\infty})/S(\bC(1)^{n})\overto{h_{n,\infty}}S(\bC(1)^{\infty})_{+}\sma
S^{\bC(1)^{n}}\overto{\simeq}E\bT_{+}\sma S^{\bC(1)^{n}}.
\]
Proposition~\ref{prop:dii} shows that the diagram in~(ii) commutes for
all $m$. To deduce that the diagram in~(i) commutes,
it suffices to show that the diagram
\[
\xymatrix{%
S(\bC(1)^{\infty})/S(\bC(1)^{n})\ar[r]^-{\delta_{i,j}}\ar[d]
&S(\bC(1)^{\infty})/S(\bC(1)^{i})\sma S(\bC(1)^{\infty})/S(\bC(1)^{j})\ar[d]\\
E\bT/E\bT_{2n-1}\ar[r]&E\bT/E\bT_{2i-1}\sma E\bT/E\bT_{2j-1}
}
\]
commutes after applying $H_{2n}$.  This is a straightforward homology
calculation, keeping track of signs.
\end{proof}

%%%%%%%%%%%%%%%%%%%%%%%%%%%%%%%%%%%%%%%%
\section{The strong K\"unneth theorem for \texorpdfstring{$THH$}{THH}}\label{sec:THH}

The following well-known result does not appear to have a good
reference in the literature.

\begin{thm}\label{thm:thhkun}
Let $R$ be a commutative ring orthogonal spectrum.  Let $X$ and $Y$ be
$R$-algebras.  The symmetric monoidal structure map
\[
THH(X)\smal_{THH(R)}THH(Y)\to THH(X\smal_{R} Y)
\]
is an isomorphism in the Borel derived category of $THH(R)$-modules.
\end{thm}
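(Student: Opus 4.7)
The approach is to use the cyclic bar construction $\Ncyc$ as a point-set model for $THH$ and exploit its strong symmetric monoidal structure, as reviewed in Section~4. First, I would reduce to a point-set statement: after replacing $R$ by a cofibrant commutative ring orthogonal spectrum and $X, Y$ by cofibrant $R$-algebras, the cyclic bar constructions $\Ncyc(X)$, $\Ncyc(Y)$, and $\Ncyc(X \sma_R Y)$ become point-set models for $THH(X)$, $THH(Y)$, and $THH(X \smal_R Y)$ in the Borel derived category of orthogonal $\bT$-spectra.

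Second, I would establish a point-set isomorphism
\[
\Ncyc(X) \sma_{\Ncyc(R)} \Ncyc(Y) \cong \Ncyc(X \sma_R Y)
\]
of orthogonal $\bT$-spectra. This comes from the level-wise identification
\[
(X \sma_R Y)^{\sma_R(n+1)} \cong X^{\sma_R(n+1)} \sma_{R^{\sma_R(n+1)}} Y^{\sma_R(n+1)},
\]
which uses the commutativity of $R$ to shuffle tensor factors and the identity $R^{\sma_R(n+1)} \cong R = \Ncyc(R)_n$. These identifications are compatible with the cyclic structure maps, and geometric realization of simplicial orthogonal spectra commutes with smash products and with balanced smash products over a simplicial commutative ring spectrum. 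This is essentially the content of $\Ncyc$ being strong symmetric monoidal (cited from BGT2 in Section~4), specialized to $R$-algebras viewed as one-object $R$-spectral categories.

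The main obstacle lies in the final step: showing that the point-set balanced smash product $\Ncyc(X) \sma_{\Ncyc(R)} \Ncyc(Y)$ models the derived smash product $THH(X) \smal_{THH(R)} THH(Y)$ in the Borel derived category of $THH(R)$-modules. A priori, $\Ncyc(X)$ need not be cofibrant as an $\Ncyc(R)$-module, so the point-set balanced smash product might not compute the derived one. The cleanest resolution is to arrange $\Ncyc$ to lift to a strong symmetric monoidal Quillen left adjoint on a suitable model category of cofibrant $R$-algebras, so that cofibrant $R$-algebras give cofibrant $\Ncyc(R)$-modules after applying $\Ncyc$; the point-set isomorphism above then already realizes the derived statement. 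Alternatively, I would use an explicit simplicial bar-resolution of $\Ncyc(X)$ over $\Ncyc(R)$ and check that the shuffle isomorphism persists through the resolution, yielding the derived isomorphism after totalization.
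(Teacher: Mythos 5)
Your route is the paper's route: replace $R$ by a cofibrant commutative ring orthogonal spectrum and $X$, $Y$ by cofibrant $R$-algebras, and then verify a \emph{point-set} isomorphism $\Ncyc(X)\sma_{\Ncyc(R)}\Ncyc(Y) \iso \Ncyc(X\sma_R Y)$ by inspecting the cyclic bar construction level-wise. However, your level-wise identification is for the wrong construction. You write
\[
(X \sma_R Y)^{\sma_R(n+1)} \iso X^{\sma_R(n+1)} \sma_{R^{\sma_R(n+1)}} Y^{\sma_R(n+1)}
\]
and invoke $\Ncyc(R)_n = R^{\sma_R(n+1)}\iso R$. But in this theorem $\Ncyc$ is the \emph{absolute} cyclic bar construction computing $THH=THH(-/\bS)$, so $\Ncyc(R)_n = R\sma\cdots\sma R$ ($n+1$ smash factors over $\bS$), which is not $R$. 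What you have written is the level-wise description of the \emph{relative} construction $\Ncyc^R$; it would prove a K\"unneth formula for $THH^R$ over $THH^R(R)\simeq R$, which is a different (and much easier) statement than the one claimed. The identification you actually need is
\[
X^{\sma(n+1)} \sma_{R^{\sma(n+1)}} Y^{\sma(n+1)} \iso (X\sma_R Y)^{\sma(n+1)},
\]
with all smash powers over $\bS$; this again follows by shuffling (since $\sma$ commutes with the coequalizer defining $\sma_R$ in each variable), but it is a different formula from yours.

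Your worry about whether the point-set balanced smash product $\Ncyc(X)\sma_{\Ncyc(R)}\Ncyc(Y)$ models the derived one is legitimate; the paper passes it over in silence as part of the ``well-known'' status of the result. Under the standing cofibrancy hypotheses ($R$ a cofibrant commutative ring orthogonal spectrum, $X$ and $Y$ cofibrant $R$-algebras), each simplicial level is a balanced smash of suitably flat $R^{\sma(n+1)}$-modules and the degeneracies are $h$-cofibrations, so geometric realization of the level-wise balanced products does compute the derived balanced product. Either of your two proposed remedies (a Quillen-adjoint lift of $\Ncyc$ or an explicit bar resolution) can be made to close this gap, but as stated neither is carried far enough to see that it does.
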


We can assume without loss of generality that $R$ is cofibrant as a
commutative ring orthogonal spectrum and $X$ and $Y$ are cofibrant as
$R$-algebras.  Then the cyclic bar constructions $N^{\cy}(X)$,
$N^{\cy}(Y)$, and $N^{\cy}(R)$ model $THH(X)$, $THH(Y)$, and $THH(R)$,
respectively.
The map in question is induced by the map
\[
N^{\cy}X\sma N^{\cy}Y\to N^{\cy}(X\sma Y)\to N^{\cy}(X\sma_{R}Y)
\]
which coequalizes the maps
\[
\xymatrix@C-1pc{%
N^{\cy}X\sma N^{\cy}R\sma N^{\cy}Y\ar[r]<-.5ex>\ar[r]<.5ex>
&N^{\cy}X\sma N^{\cy}Y
}
\]
where the top arrow is the $N^{\cy}R$-action on $N^{\cy}X$ and the bottom
arrow is the $N^{\cy}R$-action on $N^{\cy}Y$. It suffices to prove the
following stronger point-set theorem.

\begin{thm}\label{thm:thhpskun}
Let $R$ be a commutative ring orthogonal spectrum.  Let $X$ and $Y$ be
$R$-algebras.  The symmetric monoidal structure map
\[
N^{\cy}(X)\sma_{N^{\cy}(R)}N^{\cy}(Y)\to N^{\cy}(X\sma_{R} Y)
\]
is an isomorphism in the point-set category of equivariant $N^{\cy}(R)$-modules.
\end{thm}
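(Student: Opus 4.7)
My plan is to verify the claim before geometric realization, at the level of simplicial (indeed cyclic) orthogonal spectra. Since geometric realization of simplicial orthogonal spectra preserves colimits and commutes with the external smash product, it commutes with the balanced smash product of a simplicial right $A_{\bullet}$-module and a simplicial left $A_{\bullet}$-module over a simplicial ring $A_{\bullet}$. It therefore suffices to produce a natural point-set isomorphism of cyclic $N^{\cy}(R)$-bimodules between $N^{\cy}(X)\sma_{N^{\cy}(R)}N^{\cy}(Y)$ and $N^{\cy}(X\sma_R Y)$, where the balanced smash product of simplicial objects is formed degreewise. In simplicial degree $n$, the former evaluates to $X^{\sma(n+1)}\sma_{R^{\sma(n+1)}}Y^{\sma(n+1)}$ and the latter to $(X\sma_R Y)^{\sma(n+1)}$.

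The comparison at level $n$ is furnished by the shuffle (symmetry) isomorphism
\[
\sigma_n\colon (X\sma R\sma Y)^{\sma(n+1)}\overto{\iso} X^{\sma(n+1)}\sma R^{\sma(n+1)}\sma Y^{\sma(n+1)},
\]
together with its analogue for $(X\sma Y)^{\sma(n+1)}$. The defining coequalizer $X\sma R\sma Y \rightrightarrows X\sma Y$ of $X\sma_R Y$ is reflexive, with section induced by the unit $\bS\to R$, and reflexive coequalizers in orthogonal spectra commute with arbitrary smash powers in each variable; hence $(X\sma_R Y)^{\sma(n+1)}$ is the coequalizer of the parallel pair $(X\sma R\sma Y)^{\sma(n+1)} \rightrightarrows (X\sma Y)^{\sma(n+1)}$ induced by applying the two action maps simultaneously in all $n+1$ factors. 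Using that $R$ is commutative, so that $R^{\sma(n+1)}$ is a commutative ring whose factorwise action on $X^{\sma(n+1)}$ and $Y^{\sma(n+1)}$ matches the shuffled action from $(X\sma R\sma Y)^{\sma(n+1)}$, the shuffle $\sigma_n$ identifies this pair with the one defining $X^{\sma(n+1)}\sma_{R^{\sma(n+1)}}Y^{\sma(n+1)}$. This gives the desired level-wise isomorphism, and it is manifestly compatible with the $N^{\cy}(R)$-bimodule structure in each simplicial level.

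The step I expect to take the most care is the verification that the level-wise maps $\sigma_n$ assemble into a map of \emph{cyclic} objects. One must check compatibility with face maps (which involve the multiplications on $X$, $Y$, and $R$), with degeneracies (which insert the respective units), and with the cyclic operator (which cyclically rotates the $n+1$ factors). The face- and degeneracy-map checks reduce to naturality of the shuffle together with associativity/commutativity of the multiplications, all of which hold because $R$ is commutative. The cyclic-operator check is the key observation that $\sigma_n$ is built from permutations that treat each of the $n+1$ positions symmetrically, and hence commutes with the simultaneous cyclic rotation of the three groups of factors. Once these compatibilities are in place, passing to geometric realization yields the desired $\bT$-equivariant point-set isomorphism of $N^{\cy}(R)$-modules, which is the structure map of the lax symmetric monoidal functor $N^{\cy}$.
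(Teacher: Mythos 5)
Your argument is correct and follows essentially the same route as the paper: reduce to simplicial level $n$, where the map becomes $X^{\sma(n+1)}\sma_{R^{\sma(n+1)}}Y^{\sma(n+1)}\to(X\sma_R Y)^{\sma(n+1)}$, and observe it is an isomorphism. The paper records this level-wise isomorphism as ``clear by inspection''; your shuffle-isomorphism and reflexive-coequalizer argument, together with the check of compatibility with the cyclic structure maps, supply exactly the details the paper elides.
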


\begin{proof}
Since coequalizers
commute with geometric realization, looking at simplicial level $n$,
it suffices to show that the map
\[
(X\sma\cdots \sma X)\sma_{R\sma\cdots \sma R}(Y\sma\cdots \sma Y)\to
(X\sma_{R}Y)\sma \cdots\sma (X\sma_{R}Y)
\]
is an isomorphism and this is clear by inspection.
\end{proof}

\begin{rem}
Since the map in Theorem~\ref{thm:thhkun} is natural in the category
$\Ho^{M}(\Spcat_{R})$ obtained from the category of $R$-spectral
categories by formally inverting the Morita equivalences, the result
generalizes to the case when $X$ and $Y$ are $R$-spectral categories.
\end{rem}

\begin{rem}
We have consistently worked in the Borel derived category throughout
rather than the equivariant derived category.  It is clear from
Theorem~\ref{thm:thhpskun} that the strong K\"unneth theorem holds for
$THH$ in any category where $THH$ is modeled by a functor of
$N^{\cy}$ on cofibrant objects of the given types.  In particular, it
applies to the norm model of $THH$ constructed as
$N_{e}^{\bT}=I_{\bR^{\infty}}^{U}N^{\cy}$ as in \cite{ABGHLM}.
\end{rem}

%%%%%%%%%%%%%%%%%%%%%%%%%%%%%%%%%%%%%%%%
\section[Proof of Theorem~{\ref{main:THHfg}}]{\texorpdfstring{$THH$}{THH} of smooth and proper algebras (Proof of Theorem~\ref{main:THHfg})}\label{sec:spthhfg}

\bgroup
\def\sma_#1{\mathbin{\wedge_{#1}}}

The purpose of this section is to prove Theorem~\ref{main:THHfg} of
the introduction.  To\"en-Vaqui\'e~\cite[2.6]{ToenVaquie} show that
any smooth and proper $k$-linear dg category $\aX$ is Morita
equivalent to a smooth and proper dg $k$-algebra, and so by Morita
invariance of $THH$~\cite[5.12]{BM-tc}, it suffices to prove the
following theorem.

\begin{thm}\label{thm:THHsp}\label{thm:small}
Let $R$ be a commutative ring orthogonal spectrum and $A$ a
$R$-algebra which is smooth and proper when viewed as an $R$-spectral category.  Then $THH(A)$ is a small
$THH(R)$-module.
\end{thm}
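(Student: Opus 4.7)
The plan is to deduce Theorem~\ref{thm:small} from the strong K\"unneth theorem for $THH$ (Theorem~\ref{thm:thhkun} together with its extension to spectral categories by Morita invariance, noted in the remark following its proof) combined with the characterization of smooth and proper $R$-spectral categories as the dualizable objects of the Morita symmetric monoidal homotopy category. The key point is that $THH$ becomes a strong symmetric monoidal functor
\[
THH \colon \Ho^{M}(\Spcat_{R}) \to \Ho(\Mod_{THH(R)}),
\]
and such functors preserve dualizable objects; in $\Ho(\Mod_{THH(R)})$ the dualizable objects coincide with the small (compact) objects.

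First I would recall the To\"en/Blumberg--Gepner--Tabuada characterization (\cite[5.4.2]{Toen-Lectures}, \cite[3.7]{BGT}): an $R$-spectral category $A$ is smooth and proper over $R$ if and only if $A$ is dualizable in $\Ho^{M}(\Spcat_{R})$, with dual $A^{\op}$. Concretely this amounts to the existence of bimodule coevaluation and evaluation maps
\[
\eta\colon R \to A \smal_{R} A^{\op}, \qquad \varepsilon\colon A^{\op}\smal_{R} A \to R
\]
in the Morita homotopy category satisfying the two triangle identities.

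Next, apply the strong symmetric monoidal functor $THH$ and use the K\"unneth isomorphism $THH(A)\smal_{THH(R)}THH(A^{\op})\overto{\sim}THH(A\smal_{R}A^{\op})$ to produce maps
\[
THH(\eta)\colon THH(R)\to THH(A)\smal_{THH(R)}THH(A^{\op})
\]
and
\[
THH(\varepsilon)\colon THH(A^{\op})\smal_{THH(R)}THH(A)\to THH(R)
\]
which satisfy the triangle identities inherited from $(\eta,\varepsilon)$ together with the coherence of the K\"unneth transformation. This exhibits $THH(A)$ as dualizable in $\Ho(\Mod_{THH(R)})$, with dual $THH(A^{\op})$.

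Finally, since $\Ho(\Mod_{THH(R)})$ is a closed symmetric monoidal stable homotopy category, dualizable objects there coincide with small (compact) objects, so $THH(A)$ is small as a $THH(R)$-module as required. The main obstacle is verifying the smooth-and-proper-equals-dualizable identification in the precise Morita framework of the paper; once that is in hand, everything else is formal symmetric monoidal nonsense. An alternative, more direct simplicial approach would use smoothness to replace $A$ by a finite cell $A^{e}$-module (with $A^{e}=A\smal_{R}A^{\op}$) and then model $THH(A)\simeq A\smal_{A^{e}}A$ as a retract of a finite cell object built from copies of $A$, using properness to control smallness over $R$; to upgrade smallness from $R$ to $THH(R)$ along $R\to THH(R)$ one still appeals to the K\"unneth pairing, which is why the duality argument is the most natural packaging.
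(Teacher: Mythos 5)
Your argument is correct, and in fact it is exactly the approach the paper acknowledges in the introduction (``Theorem~\ref{main:THHfg} is easy to deduce from the strong K\"unneth theorem for $THH$ and the interpretation of smooth and proper in terms of duality\dots'') but then deliberately declines to carry out, giving instead the explicit simplicial argument of Section~\ref{sec:spthhfg}. So you have found a genuinely different route. Your version is the formal one: smooth and proper equals dualizable in $\Ho^{M}(\Spcat_{R})$ by To\"en/BGT, $THH$ is strong symmetric monoidal (Theorem~\ref{thm:thhkun} plus Morita invariance), strong symmetric monoidal functors between symmetric monoidal homotopy categories preserve dualizable objects, and dualizable equals small in $\Ho(\Mod_{THH(R)})$ because the unit is compact. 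All of these steps are standard and correct, and the duality packaging is the cleanest way to see why the theorem should be true.

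What the paper's longer simplicial proof buys in exchange is a concrete point-set zigzag of weak equivalences of $R\partial\Delta[2]$-modules relating $B^{\cy;R}(ARI;ARO)$ to $B^{\cy;\bS}(A)$, which directly exhibits $THH(A)$ as a retract of a finite cell $THH(R)$-module without ever passing through the Morita homotopy category or its (somewhat delicate to verify) strong symmetric monoidal structure on dualizable objects. That explicit model also produces the auxiliary identification $THH(A)\simeq THH^{R}(A;THH(R;A))$ recorded in Remark~\ref{rem:TA}, which has independent content (it shows in particular that $THH(A)\simeq THH^{R}(A)\smal_{R}THH(R)$ can fail, as in the $H\bZ\subset H\bZ[i]$ example). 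If you intend to use the duality route in your own write-up, the one thing to be careful about is the step where you assert the triangle identities in $\Ho(\Mod_{THH(R)})$: you need the K\"unneth comparison to be not merely an objectwise equivalence but a symmetric monoidal transformation so that $THH(\eta)$, $THH(\varepsilon)$ genuinely form a duality datum, and this coherence is exactly what the paper's lax/strong symmetric monoidal set-up for $\Ncyc$ (via the Milnor product argument, \cite[6.8,6.10]{BGT2}) supplies.
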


We emphasize that the statement here and the work in this section is
non-equivariant.

We recall that an $R$-module $M$ is small means that maps in the
derived category out of it $\aD_{R}(M,-)$ commutes with arbitrary
coproducts; this is equivalent to $M$ being weakly equivalent to the
homotopy retract of a finite cell $R$-module.  For the hypothesis of
the theorem, $A$ is a proper as an $R$-spectral category precisely
when $A$ is small as an $R$-module and $A$ is a smooth as an
$R$-spectral category precisely when $A$ is small as an
$A\smal_{R}A^{\op}$-module (q.v.~\cite[\S3.2]{BGT}), or more
precisely, when for some (hence any) cofibrant replacement $A'\to A$
in $R$-algebras, $A$ is small as an $A'\sma_{R}A^{\prime\op}$-module.

Without loss of generality, we model $R$ as a cofibrant commutative
ring orthogonal spectrum and we 
model $A$ as a cofibrant $R$-algebra.  These hypotheses will be
enough to ensure that all smash products we use below in the proof of
Theorem~\ref{thm:small} represent derived smash products.

The proof of Theorem~\ref{thm:small} involves a comparison of
different models representing $THH(A)$ as a $THH(R)$-module, some of
which involve elaborate simplicial constructions that play off the
smash product of orthogonal spectra against the smash products
$\sma_{R}$ for $R$-modules.  To avoid confusion, we will
always write $\sma_{\bS}$ in this section for the smash product of
orthogonal spectra to contrast with $\sma_{R}$.  Likewise, we will
refer to orthogonal spectra with no extra structure as $\bS$-modules
to contrast with the extra structure of $R$-modules or $A$-modules.  

The basic building blocks of the constructions of this section are the
two-sided and cyclic bar constructions relative to $\bS$ and $R$. We
begin by establishing conventions and notation for these constructions
in this section.  In this section only, 
we use $N\subdot^{\bS}$ and $N^{\cy;\bS}\subdot$ to denote the
two-sided bar and cyclic bar constructions using $\sma_{\bS}$, and $B^{\bS}$ and
$B^{\cy;\bS}$ for the geometric realizations.  We also use
corresponding notation replacing $\bS$ with $R$.  To be specific about
the face maps: in 
\[
N^{\bS}_{q}(X,A,Y)=X\sma_{\bS}\underbrace{A\sma_{\bS}\cdots \sma_{\bS}A}_{q}\sma_{\bS}Y,
\]
$d_{0}$ uses the left action of $A$ on $Y$ and $d_{q}$ uses the right
action of $A$ on $X$. We call $X$ the \term{leftside module} and $Y$
the \term{rightside module}; the leftside module is a right module and
the rightside module is a left module.
If the leftside module is an
$A$-bimodule, then the left action of $A$ on $X$ induces a left action
of $A$ on $N^{\bS}(X,A,Y)$; if $Y$ is also an $A$-bimodule, then
$N^{\bS}(X,A,Y)$ inherits an $A$-bimodule structure.  We have similar
observations using $R$ in place of $A$ or in place of $\bS$.

Recall that an $A$-bimodule (in the category of $\bS$-modules)
consists of an $\bS$-module $M$ together with commuting left and right
$A$-module action maps $A\sma_{\bS}M\to M$ and $M\sma_{\bS}A\to M$;
this is equivalent data to the structure of either a left or right
$A\sma_{\bS}A^{\op}$-module on $M$.  Likewise, an $A$-bimodule (in the
category of $R$-modules) consists of an $R$-module $M$ and commuting
left and right $A$-module action maps $A\sma_{R}M\to M$ and
$M\sma_{R}A\to M$, with equivalent data a (left or right)
$A\sma_{R}A^{\op}$-module structure on $M$.

Given an $A$-bimodule $M$ (in the category of $\bS$-modules), when forming
the cyclic bar construction
\[
N^{\cy;\bS}_{q}(A;M)=M\sma_{\bS}\underbrace{A\sma_{\bS}\cdots \sma_{\bS}A}_{q},
\]
our convention is to have $d_{0}$ use the left action of $A$ on $M$ and $d_{q}$ the right
action of $A$ on $M$. We then have a canonical isomorphism 
\[
N^{\cy;\bS}\subdot(A;M)\iso M\sma_{A\sma_{\bS} A^{\op}}N^{\bS}\subdot(A,A,A)
\]
where we use the (previously unused) left $A$-module structure on the
leftside $A$ and right $A$-module structure on the rightside $A$.
Writing things this way, it is natural to regard $M$ as a right
$A\sma_{\bS}A^{\op}$-module and $N^{\bS}\subdot(A,A,A)$ as a
simplicial left $A\sma_{\bS} A^{\op}$-module.

In the context of commutative ring orthogonal spectra, the two-sided bar
construction $B^{\bS}(R,R,R)$ and cyclic bar construction $B^{\cy;\bS}(R)$
are special cases of tensors.  In the following, we write $\otimes$
for the tensor of a commutative ring orthogonal spectrum with an unbased space
(in the category of commutative ring orthogonal spectra).  Then
$B^{\bS}(R,R,R)$ is canonically isomorphic to the tensor $R\otimes
\Delta[1]$ and $B^{\cy;\bS}(R)$ is canonically isomorphic to the
tensor $R\otimes(\Delta[1]/\partial\Delta[1])$.

We are now ready to begin the constructions of the various models of
$THH(A)$.  We use the following shorthand notation for some of the
constructions described above:

\begin{notn}
Let $RI$ denote $R\otimes \Delta[1]$.  Let $R_{0}=R\otimes \{0\}$ and
$R_{1}=R\otimes \{1\}$; then $R_{0}$ and $R_{1}$ are canonically
isomorphic to $R$ as commutative ring orthogonal spectra, but the subscripts
keep track of the maps of commutative ring orthogonal spectra $R\to RI$.  (In
fact, we have analogous maps $R_{t}$ for $t\in (0,1)$, although we do
not use these.)
\end{notn}

\begin{notn}
Let $ARI=A\sma_{R_{0}}RI$, the extension of scalars $RI$-algebra.  We
regard $ARI$ as an $R$ algebra via $R_{1}\to RI$.
\end{notn}

Recalling that
$d_{0}$ on the tautological element of $\Delta[1]_{1}$ corresponds to
the vertex $1$ and $d_{1}$ to the vertex $0$, we see that $ARI$ is the
geometric realization of the simplicial associative ring orthogonal spectrum
\[
ARI\subdot=N^{\bS}\subdot(A,R,R),
\]
and its $R$-algebra structure comes from the rightside $R$.

The inclusion of $A$ in $ARI\subdot$ is map of simplicial
associative ring orthogonal spectra with $A$ constant but not a map of simplicial
$R$-algebras.  The collapse map $ARI\subdot\to A$ is a map
of $R$-algebras and a simplicial homotopy equivalence of simplicial
associative ring orthogonal spectra.

\begin{notn}
Let $O\subdot$ be the simplicial model of the circle obtained by
gluing 2 copies of $\Delta[1]\subdot$ along $0$ and along $1$. To be
definite later, we label one copy (a) and the other (b).  Let
$RO\subdot$ be the simplicial commutative ring orthogonal spectrum $R\otimes
O\subdot$ (a $\sma_{\bS}$-power of $R$ in each simplicial degree).  We
write $RO$ for the geometric realization, which is canonically
isomorphic to the commutative ring orthogonal spectrum
$RI\sma_{R_{0}\sma_{\bS}R_{1}}RI$.  We have canonical maps of
commutative ring orthogonal spectra $R_{0},R_{1}\to RO$.  
\end{notn}

\begin{notn}
We let $ARO$ be the extension of scalars $RO$-algebra
$RO\sma_{R_{0}}A$.  When we regard this as an $R$-algebra it will be
via $R_{1}\to RO$.
\end{notn}

We can also describe $ARO$ as the geometric
realization of a simplicial associative ring orthogonal spectrum (or simplicial $R$-algebra).
We have 2 maps of $R$-algebras $ARI\to ARO$ corresponding to the 2 maps
of $R$-algebras $RI\to RO$ (corresponding to the 2 maps of
$\Delta[1]\subdot$ into $O\subdot$). We use the ``(a)'' map to define
a left $ARI$-module structure on $ARO$ and the ``(b)'' map to define a
right $ARI$-module structure on $ARO$; together these give a
$ARI$-bimodule structure in $R$-modules, a $ARI\sma_{R}ARI^{\op}$-module
structure. This bimodule structure commutes with the $RO$-module
structure.

\begin{notn}
Let $R\partial\Delta[2]=R\otimes \partial \Delta[2]$ and
$R\partial\Delta[2]\subdot=R\otimes \partial \Delta [2]\subdot$.  
\end{notn}

We have a map of simplicial sets $\alpha \colon \partial \Delta[2]\subdot \to O\subdot$
sending the 1-simplex $\{0,1\}$ to the point $\{0\}$ (i.e., collapsing
$d^{2}\Delta[1]\subdot$), sending $\{0,2\}$ to (a) and $\{1,2\}$ to (b).
This induces a weak equivalence of commutative ring orthogonal spectra
$R\partial \Delta[2]\to RO$. Indeed, this map is homotopic in the
category of commutative ring orthogonal spectra to an isomorphism, just as the
geometric realization of $\partial \Delta[2]\subdot \to O\subdot$ is
homotopic in the category of spaces to a homeomorphism.  For
consistency with this map, when we refer to $R\partial \Delta[2]$ as
an $R$-module it will be via $R\otimes \{2\}$.  (The vertex $2$ goes
to the vertex $1$ in the simplicial map $\partial \Delta[2]\subdot \to
O\subdot$.)

The inclusion $d^{2}\colon \Delta [1]\subdot \to \partial
\Delta[2]\subdot$ induces a map of commutative ring orthogonal spectra $RI\to
R\partial \Delta[2]$.  

\begin{notn}
Let $D$ be the $R\partial
\Delta [2]$-module $R\partial \Delta
[2]\sma_{d^{2},RI}B^{\bS}(A,A,A)$.  
\end{notn}

The maps $d^{0}, d^{1}\colon \Delta[1]\subdot\to \Delta [2]\subdot$
induce a pair of maps $RI\to R\partial \Delta[2]$, giving $D$
a pair of commuting $RI$-module structures -- indeed an
$RI\sma_{R_{1}}RI$-module structure; using the left and right
action of $A$ on $B^{\bS}(A,A,A)$, these extend to a $ARI$-bimodule
structure on $D$ (with the left action of $ARI$ corresponding to the
1-simplex $\{0,2\}$ of $\Delta[2]\subdot$ and the right action of $ARI$
corresponding to the 1-simplex $\{1,2\}$).  In fact, the $ARI$-bimodule
structure is a $ARI\sma_{R}ARI^{\op}$-module structure that commutes with the
$R\partial \Delta[2]$-module structure.  

The $A$-bimodule (in $\bS$-modules) weak equivalence $B^{\bS}(A,A,A)\to A$ induces
a weak equivalence $D\to ARO$, which is a map of $ARI$-bimodules in $R$-modules and
of $R\partial \Delta[2]$-modules (using the map $R\partial
\Delta[2]\to RO$ for the $R\partial \Delta[2]$-module structure on
$ARO$).
\iffalse
\[
D=R\partial \Delta [2]\sma_{d^{2}}B(A,A,A)\to
R\otimes \partial \Delta[2]\sma_{d^{2}}A\to 
R\otimes (\partial \Delta[2]/d^{2})\sma_{R}A=ARO
\]
\fi
Applying $B^{\cy;R}(ARI;-)$, we get a weak equivalence of $R\partial
\Delta[2]$-modules 
\begin{equation}\label{eq:first}
B^{\cy;R}(ARI;D)\to B^{\cy;R}(ARI;ARO).
\end{equation}
We will see below that these are models of $THH(A)$; the weak
equivalence~\eqref{eq:first} is the first key ingredient to the proof of
Theorem~\ref{thm:small} below.

The last model arises by applying the Dennis-Waldhausen Morita
trick~\cite[\S6]{BM-tc} to
$B^{\cy;R}(ARI;D)$.  To do so, we need to identify $B^{\cy;R}(ARI;D)$ as
the geometric realization of the simplicial $\bS$-module
\begin{equation}\tag{\relax*}\label{eq:D}
N^{\cy;R}\subdot(ARI;N^{\cy;\bS}\subdot(A;ARI\sma_{R}ARI^{\op})).
\end{equation}
In \eqref{eq:D}, we use the above map of associative ring orthogonal spectra $A\to ARI$ to get a
map of associative ring orthogonal spectra 
\[
A\sma_{\bS}A^{\op}\to ARI\sma_{\bS}ARI^{\op}\to ARI\sma_{R}ARI^{\op},
\]
and this gives commuting right $A\sma_{\bS} A^{\op}$-module and left
$ARI\sma_{R}ARI^{\op}$-module structures on $ARI\sma_{R}ARI^{\op}$;
the canonical isomorphism (symmetry isomorphism on smash factors) 
\[
(ARI\sma_{R} ARI^{\op})^{\op}\iso ARI\sma_{R}ARI^{\op}
\]
makes this commuting right $A\sma_{\bS} A^{\op}$ and
$ARI\sma_{R}ARI^{\op}$-module structures, allowing us to do
construction~\eqref{eq:D}.  To show that $B^{\cy;R}(ARI;D)$ is 
the geometric realization of the simplicial $\bS$-module~\eqref{eq:D},
we just need to produce an isomorphism of left
$ARI\sma_{R}ARI^{\op}$-modules between $D$ and $B^{\cy;\bS}(A;ARI\sma_{R}ARI)$.

By construction $D$ is isomorphic to the
geometric realization of
\[
R\otimes \partial \Delta[2]\subdot \sma_{d^{2}} N^{\bS}\subdot(A,A,A).
\]  
We can construct $\partial \Delta[2]\subdot$ as the diagonal of a
bisimplicial set obtained by gluing a copy of $\Lambda^{2}_{2}$ (the faces $d^{0}$
and $d^{1}$ of $\partial \Delta[2]\subdot$) in a first simplicial
direction along vertices $0$ and $1$ to a copy of $\Delta[1]$ (the
face $d^{2}$) in a second simplicial direction.  Taking the
geometric realization of the first simplicial direction, we see that
$D$ is isomorphic to the geometric realization of 
\[
R\otimes \Lambda^{2}_{2}\sma_{R_{0}\sma_{\bS}R_{1}}N^{\bS}\subdot(A,A,A)
\iso
(R\otimes \Lambda^{2}_{2}\sma_{R_{0}\sma_{\bS}R_{1}}(A\sma_{\bS}
A^{\op}))\sma_{A\sma_{\bS} A^{\op}}N^{\bS}\subdot(A,A,A)
\]
(to be clear, using the $R$-action corresponding to the vertex $0$ of
$\Delta[2]\subdot$ to attach $A$ and corresponding to the vertex $1$
to attach $A^{\op}$).
Breaking $\Lambda^{2}_{2}$ as $\Delta[1]\cup_{\{1\}}\Delta[1]$, this
is easily recognized as
\[
(ARI\sma_{R}ARI^{\op})\sma_{A\sma_{\bS} A^{\op}}N^{\bS}\subdot(A,A,A)
\iso N^{\cy;\bS}(A;ARI\sma_{R}ARI^{\op}).
\]
The isomorphism constructed preserves the $ARI$-bimodule (in
$R$-module) structure, using the standard left action of $ARI$ on $ARI$
and right action on $ARI^{\op}$.

The previous paragraph constructs an isomorphism from $B^{\cy;R}(ARI;D)$
to the geometric realization of the simplicial $\bS$-module
\[
N^{R}\subdot(ARI,ARI,ARI)\sma_{ARI\sma_{R}ARI^{\op}}
(ARI\sma_{R}ARI^{\op})
\sma_{A\sma_{\bS} A^{\op}}N^{\bS}\subdot(A,A,A),
\]
Using the symmetry isomorphism of $\sma_{\bS}$ to switch the sides we
put $N^{\bS}\subdot(A,A,A)$ and $N^{R}\subdot(ARI,ARI,ARI)$ on, 
we get an isomorphism with 
\[
N^{\cy;\bS}\subdot(A;N^{R}\subdot(ARI,ARI,ARI))
\]
and so an isomorphism of $\bS$-modules
\begin{equation}\label{eq:second}
B^{\cy;R}(ARI;D)\iso B^{\cy;\bS}(A;B^{R}(ARI,ARI,ARI)).
\end{equation}
(This is the Dennis-Waldhausen Morita trick.)  We transport the
$R\partial \Delta[2]$-module structure on $B^{\cy;R}(ARI;D)$
constructed above to $B^{\cy;\bS}(A;B^{R}(ARI,ARI,ARI))$ along this
isomorphism.  We emphasize that this constructs a well-defined $R\partial
\Delta[2]$-module structure on $B^{\cy;\bS}(A;B^{R}(ARI,ARI,ARI))$,
but we can describe it more concretely as follows.  We
write $B^{\cy;\bS}(A;B^{R}(ARI,ARI,ARI))$ as the geometric realization of
the multisimplicial $\bS$-module
\[
N^{\cy;\bS}_{i}(A;N^{R}_{n}(ARI_{j},ARI_{m},ARI_{k})).
\]
For fixed $m,n$, grouping together the $i$,$j$,$k$-terms, we are
looking at
\begin{multline}
\tag{\relax**}\label{eq:action}
A^{\sma_{\bS}(i)}\sma_{\bS}(ARI_{j}\sma_{R}ARI_{k}\sma_{R}ARI_{m}^{\sma_{R}(n)})\\
=
A^{\sma_{\bS}(i)}\sma_{\bS}\left((A\sma_{\bS}R^{\sma_{\bS}(j)}\sma_{\bS}R)\sma_{R}(A\sma_{\bS}R^{\sma_{\bS}(k)}\sma_{\bS}R)\sma_{R}ARI_{m}^{\sma_{R}(n)}\right)
\end{multline}
where the $R$-module structure on $ARI\subdot$ for $\sma_{R}$ is the last factor of
$R$ in the expansion.  Viewing $\partial \Delta[2]\subdot$ as the
diagonal of a trisimplicial set, $R\partial \Delta[2]$ is the
geometric realization of the trisimplicial set 
\[
R_{0}\sma_{\bS}R^{\sma_{\bS}(i)}\sma_{\bS}
R_{1}\sma_{\bS}R^{\sma_{\bS}(j)}\sma_{\bS}
R_{2}\sma_{\bS}R^{\sma_{\bS}(k)},
\]
where $R_{0}$, $R_{1}$, and $R_{2}$ are all copies of $R$ that we have
marked by numbers to keep track of them in the formulas that follow.
(The numbers correspond to the vertices of $\partial \Delta[2]$.)
We can rewrite the above in terms of $\sma_{R_{2}}$ as
\[
R_{0}\sma_{\bS}R^{\sma_{\bS}(i)}\sma_{\bS}
\left((R_{1}\sma_{\bS}R^{\sma_{\bS}(j)}\sma_{\bS}R_{2})
\sma_{R_{2}}(R^{\sma_{\bS}(k)}\sma_{\bS}R_{2})\right),
\]
and we can move $R_{0}$ inside the last factor
\[
R^{\sma_{\bS}(i)}\sma_{\bS}
\left((R_{1}\sma_{\bS}R^{\sma_{\bS}(j)}\sma_{\bS}R_{2})
\sma_{R_{2}}(R_{0}\sma_{\bS}R^{\sma_{\bS}(k)}\sma_{\bS}R_{2})\right).
\]
In terms of the formula~\eqref{eq:action}, we have $R^{\sma_{\bS}(i)}$
acting on $A^{\sma_{\bS}}(i)$, we have $R^{\sma_{\bS}(j)}$ and
$R^{\sma_{\bS}(k)}$ acting on the respective factors in
\eqref{eq:action}, we have $R_{2}$ acting on the righthand $R$-factors
(used in $\sma_{R}$), and finally $R_{0}$ acts on the $A$ in the
$k$-factor while $R_{1}$ acts on the $A$ in the $j$-factor.  

The collapse maps 
\[
B^{R}(ARI,ARI,ARI)\to ARI\to A
\]
induce a weak equivalence of $\bS$-modules 
\begin{equation}\label{eq:third}
B^{\cy;R}(ARI;D)\to B^{\cy;\bS}(A;B^{R}(ARI,ARI,ARI))\to B^{\cy;\bS}(A).
\end{equation}
This is a map of $R\partial \Delta[2]$-modules when we give
$B^{\cy;\bS}(A)$ the module structure induced by the map
\[
\beta \colon \partial \Delta[2]\subdot\to \partial
\Delta[2]\subdot/\Lambda^{2}_{2}\iso 
\Delta[1]\subdot/\partial \Delta[1]
\]
and the usual $B^{\cy;\bS}(R)\iso R\otimes \Delta[1]/\partial
\Delta[1]$ module structure on $B^{\cy}(A)$.

In the end, we have constructed in~\eqref{eq:first},
\eqref{eq:second}, and~\eqref{eq:third}  a zigzag of weak equivalences of
$R\partial \Delta[2]$-modules relating $B^{\cy;R}(ARI;ARO)$ and
$B^{cy}(A)$.  We are now ready to prove Theorem~\ref{thm:small}

\begin{proof}[Proof of Theorem~\ref{thm:small}]
The hypothesis that $A$ is proper over $R$ means that $A$ is weakly
equivalent to a homotopy retract of a finite cell $R$-module.  Since
the derived functor $RO\sma_{R_{0}}(-)$ from $R$-modules to
$RO$-modules preserves cofiber sequences, we see that $ARO$ is weakly
equivalent to a homotopy retract of a finite cell $RO$-module.  Since
the map $R\otimes \alpha \colon R\Delta[2]\to RO$ is a weak
equivalence, we see that $ARO$ is weakly equivalent to a homotopy
retract of a finite cell $R\partial \Delta [2]$-module.  The
hypothesis that $A$ is smooth over $R$ is that $A$ is weakly
equivalent to a homotopy retract of a finite cell $A\sma_{R}A$-module.
Using the weak equivalence of $R$-algebras $ARI\to A$, we see that $ARI$
and $B^{R}(ARI,ARI,ARI)$ are weakly equivalent to homotopy retracts of
finite cell right $ARI\sma_{R}ARI$-modules. Since 
\[
B^{\cy;R}(ARI;ARO)\iso B^{R}(ARI,ARI,ARI)\sma_{ARI\sma_{R}ARI^{\op}}ARO,
\]
we see that $B^{\cy;R}(ARI;ARO)$ is weakly equivalent to 
the homotopy retract of a finite cell $R\partial \Delta [2]$-module.

Since the map $R\otimes \beta \colon R\otimes \partial \Delta [2]\to
B^{\cy;\bS}(R)$ is a weak equivalence, the forgetful functor from
$B^{\cy;\bS}(R)$-modules to $R\otimes \partial \Delta [2]$-modules is
the right adjoint of a Quillen equivalence.  We conclude that
$B^{\cy;\bS}(A)$ is weakly equivalent to a homotopy retract of a
finite cell $B^{\cy;\bS}(R)$-module.
\end{proof}

\begin{rem}\label{rem:TA}
The zigzag of weak equivalences between $B^{\cy;R}(ARI;ARO)$ and
$B^{cy}(A)$ can be interpreted as an isomorphism in the stable category 
\[
THH(A)\simeq THH^{R}(A;THH(R;A)).
\]
We note that although $THH(R;A)$ is weakly
equivalent as an $A$-module or $A^{\op}$-module to
$THH(R)\smal_{R}A$, it is generally
not for the
$A\sma_{R}A^{\op}$ 
structure in the weak equivalence: if it were, we would then have a weak
equivalence $THH(A)\simeq THH^{R}(A)\smal_{R}THH(R)$.  This does not
hold in the example of $R=H\bZ$ and $A=H\bZ[i]$ as shown by the
calculation in~\cite{Lindenstrauss-GaussianIntegers}.
\end{rem}

\egroup

\section[Proof of Theorem~{\ref{main:finite}}]{The finiteness theorem for \texorpdfstring{$TP$}{TP} (Proof of Theorem~\ref{main:finite})}\label{sec:tpfg}

The purpose of this section is to prove Theorem~\ref{main:finite} and
its analogue for $G=C_{p}$.  For convenience of reference, we state
the combined theorem here.

\begin{thm}\label{thm:combfinite}
Let $k$ be a perfect field of characteristic $p>0$ and let $G=\bT$ or
$G=C_{p}$.  If $\aX$ is a smooth and proper $k$-linear dg category,
then $T_{G}(THH(\aX))$ is a finite $T_{G}(THH(k))$-module.
\end{thm}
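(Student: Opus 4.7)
The plan is to combine three inputs: the finite generation provided by Theorem~\ref{main:THHfg}, the finiteness of $i_{*}Hk$ over $N^{cy}(Hk)$ from Proposition~\ref{prop:kfinite}, and the finite global dimension of $\pi_{*}T_{G}(THH(k))$ (a graded PID for $G=\bT$, a graded field for $G=C_{p}$, by the Hesselholt calculation).

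Set $A=N^{cy}(Hk)$ and $Y=THH(X)$, viewed as a $\bT$-equivariant $A$-module. By Theorem~\ref{main:THHfg} combined with Morita invariance (which, as in Section~\ref{sec:THH}, identifies a smooth and proper $k$-linear dg category with a smooth and proper $Hk$-spectral category), $\pi_{*}Y\cong THH_{*}(X)$ is bounded below and finitely generated over $\pi_{*}A\cong k[t]$, with $|t|=2$. Proposition~\ref{prop:Post} then yields a (necessarily infinite, since $\pi_{*}Y$ is unbounded above) Postnikov tower of $Y$ in the Borel derived category of $A$-modules whose fibers are finite wedges of shifted copies of $i_{*}Hk$; applying $T_{G}$ and using that Proposition~\ref{prop:kfinite} makes $T_{G}(i_{*}Hk)$ small over $T_{G}(A)$, each individual stage $T_{G}(Y_{m})$ is small over $T_{G}(A)$, but this falls short of the desired conclusion for $T_{G}(Y)$ itself.

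To bridge from the stage-wise finiteness to a finite conclusion for $T_{G}(Y)$, I would analyze the Hesselholt--Madsen Tate spectral sequence for $Y$. Its $E^{1}$-term is identified by Theorem~\ref{thm:CCT} (for $G=\bT$) or Proposition~\ref{prop:CCfg} (for $G=C_{p}$) as $\CC_{*,*}\otimes\pi_{*}Y$, which is finitely generated over the corresponding $E^{1}$-term for $A$ itself. The half-plane structure of Proposition~\ref{prop:rhss}, together with the conditional convergence of Lemma~\ref{lem:HMTcc} (or the finite-group analogue of~\cite[4.3.6]{HMAnnals}), should upgrade this to finite generation of the abutment $\pi_{*}T_{G}(Y)$ over $\pi_{*}T_{G}(A)$. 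Once this is established, the finite global dimension hypothesis takes over: a finite generating set lifts to a map from a finite wedge of shifted copies of $T_{G}(A)$ whose cofiber is either contractible ($G=C_{p}$ case, since $\pi_{*}T_{G}(A)$ is a graded field) or the source of a further map from a finite wedge of shifted copies of $T_{G}(A)$ realizing the length-one free resolution ($G=\bT$ case, using that $TP_{*}(k)$ is a graded PID). In either case, $T_{G}(Y)$ is built by finitely many cofiber sequences from finite wedges of $T_{G}(A)$ and hence is small.

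The main obstacle is the finite-generation claim in the third paragraph: extracting a finitely generated $\pi_{*}T_{G}(A)$-module at the abutment from a finitely generated $E^{1}$-page requires a careful convergence argument that exploits both the half-plane, bounded-below structure of the input and the module structure over the $E^{1}$-term for $A$ supplied by Theorem~\ref{thm:CCT} and Proposition~\ref{prop:CCfg}.
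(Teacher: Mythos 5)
Your plan correctly identifies the basic ingredients (Theorem~\ref{main:THHfg}, Morita invariance, the structure of $\pi_{*}T_{G}(THH(k))$ as a graded PID or graded field) and the final step of lifting finite generation of homotopy groups to smallness of the module, which is exactly how the paper concludes. But there are two substantive issues.

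First, the Postnikov tower detour is not load-bearing. You note yourself that stage-wise smallness of $T_{G}(Y_{m})$ ``falls short''; in the paper, Proposition~\ref{prop:Post} is used to prove conditional convergence of the \emph{lefthand} spectral sequence (Lemma~\ref{lem:lhss}), not to prove the finiteness theorem. For the finiteness theorem the paper bypasses it entirely.

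Second, and more seriously, the step you flag as the ``main obstacle'' is indeed where all the work lies, and it is genuinely left unresolved in your proposal. The paper handles it by using the \emph{Greenlees} Tate spectral sequence, not the Hesselholt--Madsen one. These differ at $E^{2}$ when $G=\bT$ (see Section~\ref{sec:TSS}), and the Greenlees one is the right tool here because its $E^{2}$-term is directly $THH_{*}(X)\otimes_{THH_{*}(k)}k[\bar v,\bar v^{-1},t]$ (with $d^{2r+1}b=\bar v^{r}t^{r}$ in the finite case), and $k[\bar v,\bar v^{-1},t]$ is Noetherian so finite generation passes from $E^{2}$ to $E^{\infty}$. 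The Hesselholt--Madsen $E^{1}$-term $\CC_{*,*}\otimes\pi_{*}X$ is substantially less tractable for this purpose. Finally, the passage from a finitely generated $E^{\infty}$-term over $k[\bar v,\bar v^{-1},t]$ to a finitely generated abutment over $\Wk[v,v^{-1}]$ when $G=\bT$ (or $\Wk_{r}[v,v^{-1}]$ for $G=C_{p^{r}}$) is not a formal consequence of strong convergence: the base changes from a ring over $k$ to a ring over the Witt vectors, and $t$ is detected by $pv^{-1}$. The paper resolves this with an explicit inductive lifting argument using the Teichm\"uller character $\omega\colon k\to\Wk$ and completeness of the filtration on $\pi^{tG}_{*}X$. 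Without some version of that argument (or another mechanism for producing $\Wk$-coefficient lifts from $k$-coefficient data), the gap you acknowledge remains a gap.
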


We have left off the second statement, because we use it to deduce the first:
the main work of the section is to prove the following theorem.

\begin{thm}\label{thm:fg}
Let $k$ be a perfect field of characteristic $p>0$, and let $G$ be a
closed subgroup of $\bT$.  If $B$ is an
$Hk$-algebra with $THH_{*}(B)$ finitely generated over $THH_{*}(k)$,
then $\pi^{tG}_{*} THH(B)$ is finitely generated over $\pi^{tG}_{*} THH(k)$. 
\end{thm}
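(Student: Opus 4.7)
The plan is to realize $THH(A)$ as a two-cell finite complex over $THH(Hk)$ in the Borel equivariant stable category, using that $THH_*(Hk) \cong k[t]$ (with $|t|=2$) is a graded principal ideal domain, and then transfer this finiteness through the triangulated functor $(-)^{tG}$. Concretely, I would choose a length-one free resolution $0 \to F_1 \to F_0 \to THH_*(A) \to 0$ of the graded $k[t]$-module $THH_*(A)$. Lifting homogeneous generators of $F_0$ to $THH(Hk)$-module maps and assembling them produces $\varphi_0 \colon F_0^{\mathrm{top}} \to THH(A)$, where $F_0^{\mathrm{top}}$ is a finite wedge of shifted copies of $THH(Hk)$ with $\pi_*\varphi_0$ equal to the surjection $F_0 \twoheadrightarrow THH_*(A)$. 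Its homotopy fiber $C$ satisfies $\pi_*C \cong F_1$ by the long exact sequence, and lifting generators of $F_1$ to elements of $\pi_*C$ produces a Borel equivalence $F_1^{\mathrm{top}} \to C$ from another finite wedge of shifts of $THH(Hk)$. This yields a cofiber sequence
\[
F_1^{\mathrm{top}} \to F_0^{\mathrm{top}} \to THH(A)
\]
of $THH(Hk)$-modules in the Borel equivariant stable category.

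Applying $(-)^{tG}$, which by Corollary~\ref{cor:pssymmon} is a lax symmetric monoidal triangulated functor, yields a cofiber sequence of $THH(Hk)^{tG}$-modules whose first two terms are finite wedges of suspensions of $THH(Hk)^{tG}$. Writing $R := \pi^{tG}_* THH(Hk)$, the associated long exact sequence on homotopy groups exhibits each $\pi^{tG}_n THH(A)$ as an extension of the cokernel of an $R$-linear map between finitely generated free $R$-modules (automatically finitely generated over $R$) by the kernel of another such map. For $G = \bT$, $R \cong TP_*(k)$ is a graded Laurent polynomial ring over the Witt vectors $W(k)$ and is therefore Noetherian; for $G = C_p$, $R$ is a graded field; in either case, submodules of finitely generated free $R$-modules are finitely generated, and $\pi^{tG}_* THH(A)$ is finitely generated over $R$.

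The main obstacle is the general case: for $G = C_{p^n}$ with $n \geq 2$, the ring $\pi^{tG}_* THH(k)$ has infinite global dimension (as remarked at the end of the introduction), so the extension argument above does not directly bound the kernel factor. To handle these remaining subgroups I would instead pass to the Hesselholt-Madsen Tate spectral sequence of Definition~\ref{defn:HMTfilt}, whose $E^1$-term for $THH(A)$ is $\CC_{*,*} \otimes THH_*(A)$ by Proposition~\ref{prop:CCfg}, a finitely generated module over the Noetherian graded $E^1$-term for $THH(k)$. Using the lower bound on $\pi_* THH(A)$ and conditional convergence, a standard strong-convergence argument then transports finite generation from $E^1$ through all pages to $E^\infty$ and hence to $\pi^{tG}_* THH(A)$.
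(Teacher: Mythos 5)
Your first approach has a genuine gap at the very first step, and your contingency plan glosses over precisely the part of the argument that required the most care in the paper.

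\textbf{The cell-complex approach.} To build $\varphi_0\colon F_0^{\mathrm{top}}\to THH(A)$ as a map of $THH(Hk)$-modules in the \emph{Borel} derived category, you need each chosen $k[t]$-module generator $x\in\pi_n THH(A)$ to lift along the edge map $\pi_n^{h\bT}THH(A)\to \pi_n THH(A)$: maps $\Sigma^n THH(Hk)\to THH(A)$ in $\Ho^B(\Mod_{THH(Hk)})$ are computed by $\pi_n^{h\bT}THH(A)$, not $\pi_n THH(A)$. This edge map is the abutment map of the homotopy fixed point spectral sequence and is surjective only onto the intersection of kernels of all differentials $d_r\colon E_r^{0,n}\to E_r^{r,n-r+1}$. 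It is surjective for $THH(Hk)$ itself because $THH_*(Hk)$ is concentrated in even degrees, forcing the spectral sequence to collapse --- this is exactly what Proposition~\ref{prop:kfinite} exploits --- but nothing makes it surjective for $THH(A)$ with $A$ a general $Hk$-algebra. So the claim that $\pi_*\varphi_0$ is the chosen surjection $F_0\twoheadrightarrow THH_*(A)$ cannot be arranged, and the identification of the fiber with $F_1^{\mathrm{top}}$ collapses with it. (Notice also that if this construction worked, it would produce $THH(A)$ as a two-cell $THH(Hk)$-complex in the Borel category for \emph{every} $A$, so $T_G THH(A)$ would be a finite $T_G THH(k)$-module for every $G$; you correctly observe that you cannot then extract finite generation when $\pi_*^{tG}THH(k)$ has infinite global dimension, which is already a sign the premise is too strong. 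The paper instead builds a Postnikov-type tower out of the much smaller cell $i_*Hk$ in Proposition~\ref{prop:Post}, where a homotopy fixed point argument \emph{can} be run, and uses this only in the conditional convergence proof of Lemma~\ref{lem:lhss}, not for the present theorem.)

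\textbf{The spectral sequence fallback.} The paper's actual proof of Theorem~\ref{thm:fg} runs entirely through the Greenlees Tate spectral sequence --- not the Hesselholt--Madsen one --- and the step you dismiss as ``a standard strong-convergence argument'' is the crux. Finite generation of $E^\infty$ over the associated graded ring $k[\bar v,\bar v^{-1},t]$ (or $k[\bar v,\bar v^{-1},t,b]/b^2$) does not formally give finite generation of the abutment over $\pi_*^{tG}THH(k)$: the abutment ring is a $\Wk_r$-algebra (for $G=C_{p^r}$ or $G=\bT$), not a $k$-algebra, and the filtration witnessing the comparison at $E^\infty$ is infinite. The paper has to choose Teichm\"uller representatives $a^m_\ell=\omega(\bar a^m_\ell)\in\Wk$ for the leading coefficients appearing at each filtration stage and use that the resulting series converge $p$-adically (together with the fact that $t$ represents $pv^{-1}$, so climbing the filtration multiplies by $p$). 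This is a genuine argument tailored to the Witt ring, not a formal consequence of strong convergence, and it is where the perfect-field-of-characteristic-$p$ hypothesis earns its keep. Your proposal needs to either reproduce this lifting argument or find a substitute for it; as written it is asserting a conclusion, not proving it.
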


To deduce Theorem~\ref{thm:combfinite} from Theorem~\ref{thm:fg}, we
note that by Morita invariance of $THH$~\cite[5.12]{BM-tc}, it
suffices to consider dg $k$-algebras that are smooth and proper as
$k$-linear dg categories in Theorem~\ref{thm:combfinite}.
Theorem~\ref{main:THHfg} then implies that the $Hk$-algebras
corresponding to smooth and proper dg $k$-algebras satisfy the
hypotheses of Theorem~\ref{thm:fg}.  From here
Theorem~\ref{thm:combfinite} is a consequence of the following observation.

\begin{prop}\label{prop:finite}
Let $G=\bT$ or $G=C_{p}$ and let $X$ be a $T_{G}(THH(k))$-module.  Then
$X$ is a finite $T_{G}(THH(k))$-module if and only if $\pi_{*}X$ is
finitely generated over $\pi_{*}(T_{G}(THH(k)))$.
\end{prop}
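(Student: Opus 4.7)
The plan is to prove the nontrivial (``if'') direction by iteratively building a finite cell approximation to $X$, exploiting the finite global dimension of $R:=\pi_{*}T_{G}(THH(k))$ noted in the introduction. Recall that $R$ is $TP_{*}(k)$, a graded PID, when $G=\bT$, and a graded field when $G=C_{p}$; in both cases $R$ is graded Noetherian of finite global dimension $d$ (with $d=1$ or $d=0$), and every finitely generated graded projective $R$-module is free.

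The ``only if'' direction is a standard induction on cells: finite $T_{G}(THH(k))$-modules are built from shifts of the ring by finitely many cofiber sequences, and the long exact sequences in homotopy, together with closure of finitely generated $R$-modules under extensions, show $\pi_{*}X$ is finitely generated.

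For the ``if'' direction, suppose $\pi_{*}X$ is finitely generated. Choose homogeneous generators $x_{1},\ldots,x_{n}\in \pi_{*}X$ of degrees $d_{1},\ldots,d_{n}$; these assemble into a map $f_{0}\colon F_{0}\to X$ where $F_{0}=\bigvee_{i}\Sigma^{d_{i}}T_{G}(THH(k))$, and $f_{0}$ is surjective on $\pi_{*}$. The resulting fiber sequence $X_{1}\to F_{0}\to X$ gives a short exact sequence
\[
0\to \pi_{*}X_{1}\to \pi_{*}F_{0}\to \pi_{*}X\to 0,
\]
so by Noetherianness $\pi_{*}X_{1}$ is again finitely generated. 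Iterating produces a tower of fiber sequences $X_{i+1}\to F_{i}\to X_{i}$ in which each $F_{i}$ is a finite wedge of shifts of $T_{G}(THH(k))$. After $d+1$ iterations, $\pi_{*}X_{d+1}$ is the $d$th syzygy in a resolution of $\pi_{*}X$ by finite free $R$-modules, hence projective, and therefore free. A map from a finite wedge of shifts $F_{d+1}$ realizing a basis is then a $\pi_{*}$-isomorphism, hence a weak equivalence onto $X_{d+1}$. Reassembling the tower displays $X$ as built by finitely many cofiber sequences from the finite free modules $F_{0},\ldots,F_{d+1}$, exhibiting it as a finite $T_{G}(THH(k))$-module.

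The main obstacle, beyond routine bookkeeping, is the algebraic input that finitely generated graded projective $R$-modules are free. This is immediate for $G=C_{p}$ (every module over a graded field is free) and follows for $G=\bT$ from the structure theorem for finitely generated modules over a graded PID; the finiteness of the global dimension then ensures that the iterative process terminates after finitely many stages.
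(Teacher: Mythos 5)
Your argument is correct, and it differs from the paper's. The paper proves the proposition by establishing a much stronger classification: using that $\pi_{*}T_{\bT}(THH(k))\cong \Wk[v,v^{-1}]$ is a graded PID whose graded ideals are exactly the $(p^{n})$, it shows that \emph{every} $T_{\bT}(THH(k))$-module spectrum splits as a wedge of suspensions of $T_{\bT}(THH(k))$ and of cofibers of $p^{n}$ (and analogously, over the graded field $k[v,v^{-1}]$ in the $C_{p}$ case, every module is a wedge of suspensions of the ring); the finiteness criterion is then read off from this decomposition. You instead use only that $R=\pi_{*}T_{G}(THH(k))$ is Noetherian of finite global dimension and that finitely generated graded projectives over it are free, running the standard iterated cell-approximation (Postnikov/Adams-resolution) argument that terminates after finitely many steps. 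Your route is more robust — it needs no structure theorem for arbitrary modules and would apply to any ring spectrum whose homotopy ring has these three algebraic properties — at the cost of not producing the explicit wedge decomposition that the paper records as a useful byproduct. Two small points: in the ``only if'' direction, finite modules are \emph{retracts} of finite cell modules, so you should add that over a Noetherian $R$ retracts (being submodules) of finitely generated modules are finitely generated; and your indexing of syzygies is off by one (after $i$ fiber sequences $\pi_{*}X_{i}$ is the $i$th syzygy if $\pi_{*}X$ is counted as the $0$th), but this is harmless since reaching a projective syzygy one step later than strictly necessary does no damage.
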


\begin{proof}
When $G=\bT$, $\pi_{*}(T_{\bT}(THH(k)))\iso \Wk[v,v^{-1}]$
where $\Wk$ denotes the $p$-typical Witt vectors on $k$ and $v$ is an
element of $TP_{-2}(k)=\pi_{-2}(T_{\bT}(THH(k)))$, a particular choice
of which Hesselholt constructed in~\cite[4.2]{Hesselholt-Periodic}.
As a graded ring $\pi_{*}(T_{\bT}(THH(k)))$ is a graded PID;
specifically, all graded ideals are of the form $(p^{n})$ for some
$n$.  As a consequence every finitely generated
$T_{\bT}(THH(k))$-module is a wedge of 
copies of suspensions $T_{\bT}(THH(k))$ and suspensions of the cofiber
of multiplication by $p^{n}$; a module is finite over
$T_{\bT}(THH(k))$ if and only if its homotopy groups are finitely
generated over $\pi_{*}(T_{\bT}(THH(k)))$.  In the case $G=C_{p}$,
$\pi_{*}(T_{C_{p}}(THH(k)))\iso k[v,v^{-1}]$, a graded field in the
sense that every graded module over it is free.  Every
$T_{C_{p}}(THH(k))$-module is a wedge of suspensions of
$T_{C_{p}}(THH(k))$ and a $T_{C_{p}}THH(k)$-module is finite if
and only if its homotopy groups are finitely generated over
$\pi_{*}T_{C_{p}}THH(k)$.
\end{proof}

For other closed subgroups $C_{r}<\bT$, writing $r=p^{n}m$ with
$(p,m)=1$, we have a weak equivalence $T_{C_{r}}THH(A)\simeq
T_{C_{p^{n}}}THH(A)$ induced by the transfer.  For $n>1$,
$\pi^{tC_{p^{n}}}THH(k)\iso \Wk_{n}[v,v^{-1}]$, where $Wk_{n}$ denotes
the Witt vectors of length $n$.  This graded ring has infinite
global (projective) dimension and there exist
$T_{C_{p^{n}}}THH(k)$-modules whose homotopy groups are finitely 
generated but which are not themselves small. For example,
$T_{C_{p}}THH(k)$ is a $T_{C_{p^{n}}}THH(k)$-module that is not small for
$n>1$ but has homotopy groups $(\pi_{*}T_{C_{p^{n}}}THH(k))/p$.

For the proof of Theorem~\ref{thm:fg}, we prove the following slightly
more general theorem.  

\begin{thm}\label{thm:genfinite}
Let $k$ be a perfect field of characteristic $p>0$, let $G$ be a closed subgroup of $\bT$, and let $X$ be a $G$-equivariant
$N^{\cy}(Hk)$-module with 
$\pi_{*}X$ finitely generated 
over $THH_{*}(k)$.
Then $\pi^{tG}_{*}X$ is finitely generated over $\pi^{tG}_{*}THH(k)$.
\end{thm}

\begin{proof}
We can assume without loss of generality that $G=\bT$ or $G=C_{p^{r}}$
for some $r>0$.  In the case when $G=\bT$, we let $r=\infty$ and we
understand $p^{\infty}=0$ and $\Wk_{\infty}=\Wk$.

First we note that the conditionally convergent Greenlees Tate spectral
sequence is strongly convergent: 
The hypothesis that $\pi_{*}(X)$ is finitely generated over
$THH_{*}(k)$ implies that $E^{2}_{i,j}=0$ for $j$ small enough
(below the minimum degree of a generator) and each
$E^{2}_{i,j}$ is a finite dimensional vector space over $k$. 

The Greenlees Tate spectral sequence for $X$ is a module over the
Greenlees Tate spectral sequence for $THH(k)$.  In the latter, the
$E^{2}$-term is isomorphic as a graded ring to $k[\bar v,\bar
v^{-1},t]$ in the case $G=\bT$ and $k[\bar v,\bar v^{-1},t,b]/b^{2}$
in the case when $G$ is finite, where $\bar v$ is in bidegree $(-2,0)$
(the image of $v\in \pi_{-2}^{tG}THH(k)$ in the spectral sequence),
$t$ is in bidegree $(0,2)$ (the image of a generator of
$\pi_{2}THH(k)$), and $b$ is in bidegree $(1,0)$. The elements $\bar
v$ and $t$ are infinite cycles while $d^{2r+1}b=\bar v^{r}t^{r}$,
q.v.~\cite[6.2]{Hesselholt-Periodic}.  When $r=1$, $t$ becomes zero on
$E^{4}$, but otherwise, we can choose $t$ so that it represents
$pv^{-1}$ in the spectral sequence.

The $E^{2}$-term for $X$ is naturally isomorphic as a graded $k[\bar
v,\bar v^{-1},t]$-module to $THH_{*}(X)\otimes_{THH_{*}(k)} k[\bar
v,\bar v^{-1},t]$ in the case when $G=\bT$ and
$THH_{*}(X)\otimes_{THH_{*}(k)} k[\bar v,\bar v^{-1},t,b]/b^{2}$ in
the case when $G$ is finite.  In either case, the hypothesis that
$THH_{*}(X)$ is finitely generated over $THH_{*}(k)$ implies that the
$E^{2}$-term for $X$ is finitely generated over $k[\bar v,\bar
v^{-1},t]$.  Since $k[\bar v,\bar v^{-1},t]$ is Noetherian, the
$E^{\infty}$-term is also finitely generated over $k[\bar v,\bar
v^{-1},t]$.   The theorem now follows from a standard spectral
sequence comparison argument; we give the full proof in the current
context. 

Choose elements $\bar x_{1},\ldots,\bar x_{n}$ in $E^{\infty}_{*,*}$
that generate $E^{\infty}$ as a $k[\bar v,\bar v^{-1},t]$-module.
Choose representatives $x_{1},\ldots,x_{n}$; we need to show that
$x_{1},\ldots,x_{n}$ generate $\pi_{*}^{tG}X$ over
$\Wk_{r}[v,v^{-1}]$.  Let $y\neq 0$ be any element of
$\pi_{*}^{tG}X$ and let $\bar y$ denote the element in
$E^{\infty}_{i,j}$ representing $y$, where $y$ is in filtration level
$i$ (but not $i-1$) and total degree $i+j$.  Define
$i_{\ell},j_{\ell}\in \bZ$ by $\bar x_{\ell}\in
E^{\infty}_{i_{\ell},j_{\ell}}$. It will also be convenient to write
$d=i+j$ for the total degree of $y$ and $d_{\ell}=i_{\ell}+j_{\ell}$
for the total degree of $x_{\ell}$. Since $y$ is arbitrary, it
suffices to show that $y$ is in the submodule of $\pi^{tG}_{*}X$
generated by $x_{1},\ldots,x_{n}$ over $\pi^{tG}_{*}THH(k)$.

Since $E^{\infty}_{*,*}$ is generated by $\bar x_{1},\ldots,\bar
x_{n}$ over $k[\bar v,\bar v^{-1},t]$, we can write
\[
\bar y=\bar a^{0}_{1}\bar v^{\half(i_{1}-i)}t^{\half(j-j_{1})}\bar x_{1}+\cdots
   + \bar a^{0}_{n}\bar v^{\half(i_{n}-i)}t^{\half(j-j_{n})}\bar x_{n}
\]
for some $\bar a^{0}_{1},\ldots,\bar a^{0}_{n}\in k$, where we must have
$\bar a^{0}_{\ell}=0$ if $i_{\ell}-i$ is odd, $j-j_{\ell}$ is 
odd, or $j-j_{\ell}<0$.  Let $a^{0}_{\ell}=\omega(\bar
a^{0}_{\ell})\in \Wk$ using the Teichm\"uller character and let 
\begin{align*}
y_{0}&=a^{0}_{1}v^{\half(i_{1}-i)}(pv^{-1})^{\half(j-j_{1})}x_{1}+\cdots 
   + a^{0}_{n}v^{\half(i_{n}-i)}(pv^{-1})^{\half(j-j_{n})}x_{n}\\
&=a^{0}_{n}p^{\half(j-j_{1})}v^{\half(d_{1}-d)}x_{1}+\cdots 
   + a^{0}_{n}p^{\half(j-j_{n})}v^{\half(d_{n}-d)}x_{n}.
\end{align*}
We again have $a^{0}_{\ell}=0$ when $d_{\ell}-d$ is odd, $j-j_{\ell}$
is odd, or $j-j_{\ell}<0$, so this formula makes sense.  Let
$z_{1}=y-y_{0}$.  Since $y_{0}$ also represents $\bar y$ in
$E^{\infty}_{i,j}$, we must have that $z_{1}$ is in filtration degree
$i-1$ or lower, so represents an element $\bar z_{1}$ in
$E^{\infty}_{i-s_{1},j+s_{1}}$ for some $s_{1}>0$.  Writing $\bar
z_{1}$ in terms of the generators $\bar x_{1},\ldots,\bar x_{n}$, we
must have
\[
\bar z_{1}=\bar a^{0}_{1}\bar v^{\half(i_{1}-i+s_{1})}t^{\half(j-j_{1}+s_{1})}\bar x_{1}+\cdots
   + \bar a^{0}_{n}\bar v^{\half(i_{n}-i+s_{1})}t^{\half(j-j_{n}+s_{1})}\bar x_{n}
\]
for some $\bar a^{1}_{1},\ldots,\bar a^{1}_{n}\in k$. Let
$a^{1}_{\ell}=\omega(\bar a^{1}_{\ell})$ and let
\begin{align*}
y_{1}&=y_{0}+a^{1}_{1}p^{\half(j-j_{1}+s_{1})}v^{\half(d_{1}-d)}x_{1}+\cdots
   + a^{1}_{n}p^{\half(j-j_{n}+s_{1})}v^{\half(d_{n}-d)}x_{n}\\
&=\bigl(a^{0}_{1}p^{\half(j-j_{1})}+a^{1}_{1}p^{\half(j-j_{1}+s_{1})}\bigr)v^{\half(d_{1}-d)}x_{1}
   + \cdots \\&\qquad\qquad\qquad\qquad\quad +  
     \bigl(a^{0}_{n}p^{\half(j-j_{n})}+a^{1}_{n}p^{\half(j-j_{n}+s_{1})}\bigr)v^{\half(d_{n}-d)}x_{n}.
\end{align*}
Let $z_{2}=y-y_{2}$; then $z_{2}$ must be in filtration level
$i-1-s_{1}$, so represents an element $\bar z_{2}$ in
$E^{\infty}_{i-s_{2},j+s_{2}}$ for some $s_{2}>s_{1}$.  Inductively
construct as above $y_{m},z_{m}$, with $\bar z_{m}\in
E^{\infty}_{i-s_{m},j+s_{m}}$ for a strictly increasing sequence of
positive integers $s_{m}$ and with $y_{m}$ of the form
\begin{multline*}
y_{m}=\bigl(a^{0}_{1}p^{\half(j-j_{1})}+\cdots +a^{m}_{1}p^{\half(j-j_{1}+s_{m})}\bigr)v^{\half(d_{1}-d)}x_{1}
   + \cdots \\+
      \bigl(a^{0}_{n}p^{\half(j-j_{n})}+\cdots +a^{m}_{n}p^{\half(j-j_{n}+s_{m})}\bigr)v^{\half(d_{n}-d)}x_{n},
\end{multline*}
where $a^{m}_{\ell}=0$ if $j-j_{\ell}+s_{m}$ is odd or negative or if
$d_{\ell}-d$ is odd.  Because $s_{m}\to \infty$, the coefficients 
\[
a^{0}_{\ell}p^{\half(j-j_{\ell})}+\cdots +a^{m}_{\ell}p^{\half(j-j_{\ell}+s_{m})}
\]
converge to define an element $y_{\infty}$ in the submodule of
$\pi^{tG}_{*}X$ generated by $x_{1},\ldots,x_{n}$ over
$\pi^{tG}_{*}THH(k)$.  The difference $y-y_{\infty}$ is in filtration
level $s_{m}$ for all $m$, and so $y=y_{\infty}$ since the filtration on
homotopy groups is complete.  
\end{proof}

\section{Comparing monoidal models}\label{sec:compare}

In Section~\ref{sec:Moore} we defined a lax monoidal model $T^{M}$ for
the Tate fixed point functor for any finite group $G$ and in Section~\ref{sec:lax} we defined a
lax symmetric monoidal model $JT^{\oL}_{G}$.  In this section we
argue that these define the same lax monoidal structure on the Tate
fixed points viewed as a functor to the stable category.  
This comparison is in
itself is not enough to compare the map
\[
T^{M}X\smal_{T^{M}\!A}T^{M}Y\to T^{M}(X\smal_{A}Y)
\]
we used in Section~\ref{sec:main} to the map 
\[
JT^{\oL}_{G}X\smal_{JT^{\oL}_{G}A}JT^{\oL}_{G}Y\to JT^{\oL}_{G}(X\smal_{A}Y)
\]
implicitly in the statement of the main theorem; we also make that
comparison here.

Both arguments use an elaboration of the operadic structure on
$T^{\oO}_{G}$ described in~\eqref{eq:operadassoc}, which for $n=2$
takes the form
\[
\oO(2)_{+}\sma T^{\oO}_{G}(X)\sma T^{\oO}_{G}(Y)\to
T^{\oO}_{G}(X\sma Y).
\]
This structure uses the diagonal map on $EG$. Any operad $\oO$ admits a
canonical map $\oO\to \Com\to \oCO$ and we view the structure
of~\eqref{eq:operadassoc} as corresponding to this map.  We can use
any map of operads $\oO\to \oCO$ and then the structure
in~\eqref{eq:operadassoc} generalizes to this context, where we use the 
coaction of $\oCO$ on $EG$ in place of the diagonal map.  This
structure is natural in maps of ($E_{\infty}$ or $A_{\infty}$) operads
over $\oCO$.  

Now consider the maps of operads over $\oCO$
\[
\oC_{1}
\from
\oL\times \oC_{1}
\to
\oL\times \oCO
\from
\oL
\]
where the lefthand map is projection, the middle map is induced by the
inclusion of $\oC_{1}$ in $\oCO$ and the righthand map is induced by
the identity on $\oL$ and the map $\oL\to\Com\to \oCO$.  The backward
maps are weak equivalence, and looking at the structure above, we get
a natural commuting diagram
\[
\xymatrix@R-1pc{%
\ooCo(2)_{+}\sma T^{\oC_{1}}_{G}X\sma T^{\oC_{1}}_{G}Y\ar[r]
&T^{\oC_{1}}_{G}(X\sma Y)\\
(\oL(2)\times \ooCo(2))_{+}\sma T^{\oL\times \oC_{1}}_{G}X\sma T^{\oL\times \oC_{1}}_{G}Y
\ar[u]^-{\simeq}\ar[r]\ar[d]_-{\simeq}
&T^{\oL\times \oC_{1}}_{G}(X\sma Y)\ar[d]^-{\simeq}\ar[u]_-{\simeq}\\
(\oL(2)\times \oCO(2))_{+}\sma T^{\oL\times \oCO}_{G}X\sma T^{\oL\times \oCO}_{G}Y\ar[r]
&T^{\oL\times \oCO}_{G}(X\sma Y)\\
\oL(2)_{+}\sma T^{\oL}_{G}X\sma T^{\oL}_{G}Y\ar[r]\ar[u]^-{\simeq}
&T^{\oL}_{G}(X\sma Y)\ar[u]_-{\simeq}
}
\]
(where we have used the identity permutation subspace $\ooCo(2)$ of
$\oC_{1}(2)$ for the entries involving $\oC_{1}$).  Precomposing with
the universal map from the derived smash product to the smash product
and restricting to the case when $X$ and $Y$ are cofibrant, we get an
analogous diagram with the smash product replaced by the derived smash
product.  Together with the canonical map $\bS\to T^{\oO}_{G}\bS$,
the horizontal maps give structure maps for lax monoidal structures on
the Tate fixed point functor and the vertical maps imply that these
structures coincide.  We can now prove the following theorem.

\begin{thm}
The canonical isomorphism in the stable category between
$JT_{G}$ and $T^{M}$ preserves the lax
monoidal structure.
\end{thm}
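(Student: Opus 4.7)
The plan is to factor the comparison through the intermediate model $T^{\oC_1}_{G}$, producing monoidal zigzags $T^{M}\simeq T^{\oC_1}_{G}$ and $T^{\oC_1}_{G}\simeq JT^{\oL}_{G}$ in the stable category. The second zigzag is exactly what the diagram preceding the theorem provides, applied to the cofibrant setting and composed with the strong symmetric monoidal weak equivalence $J\to \Id$, so the real work is the first zigzag.

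First I would produce the natural transformation $T^{M}\to T^{\oC_1}_{G}$ induced by the collapse $\bR^{>0}_{+}\to S^{0}$ (and $\bR^{\geq 0}_{+}\to S^{0}$ on the unit summand of Construction~\ref{cons:mu}). This is an objectwise weak equivalence, cf.\ the proof of the final theorem of Section~\ref{sec:Moore}. The essential point is that Construction~\ref{cons:mu} defines the pairing on $T^{M}$ at a fixed pair $(\ell_{1},\ell_{2})\in\bR^{>0}\times\bR^{>0}$ by composing the $n=2$ structure map~\eqref{eq:operadassoc} at the point $\mu_{2}(\ell_{1},\ell_{2})\in\ooCo(2)$ with the addition map on lengths. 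Because $\ooCo(2)$ is contractible, the stable-category class of the pairing is independent of $(\ell_{1},\ell_{2})$, so collapsing the length coordinate identifies the Moore-rectified pairing on $T^{M}$ with the $\oC_1$-operadic pairing on $T^{\oC_1}_{G}$. The unit of $T^{M}$ similarly corresponds under the collapse to the canonical map $\bS\to T^{\oC_1}_{G}\bS$.

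Next I would invoke the diagram above, whose rows are the $n=2$ structure maps for the operads $\oC_1$, $\oL\times\oC_1$, $\oL\times\oCO$, and $\oL$ and whose vertical weak equivalences assemble into a zigzag of monoidal comparisons. Together with the weak equivalence $J\to \Id$, this identifies the $\oC_1$-induced lax monoidal structure on $T^{\oC_1}_{G}$ with the $JT^{\oL}_{G}$-structure in the stable category. The main obstacle is not the pairing at $n=2$ in isolation but the coherence data---associativity pentagons and unit triangles---for $T^{M}$ matching those for $JT^{\oL}_{G}$ after identification. This coherence will follow from Remark~\ref{rem:choices}: the space of equivariant based self-maps of $\tEG$ is homotopy discrete with a unique nontrivial component, so any two natural transformations between iterates of $T^{M}$ and $JT^{\oL}_{G}$ that agree up to homotopy on the underlying pairings necessarily agree in the stable category, forcing the coherence diagrams to commute there. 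This completes the identification of the two lax monoidal structures on the Tate fixed point functor.
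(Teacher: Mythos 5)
Your first two paragraphs match the paper's proof: factor the comparison through $T^{\oC_1}_G$, relate $T^M$ to $T^{\oC_1}_G$ by collapsing the Moore coordinate, and then use the zigzag of operad maps $\oC_1 \leftarrow \oL\times\oC_1 \to \oL\times\oCO \leftarrow \oL$ together with $J\to\Id$ for the rest. The paper records the $T^M$-to-$T^{\oC_1}_G$ step as a point-set commuting square (top row the Moore pairing, bottom row the $\ooCo(2)$ structure map, left vertical induced by $\mu_2$, right vertical the collapse of $\bR^{>0}$) where you instead appeal to contractibility of $\ooCo(2)$; these say the same thing, the square being the precise form of your heuristic.

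Your third paragraph, however, both raises a phantom problem and resolves it incorrectly. To show that the comparison isomorphism is a monoidal natural transformation, the only conditions to verify are compatibility with the unit maps and compatibility with the binary pairing maps, both of which the zigzag provides. The associativity pentagon and unit triangle are coherence \emph{conditions on each lax monoidal functor separately}—established once and for all by the constructions in Sections~\ref{sec:lax} and~\ref{sec:Moore}—not additional conditions on the transformation itself; once the unit and binary pairing squares commute, nothing more needs to be checked. Moreover, the tool you reach for would not do the job even if it were needed: Remark~\ref{rem:choices} controls the (essentially discrete) space of natural self-transformations of the Tate functor via self-maps of $\tEG$, which constrains maps between single copies of $T$, not the diagrams of three-fold smash products appearing in an associativity pentagon. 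Delete the third paragraph; the first two already constitute the proof.
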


\begin{proof}
The comparison of unit maps is clear.
The canonical isomorphism in the stable category $JT^{\oL}_{G}\to
T^{\oL}_{G}$ is symmetric monoidal
and the canonical map in the stable category 
\[
\oL(2)_{+}\sma T^{\oL}_{G}X \smal T^{\oL}_{G}Y\to
T^{\oL}_{G}X\smal_{\oL}T^{\oL}_{G}Y
\]
is an isomorphism, so the work above reduces to comparing the
associativity map for $T^{M}$ to the associativity map on
$T^{\oC_{1}}_{G}$.  For this it is enough to
compare the associativity map for $\bar T^{\bar M}_{*,*}$ and
$\bar T_{*,*}$, where $\bar T_{*,*}$ is defined in
Construction~\ref{cons:lastlabel}. 
Compatibility of these maps is easily seen from the commuting diagram
\[
\xymatrix{%
\bar T^{\bar M}_{i_{1},j_{1}}X \sma \bar T^{\bar M}_{i_{2},j_{2}}Y\ar[r]\ar[d]
&\bar T^{\bar M}_{i,j}(X\sma Y)\ar[d]
\\
\ooCo(2)_{+}\sma
\bar T_{i_{1},j_{1}}X \sma \bar T_{i_{2},j_{2}}Y\ar[r]
&\bar T_{i,j}(X\sma Y)
}
\] 
where the vertical map on the left is induced by
\begin{align*}
\bar T^{\bar M}_{i_{1},j_{1}}X \sma \bar T^{\bar M}_{i_{2},j_{2}}Y
&=
(\bar T_{i_{1},j_{1}}X\sma \bR_{+}^{>0})\sma 
(\bar T_{i_{2},j_{2}}Y\sma \bR_{+}^{> 0})\\
&\iso 
(\bR^{>0}\times \bR^{>0})_{+}\sma 
\bar T_{i_{1},j_{1}}X\sma \bar T_{i_{2},j_{2}}Y\\
&\hspace{-1ex}\to \ooCo(2)_{+}\sma \bar T_{i_{1},j_{1}}X\sma \bar T_{i_{2},j_{2}}Y
\end{align*}
(with the last map induced by the map $\mu_{2}$ of
Construction~\ref{cons:mu}), and the vertical map on the 
right is induced by the collapse map $\bR^{>0}\to *$.
\end{proof}

Next we move on to compare the maps
\[
T^{M}X\smal_{T^{M}\!A}T^{M}Y\to T^{M}(X\smal_{A}Y)
\]
and
\[
JT^{\oL}_{G}X\smal_{JT^{\oL}_{G}A}JT^{\oL}_{G}Y\to JT^{\oL}_{G}(X\smal_{A}Y).
\]
First we need to compare the objects, and we follow roughly same
strategy as above, transporting the constructions across the maps of
operads.  Following the ideas of~\cite{Mandell-Smash}, we construct the balanced
smash product of operadic modules.

The diagram preceding the previous theorem compares the $n=2$ case of
the structure maps 
\begin{equation}\label{eq:genopassoc}
\ooO(n)_{+}\sma T^{\oO}_{G}X_{1}\sma \cdots \sma  T^{\oO}_{G}X_{n}
\to  T^{\oO}_{G}(X_{1}\sma \cdots \sma X_{n})
\end{equation}
for $\oO=\ooCo$, $\oL\times \ooCo$, $\oL\times \oCO$, or $\oL$, where
$\ooO(n)$ denotes the corresponding $n$th space (for the $E_{\infty}$
operads) or identity permutation subspace (for the $A_{\infty}$
operads).  We also write $\ooO$ for the non-$\Sigma$ $A_{\infty}$
operad corresponding to $\oO$: For one of the $E_{\infty}$ operads
$\ooO=\oO$ with the permutations forgotten, and for one of the
$A_{\infty}$ operads $\oO$ is the identity permutation component in
each arity.  It is clear from the general case of the structure map
and comparison diagram that for an associative ring orthogonal
$G$-spectrum $A$, $T^{\oO}_{G}A$ inherits the structure of a
(non-$\Sigma$) $\ooO$-algebra and the maps of operads $\oO\to \oO'$
over $\oCO$ induce maps of $\ooO$-algebras $T^{\oO}_{G}A\to
T^{\oO'}_{G}A$.  In addition, for $Y$ a left $A$-module,
$T^{\oO}_{G}Y$ inherits the structure of a left
$T^{\oO}_{G}A$-module over $\ooO$: It has structure maps of the form
\[
\ooO(n+1)_{+}\sma (T^{\oO}_{G}A)^{(n)}\sma T^{\oO}_{G}Y \to T^{\oO}_{G}Y
\]
(for $n\geq 0$) 
satisfying the usual unit and associativity conditions with respect to
the operadic multiplication on $A$.  The maps of operads $\oO\to \oO'$
above induce maps of left $T^{\oO}_{G}A$-modules over $\ooO$.
Analogous observations apply to right modules.

For an $\ooO$-algebra $B$,
the category of left $B$-modules over $\ooO$ is equivalent to the
category of left modules for an associative ring symmetric spectrum
$U^{\ooO}_{L}B$, called the \term{left enveloping algebra}. Concretely
$U^{\ooO}_{L}B$ can be constructed as the coequalizer
\[
\xymatrix@-1pc{%
\bigvee\limits_{n,m_{1},\ldots m_{n}}
(\ooO(n+1)\times(\ooO(m_{1})\times \cdots \times \ooO(m_{n})))_{+}\sma B^{(m)}
\ar@<-.5ex>[r]\ar@<.5ex>[r]
&\bigvee\limits_{n}\ooO(n+1)_{+}\sma B^{(n)}%\ar[r]
%&U^{\ooO}_{L}R
}
\]
for $n\geq 0$, $m_{1},\ldots,m_{n}\geq 0$, and
$m=m_{1}+\cdots+m_{n}$, where one map is induced by the operadic
multiplication
\begin{multline*}
\ooO(n+1)\times(\ooO(m_{1})\times \cdots \times \ooO(m_{n}))\\
\iso \ooO(n+1)\times(\ooO(m_{1})\times \cdots \times \ooO(m_{n})\times \{1\})
\to \ooO(m+1)
\end{multline*}
and the other by the $\ooO$-action
\[
\ooO(m_{i})_{+}\sma B^{(m_{i})}\to B.
\]
The unit is induced by the map 
\[
\bS\iso \{1\}_{+}\sma \bS = \{1\}_{+}\sma B^{(0)}\to \ooO(1)\sma B^{(0)}
\]
and the multiplication is induced by the map
\begin{multline*}
(\ooO(n+1)_{+}\sma B^{(n)})\sma (\ooO(n'+1)_{+}\sma B^{(n')})\\
\iso (\ooO(n+1)\times \ooO(n'+1))_{+}\sma B^{(n+n')}
\to \ooO(n+n'+1)_{+}\sma B^{(n+n')}
\end{multline*}
induced by the operadic multiplication $\circ_{n+1}$.  Analogously, the
category of right $B$-modules over $\ooO$ is the category of right
modules for the \term{right enveloping algebra} $U^{\ooO}_{R}B$, which
may be constructed as an analogous coequalizer with identical formulas
except that it uses the map
\begin{multline*}
\ooO(n+1)\times(\ooO(m_{1})\times \cdots \times \ooO(m_{n}))\\
\iso \ooO(n+1)\times(\{1\}\times \ooO(m_{1})\times \cdots \times \ooO(m_{n}))
\to \ooO(m+1)
\end{multline*}
in the construction and the map 
\begin{multline*}
(\ooO(n+1)_{+}\sma B^{(n)})\sma (\ooO(n'+1)_{+}\sma B^{(n')})\\
\iso (\ooO(n'+1)\times \ooO(n+1))_{+}\sma B^{(n+n')}
\to \ooO(n+n'+1)_{+}\sma B^{(n+n')}
\end{multline*}
induced by $\circ_{1}$ (leaving the factors of $B$ in the same order)
in the multiplication. 

\begin{cons}
We construct a right $(U^{\ooO}_{R}B)^{\op}\sma (U^{\ooO}_{L}B)$-module
$\Bal^{\ooO}(B)$ as follows.  The underlying orthogonal spectrum is the
coequalizer 
\[
\xymatrix@-1pc{%
\bigvee\limits_{n,m_{1},\ldots m_{n}}
(\ooO(n+2)\times(\ooO(m_{1})\times \cdots \times \ooO(m_{n})))_{+}\sma B^{(m)}
\ar@<-.5ex>[r]\ar@<.5ex>[r]
&\bigvee\limits_{n}\ooO(n+2)_{+}\sma B^{(n)}%\ar[r]
}
\]
where one map is induced by the operadic multiplication
\begin{multline*}
\ooO(n+2)\times(\ooO(m_{1})\times \cdots \times \ooO(m_{n}))\\
\iso 
\ooO(n+2)\times(\{1\}\times \ooO(m_{1})\times \cdots \times \ooO(m_{n})\times \{1\})
\to \ooO(m+2)
\end{multline*}
and the other by the $\ooO$-action
\[
\ooO(m_{i})_{+}\sma B^{(m_{i})}\to B.
\]
The left $U^{\ooO}_{R}B$-action is induced by the map
\begin{multline*}
(\ooO(n+1)_{+}\sma B^{(n)})\sma (\ooO(n'+2)_{+}\sma B^{(n')})\\
\iso
(\ooO(n'+2)\times \ooO(n+1))_{+}\sma B^{(n)}\sma B^{(n')}
\to \ooO(n'+n+2)_{+}\sma B^{(n+n')})
\end{multline*}
induced by $\circ_{1}$ (where the factors of $B$ remain in the same order).
The right $U^{\ooO}_{L}B$-action is induced by the map
\begin{multline*}
(\ooO(n'+2)_{+}\sma B^{(n')})\sma (\ooO(n+1)_{+}\sma B^{(n)}) \\
\iso
(\ooO(n'+2)\times \ooO(n+1))_{+}\sma B^{(n')}\sma B^{(n)}
\to \ooO(n'+n+2)_{+}\sma B^{(n'+n)})
\end{multline*}
induced by $\circ_{n'+2}$.
\end{cons}

\begin{defn}\label{defn:balsma}
Let $\ooO$ be a non-$\Sigma$ $A_{\infty}$ operad, let $B$ be an
$\ooO$-algebra, let $M$ be a right $B$-module over $\ooO$, and let $N$
be a left $B$-module over $\ooO$.  Define the \term{point-set balanced smash
product} of $M$ and $N$ over $B$ and $\ooO$ to be the orthogonal
spectrum
\[
M\sma_{B}^{\ooO}N:=\Bal^{\ooO}(B)\sma_{(U^{\ooO}_{R}B)^{\op}\sma (U^{\ooO}_{L}B)}
(M\sma N),
\]
an enriched bifunctor 
\[
\Mod_{U^{\ooO}_{R}B}^{r}\sma 
\Mod_{U^{\ooO}_{L}B}^{\ell}\to \Sp.
\]
\end{defn}

This definition relates to the example of $T^{\oO}_{G}$ as
follows. When $A$ is an associative 
ring orthogonal $G$-spectrum and $X$ and $Y$ are right and left
$A$-modules, the structure maps~\eqref{eq:genopassoc}
\[
\ooO(n+2)_{+}\sma 
T^{\oO}_{G}X\sma (T^{\oO}_{G}A)^{(n)}\sma T^{\oO}_{G}Y
\to T^{\oO}_{G}(X\sma A^{(n)}\sma Y)
\to T^{\oO}_{G}(X\sma_{A}Y)
\]
fit together to define a map
\begin{equation}\label{eq:bsmap}
T^{\oO}_{G}X\sma^{\ooO}_{T^{\oO}_{G}A}T^{\oO}_{G}Y\to T^{\oO}_{G}(X\sma_{A}Y),
\end{equation}
naturally in $A$, $X$, $Y$, and $\oO$.  

We have a derived version of the balanced smash product that is
easiest to define if we restrict to the non-$\Sigma$ $A_{\infty}$
operads $\ooCo$, $\oL\times \ooCo$, $\oL\times \oCO$, and $\oL$
involved in our comparison.  These operads have the following special
property that we prove in the next section.

\begin{thm}\label{thm:UB}
Let $\ooO=\ooCo$, $\oL\times \ooCo$, $\oL\times \oCO$, or $\oL$, and
let $B$ be any $\ooO$-algebra.
The maps 
\begin{align*}
\ooO(2)_{+}\sma B&\to U^{\ooO}_{L}B\\
\ooO(2)_{+}\sma B&\to U^{\ooO}_{R}B\\
\ooO(3)_{+}\sma B&\to \Bal^{\ooO}B
\end{align*}
in the defining colimits are weak equivalences.
\end{thm}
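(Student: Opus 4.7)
The plan is to recognize each of the defining colimits as the realization of a two-sided monadic bar construction and then exploit the contractibility of $\ooO(n)$ in every arity (which holds for all four operads on the list) to collapse all simplicial pieces except the one sitting over $\ooO(2)_+ \sma B$ (respectively $\ooO(3)_+ \sma B$). More precisely, for an $\ooO$-algebra $B$ the coequalizer defining $U^{\ooO}_{L}B$ is the coequalizer coming from the usual monadic bar description: writing $\bO$ for the monad associated to $\ooO$, one has
\[
U^{\ooO}_{L}B \;\iso\; |\,B\subdot(\ooO(-+1)_{+},\bO,B)\,|,
\]
where $\ooO(-+1)_{+}$ is the symmetric sequence $n\mapsto \ooO(n+1)_{+}$ regarded as a right $\bO$-module via $\circ_{n+1}$-substitution. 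Analogous identifications express $U^{\ooO}_{R}B$ using $\circ_{1}$ and $\Bal^{\ooO}B$ using $n\mapsto \ooO(n+2)_{+}$ with both $\circ_{1}$ and $\circ_{n+2}$ actions.

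First I would filter by simplicial skeleta, verify that the filtration maps are $h$-cofibrations (this follows from the standard analysis of degeneracies in the categorical bar construction, exactly as in Proposition~\ref{prop:hcof}), and identify the $q$-th filtration quotient as a wedge indexed by non-degenerate simplices of the form $\ooO(n_{0}+1)_{+}\sma \ooO(n_{1})_{+}\sma\cdots\sma\ooO(n_{q})_{+}\sma B^{(m)}$, with $n_{1},\ldots,n_{q}\geq 2$ and $m=n_{0}+n_{1}+\cdots+n_{q}-q$ (and a parallel indexing in the right and balanced cases). The second step is to observe that, because $\ooO(k)$ is non-equivariantly contractible for every $k$ (for $\ooCo$, $\oL\times\ooCo$, $\oL\times\oCO$, and $\oL$), each of these factors is spacewise contractible, and the standard extra-degeneracy/simplicial contraction argument applies: using the unit $*\to \ooO(1)$ and composition with the identity in $\ooO(n+1)$ supplies an extra degeneracy $s_{-1}$ on the bar complex, so the entire bar construction is simplicially contractible onto the $0$-simplices $\ooO(2)_{+}\sma B$ (resp.\ $\ooO(3)_{+}\sma B$ for $\Bal$).

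More concretely, for the $U^{\ooO}_{L}$ statement I would construct the contracting homotopy by sending $a\in\ooO(n+1)$ and $b_{1}\sma\cdots\sma b_{n}\in B^{(n)}$ to the element $a\circ_{n+1}1\in\ooO(n+2)$ together with a formally inserted unit in the $(n+1)$-st $B$-slot; the same recipe, applied in the appropriate slot, handles $U^{\ooO}_{R}$ and $\Bal^{\ooO}$. The contractibility and $h$-cofibration properties then give a weak equivalence from the $0$-skeleton to the whole realization, and the $0$-skeleton for $U^{\ooO}_{L}B$ and $U^{\ooO}_{R}B$ is $\ooO(2)_{+}\sma B$ while that for $\Bal^{\ooO}B$ is $\ooO(3)_{+}\sma B$, which is exactly the required assertion.

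The main obstacle, I expect, will be the combinatorial bookkeeping: checking rigorously that the coequalizer written in the statement really equals the $|B\subdot|$ described above (not just that it receives a map from it), and that the putative extra degeneracy is compatible with all the operadic and action relations in a way that defines an actual $h$-contraction on the filtered realization. This requires tracing carefully through $\circ_{i}$-identities and the non-$\Sigma$ operad unit on $\ooO(1)$, especially in the balanced case where one must simultaneously preserve the left and right $U^{\ooO}$-module structures; the linear-isometries factor in the mixed operads $\oL\times\ooCo$ and $\oL\times\oCO$ contributes no new difficulty since the argument is spacewise on each arity and $\oL(n)\times\ooO'(n)$ is contractible iff both factors are.
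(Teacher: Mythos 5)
Your first step is where things go wrong, and it is a genuine conceptual gap rather than a bookkeeping issue. You claim that $U^{\ooO}_{L}B$ is \emph{isomorphic} to the realization $|B\subdot(\ooO(-+1)_{+},\bO,B)|$ of a two-sided monadic bar construction. It is not: the defining colimit is just the reflexive coequalizer $M\circ_{\bO}B$ (with $M$ the symmetric sequence $n\mapsto \ooO(n+1)_{+}$), i.e.\ the $(-1)$-augmentation of that bar object, not its realization. There is a natural map $|B\subdot(M,\bO,B)|\to M\circ_{\bO}B$, but it is a weak equivalence only under cofibrancy/flatness hypotheses on $B$ and $\ooO$ --- precisely the derived-vs-point-set distinction that the theorem is designed to sidestep. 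Since the theorem asserts a weak equivalence for \emph{arbitrary} $\ooO$-algebras $B$, your argument would need that map to be an equivalence unconditionally, and nothing you wrote establishes it. Note also that your simplicial contraction is misstated even granting the bar identification: an extra degeneracy contracts the realization onto the $(-1)$-level (the augmentation), not onto the $0$-simplices, and the $0$-simplices are $\bigvee_{n}\ooO(n+1)_{+}\sma B^{(n)}$, not $\ooO(2)_{+}\sma B$.

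The deeper issue is that contractibility of the spaces $\ooO(n)$ alone cannot be enough, and the paper says so explicitly in the remark just before the definition of the derived balanced smash product: for general non-$\Sigma$ $A_\infty$ operads the point-set enveloping algebras do \emph{not} have the correct homotopy type for arbitrary $B$, only for cofibrant $B$. Every $A_\infty$ operad under consideration has contractible spaces, so any proof resting solely on that fact (which is all your extra-degeneracy argument uses) would prove a false statement. The paper's proof instead hinges on identifying the coequalizers by hand: for $\ooCo$ it collapses $U^{\ooCo}_{L}B$ to the pushout $(\ooCo(1)_{+}\sma\bS)\cup_{D_{+}\sma\bS}(D_{+}\sma B)$ and then constructs explicit retractions and (unstable, space-level) contracting homotopies on the pairs $(\ooCo(1),D)$, $(\ooCo(1),D')$, $(\ooCo(2),C)$; for $\oL$ (and the mixed operads) it uses Hopkins' Lemma to rewrite the coequalizers as pushouts involving $\oL(n)_{+}\sma_{\oL(1)}B$, after which the weak equivalence reduces to the known equivalence $\oL(n)_{+}\sma_{\oL(1)}\bS\to\oL(n-1)_{+}\sma\bS$ from EKMM. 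Those concrete identifications, not the contractibility of the operad spaces alone, are what make the theorem true at the point-set level for all $B$, and they are the content your proposal is missing.
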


\begin{rem}
For more general non-$\Sigma$ $A_{\infty}$ operads, we do not expect
the enveloping algebras to always have the correct homotopy type for
arbitrary $\ooO$-algebras $B$; they will, however, have the correct
homotopy types for cofibrant $\ooO$-algebras, and so cofibrant
replacement of the algebra $B$ is necessary to obtain the correct
derived categories of $B$-modules in this setting.  This presents no
real practical difficulties except to complicate definitions and
statements of theorems.  To avoid these complications, we define
derived functors only for $\ooO=\ooCo$, $\oL\times \ooCo$, $\oL\times
\oCO$, or $\oL$.
\end{rem}

\begin{defn}
For $\ooO=\ooCo$, $\oL\times \ooCo$, $\oL\times \oCO$, or $\oL$, 
define the \term{derived balanced smash product} $\Tor^{B,\ooO}(M,N)$ as the
left derived enriched bifunctor \cite[5.3]{LewisMandell2} of the
point-set balanced product functor. 
\end{defn}

As a special case of~\cite[8.2]{LewisMandell2}, the derived balanced
smash product may be constructed by using a cofibrant left
$(U^{\ooO}_{R}B)^{\op}\sma (U^{\ooO}_{L}B)$-module approximation of
$M\sma N$; one good way to choose such an approximation is to smash a
cofibrant right $U^{\ooO}_{R}B$-module approximation of $M$ with a
cofibrant left $U^{\ooO}_{L}B$-module approximation of $N$.

We now need to compare the balanced smash product of
Definition~\ref{defn:balsma} with the Blumberg-Hill EKMM smash product in
$\oL(1)$-spectra in orthogonal spectra and the balanced smash product
of modules over an associative ring orthogonal spectrum.  The first of
these is the following theorem, the proof of which is given in 
the next section.  Recall that we use $J$ to denote the functor
$(-)\sma_{\oL}\bS$ from $\oL(1)$-spectra in orthogonal spectra to
EKMM $\bS$-modules in orthogonal spectra.

\begin{thm}\label{thm:BHbs}
Let $B$ be an associative ring EKMM $\bS$-module in orthogonal
spectra, $M$ a right $B$-module, and $N$ a left $B$-module (for the
EKMM smash product).  Then the Blumberg-Hill EKMM smash product
$M\sma_{B}N$ is canonically isomorphic to $J(M\sma_{B}^{\oL}N)$ for
the balanced smash product constructed in Definition~\ref{defn:balsma}.
\end{thm}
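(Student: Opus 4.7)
The plan is to realize both sides as colimits of parallel simplicial objects and to use that $J$ preserves colimits.

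First, I would unpack the defining coequalizers in Definition~\ref{defn:balsma} and rewrite the operadic balanced smash product $M\sma_B^\oL N$ as the colimit of a simplicial orthogonal spectrum $X\subdot$ with
\[
X_n = \oL(n+2)_+\sma M\sma B^{(n)}\sma N,
\]
where the inner face maps encode operadic composition in $\oL$ together with the multiplication on $B$, and the outer face maps encode the right $B$-action on $M$ and the left $B$-action on $N$. The key point is that tensoring $\Bal^\oL(B)$ against $M\sma N$ over $(U^\oL_R B)^{\op}\sma U^\oL_L B$ precisely encodes the outermost face relations arising from the $B$-module structures on $M$ and $N$, while the inner structure of $\Bal^\oL(B)$ records the multiplicative relations from $B$.

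Second, since the functor $J=(-)\sma_\oL\bS\colon \Sp[\oL(1)]\to \Sp_\BH$ is a strong symmetric monoidal left adjoint, it preserves all colimits, so $J(M\sma_B^\oL N)$ is the colimit of the simplicial EKMM $\bS$-module $JX\subdot$. Now I would identify $JX_n$ term by term: because $M$, $B$, and $N$ are already EKMM $\bS$-modules (so $JM=M$, $JB=B$, $JN=N$) and $J$ is strong symmetric monoidal,
\[
JX_n \iso J(\oL(n+2)_+)\sma_{\bS} M\sma_{\bS} B^{\sma_{\bS} n}\sma_{\bS} N.
\]
Moreover $J(\oL(n+2)_+)=\oL(n+2)_+\sma_\oL \bS$ is canonically isomorphic to $\bS$ via the unit isomorphism available in $\Sp_\BH$ (this is the content of EKMM's ``unit lemma'' in the Blumberg-Hill setting). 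Thus $JX_n\iso M\sma_{\bS}B^{\sma_{\bS} n}\sma_{\bS} N$, and one checks that under these identifications the face maps of $JX\subdot$ are precisely those of the standard two-sided bar construction $B\subdot^{\bS}(M,B,N)$ in EKMM $\bS$-modules.

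Third, the EKMM balanced smash product $M\sma_B N$ is by definition the reflexive coequalizer of
\[
M\sma_{\bS}B\sma_{\bS}N \rightrightarrows M\sma_{\bS}N
\]
which coincides with the colimit $|B\subdot^{\bS}(M,B,N)|$. Combining with Step~2 yields the canonical isomorphism $J(M\sma_B^\oL N)\iso M\sma_B N$ claimed in the theorem, and naturality in $M$, $N$, and $B$ is evident from the construction.

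The main obstacle is Step~2: establishing the identification $J(\oL(n)_+\sma X_1\sma\cdots\sma X_n)\iso X_1\sma_{\bS}\cdots\sma_{\bS}X_n$ for EKMM $\bS$-modules $X_i$ in a way that is coherent with respect to the simplicial face and degeneracy operators. Point-set coherence requires verifying that the operadic composition maps in $\oL$, after applying $J$, reduce to the associativity and unit isomorphisms of the EKMM smash product on $\Sp_\BH$, which is precisely the content of Blumberg-Hill's strong symmetric monoidality of $J$~\cite[4.8]{BlumbergHill-GSymmetric}. Once this coherence is in hand, the remaining comparisons are routine. Theorem~\ref{thm:UB} is not directly used in the point-set identification, but it guarantees that the enveloping algebra and $\Bal$ constructions have the correct homotopy type, so that the point-set isomorphism we construct is also the homotopically correct one.
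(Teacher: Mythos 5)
Your overall strategy --- reorganize both sides as colimits and match them term-by-term via Hopkins'-Lemma-style identifications --- is in the right spirit, and it shares its key tool (the EKMM/Hopkins' Lemma machinery for smash products over $\oL(1)$) with the paper's proof. But there are two concrete gaps.

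First, the reduction in Step~1 is not justified and is probably false as stated. By Definition~\ref{defn:balsma}, $M\sma_{B}^{\oL}N$ is a single coequalizer
\[
\Bal^{\oL}(B)\sma_{(U^{\oL}_{R}B)^{\op}\sma U^{\oL}_{L}B}(M\sma N),
\]
where $\Bal^{\oL}(B)$ is itself a coequalizer of a wedge of terms $\oL(n+2)_{+}\sma B^{(n)}$. These two coequalizers do not interleave into a simplicial object with $X_{n}=\oL(n+2)_{+}\sma M\sma B^{(n)}\sma N$ in any obvious way; the ``outer'' relations come from the action of the enveloping algebras (which involve iterated operadic composition and $\oL(1)$-quotients) and the ``inner'' relations come from the defining coequalizer of $\Bal^{\oL}(B)$ --- you would have to verify that the combined relation diagram actually has the shape of $\Delta^{\op}$ and that the putative face maps satisfy the simplicial identities. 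You assert this in one line but it is the heart of your argument, and nothing in the paper suggests it is even true at the point-set level.

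Second, even granting Step~1, the identification $JX_{n}\iso M\sma_{\bS}B^{\sma_{\bS}n}\sma_{\bS}N$ in Step~2 is not quite what Hopkins' Lemma gives you. For EKMM $\bS$-modules the Hopkins'-type isomorphism identifies $J(\oL(n+2)_{+}\sma_{\oL(1)^{n+2}}(X_{1}\sma\cdots\sma X_{n+2}))$ with $X_{1}\sma_{\bS}\cdots\sma_{\bS}X_{n+2}$ (note the quotient by $\oL(1)^{n+2}$); but your $X_{n}$ carries no such quotient, and $J$ applied to $\oL(n+2)_{+}\sma M\sma B^{(n)}\sma N$ with a single outer $\oL(1)$-action does not collapse to the iterated $\sma_{\bS}$. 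The paper handles exactly this issue by passing through $\oL(5)_{+}\sma_{\oL(1)^{5}}(M\sma B^{\sma 3}\sma N)\rightrightarrows \oL(3)_{+}\sma_{\oL(1)^{3}}(M\sma B\sma N)$ with the correct $\oL(1)$-quotients inserted via the canonical epimorphism from the original coequalizer diagram, and then reads off $M\sma_{B}B\sma_{B}N$ directly. That route sidesteps the simplicial reorganization entirely, which is why it succeeds where your Step~1 stalls. (Your final step is also recorded misleadingly: the reflexive coequalizer is the colimit of the bar construction over $\Delta^{\op}$, not the geometric realization $|{-}|$; these are different point-set objects. But this is a notational slip, secondary to the gaps above.)
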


The following is now an immediate consequence of
Theorems~\ref{thm:UB} and~\ref{thm:BHbs}. 

\begin{cor}
Let $A$ be an associative ring orthogonal $G$-spectrum, let $X$ be a
right $A$-module and let $Y$ be a left $A$-module.  Then the natural
transformations of balanced smash products
\begin{multline*}
T^{\oC_{1}}_{G}X\sma_{T^{\oC_{1}}_{G}A}^{\ooCo}T^{\oC_{1}}_{G}Y\from
T^{\oL\times \oC_{1}}_{G}X\sma_{(T^{\oL\times \oC_{1}}_{G}A)}^{\oL\times \ooCo}T^{\oL\times \oC_{1}}_{G}Y\to\\
T^{\oL\times \oCO}_{G}X\sma_{(T^{\oL\times \oCO}_{G}A)}^{\oL\times
\oCO}T^{\oL\times \oCO}_{G}Y\from
T^{\oL}_{G}X\sma_{T^{\oL}_{G}A}^{\oL}T^{\oL}_{G}Y\\
\from
J(JT^{\oL}_{G}X\sma_{JT^{\oL}_{G}A}^{\oL}JT^{\oL}_{G}Y)\iso
JT^{\oL}_{G}X\sma_{JT^{\oL}_{G}A}JT^{\oL}_{G}Y
\end{multline*}
induce isomorphism of derived balanced smash products.
\end{cor}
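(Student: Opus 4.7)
The plan is to treat each of the four arrows in the statement separately. The three inner arrows compare derived balanced smash products formed with respect to different non-$\Sigma$ $A_{\infty}$ operads, while the outermost arrow compares the $\oL$-version with the Blumberg-Hill EKMM balanced smash product. For the first three I would combine naturality of $T^{\oO}_{G}$ in the operad $\oO$ with Theorem~\ref{thm:UB}; for the fourth I would invoke Theorem~\ref{thm:BHbs} directly.

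For each of the three operad comparisons $\oC_{1}\from \oL\times\oC_{1}\to \oL\times\oCO\from \oL$, the first step is to check that the induced natural transformations $T^{\oO}_{G}Z\to T^{\oO'}_{G}Z$ (applied to $Z=A,X,Y$) are weak equivalences. This follows from Remark~\ref{rem:choices} together with the observation that each of the four models $\tEOG{\oO}$ is a free $G$-cell complex built on $S^{0}$ that is non-equivariantly contractible, so the induced maps between them are equivariant weak equivalences of models of $\tEG$. Consequently $T^{\oO}_{G}A\to T^{\oO'}_{G}A$ is a weak equivalence of $\ooO$-algebras (via the operad map), and similarly for the module structures on $T^{\oO}_{G}X$ and $T^{\oO}_{G}Y$.

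Next I would invoke Theorem~\ref{thm:UB} to identify the enveloping algebras $U^{\ooO}_{L}B$, $U^{\ooO}_{R}B$, and the balanced object $\Bal^{\ooO}B$ with $\ooO(2)_{+}\sma B$ and $\ooO(3)_{+}\sma B$ respectively (up to weak equivalence), for each of our four operads and $B=T^{\oO}_{G}A$. Since $\ooO(k)$ is contractible for each $\oO$ in our list and each $k$, the naturality square
\[
\xymatrix{
\ooO(k)_{+}\sma T^{\oO}_{G}A\ar[r]\ar[d]_{\simeq}
&\ooO'(k)_{+}\sma T^{\oO'}_{G}A\ar[d]^{\simeq}\\
U^{\ooO}_{L}T^{\oO}_{G}A\ar[r]
&U^{\ooO'}_{L}T^{\oO'}_{G}A
}
\]
has its top horizontal arrow a weak equivalence (both factors have the correct homotopy types), forcing the bottom arrow to be a weak equivalence as well; the same argument handles $U_{R}$ and $\Bal$. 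A standard derived-change-of-rings argument along the lines of~\cite{SSAlgMod} then shows that such weak equivalences at the level of enveloping algebras and $\Bal^{\ooO}$ induce isomorphisms of derived balanced smash products.

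For the final arrow, Theorem~\ref{thm:BHbs} gives a canonical isomorphism identifying $J(JT^{\oL}_{G}X\sma^{\oL}_{JT^{\oL}_{G}A}JT^{\oL}_{G}Y)$ with $JT^{\oL}_{G}X\sma_{JT^{\oL}_{G}A}JT^{\oL}_{G}Y$, and the comparison map from $T^{\oL}_{G}X\sma^{\oL}_{T^{\oL}_{G}A}T^{\oL}_{G}Y$ is induced by the natural weak equivalence $J\to \Id$ from Section~\ref{sec:lax}. The main obstacle I anticipate is the derived change-of-rings step in the operadic comparisons: one must verify that cofibrant approximations chosen in one model category of modules transfer correctly across the weak equivalences of enveloping algebras to produce compatible derived comparison maps, and that these are compatible with the maps induced by operadic functoriality in~\eqref{eq:bsmap}. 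This is made cleaner by working with the enriched-bifunctor formulation of Definition~\ref{defn:balsma} and the derived functor machinery of~\cite{LewisMandell2} already invoked for the balanced smash product in Proposition~\ref{prop:filt3}.
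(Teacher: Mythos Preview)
Your proposal is correct and matches the paper's approach: the paper states this corollary as ``an immediate consequence of Theorems~\ref{thm:UB} and~\ref{thm:BHbs}'' with no further argument, and your outline simply unpacks what that immediacy amounts to. The only minor point is that you spend more effort on the derived change-of-rings step than strictly necessary---once Theorem~\ref{thm:UB} gives that the enveloping algebras and $\Bal$ are all weakly equivalent to (contractible space)${}_{+}\sma B$, naturality in the operad and the definition of the derived balanced smash product via cofibrant replacement in~\cite[8.2]{LewisMandell2} handle the rest.
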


We still need to compare the balanced smash product
$T^{\oC_{1}}_{G}X\sma^{\ooCo}_{T^{\oC_{1}}_{G}A}T^{\oC_{1}}_{G}Y$
in the context of $\ooCo$-algebras and modules with the balanced smash
product $T^{\bar M}X\sma_{T^{M}\!A}T^{\bar M}Y$ in the context of
associative orthogonal spectra and modules.  As in
Section~\ref{sec:Moore}, for any $\ooCo$-algebra $B$, we can form the
Moore construction $B^{M}$ as the pushout
\[
B^{M}:=(\bS \sma \bR^{\geq 0}_{+})\cup_{\bS\sma \bR^{>0}_{+}}(B\sma \bR^{>0}_{+}),
\]
which has the natural structure of an associative ring orthogonal
spectrum and also has the property that the canonical map $B^{M}\to B$
(induced by the map collapsing $\bR$ to a point) is a homotopy
equivalence of orthogonal spectra.  Given $X$ and $Y$ right and left
$B$-modules over $\ooCo$, the Moore construction $(-)^{\bar
M}:=(-)\sma \bR^{>0}_{+}$ converts $X$ and $Y$ to $B^{M}$-modules.  We
then have a natural map 
\[
X^{\bar M}\sma_{B^{M}}Y^{\bar M}\iso
X^{\bar M}\sma_{B^{M}}B^{M}\sma_{B^{M}}Y^{\bar M}\to
X\sma^{\ooCo}_{B}Y
\]
induced by the map $\mu_{3}$ (from Construction~\ref{cons:mu})
interpreted as a map 
\[
(X\sma \bR^{>0}_{+})\sma (B\sma \bR^{>0}_{+})\sma (Y\sma \bR^{>0}_{+})
\to \ooCo(3)_{+}\sma X\sma B\sma Y
\]
and the unital extension of $\mu_{3}$ viewed as a map
\[
(X\sma \bR^{>0}_{+})\sma (\bS\sma \bR^{>0}_{+})\sma (Y\sma \bR^{>0}_{+})
\to \ooCo(2)_{+}\sma X\sma Y
\]
composed with the maps in the defining colimit
\[
\ooCo(3)_{+}\sma X\sma B\sma Y\to X\sma^{\ooCo}_{B}Y
\qquad \text{and}\qquad
\ooCo(2)_{+}\sma X\sma Y\to X\sma^{\ooCo}_{B}Y.
\]

\begin{thm}
For $B$ a $\ooCo$-algebra and $X$ and $Y$ right and left $B$-modules
over $\ooCo$, the canonical map 
\[
X^{\bar M}\sma_{B^{M}}Y^{\bar M}\to X\sma^{\ooCo}_{B}Y
\]
induces an isomorphism on the derived balanced smash products.
\end{thm}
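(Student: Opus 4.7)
The strategy is to model both sides of the comparison by two-sided bar constructions and exhibit a natural simplicial comparison map built from the Moore parameter maps $\mu_n$ of Construction~\ref{cons:mu}. I will model the derived operadic balanced smash product $X \smal^{\ooCo}_B Y$ by the geometric realization of the operadic two-sided bar construction
\[
B_n^{\ooCo}(X, B, Y) = \ooCo(n+2)_+ \sma X \sma B^{\sma n} \sma Y,
\]
with face maps built from the $\ooCo$-action maps on $X$, $B$, and $Y$; using Theorem~\ref{thm:UB} and the contractibility of $\ooCo(n+2)$, one identifies this bar construction, after realization, with a cofibrant replacement model for the derived balanced smash product over the enveloping algebras $U^{\ooCo}_L B$ and $U^{\ooCo}_R B$. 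On the other side, $X^{\bar M} \smal_{B^M} Y^{\bar M}$ will be modeled by the usual associative two-sided bar construction $B_n(X^{\bar M}, B^M, Y^{\bar M}) = X^{\bar M} \sma (B^M)^{\sma n} \sma Y^{\bar M}$.

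Second, I will construct a simplicial map $B_\bullet(X^{\bar M}, B^M, Y^{\bar M}) \to B^{\ooCo}_\bullet(X, B, Y)$ at each level $n$ using the parameter map $\mu_{n+2}\colon (\bR^{>0})^{n+2} \to \ooCo(n+2)$, together with the collapse maps $\bR^{\geq 0}\to *$ needed to absorb the unital $\bR^{\geq 0}$-extension in $B^M$. Under the natural identification $X \sma \bR^{>0}_+ \sma (B \sma \bR^{>0}_+)^{\sma n} \sma Y \sma \bR^{>0}_+ \iso (\bR^{>0})^{n+2}_+ \sma (X \sma B^{\sma n} \sma Y)$ valid away from the unital extension, the comparison is just $\mu_{n+2} \sma \id$ on the parameter factors. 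The simplicial identities will follow from the compatibility $\mu_2(\ell_1,\mu_2(\ell_2,\ell_3)) = \mu_3(\ell_1,\ell_2,\ell_3) = \mu_2(\mu_2(\ell_1,\ell_2),\ell_3)$ inside $\ooCo(3)$ and its higher-arity analogues, precisely the identities that make $B^M$ strictly associative in the first place.

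Third, I will verify the level-wise comparison is a weak equivalence. Since both $(\bR^{>0})^{n+2}$ and $\ooCo(n+2)$ are contractible, and $\mu_{n+2}$ factors through a contractible subspace of $\ooCo(n+2)$, smashing with the common factor $X \sma B^{\sma n} \sma Y$ preserves the equivalence. The unital extension is handled exactly as in Proposition~\ref{prop:UooCo} (cf.~\cite[6.2]{Mandell-Smash}), the same reasoning that shows the comparison $B^M \to B$ is a weak equivalence. Passing to geometric realization then yields the desired equivalence on derived balanced smash products, provided that both simplicial bar constructions are Reedy-cofibrant enough for their realizations to represent the derived functor.

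The main obstacle I anticipate is the careful verification that these bar constructions indeed model the derived balanced smash product when $X^{\bar M}$ and $Y^{\bar M}$ are not a priori cofibrant as modules. I expect this to require either a preliminary cofibrant replacement (whose compatibility with the comparison map must then be checked), or an argument that the Moore constructions are sufficiently good on the nose, perhaps by exploiting the $h$-cofibration $\bS \sma \bR^{\geq 0}_+ \hookrightarrow B^M$ and a filtration argument. A parallel difficulty on the operadic side is the precise identification of the operadic bar construction $|B^{\ooCo}_\bullet(X,B,Y)|$ with $\Tor^{B,\ooCo}(X,Y)$; here I will lean on Theorem~\ref{thm:UB} to control the enveloping-algebra cofibrant replacements, together with the standard argument that Reedy-cofibrant simplicial bar constructions over ``good'' operads compute the derived balanced smash product.
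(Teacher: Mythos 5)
Your proposal takes a genuinely different route from the paper's, and it has a real gap that would take significant further work to close; meanwhile the paper's proof is much more economical.

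The paper argues by cellular induction. Taking $X$ and $Y$ cofibrant and $Y = \colim Y_n$ a cellular left $B$-module over $\ooCo$ (so built from generating cells for $U^{\ooCo}_L B$-modules), it chooses a compatible cofibrant approximation $Y'_\bullet \to Y^{\bar M}_\bullet$ and observes that both $X^{\bar M}\sma_{B^M}(-)$ and $X\sma^{\ooCo}_B(-)$ preserve $h$-cofibrations, all colimits, and weak equivalences between cofibrant objects. This reduces the claim to the cell quotients $Y_n/Y_{n-1} = B\sma F_n S^m$, where both balanced smash products collapse to $X\sma F_n S^m$ up to equivalence (using $\bar M$-contractibility and Proposition~\ref{prop:UooCo}) and the comparison is transparent. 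No bar constructions, no new machinery.

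Your bar-construction route has two significant unaddressed problems. First, the ``operadic two-sided bar construction'' $B^{\ooCo}_n(X,B,Y) = \ooCo(n+2)_+ \sma X \sma B^{\sma n}\sma Y$ is not a standard object, and you do not construct its simplicial structure. The face maps would need to factor a configuration in $\ooCo(n+2)$ through $\circ_{i+1}\colon \ooCo(n+1)\times\ooCo(2)\to\ooCo(n+2)$, and while there is a continuous ``smallest bounding-box'' section for which the simplicial face identities plausibly hold, this needs to be written out and verified. More seriously, the degeneracy maps are obstructed outright: they would require inserting a degenerate (zero-length) interval into $\ooCo(n+2)$, but every interval in $\ooCo$ has strictly positive length. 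So at best you get a semi-simplicial object, and then the standard machinery for extracting derived functors from bar resolutions does not directly apply. Second, as you flag yourself but do not resolve, the identification $|B^{\ooCo}_\bullet(X,B,Y)| \simeq \Tor^{B,\ooCo}(X,Y)$ does not follow from Theorem~\ref{thm:UB} alone; $\Tor^{B,\ooCo}$ is defined via cofibrant replacement over the enveloping algebras $U^{\ooCo}_{L/R}B$ and $\Bal^{\ooCo}B$, and the resulting bar resolutions have $2n+1$ copies of $B$ in simplicial degree $n$, not $n$ copies, so your construction does not arise from those resolutions in any evident way. Without a precise comparison, the crucial step of your argument is missing.

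What your approach would buy, if completed, is an explicit conceptual simplicial model tying the Moore parameters to configurations in $\ooCo$, which could be useful elsewhere. But the paper's cell-induction proof is shorter, avoids constructing any new simplicial objects, and needs no Reedy-cofibrancy or semi-simplicial workarounds. If you pursue your route, you must at minimum (a) give the face maps and establish the simplicial (or carefully handled semi-simplicial) identities, and (b) prove the representability statement for $\Tor^{B,\ooCo}$, neither of which is in the proposal.
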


\begin{proof}
The proof follows the usual induction up the cellular filtration argument.
We may as well take $X$ and $Y$ to be cofibrant. Let $Y'\to Y^{M}$ be a
cofibrant approximation; we want to show that the composite point-set map
\[
X^{\bar M}\sma_{B^{M}}Y'\to 
X^{\bar M}\sma_{B^{M}}Y^{\bar M}\to X\sma^{\ooCo}_{B}Y
\]
is a weak equivalence.  Without loss of
generality, we can assume that $Y$ is a cellular left $B$-module,
$Y=\colim Y_{n}$ where each $Y_{n}$ is formed as the pushout of cell
attachments using the generating cofibrations~\cite[12.1]{MMSS} in the
model category of $U^{\ooCo}_{L}B$-modules as cells.   We can likewise
arrange that $Y'=\colim Y'_{n}$ where each $Y'_{n}\to
Y'_{n+1}$ is the inclusion of a subcomplex, and we have a system of
compatible weak equivalences $Y'_{n}\to Y^{M}_{n}$.  Since both
$X^{\bar M}\sma_{B^{M}}(-)$ and $X\sma^{\ooCo}_{B}(-)$ preserve
$h$-cofibrations and all colimits, it suffices to show that the maps
\[
X^{\bar M}\sma_{B^{M}}(Y'_{n}/Y'_{n-1})\to 
X^{\bar M}\sma_{B^{M}}(Y^{\bar M}_{n}/Y^{\bar M}_{n-1})
\to X\sma^{\ooCo}_{B}(Y_{n}/Y_{n-1})
\]
are weak equivalences, where we understand $Y'_{-1}=Y_{-1}=*$.  Since
both functors $X^{\bar M}\sma_{B^{M}}(-)$ and $X\sma^{\ooCo}_{B}(-)$ preserve
weak equivalences between cofibrant objects and coproducts, it
suffices to consider the case when $Y_{n}/Y_{n-1}=B\sma F_{n}S^{m}$ (for
some $m,n\in \bN$) and this case is clear.
\end{proof}

\begin{cor}
Let $A$ be an associative ring orthogonal $G$-spectrum, let $X$ be a
right $A$-module and let $Y$ be a left $A$-module.  Then the natural
transformation of balanced smash products
\[
T^{\bar M}X\sma_{T^{M}\!A}T^{\bar M}Y\to 
T^{\oC_{1}}_{G} X\sma_{T^{\oC_{1}}_{G}\!A}T^{\oC_{1}}_{G} Y
\]
induces an isomorphism on derived balanced smash products. 
\end{cor}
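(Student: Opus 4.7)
The plan is to invoke the preceding theorem with $B = T^{\oC_{1}}_{G}A$ (as a $\ooCo$-algebra), viewing $T^{\oC_{1}}_{G}X$ and $T^{\oC_{1}}_{G}Y$ as right and left $B$-modules over $\ooCo$ via the operadic structure maps~\eqref{eq:operadassoc}. The preceding theorem then yields that the canonical map
\[
(T^{\oC_{1}}_{G}X)^{\bar M}\sma_{(T^{\oC_{1}}_{G}A)^{M}}(T^{\oC_{1}}_{G}Y)^{\bar M}\to T^{\oC_{1}}_{G}X\sma^{\ooCo}_{T^{\oC_{1}}_{G}A}T^{\oC_{1}}_{G}Y
\]
induces an isomorphism on derived balanced smash products. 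The work, then, is to identify this left-hand side with $T^{\bar M}X\sma_{T^{M}A}T^{\bar M}Y$ and to check that the induced natural transformation is the one in the statement.

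To perform the identification, I would trace through the constructions of Section~\ref{sec:Moore}. The functor $(-)^{\bar M}=(-)\sma \bR^{>0}_{+}$ commutes with the sequential colimits and bar-construction homotopy colimits used to assemble $T^{\bar M}_{*}X$ out of the $\bar T_{i,j}X$. Since $T^{\oC_{1}}_{G}X$ admits a natural bar-construction model $\colim_{n}\hocolim_{i-j\leq n}\bar T_{i,j}X$ (whose structure map to the usual point-set model $(R_{G}(F(EG_{+},R_{G}X)\sma \tEOG{\oC_{1}}))^{G}$ is a weak equivalence), smashing this model with $\bR^{>0}_{+}$ recovers $T^{\bar M}X$ on the nose; the analogous identification holds for $T^{\bar M}Y$, and for $T^{M}A$ after adjoining the basepoint copy at $\bR^{\geq 0}\setminus \bR^{>0}$ using the unit map $\bS\to \bar T_{i,0}A$. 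Under these identifications, the pairing $\mu$ of Construction~\ref{cons:mu} (through $\mu_{n}$) is precisely the Moore-rectified pairing obtained from the $\ooCo$-structure on $T^{\oC_{1}}_{G}A$, so the natural transformation of the statement is identified with the one produced by the preceding theorem.

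The main obstacle is the on-the-nose (or canonically weak equivalent) matching in the previous paragraph: the filtered construction of $T^{\bar M}$, $T^{M}$ in Section~\ref{sec:Moore} was tailored to produce a filtered lax monoidal functor, and its point-set pairing is defined directly via $\mu_{n}\colon(\bR^{>0})^{n}\to \ooCo(n)$ combined with the operadic structure~\eqref{eq:operadassoc} on the $\bar T_{i,j}$. One must verify that this agrees with the abstract enveloping-algebra-level Moore construction $(T^{\oC_{1}}_{G}A)^{M}$ and that the module structures on $T^{\bar M}X$ and $T^{\bar M}Y$ correspondingly agree with those on $(T^{\oC_{1}}_{G}X)^{\bar M}$ and $(T^{\oC_{1}}_{G}Y)^{\bar M}$. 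Since the operadic structure used in both constructions is the very same structure~\eqref{eq:operadassoc}, and the Moore reparametrization $(-)\sma \bR^{>0}_{+}$ commutes with the (homotopy) colimits in sight, this is a diagram-chase. Once the identifications are in place, the corollary is immediate from the preceding theorem.
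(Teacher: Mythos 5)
Your proposal matches the paper's (unstated) intent: the corollary is meant to follow immediately from the theorem by specializing $B$, $M$, $N$ to the Tate constructions, and you are right that the only real content is identifying $T^{\bar M}$, $T^{M}$ with the Moore constructions $(-)^{\bar M}$, $(-)^{M}$ applied to an appropriate model of the Tate fixed points. However, there is one point where you should be more careful. The "natural bar-construction model'' $\colim_{n}\hocolim_{i-j\leq n}\bar T_{i,j}X$ that gives $T^{\bar M}X$ on the nose after $\sma\,\bR^{>0}_{+}$ is \emph{not} the orthogonal spectrum $T^{\oC_{1}}_{G}X=(R_{G}(F(EG_{+},R_{G}X)\sma \tEOG{\oC_{1}}))^{G}$ appearing in the target of the corollary's map --- it only maps to it by a weak equivalence. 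So applying the preceding theorem with $B=$ hocolim model of $T^{\oC_{1}}_{G}A$ produces an isomorphism landing in $\Tor^{\mathrm{(hocolim\;model)},\ooCo}$, and one still needs the additional (easy, but genuinely separate) step that the map of $\ooCo$-algebras and modules from the hocolim models to the point-set models $T^{\oC_{1}}_{G}A$, $T^{\oC_{1}}_{G}X$, $T^{\oC_{1}}_{G}Y$ induces an isomorphism on derived $\ooCo$-balanced smash products. This extra invariance is available precisely because Theorem~\ref{thm:UB} is proved for \emph{arbitrary} $\ooCo$-algebras (not just cofibrant ones), so that $\Tor^{B,\ooCo}$ sends weak equivalences of $\ooCo$-algebras and modules to isomorphisms; but it is not part of the statement of the preceding theorem, and your write-up folds it into ``the identifications'' without acknowledging that it is a second comparison. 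Once you make that step explicit, the argument is complete and agrees with the paper's.
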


If the
underlying orthogonal $G$-spectra of $X$ and $Y$ come with structure
maps $\bS\to X$ and $\bS\to Y$, then the natural transformation of
balanced smash products
\[
T^{\bar M}X\sma_{T^{M}\!A}T^{\bar M}Y\to
T^{M}X\sma_{T^{M}\!A}T^{M}Y
\]
induces a weak equivalence of derived balanced smash products.  This
completes a comparison of the derived balanced smash product for our
filtered lax monoidal model and our (point-set) lax symmetric monoidal
model.  For the comparison of the maps 
\[
T^{M}X\smal_{T^{M}\!A}T^{M}Y\to T^{M}(X\smal_{A}Y)
\]
and
\[
JT^{\oL}_{G}X\smal_{JT^{\oL}_{G}A}JT^{\oL}_{G}Y\to JT^{\oL}_{G}(X\smal_{A}Y)
\]
we combine with the natural maps on derived functors induced by the
maps~\eqref{eq:bsmap} to obtain the following commuting diagram, where
all vertical arrows are isomorphisms in the stable category.
\[
\xymatrix@R-1pc{%
T^{\bar M}X\smal_{T^{M}\!A}T^{\bar M}Y
\ar[r]\ar[d]_-{\simeq}
&T^{\bar M}(X\smal_{A}Y)\ar[d]^-{\simeq}\\
\Tor^{T^{\oC_{1}}_{G}A,\ooCo}(T^{\oC_{1}}_{G}X,T^{\oC_{1}}_{G}Y)
\ar[r]
&T^{\oC_{1}}_{G}(X\smal_{A}Y)\\
\Tor^{T^{\oL\times \oC_{1}}_{G}A,\oL\times \ooCo}(T^{\oL\times \oC_{1}}_{G}X,T^{\oL\times \oC_{1}}_{G}Y)
\ar[r]\ar[d]_-{\simeq}\ar[u]^-{\simeq}
&T^{\oL\times \oC_{1}}_{G}(X\smal_{A}Y)
\ar[u]_-{\simeq}\ar[d]^-{\simeq}\\
\Tor^{T^{\oL\times \oCO}_{G}A,\oL\times \oCO}(T^{\oL\times \oCO}_{G}X,T^{\oL\times \oCO}_{G}Y)
\ar[r]
&T^{\oL\times \oCO}_{G}(X\smal_{A}Y)\\
JT^{\oL}_{G}X\smal_{JT^{\oL}_{G}A}JT^{\oL}_{G}Y
\ar[r]\ar[u]^-{\simeq}
&JT^{\oL}_{G}(X\smal_{A}Y)\ar[u]_-{\simeq}
}
\]

%%%%%%%%%%%%%%%%%%%%%%%%%%%%%%%%%%%%%%%%
\section{Identification of the enveloping algebras and \texorpdfstring{$\Bal$}{Bal}}

In this section, we identify in more concrete terms the enveloping
algebras $U^{\ooO}_{L}B$, $U^{\ooO}_{R}B$, and balanced product object
$\Bal^{\ooO}B$ for an algebra $B$ over one of the particular
non-$\Sigma$ $A_{\infty}$ operads $\ooO=\ooCo$, $\oL\times \ooCo$,
$\oL\times \oCO$, and $\oL$.  As a consequence, we deduce
Theorems~\ref{thm:UB} and~\ref{thm:BHbs}.  The theorems only refer to the
underlying orthogonal spectra, but we have included a full description
of the ring and module structures for completeness.  We work one operad at a
time.  In each of the following subsections, we use $B$ to denote an
arbitrary $\ooO$-algebra for the given non-$\Sigma$ operad $\ooO$.

\subsection*{The operad \texorpdfstring{$\ooCo$}{C1}} 
For this operad, the left enveloping algebra was identified in
\cite[2.5]{Mandell-Smash} and we only need to review its description.

Let $D$ denote the subspace of $\ooCo(1)$
where the interval does not start at $0$; then $U^{\ooCo}_{L}B$ is the
pushout 
\[
U^{\ooCo}_{L}B=(\ooCo(1)_{+}\sma \bS)\cup_{D_{+}\sma \bS}(D_{+}\sma B)
\]
with unit induced by the map 
\[
\bS\iso \{1\}_{+}\sma \bS\to \ooCo(1)_{+}\sma \bS\to U^{\ooCo}_{L}B
\]
and product induced as follows.  Given elements $[x_{1},y_{1}]$ and
$[x_{2},y_{2}]$ of $D$, we get an element 
\[
[0,x_{1}/(x_{1}+(y_{1}-x_{1})x_{2})],
[x_{1}/(x_{1}+(y_{1}-x_{1})x_{2}),1]
\]
of $\ooCo(2)$ and an element
\[
[x_{1}+(y_{1}-x_{1})x_{2},x_{1}+(y_{1}-x_{1})y_{2}]
\]
of $D$. This can also be expressed in terms of $\circ_{2}$ where we
view $D$ as in \cite[\S2]{Mandell-Smash} as the subset of $\ooCo(2)$
where the first interval starts at $0$ and ends at the start of the second
interval, which is the given interval that does not start at 0.
The picture of
$\{[0,x_{1}],[x_{1},y_{1}]\}\circ_{2}\{[0,x_{2}],[x_{2},y_{2}]\}$ is
\[
\subsegnolabel{4em}\uplabel{x_{1}}%
\underbrace{\subsegnobeg{(y_{1}-x_{1})x_{2}}{8em}%
\subsegnobl{5em}}_{(y_{1}-x_{1})y_{2}}
\subsegnobl{3em}\uplabel{y_{1}}%
\subsegnobl{2em}\ .
\]
The first two segments translated and rescaled to begin at 0 and end
at 1 give the element of $\ooCo(2)$ and the third segment gives the
element of $D$. 
We use the element of $\ooCo(2)$ to multiply the copies of $B$ and the
element of $D$ above for the new element of $D$.  More formally,
writing $f\colon D\times D\to \ooCo(2)$ and $g\colon D\times D\to
\ooCo(1)$, the product is induced by
\[
(D_{+}\sma B)\sma (D_{+}\sma B)\iso (D\times D)_{+}\sma B\sma B
\overto{(g\times f)\sma \id} D_{+}\sma \ooCo(2)_{+}\sma B\sma B
\overto{\id\sma \xi_{2}} 
D_{+}\sma B
\]
where $\xi$ is the action of $\ooCo$ on $B$.

The right enveloping algebra has a similar description except that we
use the subspace $D'$ of $\ooCo(1)$ of intervals that do not end at
$1$.  The underlying orthogonal spectrum is then 
\[
U^{\ooCo}_{R}B=(\ooCo(1)_{+}\sma \bS)\cup_{D'_{+}\sma \bS}(D'_{+}\sma B)
\]
with unit induced by the map 
\[
\bS\iso \{1\}_{+}\sma \bS\to \ooCo(1)_{+}\sma \bS\to U^{\ooCo}_{R}B
\]
and product induced by the maps $D'\times D'\to \ooCo(2)$ and
$D'\times D'\to D'$ that send the pair of elements $[x_{1},y_{1}]$ and
$[x_{2},y_{2}]$ of $D$ to the element
\[
\left[0,\frac{(y_{2}-x_{2})(1-y_{1})}{(y_{2}-x_{2})(1-y_{1})+(1-y_{2})}\right],
\left[\frac{(y_{2}-x_{2})(1-y_{1})}{(y_{2}-x_{2})(1-y_{1})+(1-y_{2})},1\right]
\]
of $\ooCo(2)$ and the element
\[
[x_{2}+(y_{2}-x_{2})x_{1},x_{2}+(y_{2}-x_{2})y_{1}]
\]
of $D'$, respectively.  Here we are using the $\circ_{1}$ product
\[
\{[x_{2},y_{2}],[y_{2},1]\}\circ_{1} \{[x_{1},y_{1}],[y_{1},1]\},
\]
whose
picture is
\[
\subsegnoel{2em}\mvuplabel{x_{2}}{.375pt}%
\underbrace{\subseg{(y_{2}-x_{2})x_{1}}{8em}%
\subsegnobl{2em}}_{(y_{2}-x_{2})y_{1}}%
\underbrace{\subsegnolabel{6em}}_{(y_{2}-x_{2})(1-y_{1})}%
\hskip-1pt
\mvuplabel{y_{2}}{.25pt}%
\underbrace{\subsegnolabel{4em}}_{1-y_{2}}\ .
\]
The last two segments translated and rescaled to start at 0 and end at
1 give the element of $\ooCo(2)$ and the third segment (whose length
is $(y_{2}-x_{2})(y_{1}-x_{1})$) gives the element of $D'$.

To identify $\Bal^{\ooCo}B$, let $C$ be the subset of $\ooCo(2)$
consisting of those pairs of intervals $[a,b],[c,d]$ with $b<c$.  Let
$Z$ be the pushout 
\[
Z=(\ooCo(2)_{+}\sma \bS)\cup_{C_{+}\sma \bS}(C_{+}\sma B).
\]
We define the right $U^{\ooCo}_{L}B$-module structure essentially
using $\circ_{2}$. Specifically, we have maps 
\[
C\times D\to \ooCo(2)\qquad \text{and}\qquad C\times D\to C
\]
sending the elements $[a,b],[c,d]$ in
$C$ and $[x,y]$ in $D$ to 
\[
[0,(c-b)/(c-b+x(d-c))],[(c-b)/(c-b+x(d-c)),1]
\]
in $\ooCo(2)$ and 
\[
[a,b],[c+(d-c)x,c+(d-c)y]
\]
in $C$, respectively; we use the element of $\ooCo(2)$ to multiply the
two factors of $B$ and the element of $C$ as the new element of $C$.
Pictorially, 
\[
\subsegnolabel{2em}\uplabel{a}
\subsegnobl{8em}\uplabel{b}
\subsegnobeg{c-b\mathstrut}{4em}\mvuplabel{c}{-0.875pt}
\underbrace{\subsegnobeg{(d-c)x}{5em}
\subsegnobl{2em}}_{(d-c)y}
\subsegnobl{1em}\uplabel{d}
\subsegnobl{1em}
\]
the element in $\ooCo(2)$ is the third and fourth segments translated
and rescaled to start at 0 and end at 1, while the element of $C$ is
the second and fifth segments.
We also need to describe what the action of the $\ooCo(1)_{+}\sma \bS$
part does; the same formulas apply (the first formula meaning to use
the isomorphism $B\sma \bS\iso B$), and so it is easy to see that this
describes a well-defined pairing $Z\sma U^{\ooCo}_{L}B\to Z$.
Examining the formula for $\{[0,1]\}_{+}\sma \bS$ shows that the
pairing is unital, and an tedious arithmetic check shows that it is
associative.
\iffalse
(([a,b],[c,d])*[x,y])*[x',y'] is
  ooCo(2):[0,(c+(d-c)x-b)/(c+(d-c)x-b+x'(d-c)(y-x))], [ibid,1]
     q= (c+(d-c)x-b)/(c+(d-c)x-b+x'(d-c)(y-x))
  ooCo(3):[0,?][?,q][q,1] 
  where  [0,?][?,q] 
  is [0,(c-b)/(c-b+x(d-c))],[(c-b)/(c-b+x(d-c)),1]
  scaled to q
  so ? = ((c-b)/(c-b+x(d-c)))*q = (c-b)/(c+(d-c)x-b+x'(d-c)(y-x))

  C: [a,b],[c+(d-c)x+(d-c)(y-x)x',c+(d-c)x+(d-c)(y-x)y']

------------------------------------------------------------   

([a,b],[c,d])*([x,y]*[x',y']) is
([a,b],[c,d])*[x+(y-x)x',x+(y-x)y']
  ooCo(2):[0,(c-b)/(c-b+(x+(y-x)x')(d-c))],[ibid,1]
     r=(c-b)/(c-b+(x+(y-x)x')(d-c))
  ooCo(3):[0,r][r,?],[?,1]
  where [r,?][?,1] is [0,x],[x,x+(y-x)x'] scaled to 1-r and starting at r
  1-r = ((x+(y-x)x')(d-c))/(c-b+(x+(y-x)x')(d-c))
  So ? = r+ x*(1-r)/(x+(y-x)x')
       = (c-b+x(d-c))/(c-b+(x+(y-x)x')(d-c))

  C: [a,b],[c+(d-c)(x+(y-x)x'),c+(d-c)(x+(y-x)y')]
     
-----------------------------------------------------------
Both ooCo(3) are
[0,(c-b)/tl]  [r,(c-b+x(d-c))/tl],   [q,1]

tl=c-b+x(d-c)+x'*(y-x)(d-c)

\fi

The left action of $U^{\ooCo}_{R}B$ is similar, using $\circ_{1}$
(on $C$) instead of $\circ_{2}$:  We have maps
\[
D'\times C\to \ooCo(2)\qquad \text{and}\qquad D'\times C\to C
\]
that take the pair of elements $[x,y]$ in $D'$ and $[a,b],[c,d]$ in
$C$ (with $b<c)$ to the element
\[
\left[0,\frac{(b-a)(1-y)}{(b-a)(1-y)+c-b}\right],\left[\frac{(b-a)(1-y)}{(b-a)(1-y)+c-b},1\right]
\]
in $\ooCo(2)$ and the element 
\[
[a+(b-a)x,a+(b-a)y],[c,d]
\]
of $C$.  Pictorially,
\[
\subsegnolabel{2em}\uplabel{a}%
\underbrace{\subsegnobeg{(b-a)x}{4em}%
\subsegnobl{1em}}_{(b-a)y}%
\underbrace{\subsegnobl{3em}}_{\hskip-1em (b-a)(1-y)\hskip-1em}\mvuplabel{b}{-.675pt}%
\hskip-1pt
\underbrace{\subsegnolabel{4em}}_{(c-b)}\mvuplabel{c}{-1pt}%
\subsegnoel{8em}\mvuplabel{d}{-1.25pt}%
\subsegnolabel{1em}
\]
the element of $\ooCo(2)$ is the fourth and fifth segments translated
and rescaled to start at 0 and end at 1, and the element of $C$ is the
third and sixth segments.

To see that the two actions commute, we note that the two maps
\[
U^{\ooCo}_{R}B\sma Z\sma U^{\ooCo}_{L}B
\]
can each be written in terms of maps
\[
D'\times C\times D\to \ooCo(3)\qquad \text{and}\qquad 
D'\times C\times D\to C
\]
using the element of $\ooCo(3)$ to multiply the three factors of $B$. 
Writing $[x',y']$ for the element of $D'$, $[a,b],[c,d]$ for the
element of $C$, and $[x_{2},y_{2}]$ for the element of $D$, both maps
give the same element
\[
\left[0,\tfrac{(b-a)(1-y')}{\ell}\right],
\left[\tfrac{(b-a)(1-y')}{\ell},
  \tfrac{(b-a)(1-y')+(b-c)}{\ell}\right],
\left[\tfrac{(b-a)(1-y')+(b-c)}{\ell},1\right]
\]
of $\ooCo(3)$, where $\ell=(b-a)(1-y')+c-b+(d-c)x$, and both maps give
the same element 
\[
[a+(b-a)x',a+(b-a)y'],[c+(d-c)x,c+(d-c)y]
\]
of $C$. Pictorially,
\[
\subsegnolabel{2em}\uplabel{a}%
\underbrace{\subsegnobeg{(b-a)x'}{4em}%
\subsegnobl{1em}}_{(b-a)y'}%
\underbrace{\subsegnobl{3em}}_{\hskip-1em (b-a)(1-y')\hskip-1em}\mvuplabel{b}{-.675pt}%
\hskip-1pt
\subseg{(c-b)}{4em}\mvuplabel{c}{-1pt}%
\underbrace{\subsegnobeg{(d-c)x}{5em}
\subsegnobl{2em}}_{(d-c)y}
\subsegnobl{1em}\uplabel{d}
\subsegnobl{1em}
\]
the element of $\ooCo(3)$ for both maps consists of the fourth, fifth,
and sixth segments translated and rescaled to start at 0 and end at 1,
and the element of $C$ consists of the third and seventh segments.

The map $C\to \ooCo(3)$ together with the maps $\ooCo(3)_{+}\sma B\to
\Bal^{\ooCo}B$ and $\ooCo(2)_{+}\sma \bS\to \Bal^{\ooCo}B$ induces a
map $Z\to \Bal^{\ooCo}B$.

\begin{thm}
The map $Z\to \Bal^{\ooCo}B$ is an isomorphism of right
$(U^{\ooCo}_{R}B)^{\op}\sma U^{\ooCo}_{L}B$-modules.
\end{thm}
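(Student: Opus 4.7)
The plan is to establish the isomorphism in three stages: well-definedness of the forward map, compatibility with the bimodule structure, and construction of an inverse via normal-form reduction.

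First I would verify that $Z\to\Bal^{\ooCo}B$ is well-defined on the pushout: the two maps from the common subspace $C_+\sma\bS$, one going through $\ooCo(2)_+\sma\bS$ into the $n=0$ wedge summand of the coequalizer presentation of $\Bal^{\ooCo}B$, and the other going through $C_+\sma B$ (via the unit $\bS\to B$ and the ``fill-the-gap'' map $C\to\ooCo(3)$ sending $[a,b],[c,d]\mapsto[a,b],[b,c],[c,d]$) into the $n=1$ summand, agree in $\Bal^{\ooCo}B$. This agreement is precisely the coequalizer relation for $n=1$, $m_1=0$, $\beta_1\in\ooCo(0)=*$: operadic composition $\alpha\circ_2*$ deletes the middle position while $\xi_0(*)=1_B$, so the class of $([a,b],[c,d])$ at $n=0$ is identified with the class of $([a,b],[b,c],[c,d],1_B)$ at $n=1$.

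Second I would check compatibility with the $(U^{\ooCo}_R B)^{\op}\sma U^{\ooCo}_L B$-module structure. On $\Bal^{\ooCo}B$ the right $U^{\ooCo}_L B$-action is induced by operadic composition at the rightmost position of $\alpha\in\ooCo(n+2)$, and symmetrically the left $(U^{\ooCo}_R B)^{\op}$-action uses $\circ_1$. A case analysis, splitting by which summands of $Z$ and of the enveloping algebras are involved, shows that the explicit formulas describing the actions on $Z$ (the pairs of maps $C\times D\to\ooCo(2)$ and $C\times D\to C$, and symmetrically for $D'\times C$) precisely reproduce these operadic compositions: the $\ooCo(2)$-component encodes the rescaling needed to restore the ``fill-the-gap'' normal form after composition, while the $C$-component tracks the new outer intervals.

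Third, and most substantively, I would construct an inverse $\Bal^{\ooCo}B\to Z$ by specifying compatible maps $f_n\colon \ooCo(n+2)_+\sma B^{(n)}\to Z$ respecting the coequalizer relations. The strategy is normal-form reduction: given $(\alpha,b_1,\ldots,b_n)$ with outer intervals $[a,b]$, $[c,d]$, let $J$ be the convex hull of the middle intervals of $\alpha$, factor $\alpha=\alpha'\circ_2\beta$ with $\alpha'\in\ooCo(3)$ having middle interval $J$, and use the coequalizer to replace the element by $(\alpha',\xi_n(\beta)(b_1,\ldots,b_n))$, reducing to $n=1$. If $b<c$ and $J\subseteq[b,c]$, further normalize $J$ to equal $[b,c]$ by another $\ooCo(1)$-composition, producing an element of $C_+\sma B\subseteq Z$; otherwise, exploit the overlapping structure of $\ooCo$ to show the element is coequalizer-equivalent to one in the $\ooCo(2)_+\sma\bS$ summand with trivial $B$-content.

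The main obstacle I anticipate is the last case of the third step: rigorously handling the situation where $J$ does not fit inside the gap between the outer intervals (either because $b\geq c$ or because $J$ extends beyond $[b,c]$). Here one must show that the combined coequalizer relations can absorb the non-trivial $B$-content, a feature that depends on the permissive overlapping structure of $\ooCo$ (in contrast with $\oC_1$, where this flexibility is unavailable). Once this case is controlled, verifying that both compositions $Z\to\Bal^{\ooCo}B\to Z$ and $\Bal^{\ooCo}B\to Z\to\Bal^{\ooCo}B$ are the identity reduces to direct computation using the explicit formulas established in the first two steps.
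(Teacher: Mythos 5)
Your overall strategy --- well-definedness, compatibility with the bimodule actions, and an inverse by normal-form reduction --- matches the paper's, and your steps~1 and~2 are essentially what appears there (step~2 is checked in the paper on small cases $m,n\in\{0,1\}$, which suffices by additivity). For the inverse, the paper works a bit more directly than you propose: it writes down a homeomorphism $\ooCo(n+2)\overto{\iso}C\times\ooCo(n)$ for $n>0$ by peeling off the outer intervals $([x_0,y_0],[x_{n+1},y_{n+1}])\in C$ and affinely rescaling the $n$ middle intervals to live in $[0,1]$, then applies the $\ooCo(n)$-action to collapse $B^{(n)}$ to $B$ in one step, rather than your two-step reduction through $\ooCo(3)$ first. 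Both variants reach $C_+\sma B$, so this is a cosmetic difference.

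The substantive issue is that your anticipated ``main obstacle'' --- the case where the convex hull $J$ of the middle intervals fails to lie in the gap $[b,c]$, or where $b\ge c$, requiring ``the permissive overlapping structure of $\ooCo$ (in contrast with $\oC_1$)'' --- rests on a misidentification of the operad. In this paper's notation $\ooCo=\overline{\oC}_1$ is the \emph{identity-permutation component of the ordinary little $1$-cubes operad} (so intervals are pairwise disjoint and in order), while the \emph{overlapping} operad is $\oCO=\oO^\Xi_1$, a strictly larger object into which $\oC_1$ includes. The theorem you are proving is about $\Bal^{\ooCo}B$, not $\Bal^{\oCO}B$. Consequently, for any element of $\ooCo(n+2)$ with $n\ge 1$ the disjointness and ordering force $y_0\le x_1<\cdots<y_n\le x_{n+1}$, so $[x_0,y_0],[x_{n+1},y_{n+1}]$ automatically define an element of $C$ and all middle intervals automatically lie in $[y_0,x_{n+1}]$. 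The case you view as the hard one is vacuous, and the ``overlapping flexibility'' you invoke to handle it is not a property of $\ooCo$ at all. (Indeed, if it were the overlapping operad, the factorization $\ooCo(n+2)\iso C\times\ooCo(n)$ that both you and the paper rely on would fail outright and the statement as written would need to be reformulated.) Once you correct this, the remaining reduction works exactly as you sketched.

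Two smaller points you should tighten when writing this out: (a) your ``normalize $J$ to $[b,c]$ by an $\ooCo(1)$-composition'' is better phrased as ``rescale $J$ by the affine map taking $[b,c]$ to $[0,1]$ and apply the resulting $\ooCo(1)$-action to $b'$'' --- no further composition changes the ambient intervals; (b) for the claim that the composite $\Bal^{\ooCo}B\to Z\to\Bal^{\ooCo}B$ is the identity, the paper's route (observing that $\circ_2\colon\ooCo(3)\times\ooCo(n)\to\ooCo(n+2)$ is surjective for $n>0$ and restricts to the inverse of the above homeomorphism on $C\times\ooCo(n)$) is the one you should cite, rather than recomputing on generators.
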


\begin{proof}
We construct a map $\Bal^{\ooCo}B\to Z$ by constructing compatible
maps $\ooCo(n+2)_{+}\sma B^{(n)}\to Z$ as follows.  For $n=0$, we use
the map $\ooCo(2)_{+}\sma \bS\to Z$ from the construction of $Z$. For
$n>0$, we have an isomorphism 
\[
\ooCo(n+2)\to C\times \ooCo(n)
\]
defined by taking the element
\[
[x_{0},y_{0}],\ldots,[x_{n+1},y_{n+1}]
\]
in $\ooCo(n+2)$ (with $y_{i}\leq x_{i+1}$) to the element
$[x_{0},y_{0}],[x_{n+1},y_{n+1}]$ of $C$ and the element
\[
\left[\frac{x_{1}-y_{0}}{x_{n+1}-y_{0}},\frac{y_{1}-y_{0}}{x_{n+1}-y_{0}}\right],\ldots, 
\left[\frac{x_{n}-y_{0}}{x_{n+1}-y_{0}},\frac{y_{n}-y_{0}}{x_{n+1}-y_{0}}\right]
\]
in $\ooCo(n)$.  We then get a map $\ooCo(n+2)_{+}\sma B^{(n)}\to Z$
using the given element of $C$ and the element of $\ooCo(n)$ to
multiply the factors of $B$.  This obviously factors through the
coequalizer to define a map $\Bal^{\ooCo}B\to Z$.  Looking at the
composite maps 
\[
\ooCo(2)_{+}\sma \bS\to \Bal^{\ooCo}B\to Z
\quad \text{and}\quad
C_{+}\sma B\to \ooCo(3)_{+}\sma B\to \Bal^{\ooCo}B\to Z,
\]
we see that the composite map on $Z$ is the identity.  Likewise,
looking at the surjection $\circ_{2}\colon \ooCo(3)\times \ooCo(n)\to
\ooCo(n+2)$ for $n>0$ (which induces the inverse of the isomorphism
above when restricted to $C\times \ooCo(n)$), we can see that the
composite on $\Bal^{\ooCo}B$ is the identity.  To see that
$\Bal^{\ooCo}B\to Z$ is a map of bimodules, it suffices to check each
of the module structures separately.  The left action of
$U^{\ooCo}_{R}B$ on $\Bal^{\ooCo}B$ is induced by 
\begin{multline*}
(\ooCo(m+1)_{+}\sma B^{(m)})\sma (\ooCo(n+2)_{+}\sma B^{(n)})\\
\iso (\ooCo(n+2)\times \ooCo(m+1))_{+}\sma B^{(m+n)}
\overto{\circ_{1}\sma\id}\ooCo(m+n+2)_{+}\sma B^{(m+n)}.
\end{multline*}
It is now easy to check in the case $m=0,1$, $n=0,1$ that the
composite map to $Z$ is the same as the composite
\[
(\ooCo(m+1)_{+}\sma B^{(m)})\sma (\ooCo(n+2)_{+}\sma B^{(n)})
\to U^{\ooCo}_{R}B\sma Z
\]
with the action map $U^{\ooCo}_{R}B\sma Z$ defined above, and this
suffices to show the result. The case of the right action by
$U^{\ooCo}_{L}B$ is similar.
\end{proof}

We can now prove the case of Theorem~\ref{thm:UB} for $\ooO=\ooCo$,
which we state as the following proposition.

\begin{prop}\label{prop:UooCo}
The maps $\ooCo(2)_{+}\sma B\to U^{\ooCo}_{L}B$, $\ooCo(2)_{+}\sma
B\to U^{\ooCo}_{L}B$, and $\ooCo(3)_{+}\sma B\to \Bal^{\ooCo}B$ are
homotopy equivalences of orthogonal spectra.
\end{prop}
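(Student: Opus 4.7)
The plan is to treat the three maps uniformly, with the first being the model case. For $\ooCo(2)_{+}\sma B\to U^{\ooCo}_{L}B$, the key observation is the ``last interval'' projection $p\colon \ooCo(2)\to D$, $(c_{1},c_{2})\mapsto c_{2}$. This is well-defined because if $([x_{1},y_{1}],[x_{2},y_{2}])\in \ooCo(2)$ then $x_{2}\ge y_{1}>x_{1}\ge 0$, so $x_{2}>0$ and $c_{2}\in D$; the fiber over $[x_{2},y_{2}]\in D$ is the $2$-simplex $\{[x_{1},y_{1}]:0\le x_{1}<y_{1}\le x_{2}\}$, so $p$ is a homotopy equivalence of contractible spaces. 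Moreover, the canonical map $\ooCo(2)_{+}\sma B\to U^{\ooCo}_{L}B$ factors as $\ooCo(2)_{+}\sma B\overto{p_{+}\sma\id} D_{+}\sma B\to U^{\ooCo}_{L}B$, the second arrow being the pushout inclusion: this factorization is visible directly from the coequalizer relation at $n=1$, $m_{1}=0$, since $\ooCo(0)=*$ gives $c\circ_{1}(*,1)=c_{2}=p(c)$ and the matching $\ooCo$-action sends $*$ to the unit $1_{B}=\eta(1)$.

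It therefore suffices to show $D_{+}\sma B\to U^{\ooCo}_{L}B$ is a homotopy equivalence. Define a retraction $r\colon U^{\ooCo}_{L}B\to B$ by collapsing $D\to *$ on $D_{+}\sma B$ and by composing the collapse $\ooCo(1)\to *$ with the unit $\eta\colon\bS\to B$ on $\ooCo(1)_{+}\sma\bS$; the two agree on $D_{+}\sma \bS$ (both produce $1_{B}$) and hence descend to the pushout. A homotopy inverse $s\colon B\to U^{\ooCo}_{L}B$ is obtained by choosing $d_{0}=[\tfrac12,1]\in D$ and sending $b\mapsto(d_{0},b)\in D_{+}\sma B$. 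Then $r\circ s=\id_{B}$, and $s\circ r$ is homotopic to $\id$ by a single linear homotopy $h_{t}(x,y)=((1-t)x+t/2,(1-t)y+t)$: this sends $\ooCo(1)$ into $D$ for $t>0$ and into $\{d_{0}\}$ at $t=1$, and in particular contracts $D$ onto $\{d_{0}\}$ through $D$. The same formula applies on both $D_{+}\sma B$ and $\ooCo(1)_{+}\sma\bS$, so the homotopies agree on $D_{+}\sma\bS$ and glue to a well-defined homotopy on the pushout $U^{\ooCo}_{L}B$.

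For $\ooCo(2)_{+}\sma B\to U^{\ooCo}_{R}B$ the argument is symmetric via the ``first interval'' projection $\ooCo(2)\to D'$, $(c_{1},c_{2})\mapsto c_{1}$, which lands in $D'$ because $y_{1}\le x_{2}<y_{2}\le 1$ forces $y_{1}<1$. For $\ooCo(3)_{+}\sma B\to\Bal^{\ooCo}B\cong Z$ (with $Z=(\ooCo(2)_{+}\sma\bS)\cup_{C_{+}\sma\bS}(C_{+}\sma B)$), use the ``outer intervals'' projection $\ooCo(3)\to C$, $(c_{1},c_{2},c_{3})\mapsto(c_{1},c_{3})$, which lands in $C$ since $y_{1}\le x_{2}<y_{2}\le x_{3}$ implies $y_{1}<x_{3}$; both $\ooCo(3)$ and $C$ are convex, and the fiber is the $2$-simplex of middle intervals $[x_{2},y_{2}]$ with $y_{1}\le x_{2}<y_{2}\le x_{3}$. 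The three-step argument now runs exactly as above, with $C$ replacing $D$, $\ooCo(2)$ replacing $\ooCo(1)$, and a base pair $(c_{1}^{0},c_{3}^{0})\in C$ such as $([0,\tfrac14],[\tfrac34,1])$ replacing $d_{0}$; the contracting homotopy is again the componentwise linear contraction inside the convex space $\ooCo(2)$.

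The main technical point is constructing the homotopy $s\circ r\simeq\id$ coherently on the pushout as a map of orthogonal spectra. The convex structure on $\ooCo(1)$ and $\ooCo(2)$ makes this easy: the same linear formula gives compatible homotopies on both factors of each pushout, so agreement on the gluing locus ($D_{+}\sma\bS$ or $C_{+}\sma\bS$) is automatic, and the resulting maps of based spaces smash trivially with $\bS$ to produce the required maps of orthogonal spectra.
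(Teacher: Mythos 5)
Your overall strategy---retract $U^{\ooCo}_{L}B$ onto $B$ and contract the pair $(\ooCo(1),D)$ onto a basepoint---is the right one and matches the paper's, and your construction of $r$, $s$, and the gluing homotopy $h_{t}$ is correct. The problem is the very first step: the asserted factorization of the canonical map as $\ooCo(2)_{+}\sma B\overto{p_{+}\sma\id}D_{+}\sma B\to U^{\ooCo}_{L}B$ is false, and this breaks the proof as written.

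The relations you need to worry about in the defining coequalizer of $U^{\ooCo}_{L}B$ are those at $n=1$ with $m_{1}\ge 1$, which you never examine. That relation identifies $(c\circ_{1}\gamma,\,b)$ with $(c,\,\gamma\cdot b)$ for every $c\in\ooCo(2)$, $\gamma\in\ooCo(1)$, $b\in B$. Since $c\circ_{1}\gamma$ and $c$ have the same last interval, $p_{+}\sma\id$ sends the two sides to $(p(c),b)$ and $(p(c),\gamma\cdot b)$, which are distinct points of $D_{+}\sma B$ whenever $\gamma$ acts nontrivially on $b$---so $p_{+}\sma\id$ does \emph{not} coequalize, and the canonical map does not factor through it. The $n=1$, $m_{1}=0$ relation you cite only sees the unit: since $\ooO(0)=*$, it shows $(c,\eta(s))\sim(p(c),s)$ but says nothing at a general $b\in B$. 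Concretely, writing $\iota\colon D\hookrightarrow\ooCo(2)$ for the embedding $[x,y]\mapsto([0,x],[x,y])$ and decomposing $c=\iota(p(c))\circ_{1}\gamma(c)$, the map $\ooCo(2)_{+}\sma B\to D_{+}\sma B$ actually induced by the pushout identification is $(c,b)\mapsto(p(c),\,\gamma(c)\cdot b)$; it genuinely uses the $\ooCo(1)$-action on $B$ and is not the geometric projection. The same error recurs in your treatment of $\Bal^{\ooCo}B$ via the ``outer intervals'' projection. You could repair the argument by replacing $p_{+}\sma\id$ with the corrected map and then producing a homotopy that simultaneously contracts $c$ and drags $\gamma(c)$ to the identity in $\ooCo(1)$, or---as the paper's sketch does---avoid asserting any factorization and instead work directly with the two composites $g\circ f$ on $\ooCo(2)_{+}\sma B$ and $f\circ g$ on $U^{\ooCo}_{L}B$.
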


\begin{proof}
The case of the enveloping algebras are essentially proved
in~\cite[1.1]{Mandell-Smash}, but it is no extra work to include those
cases here.  We have maps 
\[
U^{\ooCo}_{L}B\to B, \qquad
U^{\ooCo}_{R}B\to B, \qquad
\Bal^{\ooCo}B\to B
\]
induced by the maps $\ooCo(1)\to *$, $\ooCo(2)\to *$, and $\ooCo(3)\to
*$.  Choosing elements in $\ooCo(2)$ and $\ooCo(3)$, we then get
composite maps
\[
U^{\ooCo}_{L}B\to \ooCo(2)_{+}\sma B, \qquad
U^{\ooCo}_{R}B\to \ooCo(2)_{+}\sma B, \qquad
\Bal^{\ooCo}B\to \ooCo(3)_{+}\sma B
\]
and we see that the composites on $\ooCo(2)_{+}\sma B$ and
$\ooCo(3)_{+}\sma B$
are homotopic to the identity since $\ooCo(2)$ and $\ooCo(3)$ are
contractible.  Likewise, it is straightforward to write explicit
formulas for contractions on the pairs $(\ooCo(1),D)$,
$(\ooCo(1),D')$, and $(\ooCo(2),C)$ that produce the homotopies for
the composites on $U^{\ooCo}_{L}B$, $U^{\ooCo}_{R}B$, and
$\Bal^{\ooCo}B$.
\end{proof}

\subsection*{The operad \texorpdfstring{$\oL$}{L}}

For the $\oL$ we also need to prove
Theorem~\ref{thm:BHbs} in addition to Theorem~\ref{thm:UB}.  For this
operad, we adapt the techniques of 
EKMM~\cite[\S I.5]{EKMM}.  We begin by identifying the enveloping algebras.

\begin{prop}
The underlying orthogonal spectra of $U^{\oL}_{L}B$ and $U^{\oL}_{R}B$
are naturally isomorphic to the pushout
\[
(\oL(1)_{+}\sma \bS)\cup_{\oL(2)_{+}\sma_{\oL(1)}\bS}\oL(2)\sma_{\oL(1)}B
\]
where for $U^{\oL}_{L}B$ the action of $\oL(1)$ on $\oL(2)$ is via
$\oL(1)\times \{1\}\subset \oL(1)\times \oL(1)$ and for $U^{\oL}_{L}B$ the action of $\oL(1)$ on $\oL(2)$ is via
$\{1\}\times \oL(1)\subset \oL(1)\times \oL(1)$.
\end{prop}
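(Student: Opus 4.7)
My plan is to identify $U^{\oL}_L B$ by cutting the defining coequalizer down to contributions from arities $n \leq 1$, exploiting the surjectivity of operadic composition for the linear isometries operad. The $U^{\oL}_R B$ case is entirely symmetric, swapping $\circ_1$ for $\circ_2$ throughout and replacing bijective isometries in the first $n$ slots with bijective isometries in the last $n$ slots.

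The forward map $\Phi$ from the pushout into $U^{\oL}_L B$ arises from the natural inclusions $\oL(1)_+ \sma \bS \hookrightarrow U^{\oL}_L B$ (the $n = 0$ summand of the defining wedge) and $\oL(2)_+ \sma B \hookrightarrow U^{\oL}_L B$ (the $n = 1$ summand). The coequalizer relation for $n = 1,\ m_1 = 1$,
\[
(\gamma \circ_1 \alpha,\, b) \sim (\gamma,\, \alpha \cdot b) \qquad (\alpha \in \oL(1),\ \gamma \in \oL(2),\ b \in B),
\]
descends the latter inclusion to $\oL(2)_+ \sma_{\oL(1)} B$, matching the $\oL(1) \times \{1\}$ convention. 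The relation for $n = 1,\ m_1 = 0$, using the unit map $\bS \to B$ arising from $\oL(0) = \ast$, identifies the copy of $\oL(2)_+ \sma_{\oL(1)} \bS$ inside $\oL(2)_+ \sma_{\oL(1)} B$ with its image in $\oL(1)_+ \sma \bS$ under $\gamma \mapsto \gamma \circ_1 \ast$; this is exactly the pushout gluing.

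To construct an inverse $\Psi$, I use the following geometric fact: for every $n \geq 1$, the composition $\circ_1 \colon \oL(2) \times \oL(n) \to \oL(n+1)$ is surjective. Given $f \in \oL(n+1)$, choose any bijective linear isometry $\psi \colon U^n \to U$ (which exists because $U$ is a countably infinite-dimensional inner product space, so $\dim U^n = \dim U$) and set $\phi(a, b) = f|_{U^n}(\psi^{-1}(a)) + f|_U(b)$; since the images of $f|_{U^n}$ and $f|_U$ are orthogonal (as $f$ is an isometry), $\phi \in \oL(2)$ and $\phi \circ_1 \psi = f$. For $n \geq 2$ I then define $\Psi$ on the $n$th summand by $(f, b_1, \ldots, b_n) \mapsto [\phi,\, \psi \cdot (b_1, \ldots, b_n)] \in \oL(2)_+ \sma_{\oL(1)} B$, where $\psi \cdot \vec b$ denotes the operad action. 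Independence of the bijective decomposition follows because any two choices $\psi, \psi'$ with $\phi \circ_1 \psi = \phi' \circ_1 \psi' = f$ differ by a unique $\mu \in \oL(1)$ via $\psi' = \mu \circ \psi$ and $\phi' = \phi \circ_1 \mu^{-1}$, so the quotient relation in $\oL(2)_+ \sma_{\oL(1)} B$ combined with operadic associativity $\psi' \cdot \vec b = \mu \cdot (\psi \cdot \vec b)$ gives the same class.

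The main obstacle is verifying that $\Psi$ respects all the remaining coequalizer relations (those with $n \geq 2$ or with larger $m_i$); this is combinatorial bookkeeping showing that such relations are generated under operadic associativity by the single $n = 1$ relation already imposed in the target pushout. Granted this, $\Phi \circ \Psi = \id$ on each summand of the defining wedge because $(f, \vec b)$ and $(\phi, \psi \cdot \vec b)$ are identified in $U^{\oL}_L B$ by the $n = 1,\ m_1 = n$ coequalizer relation applied to the chosen decomposition $f = \phi \circ_1 \psi$, while $\Psi \circ \Phi = \id$ by direct inspection on the $n = 0$ and $n = 1$ summands.
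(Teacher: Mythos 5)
Your route is correct and is, in substance, a proof-from-scratch of the instance of Hopkins' Lemma that the paper invokes as a black box. The paper constructs the same backward map $U^{\oL}_L B \to P$ by citing \cite[I.5.4]{EKMM} to obtain the identification $\oL(n+1)_{+}\sma B^{(n)} \iso \oL(2)_{+}\sma_{\oL(1)}(\oL(n)_{+}\sma B^{(n)})$, and then uses the $\oL$-action $\oL(n)_{+}\sma B^{(n)} \to B$ to land in $\oL(2)_{+}\sma_{\oL(1)}B \to P$; your explicit choice of a bijective isometry $\psi\colon U^n \to U$ (exactly the section that makes Hopkins' split coequalizer argument go) reproduces that isomorphism concretely. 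So both proofs have the same skeleton: $\Phi$ on the $n=0,1$ summands, $\Psi$ built from a factorization of arity $\geq 2$, check the composites.

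Two points where your sketch should be tightened. First, your invocation of $\mu^{-1}$ is sound only because you restrict to \emph{bijective} $\psi,\psi'$ — the resulting $\mu = \psi'\circ\psi^{-1}$ is then an invertible element of the monoid $\oL(1)$, so the double-sided zigzag $(\phi\circ_1\mu^{-1},\,\mu\cdot c)\sim(\phi,\,c)$ is legitimate. But say this explicitly: a general element of $\oL(1)=\aI(U,U)$ is not invertible. Second, the piece you flag as ``combinatorial bookkeeping'' is where the real content (i.e., what Hopkins' Lemma packages) lives, and it is worth writing down. Given the coequalizer relation $f = g\circ(\nu_1,\dotsc,\nu_n,1)$ and chosen bijective $\psi\colon U^m\to U$, $\psi'\colon U^n\to U$, one computes directly that $\phi = \phi'\circ_1\bigl(\psi'\circ(\nu_1,\dotsc,\nu_n)\circ\psi^{-1}\bigr)$. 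The connecting isometry $\psi'\circ(\nu_1,\dotsc,\nu_n)\circ\psi^{-1}\in\oL(1)$ is \emph{not} invertible in general, since $(\nu_1,\dotsc,\nu_n)$ need not be; the descent then goes through the one-sided balancing relation $(\alpha\circ_1\mu, c)\sim(\alpha,\mu\cdot c)$ in $\oL(2)_{+}\sma_{\oL(1)}B$ and operadic associativity $\bigl(\psi'\circ(\nu_1,\dotsc,\nu_n)\bigr)\cdot\vec b = \psi'\cdot(\nu_1\cdot\vec b_1,\dotsc,\nu_n\cdot\vec b_n)$, not through any inverse. Phrasing this as ``the relations are generated by the $n=1$ relation'' is suggestive but not quite the mechanism — what you are really checking is commutation of $\Psi$ with the two legs of the coequalizer diagram. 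With these two clarifications your argument is a complete and somewhat more elementary substitute for the citation.
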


\begin{proof}
We treat the case of $U^{\oL}_{L}B$ as the other case is entirely
similar.  For the purpose of the proof, denote the pushout as $P$.
The defining coequalizer for $U^{\oL}_{L}B$ induces the map $P\to
U^{\oL}_{L}B$.  Using Hopkins' Lemma~\cite[I.5.4]{EKMM}, we have
isomorphisms 
\[
\oL(n+1)_{+}\sma B^{(n)}\iso \oL(2)_{+}\sma_{\oL(1)}(\oL(n)_{+}\sma B^{(n)})
\]
from which the $\oL$-action on $B$ induces a map to $P$.  These maps
glue over the coequalizer to construct a map $U^{\oL}_{L}B\to P$.
We see that the composite map is the identity on $P$ by looking at the
composite maps $\oL(2)_{+}\sma B\to P$ and $\oL(1)_{+}\sma \bS\to P$.
Likewise the map
\[
(\oL(1)_{+}\sma B)\vee \bigvee \oL(2)_{+}\sma (\oL(n)_{+}\sma B^{(n)}) \to U^{\oL}_{L}B
\]
is unchanged by composing with the composite $U^{\oL}_{L}B\to P\to
U^{\oL}_{L}B$, and we conclude that the composite is the identity on
$U^{\oL}_{L}B$ (since the coequalizer description of $U^{\oL}_{L}B$
plus Hopkins' Lemma implies that the displayed map is an epimorphism).
\end{proof}

For the pushout description of $U^{\oL}_{L}B$ in the previous
proposition, the unit $\bS\to U^{\oL}_{L}B$ is induced by the map
\[
\bS\iso \{1\}_{+}\sma \bS\to \oL(1)_{+}\sma \bS\to U^{\oL}_{L}B.
\]
The product is induced by the map 
\begin{multline*}
(\oL(2)_{+}\sma_{\oL(1)}B)\sma (\oL(2)_{+}\sma_{\oL(1)}B)
\to
(\oL(2)_{+}\sma_{\oL(1)}B)\sma_{\oL(1)} (\oL(2)_{+}\sma_{\oL(1)}B)\\
\iso
\oL(3)_{+}\sma_{\oL(1)\times \oL(1)}B\sma B
\iso
\oL(2)_{+}\sma_{\oL(1)}(\oL(2)_{+}\sma_{\oL(1)\times \oL(1)}B\sma B)\\
\to
\oL(2)_{+}\sma_{\oL(1)}B
\end{multline*}
and the straightforward modifications using $\oL(1)_{+}\sma \bS$ in
place of one or both copies of $\oL(2)_{+}\sma B$ with $\oL(1)_{+}\sma
\bS$. (The latter is the map
\[
(\oL(1)_{+}\sma \bS)\sma (\oL(1)_{+}\sma \bS)\iso (\oL(1)\times
\oL(1))_{+}\sma \bS\to \oL(1)_{+}\sma \bS
\]
induced by the operadic multiplication.)  The product for
$U^{\oL}_{R}B$ is similar except that the operad factors (but not the
$B$ factors) transpose, plugging the left $\oL(2)$ into the right
$\oL(2)$ with respect to the operadic multiplication.

We have the following concrete description of $\Bal^{\oL}B$.  The
proof uses the same techniques as the proof of the previous proposition.

\begin{prop}
The underlying orthogonal spectrum of $\Bal^{\oL}B$ is computed by the pushout 
\[
(\oL(2)_{+}\sma \bS)\cup_{\oL(3)_{+}\sma_{\oL(1)}\bS}(\oL(3)_{+}\sma_{\oL(1)}B)
\]
where the action of $\oL(1)$ on $\oL(3)$ is via $\{1\}\times
\oL(1)\times \{1\} \subset \oL(1)\times \oL(1)\times \oL(1)$. 
\end{prop}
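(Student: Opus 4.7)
The plan is to mimic exactly the proof structure used for $U^{\oL}_{L}B$ and $U^{\oL}_{R}B$ in the previous proposition. Denote the pushout in the statement by $P$. First I would construct a map $P\to \Bal^{\oL}B$: the summand $\oL(2)_{+}\sma \bS$ maps in via the $n=0$ wedge summand in the defining coequalizer for $\Bal^{\oL}B$; the summand $\oL(3)_{+}\sma_{\oL(1)}B$ maps in via the $n=1$ wedge summand $\oL(3)_{+}\sma B$, where the fact that this factors through the quotient by the middle $\oL(1)$ action is exactly the coequalizer relation at $n=1$, $m_{1}=1$. The two inclusions agree on $\oL(3)_{+}\sma_{\oL(1)}\bS$ because plugging the unit of $B$ into the middle slot is the same (under coequalizer) as using operadic composition with $1\in\oL(0)$... wait, $\oO(0)=*$, so the appropriate check is via the $n=1,m_{1}=0$ coequalizer relation, which collapses the middle slot using the operadic identity, recovering $\oL(2)_{+}\sma \bS$.

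Next I would build the inverse $\Bal^{\oL}B\to P$. The key input is Hopkins' Lemma \cite[I.5.4]{EKMM}, which produces a canonical homeomorphism
\[
\oL(n+2)_{+}\sma B^{(n)}\iso \oL(3)_{+}\sma_{\oL(1)}(\oL(n)_{+}\sma B^{(n)}),
\]
where the $\oL(1)$-action on $\oL(3)$ is through the middle coordinate $\{1\}\times \oL(1)\times \{1\}\subset \oL(1)^{3}$ and on $\oL(n)_{+}\sma B^{(n)}$ through the ``external'' $\oL(1)$-action coming from operadic composition in the single slot of $\oL(1)$. Composing with the $\oL$-action $\oL(n)_{+}\sma B^{(n)}\to B$ (for $n\geq 1$), and with the identification $\oL(2)_{+}\sma \bS$ for $n=0$, yields compatible maps $\oL(n+2)_{+}\sma B^{(n)}\to P$ for all $n\geq 0$. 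Then I would check that these maps coequalize the two arrows in the defining colimit for $\Bal^{\oL}B$, giving a map $\Bal^{\oL}B\to P$; the verification that the operadic composition arrow and the action arrow induce the same map to $P$ follows from the associativity of the $\oL$-action and the fact that, under Hopkins' Lemma, the composition $\oL(n+2)\times \oL(m_{1})\times \cdots \times \oL(m_{n})\to \oL(m+2)$ is compatible with the $\oL(1)$-quotient in the middle slot.

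Finally, I would verify that the two maps are mutually inverse. Composing $P\to \Bal^{\oL}B\to P$ is the identity by inspection of each of the two wedge summands $\oL(2)_{+}\sma \bS$ and $\oL(3)_{+}\sma_{\oL(1)}B$ defining $P$. For the other direction, the surjection
\[
\oL(3)\times \oL(n)\to \oL(n+2)
\]
from operadic composition into the middle slot (which induces the inverse of the Hopkins isomorphism after passing to the $\oL(1)$-quotient) shows that the composite $\Bal^{\oL}B\to P\to \Bal^{\oL}B$ is the identity, since the coequalizer relation at $n'=1$, $m_{1}=n$ identifies $\oL(n+2)_{+}\sma B^{(n)}$ with its image. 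The hard part, in the sense that it requires the most attention to detail, is making precise the Hopkins-style identification for $\oL(n+2)$ with the specific $\oL(1)$-action through the middle coordinate; once that is in place, the rest of the argument is formally the same as in the preceding proposition.
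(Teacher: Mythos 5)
Your proposal is correct and follows exactly the route the paper intends: the paper states only that ``the proof uses the same techniques as the proof of the previous proposition,'' and your write-up supplies precisely those details, transposing the Hopkins' Lemma argument for $U^{\oL}_L B$ to the $\Bal^{\oL}B$ case with the $\oL(1)$-quotient taken through the middle slot of $\oL(3)$. Your construction of the two maps, the use of the $n=1,m_1=0$ relation to match the unit on the gluing object, and the epimorphism argument for the composite on $\Bal^{\oL}B$ are all the right checks.
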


The right action of $U^{\oL}_{L}B$ is induced by the map
\[
(\oL(3)_{+}\sma_{\oL(1)}B)\sma (\oL(2)_{+}\sma_{\oL(1)} B)
\to \oL(4)_{+}\sma_{\oL(1)\times \oL(1)} (B\sma B)
\to \oL(3)_{+}\sma_{\oL(1)}B
\]
again multiplying $\oL(2)_{+}\sma_{{\oL(1)}\times \oL(1)}B\sma B\to B$
on the inner factors of $B$, using Hopkins' Lemma (with similar
formulas for $\oL(2)_{+}\sma \bS$ and/or $\oL(1)_{+}\sma \bS$).  The
left action of $U^{\oL}_{R}B$ is similar but transposing the operad
spaces so that the operad space for $U^{\oL}_{R}B$ plugs in to the
operad space for $\Bal^{\oL}B$ for the operadic multiplication.

Turning to Theorem~\ref{thm:BHbs}, recall that $J(-)=(-)\sma_{\oL}\bS$
denotes the functor from $\oL(1)$-spectra in orthogonal spectra to
EKMM $\bS$-modules in orthogonal spectra.  One feature it has is that
$J$ turns the maps $\oL(n)_{+}\sma_{\oL}\bS\to \oL(n-1)_{+}\sma \bS$
into isomorphisms (another application of Hopkins' Lemma).  Because of
this we then have natural isomorphisms
\begin{equation}\label{eq:withJ}
\begin{aligned}
JU^{\oL}_{L}B&\iso J(\oL(2)_{+}\sma_{\oL(1)}B)\\
JU^{\oL}_{R}B&\iso J(\oL(2)_{+}\sma_{\oL(1)}B)\\
J\Bal^{\oL}B&\iso J(\oL(3)_{+}\sma_{\oL(1)}B)
\end{aligned}
\end{equation}
When $B$ is already an EKMM $\bS$-module in orthogonal spectra, we can
omit the $J$ on the right side.  We now prove Theorem~\ref{thm:BHbs}.

\begin{proof}[Proof of Theorem~\ref{thm:BHbs}]
It suffices to construct an isomorphism between $J(M\sma^{\oL}_{B}N)$
and $M\sma_{B}B\sma_{B}N$, where the smash products without the
superscript denote smash in the category of EKMM $\bS$-modules in
orthogonal spectra. With the simplification of~\eqref{eq:withJ}, we have
$J(M\sma^{\oL}_{B}N)$ as the coequalizer of
\[
\xymatrix@R-1pc{%
(\oL(3)_{+}\sma_{\oL(1)}B)\sma 
(U^{\oL}_{R}B)^{\op}\sma U^{\oL}_{L}B
\sma M\sma N
\ar@<.5ex>[d]\ar@<-.5ex>[d]\\
(\oL(3)_{+}\sma_{\oL(1)}B)\sma M\sma N.
}
\]
Because of the maps $\oL(1)_{+}\sma\bS\to (U^{\oL}_{R}B)^{\op}$ and
$\oL(1)_{+}\sma \bS\to U^{\oL}_{L}B$, we can replace the 
$(\oL(3)_{+}\sma_{\oL(1)}B)\sma M\sma N$ with
$(\oL(3)_{+}\sma_{\oL(1)^{3}}\sma (M\sma B\sma N)$.  Also
using~\eqref{eq:withJ} to replace the enveloping algebras with
$\oL(2)_{+}\sma_{\oL(1)}B$, we then identify $J(M\sma^{\oL}_{B}N)$ as
the coequalizer of
\[
\xymatrix@R-1pc{%
(\oL(3)_{+}\sma_{\oL(1)}B)\sma (\oL(2)_{+}\sma_{\oL(1)}B)\sma
(\oL(2)_{+}\sma_{\oL(1)}B)\sma M\sma N
\ar@<.5ex>[d]\ar@<-.5ex>[d]\\
\oL(3)_{+}\sma_{\oL(1)^{3}} (M\sma B\sma N).
}
\]
We can simplify the above coequalizer diagram to 
\[
\xymatrix@R-1pc{%
\oL(5)_{+}\sma_{\oL(1)^{5}}(M\sma B\sma B\sma B\sma N)
\ar@<.5ex>[d]\ar@<-.5ex>[d]\\
\oL(3)_{+}\sma_{\oL(1)^{3}} (M\sma B\sma N)
}
\]
using the categorical epimorphism
\begin{multline*}
(\oL(3)_{+}\sma_{\oL(1)}B)\sma (\oL(2)_{+}\sma_{\oL(1)}B)\sma
(\oL(2)_{+}\sma_{\oL(1)}B)\sma M\sma N\\
\to \oL(3)_{+}\sma_{\oL(1)^{3}} ((\oL(2)_{+}\sma_{\oL(1)}B)\sma B\sma
(\oL(2)_{+}\sma_{\oL(1)}B))\sma M\sma N\\
\iso \oL(5)_{+}\sma_{\oL(1)^{5}}(M\sma B\sma B\sma B\sma N).
\end{multline*}
The latter coequalizer is easily seen (using Hopkins' Lemma) to be
$M\sma_{B}B\sma_{B}N$. 
\end{proof}

The following proposition gives the instance of Theorem~\ref{thm:UB}
for the non-$\Sigma$ $A_{\infty}$ operad $\ooO=\oL$.

\begin{prop}\label{prop:UBL}
For any non-$\Sigma$ $\oL$-algebra $B$,
the maps $\oL(2)_{+}\sma B\to U^{\oL}_{L}B$, $\oL(2)_{+}\sma B\to
U^{\oL}_{R}B$, and $\oL(3)_{+}\sma B\to \Bal^{\oL}B$ are weak
equivalences of orthogonal spectra.
\end{prop}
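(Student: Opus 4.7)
The plan is to adapt the strategy used for $\ooCo$ in Proposition~\ref{prop:UooCo}: construct augmentations onto $B$, compose with sections coming from distinguished elements of $\oL(n)$, and verify both composites are homotopic to the identity using contractibility of $\oL(n)$. The additional difficulty over the $\ooCo$ case is the presence of balanced smash products over $\oL(1)$ in the pushout descriptions of $U^{\oL}_{L}B$, $U^{\oL}_{R}B$, and $\Bal^{\oL}B$, which forces the contractions to be $\oL(1)$-equivariant.

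First, I would construct natural augmentations $U^{\oL}_{L}B\to B$, $U^{\oL}_{R}B\to B$, and $\Bal^{\oL}B\to B$. Using the pushout descriptions, on the summand $\oL(1)_{+}\sma\bS$ (resp.\ $\oL(2)_{+}\sma\bS$ for the balanced object) the augmentation collapses $\oL(n)\to *$ and uses the unit $\bS\to B$ coming from $\oL(0)=*$; on the summand $\oL(n)_{+}\sma_{\oL(1)}B$ it is obtained from the $\oL$-algebra action $\oL(n)_{+}\sma B^{(n)}\to B$ after inserting copies of the unit in the missing slots. These formulas descend through the $\oL(1)$-quotient because the $\oL$-algebra action is $\oL(1)$-equivariant, and they agree on the pushout intersection $\oL(n)_{+}\sma_{\oL(1)}\bS$ by the operadic unit axiom.

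Second, pick distinguished elements $e_{2}\in \oL(2)$ and $e_{3}\in \oL(3)$ (for instance, isometries canonically identifying $(\bR^{\infty})^{n}$ with $\bR^{\infty}$) and compose the augmentations with sections $b\mapsto e_{n}\sma b$ to obtain the desired retractions. The composite starting and ending on $\oL(n)_{+}\sma B$ sends $\alpha\sma b$ to $e_{n}\sma \alpha(b,1,\ldots,1)$; it is homotopic to the identity by joining $\alpha$ to $e_{n}$ through the contractible space $\oL(n)$ and then using contractibility of $\oL(1)$ together with the unit axiom to deform $\alpha(b,1,\ldots,1)$ to $b$.

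The main obstacle is verifying the composite in the reverse direction is homotopic to the identity. This reduces to constructing contractions on each summand of the defining pushout that glue along the intersection $\oL(n)_{+}\sma_{\oL(1)}\bS$. The $\oL(1)_{+}\sma\bS$ (resp.\ $\oL(2)_{+}\sma\bS$) summand contracts using contractibility of $\oL(1)$ (resp.\ $\oL(2)$). The summand $\oL(n)_{+}\sma_{\oL(1)}B$ requires an $\oL(1)$-equivariant contraction of $\oL(n)$ to a point in the orbit of $e_{n}$, which exists by Hopkins' Lemma: the lemma exhibits $\oL(n)$ as an iterated $\oL(1)$-balanced composite of $\oL(2)$'s and thereby produces the $\oL(1)$-equivariant structure needed to contract one slot at a time. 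Compatibility of the two contractions along the pushout intersection follows from operadic associativity and a compatible choice of basepoints $e_{n}$ across $n$; this bookkeeping is the most delicate step. As a consistency check, after applying the functor $J$ the statement reduces to Theorem~\ref{thm:BHbs}, which is already known.
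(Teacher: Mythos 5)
Your strategy diverges from the paper's: you try to replicate the explicit deformation-retraction argument of Proposition~\ref{prop:UooCo}, while the paper observes that it suffices to check after applying $J$, uses the isomorphisms~\eqref{eq:withJ} to identify the target, and then cites the weak equivalence $\oL(n)_{+}\sma_{\oL(1)}\bS\to\oL(n-1)_{+}\sma\bS$ from~\cite[XI.2.2]{EKMM}. The contrast matters: the $\ooCo$ proof works because the pushout description of $U^{\ooCo}_{L}B$ involves no balanced products, so one only needs pointed contractions of pairs like $(\ooCo(1),D)$, which can be written down by hand. The $\oL$ pushout genuinely involves $\sma_{\oL(1)}$, and you correctly identify that this forces $\oL(1)$-equivariant contractions.

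That is exactly where the gap is. Hopkins' Lemma is a point-set isomorphism, $\oL(2)\times_{\oL(1)\times\oL(1)}(\oL(i)\times\oL(j))\iso\oL(i{+}j)$; it does not produce $\oL(1)$-equivariant contractions, and no such contraction of $\oL(n)$ exists. $\oL(n)$ is (non-equivariantly) contractible, and the $\oL(1)$-action is free, but $\oL(1)$ is a monoid and not a group, the action is not a principal bundle, and there are no local slices. An $\oL(1)$-equivariant deformation retraction of $\oL(n)$ onto a single orbit would make $\oL(n)_{+}\sma_{\oL(1)}B\to B$ a (point-set) homotopy equivalence for every $B$; what is actually true is only a weak equivalence, and proving even that is the nontrivial content of~\cite[XI.2.2]{EKMM}, which rests on the Cofibration Theorem rather than on any equivariant contractibility. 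So "iterate Hopkins' Lemma one slot at a time" does not close the argument: at each stage you would still need an equivariant contraction of a single $\oL(2)$ over one $\oL(1)$-factor, and this is the same problem again. The first half of your argument (the composite on $\oL(n)_{+}\sma B$) already runs into a milder form of the same issue, since deforming $\alpha(b,1,\dotsc,1)$ back to $b$ through the monoid action of $\oL(1)$ on $B$ is not a point-set contraction either. Finally, your proposed consistency check is not quite right: Theorem~\ref{thm:BHbs} identifies $J$ of the balanced smash product with the EKMM balanced smash product, but it does not itself give the weak equivalences of Proposition~\ref{prop:UBL}; the relevant input after applying $J$ is~\eqref{eq:withJ} together with~\cite[XI.2.2]{EKMM}, which is a separate fact.
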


\begin{proof}
It suffices to check that these maps are weak equivalences after
applying $J$.  Using~\eqref{eq:withJ}, each map is an instance of the
map 
\[
\oL(n)_{+}\sma_{\oL(1)}\bS\iso 
(\oL(n)\times_{\oL(1)}\oL(0))_{+}\sma \bS
\to \oL(n-1)_{+}\sma \bS.
\]
This map is a weak equivalence; see~\cite[XI.2.2]{EKMM}.
\end{proof}

\subsection*{The operad \texorpdfstring{$\oL\times \ooCo$}{LxC1}}
This operad combines the features of the previous two cases.  See the
case of $\ooCo$ for the definition of $D$, $D'$, and $C$.

\begin{prop}
The underlying orthogonal spectrum of $U^{\oL\times \ooCo}_{L}B$ is 
\[
((\oL(1)\times \ooCo)_{+}\sma \bS)\cup_{((\oL(2)\times D)_{+}\sma_{\oL(1)}\bS)}
((\oL(2)\times D)_{+}\sma B).
\]
The underlying orthogonal spectrum of $U^{\oL\times \ooCo}_{R}B$ is 
\[
((\oL(1)\times \ooCo)_{+}\sma \bS)\cup_{((\oL(2)\times D')_{+}\sma_{\oL(1)}\bS)}
((\oL(2)\times D')_{+}\sma B).
\]
The underlying orthogonal spectrum of $\Bal^{\oL\times \ooCo}B$ is 
\[
((\oL(2)\times \ooCo(2))_{+}\sma \bS)\cup_{((\oL(3)\times C)_{+}\sma_{\oL(1)}\bS)}
((\oL(3)\times C)_{+}\sma_{\oL(1)}B).
\]
\end{prop}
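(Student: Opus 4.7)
The plan is to adapt the arguments for $\ooO = \ooCo$ and $\ooO = \oL$ to the product operad $\oL \times \ooCo$, exploiting the fact that operadic composition, the action of $(\oL \times \ooCo)(1) = \oL(1) \times \ooCo(1)$, and Hopkins' Lemma all factor coordinatewise through the two operads. Throughout, write $\oO = \oL \times \ooCo$ and note that $(\oL \times \ooCo)(n) = \oL(n) \times \ooCo(n)$.

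I would begin with $U^{\oO}_L B$. Denote the proposed pushout by $P$. The map $P \to U^{\oO}_L B$ is induced by the tautological map $\oO(1)_+ \sma \bS \to U^{\oO}_L B$ together with the action map $(\oL(2) \times D)_+ \sma B \to \oO(2)_+ \sma B \to U^{\oO}_L B$, where $D \subset \ooCo(1)$ is the subspace of intervals not starting at $0$, exactly as in the $\ooCo$ case. For the reverse map, I would use the product version of Hopkins' Lemma (applied only in the $\oL$-direction) together with the iterated $\circ_i$ identification from the $\ooCo$-treatment: every element of $\oO(n+1)$ is, up to the $\oL(1)$-action coming from $\{1\} \times \oL(1)^n \subset \oL(1)^{n+1}$, in the image of $\circ_2 \circ \cdots \circ_2$ built out of $D$-elements in the $\ooCo$ direction and $\oL(2)$-elements in the $\oL$ direction. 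This yields an isomorphism
\[
\oO(n+1)_+ \sma B^{(n)} \iso (\oL(2) \times D)_+ \sma_{\oL(1)} (\oO(n)_+ \sma B^{(n)})
\]
for $n \geq 1$, and the $\oO$-action on $B$ then produces a compatible map to $P$ that glues across the coequalizer to give $U^{\oO}_L B \to P$. Checking that both composites are the identity is identical in structure to the verifications in the $\ooCo$ and $\oL$ cases: one composite is determined on $(\oL(1) \times \ooCo(1))_+ \sma \bS$ and on $(\oL(2) \times D)_+ \sma B$, and the other is epimorphic by Hopkins' Lemma plus the $\ooCo$-style surjectivity.

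The case of $U^{\oO}_R B$ is entirely analogous, using the subspace $D' \subset \ooCo(1)$ of intervals not ending at $1$ and the $\oL(1)$-action via $\oL(1)^n \times \{1\}$, and reducing through $\circ_1$ rather than $\circ_2$. For $\Bal^{\oO} B$, the strategy is the same but seeded one level higher: the subspace $C \subset \ooCo(2)$ from the $\ooCo$-discussion gives the analogue of the identification $\ooCo(n+2) \iso C \times \ooCo(n)$ proven earlier, and combined with Hopkins' Lemma on the $\oL$-factor this yields
\[
\oO(n+2)_+ \sma B^{(n)} \iso (\oL(3) \times C)_+ \sma_{\oL(1)} (\oO(n)_+ \sma B^{(n)})
\]
(with $\oL(1)$ acting via $\{1\} \times \oL(1) \times \{1\}$). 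The same back-and-forth construction then identifies the pushout with $\Bal^{\oO} B$.

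The main obstacle is purely bookkeeping: ensuring that the $\oL(1)$-action used to glue $(\oL(2) \times D)_+ \sma B$ to $(\oL(1) \times \ooCo(1))_+ \sma \bS$ (and similarly for the other two cases) is exactly the action coming from the correct embedding $\oL(1) \hookrightarrow \oL(k)$ and is compatible with both the $\ooCo$-style composition on the $D$, $D'$, $C$ factors and the Hopkins decomposition on the $\oL$-factor. Once the coordinatewise nature of operadic composition in $\oL \times \ooCo$ is recorded and Hopkins' Lemma is applied only in the $\oL$-slot, the verifications reduce term-by-term to those already carried out for $\oL$ and for $\ooCo$ separately, and the proposition follows.
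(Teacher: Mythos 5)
Your proposal is correct and matches the paper's own proof, which is exactly the strategy you describe: apply Hopkins' Lemma in the $\oL$-factor (exactly as in the $\oL$-only case) and the decomposition homeomorphisms $D\times\ooCo(n)\iso\ooCo(n+1)$, $D'\times\ooCo(n)\iso\ooCo(n+1)$, $C\times\ooCo(n)\iso\ooCo(n+2)$ in the $\ooCo$-factor, yielding the single-step isomorphism $\oO(n+1)_+\sma B^{(n)}\iso(\oL(2)\times D)_+\sma_{\oL(1)}(\oO(n)_+\sma B^{(n)})$ (and its $D'$, $C$ analogues), then verify both composites are the identity as in the two separate cases. The only caveat is a bookkeeping discrepancy: you attach $\circ_2$ to the $U_L$ decomposition and $\circ_1$ to $U_R$, whereas the paper uses $\circ_1$, $\circ_2$, $\circ_2$ for the $D$, $D'$, $C$ homeomorphisms respectively (matching the $\oL(1)\times\{1\}$ versus $\{1\}\times\oL(1)$ actions on $\oL(2)$ in the $\oL$-only case); this does not affect the overall argument.
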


The proof is to use Hopkins' Lemma as in the previous subsection and
the homeomorphisms
\[
D\times \ooCo(n)\overto{\iso}\ooCo(n+1),\quad
D'\times \ooCo(n)\overto{\iso}\ooCo(n+1),\quad\text{and}\quad 
C\times \ooCo(n)\overto{\iso}\ooCo(n+2)
\]
(for $n>0$) induced by the maps $D\to \ooCo(2)$, $D'\to\ooCo(2)$, and
$C\to \ooCo(3)$ and the operadic products $\circ_{1}$,$\circ_{2}$, and
$\circ_{2}$, respectively, as in the first subsection.  The unit and
product on this model of $U^{\oL\times \ooCo}_{L}B$ and $U^{\oL\times
\ooCo}_{R}B$ are the evident generalization of the structure described
in the previous two subsections, as is the action on $\Bal^{\oL\times
\ooCo}B$. 

The following proposition is the instance of Theorem~\ref{thm:UB} for
the operad $\ooO=\oL\times \ooCo$.

\begin{prop}
The maps $(\oL(2)\times \ooCo(2))_{+}\sma B\to U^{\oL\times \ooCo}_{L}B$,
$(\oL(2)\times \ooCo(2))_{+}\sma B\to U^{\oL\times \ooCo}_{R}B$, and
$(\oL(3)\times \ooCo(3))_{+}\sma B\to \Bal^{\oL\times \ooCo}B$ are
weak equivalences of orthogonal spectra. 
\end{prop}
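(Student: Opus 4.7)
The plan is to combine the techniques of Propositions~\ref{prop:UooCo} and~\ref{prop:UBL}: the contractibility of the pairs $(\ooCo(n),D)$, $(\ooCo(n),D')$, $(\ooCo(n),C)$ handles the $\ooCo$-part, while $J$ together with Hopkins' Lemma handles the $\oL$-part. I focus on $(\oL(2)\times\ooCo(2))_+\sma B\to U^{\oL\times\ooCo}_L B$; the other two are handled by the same argument with $D$ replaced by $D'$ or $C$ (and $\oL(2)$, $\ooCo(2)$ replaced by $\oL(3)$, $\ooCo(3)$ in the balanced case). Because the $\oL(1)$-action on the $\oL$-spaces is independent of the $\ooCo$-factors (which are acted on trivially), the two arguments decouple.

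First, since $J\to\Id$ is a natural weak equivalence, it suffices to show the $J$-image is a weak equivalence. Following the Hopkins' Lemma argument in the proof of~\eqref{eq:withJ} and Proposition~\ref{prop:UBL}, with the extra $\ooCo$-factor carried along, the pushout description from the preceding proposition simplifies after $J$ to a natural isomorphism
\[
JU^{\oL\times\ooCo}_L B\iso J\bigl((\oL(2)\times D)_+\sma_{\oL(1)} B\bigr),
\]
and analogous isomorphisms for $JU^{\oL\times\ooCo}_R B$ and $J\Bal^{\oL\times\ooCo} B$. Second, I would factor the $J$-image of the map as
\[
J\bigl((\oL(2)\times\ooCo(2))_+\sma B\bigr)
\overto{q}
J\bigl((\oL(2)\times\ooCo(2))_+\sma_{\oL(1)} B\bigr)
\overto{r}
J\bigl((\oL(2)\times D)_+\sma_{\oL(1)} B\bigr),
\]
where $q$ is the quotient by $\oL(1)$ and $r$ is induced by an explicit deformation retraction $\ooCo(2)\to D$ (existing by the contractibility argument at the end of the proof of Proposition~\ref{prop:UooCo}). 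The map $q$ is a weak equivalence by the argument of Proposition~\ref{prop:UBL}: after $J$ it is an instance, smashed with $\ooCo(2)_+$ and $B$, of the weak equivalence $\oL(n)_+\sma_{\oL(1)}\bS\to\oL(n-1)_+\sma\bS$ from~\cite[XI.2.2]{EKMM}. The map $r$ is a weak equivalence because the retraction $\ooCo(2)\to D$ is trivially $\oL(1)$-equivariant and is a homotopy equivalence.

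The main obstacle is justifying that the Hopkins' Lemma collapse of the pushout description of $U^{\oL\times\ooCo}_L B$ carries through when the $\ooCo$-factors are present. However, because the gluing data in the pushout is built from the operadic composition in the product operad $\oL\times\ooCo$, which is componentwise, the $\ooCo$-factors commute with all $\oL$-operations, so the Hopkins' Lemma computation of Proposition~\ref{prop:UBL} applies verbatim after smashing with $D_+$ (or $D'_+$, $C_+$).
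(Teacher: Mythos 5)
Your proposal has a genuine gap in its first step, and the factorization in the second step is not justified.

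The purported isomorphism $JU^{\oL\times\ooCo}_L B\iso J\bigl((\oL(2)\times D)_+\sma_{\oL(1)} B\bigr)$ is false. In the pure-$\oL$ case (\eqref{eq:withJ}), the pushout
\[
(\oL(1)_{+}\sma \bS)\cup_{\oL(2)_{+}\sma_{\oL(1)}\bS}(\oL(2)_+\sma_{\oL(1)}B)
\]
collapses after $J$ because the gluing leg $J(\oL(2)_+\sma_{\oL(1)}\bS)\to J(\oL(1)_+\sma\bS)$ is an \emph{isomorphism} by Hopkins' Lemma. In the $\oL\times\ooCo$ case the corresponding leg is
\[
D_+\sma J(\oL(2)_+\sma_{\oL(1)}\bS)\to\ooCo(1)_+\sma J(\oL(1)_+\sma\bS),
\]
and even after applying Hopkins' Lemma to the $\oL$-factor, this is $D_+\sma J(\oL(1)_+\sma\bS)\to\ooCo(1)_+\sma J(\oL(1)_+\sma\bS)$, which is only a homotopy equivalence ($D\hookrightarrow\ooCo(1)$ is a proper inclusion of contractible spaces, not a homeomorphism). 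The pushout therefore does \emph{not} collapse to a single term after $J$; you can deduce a weak equivalence from the $h$-cofibration property, but then the direction of the map is $J\bigl((\oL(2)\times D)_+\sma_{\oL(1)}B\bigr)\to JU^{\oL\times\ooCo}_LB$, which is the wrong direction for your factorization.

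The second step is also unjustified: the map $\ooCo(2)_+\sma B\to U^{\ooCo}_LB$ does not factor through a deformation retraction $\ooCo(2)\to D$ followed by the pushout inclusion $D_+\sma B\to U^{\ooCo}_L B$; in fact, the paper's own proof of Proposition~\ref{prop:UooCo} avoids any such assertion, instead constructing a homotopy inverse by choosing a point of $\ooCo(2)$ and contracting the pair $(\ooCo(1),D)$. The paper's proof of the present proposition follows the same plan: after applying $J$ and using Hopkins' Lemma to rewrite each term of the pushout with an $\oL(3)$-factor (not to collapse it), one observes the result is literally the pure-$\ooCo$ pushout with $\oL(3)_+\sma_{\oL(1)}B$ substituted for $B$ and $\oL(3)_+\sma_{\oL(1)}\bS$ for $\bS$, and then runs the homotopy-inverse argument of Proposition~\ref{prop:UooCo} verbatim in that setting. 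To fix your argument you would need to abandon the collapse and the factorization through $r$ and adopt this rewriting strategy instead.
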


\begin{proof}
 We treat the case of $\Bal^{\oL\times
\ooCo}B$; the remaining cases are similar.  
The left action of $\oL(1)$ on $\oL(n)\times \ooCo(n)$ (as
$\oL(1)\times \{1\}$) and the models given in the previous proposition
allow us to view $\Bal^{\oL\times \ooCo}$ as $\oL(1)$-spectra; we can
then apply the functor $J$ to convert to EKMM $\bS$-modules (in
orthogonal spectra).   Applying $J$ to the pushout
\begin{equation}\label{eq:LCpo}
((\oL(3)\times \ooCo(2))\sma_{\oL(1)}\bS)
\cup_{((\oL(3)\times \ooCo(3))_{+}\sma_{\oL(1)} \bS)}
((\oL(3)\times \ooCo(3))_{+}\sma_{\oL(1)}B)
\end{equation}
is isomorphic to $J(\Bal^{\oL\times \ooCo}B)$, and so suffices to show
that the map from $\oL(3)_{+}\sma B$ to~\eqref{eq:LCpo} is a weak
equivalence. Using the weak equivalence
\[
\oL(3)_{+}\sma B\to \oL(3)_{+}\sma_{\oL(1)}B
\]
(see~\cite[XI.2.2]{EKMM}), it suffices to check that the inclusion of
$(\oL(3)\times \ooCo(3))_{+}\sma_{\oL(1)}B$ in the
pushout~\eqref{eq:LCpo} is a weak equivalence.  From here the proof is
identical to the proof of Proposition~\ref{prop:UooCo} using
$\oL(3)_{+}\sma_{\oL(1)}B$ in place of $B$ and
$\oL(3)_{+}\sma_{\oL(1)}\bS$ in place of $\bS$.
\end{proof}

\subsection*{The operad \texorpdfstring{$\oL\times \oCO$}{LxC1Xi}} 
The operad $\oCO$ satisfies $\oCO(n)=\oCO(1)^{n}$, the operad built
from the monoid $\oCO(1)$ using the diagonal multiplication.  In any
reasonable symmetric monoidal category tensored over spaces, a
non-$\Sigma$ $\oCO$-algebra $A$ is just an associative monoid object
together with a left action of the monoid $\oCO(1)$ on its underlying
object such that the multiplication is $\oCO(1)$-equivariant and the
unit is $\oCO(1)$-fixed. Likewise a left or right
$A$-module $M$ over $\oCO$ is just a left or right $A$-module (in the
usual sense) together with a left action of $\oCO(1)$ for which the
module structure map is $\oCO(1)$-equivariant.  
In particular, we see that the left enveloping
algebra of $A$ is $\oCO(1)_{+}\sma A$ (where we have written $(-)_{+}\sma (-)$ for
the tensor with a space) with $\oCO(1)$ acting diagonally.
Likewise, the right enveloping algebra is 
$\oCO(1)^{\op}_{+}\sma A$.  (Recall that our convention is for the right
enveloping algebra to act on the right; the alternative convention of
having it act on the left would yield $\oCO(1)_{+}\sma A^{\op}$ for the
right enveloping algebra.) Working in the weak symmetric monoidal
category of $\oL(1)$-spectra in orthogonal spectra, we obtain the
following analogous statement.

\begin{prop}
For any non-$\Sigma$ $(\oL\times \oCO)$-algebra $B$, we have natural
isomorphisms of associative ring orthogonal spectra
\begin{align*}
U^{\oL\times \oCO}_{L}B&\iso \oCO(1)_{+}\sma U^{\oL}_{L}B\\
U^{\oL\times \oCO}_{R}B&\iso \oCO(1)^{\op}_{+}\sma U^{\oL}_{R}B\\
\noalign{\noindent and an isomorphism of bimodules}
\Bal^{\oL\times \oCO}B&\iso (\oCO(1)\times \oCO(1))_{+}\sma \Bal^{\oL}B
\end{align*}
(where the action has both factors of $\oCO(1)$ in $\Bal$ always left
for the multiplication on $\oCO(1)$).
\end{prop}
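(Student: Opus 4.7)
The plan is to construct explicit isomorphisms in both directions using the coequalizer definitions of $U^{\ooO}_L$, $U^{\ooO}_R$, and $\Bal^{\ooO}$, exploiting the ``diagonal'' structure of $\oCO$: since $\oCO(n)=\oCO(1)^n$ (write $M:=\oCO(1)$), the operadic composition
\[
(c_1,\ldots,c_n)\circ_i(d_1,\ldots,d_k)=(c_1,\ldots,c_{i-1},c_id_1,\ldots,c_id_k,c_{i+1},\ldots,c_n)
\]
multiplies diagonally in the monoid $M$. As a consequence, each $M$-factor appearing in the wedge $\bigvee\ooO(n+1)_+\sma B^{(n)}$ that defines $U^{\ooO}_L B$ either persists through slots filled by identities in the coequalizing composition, or gets combined into a single $M$-factor that can be absorbed into $B$ via the action $\oCO(1)_+\sma B\to B$ coming from the $(\oL\times\oCO)$-algebra structure.

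For the left enveloping algebra I would define the forward map $\oCO(1)_+\sma U^{\oL}_L B\to U^{\oL\times\oCO}_L B$ on $\oCO(1)_+\sma\oL(n+1)_+\sma B^{(n)}$ by
\[
c\sma a\sma(b_1,\ldots,b_n)\longmapsto\bigl((1,\ldots,1,c),a\bigr)\sma(b_1,\ldots,b_n),
\]
placing the free factor in slot $n+1$, which is precisely the slot filled by the identity in the operadic composition defining $U_L$. The inverse would send $((c_1,\ldots,c_{n+1}),a)\sma(b_1,\ldots,b_n)$ to $c_{n+1}\sma a\sma(c_1\cdot b_1,\ldots,c_n\cdot b_n)$, absorbing the first $n$ factors into $B$ via the $M$-action. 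The right enveloping algebra case is symmetric with the free factor in slot $1$, and for $\Bal^{\oL\times\oCO}B$ two free factors persist in slots $1$ and $n+2$, yielding the $\oCO(1)\times\oCO(1)$-factor.

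The key technical step is checking that the inverse descends to the coequalizer, which reduces to verifying the $M$-equivariance
\[
a\cdot_{\oL}(c\cdot b_1,\ldots,c\cdot b_n)=c\cdot_M(a\cdot_{\oL}(b_1,\ldots,b_n))
\]
of the $\oL$-action on $B$ with respect to the diagonal $M$-action on $B^{(n)}$. This is automatic from the $(\oL\times\oCO)$-algebra structure, since both sides compute the action on $(b_1,\ldots,b_n)$ of the single element $(a,(c,\ldots,c))\in(\oL\times\oCO)(n)$ under the two factorizations $(1_\oL,c)\circ_1(a,(1,\ldots,1))$ and $(a,(1,\ldots,1))\circ((1_\oL,c),\ldots,(1_\oL,c))$, both using the diagonal composition $(c)\circ_1(1,\ldots,1)=(c,\ldots,c)$ in $\oCO$.

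The main work will be the bookkeeping verification that these isomorphisms respect the algebra and bimodule structures. For $U^{\oL\times\oCO}_L B$ the multiplication is induced by the operadic composition $\circ_{n+1}$ on the free slot, which on the $\oCO$-factor is exactly multiplication in $M$; this matches the product on $\oCO(1)_+\sma U^{\oL}_L B$ viewed as the smash of the monoid ring on $\oCO(1)_+$ with the ring $U^{\oL}_L B$. The analogous check for $U^{\oL\times\oCO}_R B$ uses $\circ_1$ in place of $\circ_{n+1}$ and gives the opposite monoid structure $\oCO(1)^{\op}$, and for the bimodule structure on $\Bal^{\oL\times\oCO}B$ the two free slots transform independently under $\circ_1$ from $U^{\oL\times\oCO}_R B$ and $\circ_{n+2}$ from $U^{\oL\times\oCO}_L B$. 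These compatibility checks are routine but somewhat lengthy, involving careful tracking of which $\oCO(1)$-factors remain free and which get absorbed into $B$.
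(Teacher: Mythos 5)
Your construction of the underlying‐spectrum isomorphism and your check that the proposed inverse descends to the defining coequalizer are both correct, and your method is a genuinely different route from the paper's.  The paper argues abstractly: since $\oCO(n)=\oCO(1)^{n}$ with diagonal composition, a non-$\Sigma$ $\oCO$-algebra is characterized as a monoid together with an $\oCO(1)$-action, and the enveloping algebras are then read off that characterization in one sentence.  Your argument instead identifies the coequalizer by hand via a normal form, which has the advantage of making explicit the coherence diagram the paper elides; the verification you label ``the key technical step'' is exactly the $\oL\times\oCO$ operad axiom, as you say.

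The gap is in the last paragraph, where you assert the multiplication ``on the $\oCO$-factor is exactly multiplication in $M$'' and therefore agrees with the product ring structure on $\oCO(1)_{+}\sma U^{\oL}_{L}B$.  That claim does not survive the bookkeeping you defer.  In $\sigma\circ_{n+1}\sigma'$, the slot-$(n+1)$ entry $c_{n+1}$ of $\sigma$'s $\oCO(1)^{n+1}$-component gets multiplied onto \emph{every} $\oCO(1)$-coordinate of $\sigma'$, not just the last one.  Running this through your isomorphism, the multiplication transported to $\oCO(1)_{+}\sma U^{\oL}_{L}B$ is the crossed product $(c,u)(c',u')=(cc',\,u\cdot(c\cdot u'))$, where $c\cdot u'$ is the natural $\oCO(1)$-action on $U^{\oL}_{L}B$ coming from the $\oCO$-part of the algebra structure on $B$ — not the product ring.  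This twist is the visible trace of the compatibility $c\cdot(b_{1}b_{2})=(c\cdot b_{1})(c\cdot b_{2})$ that the operad axioms for $\oCO$-algebras do in fact impose, so the paper's parenthetical ``no compatibility required'' is imprecise, and the proposition's phrase ``isomorphisms of associative ring orthogonal spectra'' is, as literally written, too strong in the same way.  Similar corrections apply to the $U_{R}$ and $\Bal$ clauses.  This does not propagate to anything later: the only thing used downstream is the underlying-spectrum identification needed to deduce the $\oL\times\oCO$ case of Theorem~\ref{thm:UB}, and that is correct.  But if you want the ring statement to be exact, the target must carry the twisted multiplication, or one must separately compare the twisted and untwisted products.
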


The instance of Theorem~\ref{thm:UB} for $\oL\times \oCO$ now follows
from Proposition~\ref{prop:UBL}.

%%%%%%%%%%%%%%%%%%%%%%%%%%%%%%%%%%%%%%%%
\section[A lax monoidal fibrant replacement functor]{A topologically
enriched lax symmetric monoidal fibrant replacement functor for
equivariant orthogonal spectra\sbreak (Proof of
Lemma~\ref{lem:smRG})}\label{sec:smRG}

In this section we construct a topologically enriched lax symmetric
monoidal fibrant replacement 
functor for the positive stable model category of 
orthogonal $G$-spectra.  We use an equivariant version of the construction
of Kro~\cite[3.3]{Kro}.

Before beginning the construction and the argument, it is useful to be
slightly more precise about the homotopy groups of an 
orthogonal $G$-spectrum.  Recall that a \term{complete $G$-universe} is an
infinite dimensional $G$-inner product space containing a
representative of each finite dimensional $G$-representation.  We use
the notation $V<U$ to denote that $V$ is a 
finite dimensional $G$-linear subspace of $U$.
Given a complete $G$-universe $U$, $V<U$, and $W$ an arbitrary finite
dimensional $G$-inner product space, for $H<G$ and $X$ a
orthogonal $G$-spectrum, define
\[
\pi^{H}_{W,V<U}X=\lcolim_{V<Z<U} [S^{W\oplus (Z-V)},X(Z)]^{H},
\]
where $[-,-]^{H}$ denotes the set of homotopy classes of maps of based
$H$-spaces and $Z-V$ denotes the orthogonal complement of $V$ in
$Z$. The following facts are well-established.
\begin{enumerate}
\item $\pi^{H}_{W,V<U}$ has the natural structure of an abelian group.
\item If $U'$ is a complete $G$-universe and $f\colon U\to U'$ is a
$G$-equivariant linear isometry (not necessarily isomorphism), then
the induced map 
\[
\pi^{H}_{W,V<U}X\to \pi^{H}_{W,f(V)<U'}X
\]
is an isomorphism.
\item For any finite-dimensional $G$-inner product space $W'$, the map
\[
\pi^{H}_{W,V<U}X\to \pi^{H}_{W\oplus W',V\oplus W'<U\oplus W'}
\]
(induced by $(-)\sma S^{W'}$ and the structure map on $X$) is an
isomorphism. 
\item A map $X\to Y$ of orthogonal $G$-spectra is a stable
equivalence if and only if the induced maps on $\pi^{H}_{W,V<U}$ are
isomorphisms for all $H<G$, $V<U$, and $W$.
\end{enumerate}
Indeed, the group $\pi^{H}_{W,V<U}X$ defined above is a specific model for the $RO(G)$-graded
homotopy group $\pi^{H}_{[W]-[V]}X$; (ii) and~(iii) are some minimal
invariance properties easily proved by comparison of colimit
arguments, while~(iv) follows from the fact that
a map in the stable category induces an isomorphism on integer-graded
homotopy groups if and only if it induces an isomorphism on
$RO(G)$-graded homotopy groups.  Another useful
observation is that when $U$ is a complete $G$-universe and $W$ is any
non-trivial finite dimensional $G$-inner product space, $U\otimes
W$ is also a complete $G$-universe \cite[IV.3.9]{MM}.

For $W$ any finite
dimensional $G$-inner product space, define 
\[
(R_{G}X)(W)=\lhocolim_{V<U}\; \Omega^{V\otimes W}X((\bR\oplus V)\otimes W)
\]
where for $V<V'$ the map in the hocolim system is induced by the
structure map for $X$
\begin{multline*}
X((\bR\oplus V)\otimes W)\sma S^{(V'-V)\otimes W}\\\to
X(((\bR\oplus V)\otimes W)\oplus ((V'-V)\otimes W))\iso
X((\bR\oplus V')\otimes W)
\end{multline*}
and the canonical isomorphism $V\oplus (V'-V)\iso V'$.
The structure map 
\[
(R_{G}X)(W)\sma S^{W'}\to (R_{G}X)(W\oplus W')
\]
is induced at the $V$ spot in the hocolim 
\[
\Omega^{V\otimes W}X((\bR\oplus V)\otimes W)\sma S^{W'}
\to
\Omega^{V\otimes (W\oplus W')}X((\bR\oplus V)\otimes (W\oplus W'))
\]
as the adjoint under the $(\Sigma^{W\oplus W'}$,$\Omega^{W\oplus
W'})$-adjunction to the map
\begin{multline*}
X((\bR\oplus V)\otimes W)\sma S^{W'}\sma S^{V\otimes W'}
\iso
X((\bR\oplus V)\otimes W)\sma S^{(\bR \oplus V)\otimes W'}\\
\to
X((\bR\oplus V)\otimes W\oplus (\bR \oplus V)\otimes W')
\iso
X((\bR\oplus V)\otimes (W\oplus W'))
\end{multline*}
coming from the structure map for $X$.  The check that the structure
map is well-defined works exactly as in the non-equivariant case, and
this together with the evident $G$ and $\aI(W,W')$-action maps make
$R_{G}$ into an endofunctor on orthogonal $G$-spectra.
The inclusion of $X(W)$ as $\Omega^{0\otimes W}X((\bR\oplus 0)\otimes
W)$ induces a natural transformation $\Id\to R_{G}$.  

\begin{prop}
For any orthogonal $G$-spectrum $X$, $R_{G}X$ is a positive
$G$-$\Omega$-spectrum and $X\to R_{G}X$ is a stable equivalence.
\end{prop}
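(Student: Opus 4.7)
The plan is to handle both claims by a single key computation: for any non-trivial finite dimensional $G$-inner product space $W$, the value
\[
(R_{G}X)(W) = \lhocolim_{V<U} \Omega^{V \otimes W} X(W \oplus V \otimes W)
\]
is a filtered homotopy colimit indexed by the cofinal system $\{V \otimes W : V < U\}$ of finite dimensional $G$-subspaces of the complete $G$-universe $U \otimes W$. Its $H$-equivariant homotopy groups therefore compute $RO(G)$-graded homotopy groups of $X$ in a complete universe, which by universe invariance agree with those computed in $U$.

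First I would show that $X \to R_{G}X$ is a stable equivalence. Since $S^{W \oplus (Z-V)}$ is a compact based $G$-CW complex, $[-, -]^{H}$ commutes with the filtered homotopy colimit defining $(R_{G}X)(Z)$, giving
\[
\pi^{H}_{W, V<U}(R_{G}X) = \lcolim_{V < Z < U,\; V' < U}\; [S^{W \oplus (Z-V) \oplus V' \otimes Z}, X(Z \oplus V' \otimes Z)]^{H}.
\]
As $V'$ and $Z$ grow, the subspaces $V' \otimes Z$ range cofinally over finite dimensional $G$-subspaces of $U \otimes U$, so this double colimit identifies with $\pi^{H}_{W, V < U \otimes U}(X)$, and by invariance under change of complete universe with $\pi^{H}_{W, V<U}(X)$. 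The inclusion $X(Z) \hookrightarrow (R_{G}X)(Z)$ at $V' = 0$ realizes this isomorphism.

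Second I would prove that $R_{G}X$ is a positive $G$-$\Omega$-spectrum, that is, the adjoint structure map $(R_{G}X)(W) \to \Omega^{W'}(R_{G}X)(W \oplus W')$ is a $G$-weak equivalence for every non-trivial $W$ and every $W'$. Compactness of $S^{W'}$ lets $\Omega^{W'}$ commute up to $G$-weak equivalence with the filtered homotopy colimit on the right (after passage to a cofinal countable subsystem of $\{V<U\}$), so the adjoint map is induced termwise from structure maps of $X$. On $\pi_{n}^{H}$ the same cofinality computation identifies both sides with the $RO(G)$-graded homotopy group $\pi^{H}_{n-W}X$, computed in the complete universes $U \otimes W$ and $U \otimes (W \oplus W')$ respectively, and under these identifications the induced map is the identity. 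Hence the adjoint structure map is an isomorphism on all integer-graded $\pi_{n}^{H}$, i.e., a $G$-weak equivalence.

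The principal obstacle is purely organizational: one must carefully verify that the systems $V \otimes W$, $V \otimes (W \oplus W')$, and $V' \otimes Z$ are cofinal in the complete universes $U \otimes W$, $U \otimes (W \oplus W')$, and $U \otimes U$ respectively, and justify that $\Omega^{W'}$ and $[-, -]^{H}$ commute with the filtered $G$-equivariant homotopy colimits by passage to cofinal sequences. These are standard but notationally heavy; once set up, both statements of the proposition reduce to universe invariance of $\pi^{H}_{W, V<U}(X)$.
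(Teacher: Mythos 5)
Your argument is correct, and for the $\Omega$-spectrum claim it follows essentially the same line as the paper's: identify $\pi^{H}_{n}$ of both sides of the adjoint structure map with an $RO(G)$-graded homotopy group of $X$ computed with respect to a complete $G$-universe, observe the map corresponds to an inclusion of universes, and invoke universe invariance. The genuine difference is in how you establish that the unit $X\to R_{G}X$ is a stable equivalence. The paper argues indirectly: from the level computation of $\pi^{H}_{n}((R_{G}X)(W))$ it deduces that $R_{G}$ carries stable equivalences to positive level equivalences, which lets it replace $X$ by a positive $G$-$\Omega$-spectrum and then check the unit is a positive level equivalence on that replacement. You instead compute $\pi^{H}_{W,V<U}(R_{G}X)$ directly as a double colimit (over $Z$ and $V'$), using compactness of the test sphere to move into the filtered homotopy colimit, loop adjunction, and cofinality of the family $\{Z\oplus V'\otimes Z\}$ inside a large complete universe; universe invariance then yields the isomorphism, with the unit realized by the inclusion of $U$ as the first summand. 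This is more self-contained in that it avoids the fibrant-replacement detour. It does front-load the cofinality bookkeeping you flag, together with the justification that $[-,-]^{H}$ and $\Omega^{W'}$ pass through the filtered homotopy colimit (pass to a cofinal sequence, use h-cofibrations in the bar-construction model), but all of that is standard. One harmless imprecision to fix in a final write-up: the subspaces $Z\oplus V'\otimes Z$ (resp.\ $(\mathbb{R}\oplus V)\otimes W$) live in $(\mathbb{R}\oplus U)\otimes U$ (resp.\ $(\mathbb{R}\oplus U)\otimes W$), not in $U\otimes U$ (resp.\ $U\otimes W$) as written; since all of these are complete universes once $W\neq 0$, the discrepancy does not affect the argument, but the distinction is worth recording.
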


\begin{proof}
The formula for $(R_{G}X)(W)$ gives a canonical isomorphism
\begin{align*}
\pi^{H}_{n}((R_{G}X)(W))&\iso 
\colim_{(V<U)} [S^{\bR^{n}\oplus (V\otimes W)},X((\bR\oplus V)\otimes W)]^{H}\\
&\iso
\colim_{(\bR\otimes W<Z<(\bR \oplus U)\otimes W)} [S^{\bR^{n}\oplus
(Z-\bR\otimes W)},X(Z)]^{H}
\end{align*}
with the second isomorphism by cofinality.  This gives a canonical
isomorphism 
\[
\pi^{H}_{n}(R_{G}X(W))\iso 
\pi^{H}_{\bR^{n},\bR\otimes W<(\bR\oplus U)\otimes W}X
\]
when $W$ is non-trivial (we have not defined the righthand side when
$W$ is trivial).   Similarly, we have a canonical isomorphism
\[
\pi^{H}_{n}(\Omega^{W'}(R_{G}X(W\oplus W')))
\iso \pi^{H}_{\bR^{n}\oplus W',\bR\otimes (W\oplus W')<(\bR\oplus U)\otimes (W\oplus W')}X
\]
and the adjoint structure map $R_{G}X(W)\to \Omega^{W'}(R_{G}X(W\oplus
W'))$ is a weak equivalence whenever $W$ is nontrivial as an instance
of properties~(ii) and~(iii) of homotopy groups listed above.  It
follows from this calculation (and property~(iv) of homotopy groups
above) that $R_{G}$ takes stable equivalences of orthogonal
$G$-spectra to positive level equivalences.  In particular, to prove
that the natural transformation $\Id\to R_{G}$ is a stable
equivalence, it suffices to check that it is a positive level
equivalence on a positive $G$-$\Omega$-spectrum $X$.  In this case,
for $W$ nontrivial, $\pi^{H}_{n}(X(W))\to \pi^{H}_{\bR^{n},W<U}X$ is
an isomorphism, and we can identify the induced map on homotopy groups
$\pi^{H}_{n}(X(W))\to \pi^{H}_{n}(R_{G}X(W))$ as an isomorphism once
again using property~(ii) of homotopy groups above.
\end{proof}

The functor $R_{G}$ is clearly continuous on mapping spaces.  Thus,
to complete the proof of Lemma~\ref{lem:smRG}, we need to construct
the lax symmetric monoidal structure on $R_{G}$ and prove that $\Id\to
R_{G}$ is a symmetric monoidal transformation.  This works essentially
just as in the non-equivariant case, using internal sum $+$ of finite
dimensional subspaces of $U$ in place of $\max$ of natural numbers.
(Of course, $\bR^{k}+\bR^{\ell}<\bR^{\infty}$ is
$\bR^{\max(k,\ell)}<\bR^{\infty}$.) Denoting by $\aI_{U}$ the
partially ordered set of $V<U$, $+$ defines a functor $\aI_{U}\times
\aI_{U}\to \aI_{U}$ that is strictly symmetric ($V+V'=V'+V$) and 
strictly associative ($(V+V')+V''=V+(V'+V'')$).  For specified 
orthogonal $G$-spectra $X$ and $X'$ and finite dimensional $G$-inner 
product spaces $W$ and $W'$, we have a natural transformation 
\[
\xymatrix@R-1.25pc{% \hspace{-1em} to fix error "Dimension too large."
\relax\Omega^{V\otimes W}X((\bR\oplus V)\otimes W)\sma 
\Omega^{V'\otimes W'}X((\bR\oplus V')\otimes W')\ar[d]\\
\relax\hspace{-1em}\Omega^{(V+V')\otimes W}X((\bR\oplus (V+V'))\otimes W)\sma
\Omega^{(V+V')\otimes W'}X'((\bR\oplus (V+V'))\otimes W')\hspace{-1em}\ar[d]\\
\relax\Omega^{(V+V')\otimes (W\oplus W')}((X\sma X')%
((\bR\oplus (V+V'))\otimes (W\oplus W')))
}
\]
subordinate to $+$ and inducing a map of based $G$-spaces
\[
R_{G}X(W)\sma R_{G}X'(W')\to (R_{G}(X\sma X'))(W\oplus W').
\]
The check that this assembles to a natural transformation 
\[
R_{G}X\sma R_{G}X'\to R_{G}(X\sma X')
\]
is now straightforward and essentially the same as in the non-equivariant
case, as are the remaining checks of the associativity and symmetry properties
and the check of lax symmetric monoidality of the natural transformation
$\Id\to R_{G}$. 

%%%%%%%%%%%%%%%%%%%%%%%%%%%%%%%%%%%%%%%%%%%%%%%%%%%%%%%%%%%%%%%%%%%%%%%%
% Bibliography
%%%%%%%%%%%%%%%%%%%%%%%%%%%%%%%%%%%%%%%%%%%%%%%%%%%%%%%%%%%%%%%%%%%%%%%%

\bibliographystyle{plain}
\bibliography{thhtc}

\end{document}